\documentclass{birkjour}
\usepackage{amsmath,amsfonts,amssymb}%

\usepackage[all]{xy}
\usepackage[usenames]{color}
\usepackage{setspace}
\usepackage{url}
\usepackage{halloweenmath}
\usepackage{amsmath, amsthm, amsfonts, amssymb, stmaryrd}%
\usepackage{mathrsfs}
\usepackage{tikz}
\usetikzlibrary{decorations.pathmorphing}
\usepackage{bm}
\usepackage{enumerate}
\usepackage{microtype}
\usepackage{xparse, expl3}
\usepackage{aliascnt}

\usepackage{hyperref}
\hypersetup{
    colorlinks = true,
    citecolor=darkblue,
    linkcolor=darkblue,
    urlcolor=darkblue
}
\usepackage[capitalize]{cleveref}
\crefname{theoremvArIAblE}{Theorem}{Theorems}
\crefname{propositionvArIAblE}{Proposition}{Propositions}
\crefname{lemmavArIAblE}{Lemma}{Lemmas}
\crefname{corollaryvArIAblE}{Corollary}{Corollaries}
\crefname{claimvArIAblE}{Claim}{Claims}
\crefname{definitionvArIAblE}{Definition}{Definitions}
\crefname{examplevArIAblE}{Example}{Examples}
\crefname{remarkvArIAblE}{Remark}{Remarks}
\crefname{conjecturevArIAblE}{Conjecture}{Conjectures}
\crefname{equation}{Equation}{Equations}
\crefname{section}{Section}{Sections}
\crefname{subsection}{Section}{Sections}

\definecolor{LightGray}{rgb}{.6,.6,.6}
\definecolor{darkblue}{rgb}{0.0,0.0,0.6} 

\def\xx{{\EM{\bm{x}}}}

\def\zz{{\EM{\bm{z}}}}

\def\Aut{{\mathrm{{Aut}}}}

\def\Lempty{\EM{\mc{L}}}

\def\Lomega#1{\EM{\Lempty_{#1, \w}}}

\def\Lkappalambda#1#2{{\EM{\Lempty_{#1, #2}}}}

\def\Lkw{\Lkappalambda{\kappa}{\w}}

\def\Lww{\Lomega{\w}}

\def\Lwow{\Lomega{\w_1}}
\def\Lkpw{\Lomega{\kappa^+}}
\def\Liw{\Lomega{\infty}}

\newcommand\rest[1][]{\EM{\!\restriction_{#1}}}

\newcommand\pars{\,\cdot\,}

\newcommand{\defn}[1]{{\textbf{#1}}}

\def\ZFC{\text{ZFC}}

\def\Powerset{\mathcal{P}}
\def\PowersetFin{\Powerset_{<\w}}

\newcommand{\Caratheodory}{Carath\'{e}odory}

\DeclareMathOperator{\dom}{dom}

\DeclareMathOperator{\rank}{rank}

\DeclareMathOperator{\CARD}{CARD}

\DeclareMathOperator{\Rel}{Rel}
\DeclareMathOperator{\arity}{ar}

\DeclareDocumentCommand{\forces}{d[]}
{
\IfNoValueTF{#1}
	{
	\EM{\Vdash}
	}
	{
	\EM{\Vdash_{#1}}
	}
}

\DeclareDocumentCommand{\Generic}{d[]}
{
\IfNoValueTF{#1}
	{
	\mathbb{G}
	}
	{
	\mathbb{#1}
	}
}

\DeclareDocumentCommand{\GenericUnion}{d[]}
{
\IfNoValueTF{#1}
	{
	\mathfrak{G}
	}
	{
	\mathfrac{#1}
	}
}

\DeclareDocumentCommand{\RRF}{d<> d()}
{
\IfNoValueTF{#1}
{
    \IfNoValueTF{#2}
    {
    \mathcal{R}
    }
    {
    \mathcal{R}(#2)
    }
}
{
    \IfNoValueTF{#2}
    {
    \mathcal{R}_{#1}
    }
    {
    \mathcal{R}_{#1}(#2)
    }
}
}

\DeclareDocumentCommand{\Cantor}{d[]}
{
\IfNoValueTF{#1}
	{
	2^\w
	}
	{
	2^{#1}
	}
}

\DeclareDocumentCommand{\Lebesgue}{d<> d[]}
{
\IfNoValueTF{#1}
    {
    \IfNoValueTF{#2}
        {
        \lambda
        }
        {
        \lambda_{#2}
        }
    }
    {
    \IfNoValueTF{#2}
        {
        \lambda^{(#1)}
        }
        {
        \lambda^{(#1)}_{#2}
        }
    }
}

\DeclareDocumentCommand{\NonRed}{d()}
{
\IfNoValueTF{#1}
	{
	\mathfrak{NR}
	}
	{
	\mathfrak{NR}(#1)
	}
}

\DeclareDocumentCommand{\Str}{d<> d[] d()}
{
\IfNoValueTF{#1}
{
\IfNoValueTF{#2}
	{
	\EM{\StrMathOp(#3)}
	}
	{
	\EM{\StrMathOp_{#2}(#3)}
	}
}
{
\IfNoValueTF{#2}
	{
	\EM{\StrMathOp^{#1}(#3)}
	}
	{
	\EM{\StrMathOp^{#1}_{#2}(#3)}
	}
}
}

\newcommand{\skullsymb}{\boxcircle}

\DeclareDocumentCommand{\Skeleton}{d<> d[] d()}
{
\IfNoValueTF{#1}
{
        \IfNoValueTF{#2}
        	{
        	\skullsymb_{#3}
        	}
        	{
        	\skullsymb_{#3:#2}
        	}
}
{
\IfNoValueTF{#2}
        {
        \skullsymb_{#3:#1}
        }
        {
        \skullsymb_{#3:#1, #2}
        }
}
}

\DeclareDocumentCommand{\Proj}{s d<> d[] d()}
{
\IfBooleanTF{#1}
{
\IfNoValueTF{#2}
    {
    \IfNoValueTF{#3}
	{
        \IfNoValueTF{#4}
    	{
            \EM{ERROR}
            }
    	{
            \EM{\Pushforward[\pi](#4)}
            }
        }
        {
        \IfNoValueTF{#4}
    	{
            \EM{\Pushforward[\pi_{#3}]}
            }
    	{
            \EM{\Pushforward[\pi_{#3}](#4)}
            }
        }
    }
    {
    \IfNoValueTF{#3}
	{
        \IfNoValueTF{#4}
    	{
            \EM{\Pushforward\pi_{#2}]}
            }
    	{
            \EM{\Pushforward[\pi_{#2}](#4)}
            }
        }
        {
        \IfNoValueTF{#4}
    	{
            \EM{\Pushforward[\pi_{#2:#3}]}
            }
    	{
            \EM{\Pushforward[\pi_{#2:#3}](#4)}
            }
        }
    }
}
{
\IfNoValueTF{#2}
    {
    \IfNoValueTF{#3}
	{
        \IfNoValueTF{#4}
    	{
            \EM{ERROR}
            }
    	{
            \EM{\pi(#4)}
            }
        }
        {
        \IfNoValueTF{#4}
    	{
            \EM{\pi_{#3}}
            }
    	{
            \EM{\pi_{#3}(#4)}
            }
        }
    }
    {
    \IfNoValueTF{#3}
	{
        \IfNoValueTF{#4}
    	{
            \EM{\pi_{#2}}
            }
    	{
            \EM{\pi_{#2}(#4)}
            }
        }
        {
        \IfNoValueTF{#4}
    	{
            \EM{\pi_{#2:#3}}
            }
    	{
            \EM{\pi_{#2:#3}(#4)}
            }
        }
    }
}
}

\newcommand{\llrr}[1]{{\llbracket #1 \rrbracket}}

\DeclareDocumentCommand{\extent}{d<> d[] m}
{
\IfNoValueTF{#1}
{
\IfNoValueTF{#2}
	{
	\EM{\llrr{#3}}
	}
	{
	\EM{\llrr{#3}_{#2}}
	}
}
{
\IfNoValueTF{#2}
	{
	\EM{\llrr{#3}^{#1}}
	}
	{
	\EM{\llrr{#3}_{#2}^{#1}}
	}
}
}

\DeclareMathOperator{\ThOp}{Th}
\DeclareDocumentCommand{\Th}{d[]}
{
\IfNoValueTF{#1}
	{
	\ThOp
	}
	{
	\Th_{#1}
	}
}

\DeclareMathOperator{\StrMathOp}{Str}

\DeclareDocumentCommand{\ImEquiv}{d[] d()}
{
\IfNoValueTF{#2}
	{
	\EM{[\cdot]_{#1}}
	}
	{
	\EM{[#2]_{#1}}
	}
}

\DeclareDocumentCommand{\dual}{d()}
{
\IfNoValueTF{#1}
	{
	\EM{\hat{\ }}
	}
	{
	\EM{\hat{#1}}
	}
}

\DeclareDocumentCommand{\SizeAS}{d()}
{
\IfNoValueTF{#1}
	{
	\EM{|\cdot|}
	}
	{
	\EM{|#1|}
	}
}

\DeclareDocumentCommand{\compcK}{d[]}
{
\IfNoValueTF{#1}
	{
	\EM{\mathbb{K}}
	}
	{
	\EM{\mathbb{K}[#1]}
	}
}

\DeclareDocumentCommand{\AgeK}{d[]}
{
\IfNoValueTF{#1}
	{
	\bm{K}
	}
	{
	\bm{K}[#1]
	}
}

\DeclareDocumentCommand{\Rel}{d[]}
{
\IfNoValueTF{#1}
	{
	\EM{\mathcal{R}}
	}
	{
	\EM{\mathcal{R}_{#1}}
	}
}

\DeclareDocumentCommand{\Func}{d[]}
{
\IfNoValueTF{#1}
	{
	\EM{\mathcal{F}}
	}
	{
	\EM{\mathcal{F}_{#1}}
	}
}

\DeclareDocumentCommand{\ar}{d[]}
{
\IfNoValueTF{#1}
	{
	\EM{\arity}
	}
	{
	\EM{\arity_{#1}}
	}
}

\DeclareDocumentCommand{\Fn}{d<> d[] d()}
{
\IfNoValueTF{#3}
	{
	\EM{\textrm{Fn}(#1, #2)}
	}
	{
	\EM{\textrm{Fn}(#1, #2, #3)}
	}
}

\DeclareMathOperator{\Perm}{Sym}
\DeclareDocumentCommand{\Sym}{d()}
{
\EM{\Perm(#1)}
}

\DeclareDocumentCommand{\SymFin}{d()}
{
\EM{\Perm^{<\w}(#1)}
}

\DeclareDocumentCommand{\name}{d[] m}
{
\IfNoValueTF{#1}
	{
        \EM{\dot{#2}}
        }
	{
        \EM{\dot{#2}^{#1}}
	}
}

\DeclareDocumentCommand{\eqcl}{d[] d()}
{
\IfNoValueTF{#1}
	{
        \EM{[#2]}
        }
	{
        \EM{[#2]_{#1}}
	}
}

\DeclareDocumentCommand{\SymDiff}{}{\triangle}

\DeclareDocumentCommand{\CantorFiniteSeq}{}{\mathbb{C}}

\DeclareDocumentCommand{\Cohen}{d[] d()}
{
\mathfrak{C}_{#1}(#2)
}

\newcommand{\Fraisse}{\textrm{Fra\"iss\'e}}

\DeclareDocumentCommand{\tdcl}{d[] d()}
{
\IfNoValueTF{#1}
{
    \EM{\textbf{term}(#2)}
}
{
    \EM{\textbf{term}_{#1}(#2)}
}
}

\DeclareDocumentCommand{\gdcl}{d[] d()}
{
\IfNoValueTF{#1}
{
    \EM{\textrm{gcl}(#2)}
}
{
    \EM{\textrm{gcl}_{#1}(#2)}
}
}

\DeclareDocumentCommand{\qftp}{d[] d()}
{
\IfNoValueTF{#1}
{
    \EM{\quantfreetp(#2)}
}
{
    \EM{\quantfreetp_{#1}(#2)}
}
}

\DeclareMathOperator{\quantfreetp}{qtp}

\DeclareDocumentCommand{\Closure}{d<> d()}
{
\IfNoValueTF{#1}
{
    \EM{\textrm{cl}(#2)}
}
{
    \EM{\textrm{cl}_{#1}(#2)}
}
}

\DeclareDocumentCommand{\Pushforward}{d[] d()}
{
#1^*(#2)
}

\DeclareDocumentCommand{\BorelSets}{d<> d[] d()}
{
\IfNoValueTF{#1}
{
    \sigma(\Str[#2](#3))
}
{
    \sigma_{#1}(\Str[#2](#3))
}
}

\DeclareDocumentCommand{\BorelCodes}{d<> d[] d()}
{
\IfNoValueTF{#1}
{
    \mbb{B}_{#2:#3}
}
{
    \mbb{B}^{#1}_{#2:#3}
}
}

\DeclareDocumentCommand{\BasicBC}{d<> d[] d()}
{
\IfNoValueTF{#1}      
{
    \mbb{B}^{\mathfrak{b}}_{#2:#3}
}
{
    \mbb{B}^{{#1}, \mathfrak{b}}_{#2:#3}
}
}

\DeclareDocumentCommand{\AlmostBasicBC}{d<> d[] d()}
{
\IfNoValueTF{#1}      
{
    \mbb{B}^{\mathfrak{ab}}_{#2:#3}
}
{
    \mbb{B}^{{#1},\mathfrak{ab}}_{#2:#3}
}
}

\DeclareDocumentCommand{\BCspace}{d[] d()}
{
\IfNoValueTF{#1}
{
    \mathfrak{BS}(#2)
}
{
    \mathfrak{BS}_{#1}(#2)
}
}

\DeclareDocumentCommand{\BorelProj}{d[] d()}
{
\<:#2:\>_{#1}
}

\newcommand\widetildelong[1]{\mathord{\begin{tikzpicture}[baseline= (a.base)]
\node[inner sep=0pt] (a) {$#1$};
\draw[semithick,decorate,decoration={snake,amplitude=.3mm,segment length=0.8mm}] ([xshift=.15ex,yshift=0.5ex]a.north west) -- ([xshift=-.15ex,    yshift=0.5ex]a.north east);
\end{tikzpicture}}}

\DeclareDocumentCommand{\BCvalue}{d()}
{
\widetildelong{#1}
}

\makeatletter
\newsavebox{\@brx}
\newcommand{\lllangle}[1][]{\savebox{\@brx}{\(\m@th{#1\langle}\)}%
  \mathopen{\copy\@brx\kern-0.5\wd\@brx\usebox{\@brx}}}
\newcommand{\rrrangle}[1][]{\savebox{\@brx}{\(\m@th{#1\rangle}\)}%
  \mathclose{\copy\@brx\kern-0.5\wd\@brx\usebox{\@brx}}}
\makeatother

\DeclareDocumentCommand{\BCformula}{d()}
{
\lllangle #1 \rrrangle
}

\DeclareDocumentCommand{\CohenImage}{d[] d()}
{
\llparenthesis {#2} \rrparenthesis_{#1}
}

\DeclareDocumentCommand{\RandPO}{d()}
{
\IfNoValueTF{#1}
{
    \EM{\cO^+}
}
{
    \EM{\cO^+(#1)}
}
}

\DeclareDocumentCommand{\NonRedTuple}{d[] d()}
{
#2^{(#1)}
}

\DeclareDocumentCommand{\ClosureMap}{d[] d()}
{
\EM{\textrm{clMap}(#1, #2)}
}

\DeclareDocumentCommand{\HP}{}
{%
    \textrm{(HP)}%
}

\DeclareDocumentCommand{\JEP}{}
{%
    \textrm{(JEP)}%
}

\DeclareDocumentCommand{\AP}{}
{%
    \textrm{(AP)}%
}

\DeclareDocumentCommand{\SAP}{}
{%
    \textrm{(SAP)}%
}

\makeatletter
\newcommand{\dotminus}{\mathbin{\text{\@dotminus}}}

\newcommand{\@dotminus}{%
  \ooalign{\hidewidth\raise1ex\hbox{.}\hidewidth\cr$\m@th-$\cr}%
}

\DeclareDocumentCommand{\LangOne}{}
{
    \Lang_1
}

\DeclareMathOperator{\Lang}{\mathscr{L}}

\DeclareDocumentCommand{\quadiff}{}
{
    \EM{\quad\text{ if and only if }\quad}
}

\DeclareDocumentCommand{\qquadiff}{}
{
    \qquad\text{if and only if}\qquad
}

\DeclareDocumentCommand{\floor}{m}
{
\lfloor #1\rfloor
}

\newcommand{\sC}{\mathscr{C}}
\newcommand{\sCle}{\leqslant}

\def\aa{{\EM{\bm{a}}}}
\def\bb{{\EM{\bm{b}}}}

\def\bb{{\EM{\bm{b}}}}

\def\cA{{\EM{\mathcal{A}}}}
\def\cB{{\EM{\mathcal{B}}}}
\def\cC{{\EM{\mathcal{C}}}}

\def\cE{{\EM{\mathcal{E}}}}

\def\cM{{\EM{\mathcal{M}}}}
\def\cN{{\EM{\mathcal{N}}}}
\def\cO{{\EM{\mathcal{O}}}}

\def\cV{{\EM{\mathcal{V}}}}

\def\cZ{{\EM{\mathcal{Z}}}}

\def\w{\EM{\omega}}

\def\Rationals{{\EM{{\mbb{Q}}}}}

\def\Reals{\EM{\mbb{R}}}

\def\^{\EM{{}^{\And}}}

\def\Or{\EM{\vee}}
\def\And{\EM{\wedge}}

\def\<{\EM{\langle}}
\def\>{\EM{\rangle}}

\def\indent{\hspace*{2em}}

\def\EM#1{\ensuremath{#1}}
\def\mbb#1{\EM{\mathbb{#1}}}
\def\mc#1{\EM{\mathcal{#1}}}

\def\st{\,:\,}
\def\:{\colon}

\providecommand{\dotdiv}{
  \mathbin{
    \vphantom{+}
    \text{
      \mathsurround=0pt 
      \ooalign{
        \noalign{\kern-.35ex}
        \hidewidth$\smash{\cdot}$\hidewidth\cr 
        \noalign{\kern.35ex}
        $-$\cr 
      }%
    }%
  }%
}

\DeclareDocumentCommand{\RightJustify}{m}{\hspace*{\fill}\mbox{#1}\penalty-9999\relax}

\newcommand{\bA}{\mbb{A}}
\newcommand{\bB}{\mbb{B}}
\newcommand{\bP}{\mbb{P}}
\newcommand{\bQ}{\mbb{Q}}
\DeclareMathOperator{\RO}{RO}

\newcommand{\forcingisom}{\approx}
\newcommand{\isom}{\cong}

\DeclareDocumentCommand{\DeclareCounter}{m}%
{%
 	\ifcsname c@#1\endcsname%
	\else%
		\newcounter{#1}%
	\fi%
}

\DeclareDocumentCommand{\MyQED}{}{\qed}

\DeclareDocumentEnvironment{proof}{d() D[]{\MyQED} d<>}
{
\noindent\IfNoValueTF{#1}
{\emph{Proof.\!\!}}
{\emph{Proof\ #1.\ }}
}
{
\IfValueTF{#3}
	{%
	\ExplSyntaxOff
	\RightJustify{\EM{#2_{#3}}}
	\vspace{1ex}
	}
	{
	\RightJustify{\EM{#2}}
	\vspace{1ex}
	}
}

\DeclareCounter{ProofLabelcOUntEr}
\DeclareDocumentCommand{\ProofLabel}{}{%
\addtocounter{ProofLabelcOUntEr}{1}
\label{cUrrEntProoflAbEl\arabic{ProofLabelcOUntEr}}
}

\DeclareDocumentCommand{\ProofRef}{D<>{1}}
{%
\ref{cUrrEntProoflAbEl\arabic{ProofcOUntEr#1}}
}
\DeclareDocumentCommand{\ProofCref}{D<>{1}}
{%
\cref{cUrrEntProoflAbEl\arabic{ProofcOUntEr#1}}
}

\DeclareDocumentEnvironment{Proof}{d() D[]{\MyQED} D<>{1}}
{
\DeclareCounter{ProofcOUntEr#3}%
\setcounter{ProofcOUntEr#3}{\value{ProofLabelcOUntEr}}%
\begin{proof}(#1)[#2]<\ProofRef<#3>>%
}
{
\end{proof}
}

\ExplSyntaxOn
\def\TheoremDepth{section}

\DeclareDocumentCommand{\DeclareTheorem}{m o m o}{%
\IfNoValueTF{#4}
	{%
	\IfNoValueTF{#2}
		{%
		\newtheorem{#1vArIAblE}{#3}
		}
		{%
		\newaliascnt{#1vArIAblE}{#2vArIAblE}
		\newtheorem{#1vArIAblE}[#1vArIAblE]{#3}
		\aliascntresetthe{#1vArIAblE}
		}
	}
	{%
	\newtheorem{#1vArIAblE}{#3}[#4]%
	}
\newtheorem*{#1vArIAblE*}{#3}

\DeclareDocumentEnvironment{#1}{o o}

	{
	\IfValueT{##2}%
		{
		\begin{spacing}{##2}
		}
	\IfValueTF{##1}
		{
		\begin{#1vArIAblE}[##1]
		}
		{
		\begin{#1vArIAblE}
		}
	\ProofLabel
	}
	{
	\IfValueT{##2}%
		{
		\end{spacing}{##2}
		}
	\end{#1vArIAblE}
	}

\DeclareDocumentEnvironment{#1*}{o o}

	{
	\IfValueT{##2}%
		{
		\begin{spacing}{##2}
		}
	\IfValueTF{##1}
		{
		\begin{#1vArIAblE*}[##1]
		}
		{
		\begin{#1vArIAblE*}
		}
	}
	{
	\IfValueT{##2}%
		{
		\end{spacing}{##2}
		}
	\end{#1vArIAblE*}
	}
}
\ExplSyntaxOff

\theoremstyle{plain}
\DeclareTheorem{theorem}{Theorem}[\TheoremDepth]
\DeclareTheorem{proposition}[theorem]{Proposition}
\DeclareTheorem{lemma}[theorem]{Lemma}
\DeclareTheorem{corollary}[theorem]{Corollary}
\DeclareTheorem{claim}[theorem]{Claim}

\theoremstyle{definition}
\DeclareTheorem{definition}[theorem]{Definition}
\DeclareTheorem{axiom}[theorem]{Axiom}
\DeclareTheorem{convention}[theorem]{Convention}

\theoremstyle{remark}
\DeclareTheorem{remark}[theorem]{Remark}
\DeclareTheorem{example}[theorem]{Example}
\DeclareTheorem{question}[theorem]{Question}
\DeclareTheorem{conjecture}[theorem]{Conjecture}

\begin{document}

\title{Forcing with Invariant Measures}

\thanks{The third author's research has been supported by a grant from IPM (No.~1402030417).}

\begin{abstract}
This paper introduces 
a model-theoretic generalization of the notion of forcing with random reals, 
in which forcing gives rise to \emph{random generic structures}. 
Specifically, we consider forcing with $\kappa$-Borel probability measures on the space of $\Lang$-structures with a 
(possibly uncountable) infinite set $X$, focusing on those that are invariant under the action of the symmetric group $\Sym(X)$.
We demonstrate how any 
$\Sym(X)$-invariant measure where $X$ is countable
can be uniquely extended to a
$\Sym(Y)$-invariant measure where $Y$ is uncountable,
and
prove that forcing with such measures
satisfies the countable chain condition.
We also show that we can uniformly distinguish between 
these 
random generic structures
and the \emph{Cohen generic structures} that arise from forcing with a strong \Fraisse\ class: There is a $\kappa$-Borel set of low complexity that contains every Cohen generic structure that is not highly homogeneous but contains no random generic structure, implying that a structure that is not highly homogeneous cannot be both Cohen generic and random generic.
Finally, 
we answer an open question of Kostana in the case of $\w_1$, by establishing a connection between forcing with a strong \Fraisse\ class and Cohen forcing.
\end{abstract}

\author[Ackerman]{Nathanael Ackerman}
\address{Harvard University\\
Cambridge, MA 02138\\
USA}
\email{nate@aleph0.net}

\author[Freer]{Cameron Freer}
\address{Department of Brain and Cognitive Sciences\\
Massachusetts Institute of Technology\\
Cambridge, MA 02139\\
USA}
\email{freer@mit.edu}

\author[Golshani]{Mohammad Golshani}
\address{School of Mathematics\\
Institute for Research in Fundamental Sciences (IPM)\\
P.O. Box: 19395-5746\\
Tehran\\
Iran}
\email{golshani.m@gmail.com}

\author[Mirabi]{Mostafa Mirabi}
\address{Department of Mathematics and Computer Science\\
The Taft School\\
Watertown, CT 06795\\
USA\\
and \\
Department of Mathematics and Computer Science\\
Wesleyan University\\
Middletown, CT 06459\\
USA}
\email{mmirabi@wesleyan.edu}

\author[Patel]{Rehana Patel}
\address{Wesleyan University\\
Middletown, CT 06459\\
USA}
\email{rpatel@wesleyan.edu}

\subjclass{Primary: 03C55; Secondary: 03C25, 03E35}
\keywords{\Fraisse\ limit, random forcing, Cohen forcing, generic structures, invariant measures}

\newcommand\blfootnote[1]{%
  \begingroup
  \renewcommand\thefootnote{}
  \footnote{#1}%
  \addtocounter{footnote}{-1}
  \endgroup
}

\blfootnote{The present paper is the winner of the Christine Ladd-Franklin Logic Prize for the USA 2025, part of the third edition of the World Logic Prizes Contest.}

\maketitle
\tableofcontents

\section{Introduction}
In 1963, 
Cohen \cite{MR157890} introduced the notion of \emph{forcing} to prove the independence of the continuum hypothesis and the axiom of choice from the axioms of Zermelo-Fraenkel set theory.  Forcing enables one to add a new, so-called \emph{generic}, object to an existing \emph{ground} model of set theory.
The method uses a partial order of \emph{conditions} from the ground model, each of which can be thought of as an approximation to a generic object.
In \emph{Cohen forcing}, the conditions are 
finite partial functions from $X$ to $\{0, 1\}$ for a set $X$, 
and the resulting generic function is a total function from $X$ to $\{0, 1\}$,
i.e., an element of the generalized Cantor space $\Cantor[X]$.
(In Cohen's specific method for the independence of the continuum hypothesis, 
$X = \kappa \times \w$ for a cardinal $\kappa$,
so that the generic function witnesses
that $2^{\w} \ge \kappa$.)

In the 1970s, 
Robinson developed the technique of model-theoretic forcing
(\cite{MR272613}, \cite{Robinson1975-ROBFIM}, \cite{MR389581}), which has
provided deep insights into model-theoretic algebra and other areas of model theory.
For a language $\Lang$, 
this technique allows one to build a generic countably infinite $\Lang$-structure 
using conditions that are 
finite structures in a \emph{strong \Fraisse\ class},
where the underlying set of each condition
is a subset of some common countable set.
Recent work of Golshani \cite{Golshani} and Kostana \cite{Cohen} considers 
forcing with strong \Fraisse\ classes 
where the underlying set of each condition
is a subset of some common \emph{uncountable} set.

Given a strong \Fraisse\ class with countably many isomorphism types,
forcing with
it (restricted to structures with underlying set a subset of $\w$)
produces a countable \emph{\Fraisse\ limit}, i.e., a structure which is universal for the class and is ``maximally symmetric''.
Strong \Fraisse\ classes appear throughout mathematics. Examples include the class of all finite graphs, whose countable \Fraisse\ limit is the Rado graph, and the class of all finite linear orders, whose countable \Fraisse\ limit is the 
set of rational numbers as a linear order.

Note that a function from a set $X$ to $\{0, 1\}$ may be viewed as an \linebreak $\Lang_1$-structure with underlying set $X$, where $\LangOne = \{U\}$ for a unary relation symbol $U$.
Then Cohen forcing over finite partial functions can be viewed as
the special case of model-theoretic forcing 
where the conditions are those elements
of
the strong \Fraisse\ class of all finite $\Lang_1$-structures 
whose underlying set is a 
subset of $X$.
For this reason, we also 
use the term \emph{Cohen forcing} to refer to
forcing with a strong \Fraisse\ class in an arbitrary relational language,
and we call the generic object it produces a \emph{Cohen generic structure}.

Each comeager Borel subset of $\Cantor[\w]$ can be viewed as 
a property that holds of a ``typical'' element of $\Cantor[\w]$.
An important characterization of Cohen generic functions from $\w$ to $\{0, 1\}$ is that they are precisely those elements 
of $\Cantor[\w]$ 
that 
are in every comeager
Borel set 
that can be described in the ground model.
An alternative view is to consider
each Borel subset of $\Cantor[\w]$ of Lebesgue measure~1 to be
a property that holds of a ``typical'' element of $\Cantor[\w]$.
This exemplifies
a more general analogy between category and measure in the study of Cantor space, in which category is concerned with all \emph{possible} configurations, and measure is concerned with configurations that occur \emph{often}.

Under this analogy, the measure-theoretic counterpart
to Cohen forcing is
\emph{random forcing}, whose generic functions in $\Cantor[\w]$ are precisely those that 
are in every
Borel set  of Lebesgue measure~$1$ 
that can be described in the ground model.
This notion of forcing was introduced by Solovay in 1964 to show that
there is a model of set theory 
in which all subsets of the reals are Lebesgue-measurable \cite{MR265151}. 

Again viewing functions in $\Cantor[\w]$ as $\Lang_1$-structures, random forcing can be seen as building an $\Lang_1$-structure in a Lebesgue-random generic way. Specifically, 
a random generic is
isomorphic to
the $\Lang_1$-structure in which $U$ holds on an infinite-coinfinite subset of $\w$.
It is natural to ask how one might 
extend random forcing to a forcing notion that
produces a random generic structure in an arbitrary relational language.
To do so, one would need to identify appropriate analogues of Lebesgue measure with which to force.

A key property of Lebesgue probability measure on $2^\omega$ (equivalently, on the space of $\Lang_1$-structures on $\omega$) that makes it suitable for random forcing is that it is \emph{invariant} under permutations of the underlying set $\w$. This can be thought of as saying that the construction of the generic does not make use of any intrinsic relationship among the elements
of the underlying set (e.g., their ordering).
In this paper, we will propose 
that an appropriate extension of random forcing is
forcing with certain $\Sym(X)$-invariant probability measures on the space of $\Lang$-structures, where $X$ is an arbitrary set and $\Lang$ is an arbitrary relational language.

Invariant probability measures (on spaces of structures)
occur throughout mathematics.
They were studied in the 1930s by 
de~Finetti \cite{deFinetti} in the case of $\Lang_1$; these invariant measures are the distributions of \emph{exchangeable sequences}.
The de~Finetti representation theorem for invariant probability measures on $\Lang_1$-structures
plays a crucial role in the foundations of statistical inference and the philosophical debate between Bayesians and frequentists \cite{MR0440641}.

In the 1970s 
Aldous \cite{MR637937} and Hoover \cite{Hoover79} 
generalized
de Finetti's
representation theorem 
to invariant probability measures for relational languages of bounded arity.
In the 2000s,  work of Lov\'asz, Szegedy, and others \cite{MR3012035} 
rediscovered a version of this representation theorem in the context of graphs;
the objects representing invariant measures in 
this 
representation are called 
\emph{graphons},
and can be seen as limit objects of convergent sequences of large graphs.
This view of invariant measures provides a new perspective on deep theorems in combinatorics such as Szemer\'edi's regularity lemma \cite{MR369312}.

In the 1960s, Gaifman  \cite{MR175755}
studied the model theory of invariant probability measures for arbitrary relational languages, 
viewing them as a particular kind of ``probabilistic structure''.
The model theory of these probabilistic structures was further developed in recent years by Ackerman, Freer, and Patel. 
In particular, many classical results in infinitary logic were shown to have analogues for these probabilistic structures \cite{complete-classification}.

In this paper, we develop \emph{random forcing with invariant measures}, specifically random forcing using an invariant probability $\kappa$-measure 
on the space of $\Lang$-structures with a fixed underlying set, where $\Lang$ is an arbitrary relational language and $\kappa$ is an infinite ordinal.
We call the generic structures produced by these forcing notions \emph{random generic structures}.
We investigate the relationship between random generic structures and the invariant measures used to create them. 
This 
work can be viewed
as studying the connection between a ``random sample'' from a probabilistic structure and the probabilistic structure itself.
We also study the connections between random generic structures and Cohen generic structures.

Further, we
provide a robust tool for lifting measure-theoretic properties of invariant measures from Cantor space ($\Cantor[\w]$) to generalized Cantor space ($\Cantor[\kappa]$).
In addition, we show a way to generalize random forcing to produce uncountable as well as countable structures.
Our results provide a comprehensive theory of forcing using invariant measures, 
simultaneously generalizing random forcing with Lebesgue measure and model-theoretic forcing with strong \Fraisse\ classes.

\subsection{Outline of the Paper}

In \cref{Section: Preliminaries}, we review some of the ideas that will be needed for our work, including forcing, Boolean algebras, the logic action, $\kappa$-Borel codes, the almost-sure theory of a measure, invariant probability $\kappa$-measures, and \Fraisse\ limits. The notion of a $\kappa$-Borel code
is of special importance, as we will consider relativizations of measures to larger models of set theory, and $\kappa$-Borel codes give us an absolute way to describe them.

In \cref{Section: Examples on Cantor}, we review some of the key properties of
Cohen forcing over arbitrary sets as well as of random forcing with Lebesgue measure over arbitrary sets.
Because functions from a set $X$ to $\{0, 1\}$ are cryptomorphic to $\LangOne$-structures on $X$, 
the results in this section 
can be respectively viewed as a special case of Cohen forcing over arbitrary structures and of random forcing with respect to an appropriate invariant measure over arbitrary structures,
which
we study in \cref{Section: Forcing with Strong Fraisse Classes,Random Generic Structures Section}.

In \cref{Section: Forcing with Strong Fraisse Classes}, we review Cohen forcings over strong \Fraisse\ classes and consider the relationship between generics for
different strong \Fraisse\ classes. For an infinite set $X$, we show that forcing over $X$ using any non-trivial strong \Fraisse\ class with at most $|X|$-many isomorphism types will yield a model of set theory containing a Cohen generic function in $\Cantor[X]$. We also show that when $|X| \leq \w_1$ such forcings are equivalent to Cohen forcing over $X$, in the sense that they yield the same forcing extensions.  This answers an open question from \cite{Cohen} in the case of $\w_1$.

In \cref{Section: Invariant Measures},
we turn our focus to invariant probability $\kappa$-measures on the space of structures with a given infinite underlying set. 
In  \cref{subsec:liftings},
we show that every such invariant measure has a relativization to every model of set theory. We also show that for all infinite sets $X$ and $Y$, any $\Sym(Y)$-invariant measure
can be uniquely lifted to an analogous $\Sym(X)$-invariant measure.
In \cref{Section: Ergodic Extreme dissociated}, we 
consider three key properties that an invariant measure may satisfy, namely 
being ergodic, extreme, or dissociated. 
A fundamental result in probability theory is that these notions are equivalent for invariant probability measures on the space of structures with a countable underlying set.
We show that these three notions 
are each absolute between models of set theory.
We then use this fact along with our lifting technique from \cref{subsec:liftings} 
to show that the three notions coincide for invariant probability $\kappa$-measures on the space of structures with an arbitrary uncountable underlying set.

In \cref{Random Generic Structures Section}, we develop the notion of forcing with an invariant probability $\kappa$-measure $\mu$ to obtain a $\mu$-random generic structure. We show that forcing with such an invariant measure has the countable chain condition, and hence preserves all cardinals and cofinalities.
We also show that for every $\mu$-generic filter there is a unique $\mu$-random generic structure that characterizes the filter. This $\mu$-random generic structure can be thought of as a ``random sample'' from the invariant measure, and we study the relationship between such a sample and the measure it came from. Specifically, when $\mu$ is ergodic, the almost-sure theory of the measure is complete, and we show that every $\mu$-random generic structure
must satisfy the almost-sure theory of $\mu$.
This holds even when $\mu$ is \emph{properly ergodic}, in which case the almost-sure theory has no models in the ground model of set theory by \cite{AFKrP}.

In \cref{Section: Distinguishing Generic Structures}, we consider how much information can be recovered about an invariant probability $\kappa$-measure $\mu$ given access only to a random generic structure for $\mu$. We show that when $\mu$ is ergodic, one can completely recover $\mu$ from any random generic structure for $\mu$. 
Next we study the relationship between Cohen generic structures and random generic structures.
Certain \Fraisse\ classes have the property of being \emph{highly homogeneous}.
We show that for any infinite set $X$, there is a single $|X|$-Borel code such that for every Cohen generic structure for a strong \Fraisse\ class that is not highly homogeneous,
the Cohen generic structure
is contained in the set represented by the code. However, for any invariant probability $\kappa$-measure $\mu$, 
every 
random generic structure for $\mu$ is contained in the complement of the set represented by the code. As a consequence, 
if a structure is not highly homogeneous, it
cannot be simultaneously Cohen generic and random generic.

Finally, in \cref{Section: Conjectures}, we conclude with several natural conjectures.

\section{Preliminaries}  
\label{Section: Preliminaries}

\subsection{Notation}      

We work in an unnamed but fixed background model of \ZFC. We will use $X$, $Y$, $Z$ and their variants (e.g., $X_0$, $X_1$, or $X^*$) to represent sets in this background model of set theory, and $X$ and $Y$ will always be infinite. We write $\CARD$ to denote the collection of cardinals. For $n \in \w$, we define $[n]= \{0, \dots, n-1\}$. We write $\Cantor[Z]$ to denote the set of functions from $Z$ to $\{0, 1\}$. We write $\Powerset(Z)$ to denote the powerset of $Z$ (i.e., the collection of subsets of $Z$). We write $\PowersetFin(Z)$ to denote the collection of finite subsets of $Z$, and $\Powerset_{\w}(Z)$ to denote the collection of countably infinite subsets of $Z$. The symbol $\kappa$ will denote a fixed infinite ordinal,
although intuitively it may be helpful to think of $\kappa$ as a cardinal (but when we move to a larger model of set theory, we do not require $\kappa$ to stay a cardinal). 

We write $\<a_i\>_{i \in Z}$ to denote a sequence indexed by a set $Z$, and write
$\<a_i\>_{i \in Z} \subseteq B$ to mean that $a_i \in B$ for $i \in Z$.
For a sequence $\<r_i\>_{i \in Z} \subseteq [0, \infty)$, we define $\sum_{i \in Z} r_i = \sup \bigl\{\sum_{i \in Z_0} r_i \st Z_0 \in \PowersetFin(Z)\bigr\}$. For $r \in \Reals$, we let $\floor{r}$ be the largest integer with $\floor{r} \leq r$.
Given a finite tuple $\aa$ (of variables or elements of some set), we write $|\aa|$ to denote its length.

We write $Z_0 \SymDiff Z_1$ for the symmetric difference of $Z_0$ and $Z_1$, i.e., \linebreak $(Z_0 \setminus Z_1) \cup (Z_1 \setminus Z_0)$. 
When $\sim$ is an equivalence relation on $Z$ and $a \in Z$, we write $\eqcl[\sim](a)$ for the $\sim$-equivalence class of $a$. We let $\Sym(Z)$ be the group of permutations of $Z$, and for $n \in \w$ we write $\Sym(n)$ to mean $\Sym([n])$. We write $\SymFin(Z)$ for  the group of permutations of $Z$ with finite support, i.e., the subgroup consisting of those $g \in \Sym(Z)$ for which $\{a \in Z \st g(a) \neq a\}$ is finite. 

The \defn{language of set theory} is $\Lang_\in = \{\in\}$ where $\in$ is a binary relation symbol. By a \defn{model of set theory} we mean a transitive model of $\ZFC$ 
that
is a subset of our background model of $\ZFC$. We will use $V$ and its variants to represent models of set theory. For a formula $\varphi$ in set theory, we let $\varphi^{V}$ be the collection of elements $a \in V$ such that $V \models \varphi(a)$. 
(We will sometimes elide mention of $V$ when the model of set theory is clear from context.)

All languages will be relational unless we explicitly say otherwise. We will use $\Lang$ and its variants
to represent languages. 
We will use calligraphic letters for structures in some language and the corresponding Roman letter for their underlying set (e.g., $\cM$ is a structure with underlying set $M$).  
Suppose $\cM$ is an $\Lang$-structure; when 
$\Lang_0 \subseteq \Lang$ and $M_0 \subseteq M$, we write $\cM\rest[M_0]$ to denote the substructure of $\cM$ with underlying set $M_0$, we write $\cM\rest[\Lang_0]$ to denote the reduct of $\cM$ to an $\Lang_0$-structure, and we write $\cM\rest[\Lang_0, M_0]$ to denote $ (\cM\rest[\Lang_0])\rest[M_0]$. 
When $\cA$ and $\cB$ are two $\Lang$-structures, we write $\cA \subseteq \cB$ to mean that $\cA$ is a substructure of $\cB$. We write $\Aut(\cM)$ to denote the automorphism group of a structure $\cM$.

For an infinite 
$\kappa \in \CARD$, we let $\Lkw(\Lang)$ be the smallest collection of formulas containing all atomic $\Lang$-formulas and closed under (1) variable substitutions, (2) adding a new free variable, (3) negations, (4) existential and universal quantification by a single variable, and (5) conjunctions and disjunctions of size $< \kappa$ where all formulas have free variables among a common finite set. 
We let $\Liw(\Lang) = \bigcup_{\kappa \in \CARD} \Lkw(\Lang)$. If $\varphi \in \Liw(\Lang)$, we let $\arity(\varphi)$ denote the \defn{arity} of $\varphi$, i.e., the number of free variables in $\varphi$. If $\varphi \in \Lkw(\Lang)$ is a sentence (i.e., has arity $0$), we write $\models \varphi$ when $\varphi$ is true in all $\Lang$-structures. For a formula $\varphi \in \Liw(\Lang)$ and $\Lang$-structure $\cM$, we let $\varphi^\cM = \{\aa\in M^k \st \cM \models \varphi(\aa)\}$, where $k$ is the arity of $\varphi$.

We call a tuple $\xx$ \defn{non-redundant} if it is injective, considered as a function. For $k \in \w$, we let $\NonRedTuple[k](Z)$ be the collection of non-redundant tuples of elements of $Z$ of size $k$. We let $\NonRed(x_0, \dots, x_{k-1})$ be the first-order formula $\bigwedge_{0 \leq i < j < k}(x_i \neq x_j)$, and say that a formula $\varphi$ of arity $k$ is \defn{non-redundant} when $\models (\forall x_0, \dots, x_{k-1})\, \bigl(\varphi(x_0, \dots, x_{k-1}) \rightarrow \NonRed(x_0, \dots, x_{k-1})\bigr)$, i.e., $\varphi$ holds only of non-redundant tuples. An $\Lang$-structure $\cM$ is \defn{non-redundant} when for all relation symbols $R$ in $\Lang$ of arity $k$,  we have
\[
\cM \models (\forall x_0, \dots, x_{k-1})\, \bigl (R(x_0, \dots, x_{k-1}) \rightarrow \NonRed(x_0, \dots, x_{k-1}) \bigr).
\]
By an \defn{$\Lang$-literal} we mean a quantifier-free $\Lang$-formula 
built from atomic $\Lang$-formulas using only negation.

We say that $\cZ = (Z, \cO_\kappa(Z))$ 
is a \defn{$\kappa$-measurable space} when
\begin{itemize}
\item $\emptyset \in \cO_\kappa(Z) \subseteq \Powerset(Z)$, 

\item $\cO_\kappa(Z)$ is closed under complements,  and

\item $\cO_\kappa(Z)$ is closed under unions of size $\leq \kappa$.
\end{itemize}

For $\kappa \in \CARD$, we let $\sigma_\kappa(\Cantor[Z])$ be the smallest subset of $\Powerset(\Cantor[Z])$ closed under complements, unions of size $\leq \kappa$, 
and containing all sets of the form $\{f \in \Cantor[Z] \st p \subseteq f\}$ where $p \in \Cantor[Z_0]$ for some $Z_0 \in \Powerset_{<\w}(Z)$. 

We say that a probability 
measure $\mu$ on a $\kappa$-measure space $(Z, \cO_\kappa(Z))$ is a \defn{probability $\kappa$-measure} if,
whenever 
$\{A_i\}_{i \in \kappa} \subseteq \cO_\kappa(Z)$ consists of pairwise disjoint sets, we have
$\mu(\bigcup_{i \in \kappa} A_i) = \sum_{i \in \kappa} \mu(A_i)$. Note that a probability \linebreak $\w$-measure is simply a probability measure. 

Suppose $(Z_0, \cO_\kappa(Z_0))$ and $(Z_1, \cO_\kappa(Z_1))$ are $\kappa$-measurable spaces and \linebreak $\alpha\:Z_0 \to Z_1$ is a measurable function, i.e., $\alpha^{-1}(A) \in \cO_\kappa(Z_0)$ for all \linebreak $A \in \cO_\kappa(Z_1)$. For a probability $\kappa$-measure $\mu$ on $(Z_0, \cO_\kappa(Z_0))$, the \defn{pushforward of $\mu$ along $\alpha$}
is the probability $\kappa$-measure 
$\Pushforward[\alpha](\mu)$
on $(Z_1, \cO_\kappa(Z_1))$ defined by
\linebreak
$\Pushforward[\alpha](\mu)(A) = \mu(\alpha^{-1}(A))$
for $A \in \cO_\kappa(Z_1)$.

\subsection{Forcing}      

Our forcing notation 
and terminology
will follow that of \cite{MR756630}. Specifically, when $\mbb{P} = (P, \leq_{\mbb{P}})$ is a partial order, we will interpret $p \leq_{\mbb{P}} q$ 
to mean
that ``$p$ is stronger than $q$''. By a \defn{filter} on $\mbb{P}$ we mean a nonempty set $F \subseteq P$ such that 
\begin{itemize}

\item if $p \leq_{\mbb{P}} q$ and $p \in F$ then $q \in F$, and

\item if $p, q \in F$ then there is an $r \in F$ such that $r \leq_{\mbb{P}}p$ and $r \leq_{\mbb{P}}q$. 
\end{itemize}
When the meaning is clear from context, we will write $\le$ rather than $\le_{\mbb{P}}$ and elide the distinction between $\mbb{P}$ and $P$.

A filter $F$ is \defn{generic} over $V$ (for $\mbb{P}$) when, for all dense $D \subseteq P$ with $D \in V$, we have $F \cap D \neq \emptyset$. (We will typically use some variant of $\Generic$ or $\Generic[H]$ to denote a generic filter.)
If $\varphi$ is a formula in the language of set theory of arity $k$ and $a_0, \dots, a_{k-1}$ are $\mbb{P}$-names and $p \in P$, 
we write $p \forces[\mbb{P}] \varphi(a_0, \dots, a_{k-1})$ when $p$ forces $\varphi(a_0, \dots, a_{k-1})$.

If $\mbb{Q} = (Q, \leq_\mbb{Q})$ is a pre-order, we 
let $x \sim_\mbb{Q} y$ 
denote
the 
relation \linebreak
$(x \leq_\mbb{Q} y) \And (y \leq_\mbb{Q} x)$. We 
let $\mbb{Q}/\!\sim_{\mbb{Q}} = (Q/\!\sim_{\mbb{Q}},\, \leq_\mbb{Q})$ be the corresponding partial order and $\iota_{\mbb{Q}}\:\mbb{Q} \to \mbb{Q}/\!\sim_{\mbb{Q}}$ be the map where $\iota_{\mbb{Q}}(a) = \eqcl[\sim_{\mbb{Q}}](a)$ for all $a\in Q$. Note that (as in \cite{MR756630}) forcing can be developed for pre-orders in an obvious way. For every $\mbb{Q}$-name $a$ we can find a $\mbb{Q}/\!\sim_{\mbb{Q}}$-name $\iota_{\mbb{Q}}(a)$ which is obtained by hereditarily replacing each element $q \in Q$ with an equivalence class $\eqcl[\sim_{\mbb{Q}}](q)$. Then for any $q \in Q$, any formula $\varphi$ in the language of set theory of arity $k$ and any $\mbb{Q}$-names $a_0, \dots, a_{k-1}$, we have 
\[
q \forces[\mbb{Q}]\varphi(a_0, \dots, a_{k-1})\quad\text{ if and only if } \quad \eqcl[\sim_{\mbb{Q}}](q) \forces[\mbb{Q}/\!\sim_{\mbb{Q}}]\varphi(\iota_{\mbb{Q}}(a_0), \dots, \iota_{\mbb{Q}}(a_{k-1})).
\]

\begin{definition}
A forcing notion $\bP$ is \defn{non-trivial} when every condition in $\bP$ has two incompatible extensions.
\end{definition}

\subsection{Boolean Algebras}
Here we recall basic information and definitions regarding Boolean algebras.

\begin{definition}
Suppose $\cA$ and $\cB$ are Boolean algebras with $\cA \subseteq \cB$. We say that $\cA$ is a \defn{regular subalgebra} 
of $\cB$
if whenever $A_0 \subseteq A$ and $a = (\sup A_0)^{\cA}$ (i.e., $a$ is the supremum of $A_0$ in $\cA$), then $a = (\sup A_0)^{\cB}$ (i.e., $a$ is the supremum of $A_0$ in $\cB$). 
\end{definition}

\begin{definition}
Let $Z$ be a collection of sets. We say that a subset $C \subseteq Z$ is 
\defn{unbounded} if
 for every $z \in Z$ there is some $c \in C$ such that $z \subseteq c$. 
The set $C$ is \defn{closed} if whenever $C_0 \subseteq C$ is such that $\bigcup C_0 \in Z$, then also $\bigcup C_0 \in C$.
The set $C$ is \defn{club} if it is closed and unbounded. 
\end{definition}

\begin{definition}
The \defn{density} of a Boolean algebra $\cB$ is the least size of a dense subset 
of $\cB$.
We say that $\cB$ has \defn{uniform density} when for all $b \in B$, the Boolean algebras $\cB$ and 
$\cB\rest[\{ b' \in \cB \st b' \leq b\}]$ 
have the same density.  
\end{definition}

\begin{definition}
Given a non-trivial forcing notion $\bP$, we write $\RO(\bP)$ to denote the unique Boolean completion of $\bP$. 
\end{definition}
Note that there is an embedding $i \colon \bP \to \RO(\bP) \setminus \{0\}$ whose image is dense.

\begin{definition}
Forcing notions $\bP$ and $\bQ$ are \defn{forcing isomorphic}, written $\bP \forcingisom \bQ$, when $\RO(\bP) \isom\RO(\bQ)$.
\end{definition}

If $\bP$ and $\bQ$ are forcing isomorphic, then they give the same forcing extensions of the universe in the sense that for any generic filter $\Generic$
that is $\bP$-generic over $V$, there is a generic filter $\Generic[H]$
which is $\bQ$-generic over $V$ with $V[\Generic] = V[\Generic[H]]$, and vice-versa.

\subsection{Logic Action}      

We now introduce the logic action for a set $Z$. This is an action of $\Sym(Z)$ on the space of $\Lang$-structures with underlying set $Z$. 

\begin{definition}
Let $\Str[\Lang](Z)$ be the collection of $\Lang$-structures with underlying set $Z$. For $\varphi \in \Liw(\Lang)$ of arity $n$ and $a_0, \dots, a_{n-1} \in Z$ we let 
\[
\extent<\Lang>[Z]{\varphi(a_0, \dots, a_{n-1})} = \{\cM \in \Str[\Lang](Z)\st \cM \models \varphi(a_0, \dots, a_{n-1})\}. 
\]
We will simply write $\extent[Z]{\varphi(a_0, \dots, a_{n-1})}$ when the language $\Lang$ is clear from context. 
\end{definition}

\begin{definition} 
Let $\BorelSets<\kappa>[\Lang](Z)$ be the smallest 
subset of $\Powerset(\Str[\Lang](Z))$
that is closed under unions of size $\le\kappa$, closed under complements, and which contains all sets of the form $\extent<\Lang>[Z]{\varphi(a_0, \dots, a_{n-1})}$, where $\varphi$ is a $\Lang$-literal of arity $n$ and $a_0, \dots, a_{n-1} \in Z$. We write $\Str<\kappa>[\Lang](Z)$ to denote the $\kappa$-measurable space $\bigl(\Str[\Lang](Z), \, \BorelSets<\kappa>[\Lang](Z)\bigr)$.
\end{definition}

Note that $\Str<\kappa>[\Lang](Z_0)$ is isomorphic to 
the generalized Cantor space \linebreak
$(\Cantor[Z_1], \sigma_\kappa(\Cantor[Z_1]))$ for some set $Z_1$. 

\begin{definition}
For any $f \in  \Cantor[\coprod_{R \in \Lang}\, Z^{\arity(R)}]$, 
let $\iota_{\Lang, Z}(f)$ be the $\Lang$-structure with underlying set $Z$ such that for $R \in \Lang$ and $a_0, \dots, a_{\arity(R)-1} \in Z$,
we have
\[
\iota_{\Lang, Z}(f) \models R(a_0, \dots, a_{\arity(R)-1})
\text{~~if and only if~~}
f(R, \<a_0, \dots, a_{\arity(R)-1}\>) = 1, 
\]
where $f(R, \<a_0, \dots, a_{\arity(R)-1}\>)$ is the evaluation of $f$ at the obvious 
summand.
\end{definition}

The following is immediate. 
\begin{lemma}
\label{Isomorphism between Str_L and Cantor}
The map $\iota_{\Lang, Z}$ is an isomorphism between \[
(\Cantor[\coprod_{R \in \Lang}\, Z^{\arity(R)}],
\  \sigma_\kappa(\Cantor[\coprod_{R \in \Lang}\, Z^{\arity(R)}]))
\] and $\Str<\kappa>[\Lang](Z)$ as $\kappa$-measurable spaces. 
\end{lemma}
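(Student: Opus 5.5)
We prove the lemma in three steps: $\iota_{\Lang, Z}$ is a bijection, it maps generators to generators, and a generating-family argument then handles the full $\kappa$-algebras in both directions. Write $W = \coprod_{R \in \Lang} Z^{\arity(R)}$.

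\textbf{Bijectivity.} An $\Lang$-structure with underlying set $Z$ is determined exactly by specifying, for each $R \in \Lang$ and each tuple $\aa \in Z^{\arity(R)}$, whether $R(\aa)$ holds. The map $\iota_{\Lang, Z}$ is precisely this correspondence. Its inverse sends $\cM$ to the characteristic function $f_\cM(R, \aa) = 1$ iff $\cM \models R(\aa)$. Both composites are clearly the identity.

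\textbf{Generators correspond.} On the Cantor side, the generating cylinders are $[p] = \{f \in \Cantor[W] \st p \subseteq f\}$ for finite partial functions $p$. Each $[p]$ is a finite intersection of the single-coordinate cylinders $\{f \st f(R,\aa) = i\}$. By construction,
\[
\iota_{\Lang, Z}\bigl[\{f \st f(R,\aa)=1\}\bigr] = \extent[Z]{R(\aa)} \quad\text{and}\quad \iota_{\Lang, Z}\bigl[\{f \st f(R,\aa)=0\}\bigr] = \extent[Z]{\neg R(\aa)}.
\]
Conversely, consider a literal $\varphi(a_0,\dots,a_{n-1})$; since the language is relational, it is either $\pm R(a_{j_0},\dots)$ or $\pm(a_i = a_j)$. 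In the first case its extent is the image of a single-coordinate cylinder. In the equality case, $\extent[Z]{\varphi}$ is $\emptyset$ or all of $\Str[\Lang](Z)$, according to whether the equality literal is true of the given elements, and both of these lie in any $\kappa$-algebra. Finite intersections are obtained from complements and finite unions, so each side's generators belong to the other side's $\kappa$-algebra.

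\textbf{Extending to the full $\kappa$-algebras.} A bijection commutes with complements and arbitrary unions. Hence the family $\{A \subseteq \Str[\Lang](Z) \st \iota_{\Lang, Z}^{-1}[A] \in \sigma_\kappa(\Cantor[W])\}$ is closed under complements and unions of size $\le\kappa$. By the previous step it contains all the generators of $\BorelSets<\kappa>[\Lang](Z)$, so by minimality it contains all of $\BorelSets<\kappa>[\Lang](Z)$. Symmetrically, the family $\{B \subseteq \Cantor[W] \st \iota_{\Lang, Z}[B] \in \BorelSets<\kappa>[\Lang](Z)\}$ is a $\kappa$-algebra containing the cylinders, so it contains $\sigma_\kappa(\Cantor[W])$. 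Thus $\iota_{\Lang, Z}$ and its inverse are both measurable, and $\iota_{\Lang, Z}$ is an isomorphism of $\kappa$-measurable spaces.

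There is no real obstacle here. The only point needing care is the equality-literal case of the generator check, which is dispatched by noting that $\emptyset$ and the whole space belong to any $\kappa$-algebra.
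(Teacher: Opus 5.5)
Your proof is correct. The paper gives no argument for this lemma and simply declares it immediate; your verification (the bijection, the correspondence of generators including the trivial equality literals, and the minimality argument applied to preimages and to images) is exactly the routine check that ``immediate'' stands in for.
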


We now define the logic action.

\begin{definition}
Let $\circ_{\Lang, Z}\:\Str[\Lang](Z) \times \Sym(Z) \to \Str[\Lang](Z)$ be the map where, for $g \in \Sym(Z)$, for $\cM \in \Str[\Lang](Z)$, for $R \in \Lang$ of arity $n$, and for $a_0, \dots, a_{n-1} \in Z$, we have
\[
\cM \models R(a_0, \dots, a_{n-1}) 
\text{~~if and only if~~}
\circ_{\Lang, Z}(\cM, g) \models R(g(a_0), \dots, g(a_{n-1})).
\]
We call this map the \defn{logic action} on $\Str[\Lang](Z)$. 
For $g \in \Sym(Z)$, 
when no confusion can occur,
we will abuse notation and also write $g$ to refer to the map $\circ_{\Lang, Z}(\pars, g)$. 
\end{definition}

Note that whenever $g \in \Sym(Z)$ and $A \in \BorelSets<\kappa>[\Lang](Z)$ we have \linebreak $g``[A] \in \BorelSets<\kappa>[\Lang](Z)$ as well, and in particular $\circ_{\Lang, Z}(\pars, g)$ is a measurable map.

\subsection{Borel Codes}      

We now formalize the notion of \emph{Borel codes}.

\begin{definition} 
The collection of \defn{$\kappa$-Borel codes} over $\Str[\Lang](Z)$, denoted $\BorelCodes<\kappa>[\Lang](Z)$, is defined to be the smallest set satisfying the following. 
\begin{itemize}
\item 
$\<0, \varphi, \<a_0, \dots, a_{n-1}\>\> \in \BorelCodes<\kappa>[\Lang](Z)$ 
for every $\varphi \in \Lww(\Lang)$ of arity $n$ that is a finite conjunction of literals and every $a_0,\dots, a_{n-1} \in Z$.

\item 
$\<1, \neg, \gamma\> \in \BorelCodes<\kappa>[\Lang](Z)$
for every $\gamma \in \BorelCodes<\kappa>[\Lang](Z)$.
We will use $\neg \gamma$ to denote $\<1, \neg, \gamma\>$. 

\item 
$\<2, \bigvee, \Gamma\>, \<3, \bigwedge, \Gamma\> \in \BorelCodes<\kappa>[\Lang](Z)$
for every $\Gamma = \{\gamma_i\}_{i \in \kappa} \subseteq \BorelCodes<\kappa>[\Lang](Z)$.
We will use $\bigvee \Gamma$ to denote $\<2, \bigvee, \Gamma\>$ and $\bigwedge \Gamma$ to denote $\<3, \bigwedge, \Gamma\>$. When $\Gamma = \{\zeta, \gamma\}$, we will use $\zeta \And \gamma$ to represent $\bigwedge \{\zeta, \gamma\}$ and $\zeta \Or \gamma$ to represent $\bigvee \{\zeta,\gamma\}$. 
\end{itemize}

When $\kappa = \w$ we will simply write $\BorelCodes[\Lang](Z)$.  The \defn{rank} of a Borel code $\gamma$, denoted $\rank(\gamma)$, is defined as follows. 
\begin{itemize}
\item If $\gamma = \<0, \varphi, \<a_0, \dots, a_{n-1}\>\>$ then $\rank(\gamma)= 0$. 

\item If $\gamma = \neg \zeta$ then $\rank(\gamma) = \rank(\zeta) + 1$. 

\item If $\gamma = \bigwedge \Gamma$ or $\bigvee \Gamma$ then $\rank(\gamma) = \sup \{\rank(\zeta) + 1 \st \zeta \in \Gamma\}$. 

\end{itemize}
\end{definition}

\begin{definition}
We define the \defn{interpretation} of $\gamma \in \BorelCodes<\kappa>[\Lang](Z)$, denoted $\BCvalue(\gamma)$, by induction as follows. 
\begin{itemize}
\item  $\BCvalue(
\<0, \varphi, \<a_0, \dots, a_{n-1}\>\>
) = \extent<\Lang>[Z]{\varphi(a_0, \dots, a_{n-1})}$.

\item $\BCvalue(\neg \gamma) = \Str[\Lang](Z) \setminus \BCvalue(\gamma)$. 

\item $\BCvalue(\bigvee \Gamma) = \bigcup_{\gamma \in \Gamma} \BCvalue(\gamma)$. 

\item $\BCvalue(\bigwedge \Gamma) = \bigcap_{\gamma \in \Gamma} \BCvalue(\gamma)$. 
\end{itemize}
For a probability $\kappa$-measure $\mu$ on $\Str<\kappa>[\Lang](Z)$ and $\gamma \in \BorelCodes<\kappa>[\Lang](Z)$, we say that the \defn{value of $\mu$ on $\gamma$}, written
$\mu(\gamma)$, is $\mu(\BCvalue(\gamma))$.
\end{definition}

Note that $\BorelSets<\kappa>[\Lang](Z)
= 
\{\BCvalue(\gamma) \st \gamma \in \BorelCodes<\kappa>[\Lang](Z)\}$.
We now describe two important collections of Borel codes.
\begin{definition}
The $\kappa$-Borel codes of rank $0$ are called \defn{basic}, and we write $\BasicBC<\kappa>[\Lang](Z)$ to denote the collection of basic $\kappa$-Borel codes.
A $\kappa$-Borel code is \defn{almost basic} when it is of the form $\bigvee \Gamma$ where $\Gamma$ is finite 
and every element of $\Gamma$ is a basic $\kappa$-Borel code; we write $\AlmostBasicBC<\kappa>[\Lang](Z)$ to denote the collection of almost basic $\kappa$-Borel codes.
\end{definition}

As we will see in \cref{Unique-extension}, the values a probability $\kappa$-measure takes on the basic $\kappa$-Borel codes 
suffice to pin down the measure uniquely. 
The interpretations 
of almost basic $\kappa$-Borel codes 
correspond 
to the realizations of quantifier-free first-order formulas. Hence the collection of almost basic $\kappa$-Borel codes is the same in any model of set theory (that contains
the underlying language and the underlying set).

The following is one of the main motivations for considering
probability $\kappa$-measures and $\kappa$-Borel codes 
for $\kappa > \w$, 
as opposed to simply considering 
probability $\w$-measures and $\w$-Borel codes.

\begin{lemma}
\label{Exists Borel codes for formulas}
Suppose $Z, \Lang \in V$
and let
$\varphi \in \Lkpw(\Lang)^V$ be a formula.
Suppose that either 
\begin{itemize}
\item $V \models |Z| \leq \kappa$, or 

\item $\varphi$ is quantifier-free.
\end{itemize}
Then for every tuple $\aa\subseteq Z$ 
there is some code 
$\BCformula({\varphi(\aa)}) \in \BorelCodes<\kappa>[\Lang](Z)$ such that in any model of set theory $V^*$ with $V \subseteq V^*$, we have 
\[
V^* \models \extent<\Lang>[Z]{\varphi(\aa)} = \BCvalue(\BCformula(\varphi(\aa))).
\]
\end{lemma}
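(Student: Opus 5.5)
We will define the code $\BCformula(\varphi(\aa))$ by recursion on the construction of $\varphi$ in $\Lkpw(\Lang)^V$, working entirely inside $V$. The claim to prove alongside it is that for every $V^* \supseteq V$ and every tuple $\aa$, the set $\BCvalue(\BCformula(\varphi(\aa)))$ computed in $V^*$ equals $\extent<\Lang>[Z]{\varphi(\aa)}$ computed in $V^*$. We treat both hypotheses in one induction. In the quantifier-free case the quantifier clause is never used, so the cardinality assumption on $Z$ is not needed there.

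\emph{Atomic formulas and literals.} If $\varphi$ is atomic or a negated atomic formula, let $\BCformula(\varphi(\aa)) = \<0, \varphi, \aa\>$. This is a legitimate basic code because a literal is a conjunction of one literal. The equality $\varphi(\aa)$ between variables is handled as well, since its extent is either all of $\Str[\Lang](Z)$ or empty.

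\emph{Boolean connectives.} For $\neg\psi$ we take $\neg\BCformula(\psi(\aa))$. For a conjunction $\bigwedge_{i\in I}\psi_i$ with $|I|^V\le\kappa$ (sizes $<\kappa^+$), we fix in $V$ a surjection $\kappa\to I$. This is possible when $I\neq\emptyset$; the empty case is treated separately as $\top$ or $\bot$, for instance via $\<0,x_0=x_0,\<a\>\>$ or its negation, and the case $Z=\emptyset$ is excluded since $Z$ is infinite. The surjection lets us reindex the codes $\BCformula(\psi_i(\aa))$ as a $\kappa$-sequence $\Gamma$, and we take $\bigwedge\Gamma$. Disjunctions are handled in the same way with $\bigvee$. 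The chosen surjection lies in $V\subseteq V^*$ and remains a surjection there, so in $V^*$ the union or intersection of the interpreted sets is exactly $\extent[Z]{\varphi(\aa)}$. This holds even if $\kappa$ is no longer a cardinal in $V^*$, because we only ever use $\kappa$ as an index set. Variable substitution and adding dummy variables require nothing new: we simply evaluate at the correspondingly rearranged tuple $\aa$.

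\emph{Quantifiers.} This is where $V\models|Z|\le\kappa$ is used. For $\exists y\,\psi(\aa,y)$ we fix in $V$ a surjection $e\colon\kappa\to Z$ and set
\[
\BCformula(\exists y\,\psi(\aa,y)) = \textstyle\bigvee\{\BCformula(\psi(\aa,e(\alpha)))\st\alpha\in\kappa\}.
\]
The universal quantifier is treated dually with $\bigwedge$. The main point to check is absoluteness. In $V^*$ the set $Z$ is the same set, and $e$ is still onto $Z$. Hence $\cM\models\exists y\,\psi(\aa,y)$ in $V^*$ if and only if some $e(\alpha)$ witnesses it, and by the induction hypothesis in $V^*$ this holds if and only if $\cM\in\BCvalue(\BCformula(\psi(\aa,e(\alpha))))$.

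The one subtlety I expect is that the structures $\cM\in\Str[\Lang](Z)^{V^*}$ need not lie in $V$. The induction must therefore be on the syntactic formula $\varphi\in V$, with the satisfaction relation evaluated in $V^*$. This is fine because satisfaction for $\Liw$ is defined by the same recursion in any model. The code itself is built once, in $V$, using choice in $V$ to pick the enumerations, so it does not depend on $V^*$.
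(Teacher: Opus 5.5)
Your proof is correct and is essentially the paper's argument: literals receive basic codes, negation and $\le\kappa$-ary conjunctions and disjunctions are passed directly to the code, variable substitution is handled by re-indexing the parameter tuple, and when $V\models|Z|\le\kappa$ the quantifier $\exists y$ becomes a disjunction over the elements of $Z$ (with $\forall$ handled dually). You also make explicit two points the paper leaves implicit: the surjections from $\kappa$ needed to match the formal $\kappa$-indexing of codes, and the fact that satisfaction is evaluated in $V^*$ for structures that need not lie in $V$. One small slip is that only $X$ and $Y$, not $Z$, are assumed infinite in the paper's conventions, but the degenerate case $Z=\emptyset$ is trivial.
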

\begin{proof}
If $\varphi \in \Lww(\Lang)$ is a literal of arity $n$ and 
$a_0,\dots, a_{n-1} \in Z$, define
$\BCformula(\varphi(a_0, \dots, a_{n-1})) =
\<0, \varphi, \<a_0, \dots, a_{n-1}\>\>$.

Let $F \subseteq \Lkpw(\Lang)$ be the collection of all $\Lkpw(\Lang)$-formulas that satisfy the conclusion for all instantiations by tuples 
from $Z$ of appropriate length. If 
$\varphi \in \Lkpw(\Lang)$ satisfies
$\BCformula(\varphi(a_0, \dots, a_{n-1})) \in \BasicBC<\kappa>[\Lang](Z)$
for all $a_0, \dots, a_{n-1} \in Z$ then it is immediate that $\varphi \in F$. 

Let $\varphi(\xx) \in F$ where $\xx$ consists of distinct variables, let $f\:\xx \to \zz$ be a map, and let $\psi(\zz) = \varphi(\xx / f)$, i.e., the result of substituting the corresponding variables of $\zz$ in place of variables of $\xx$. Then letting $\aa$ be a tuple of elements of $Z$ of length $|\zz|$ (formally, a mapping that assigns each variable in $\zz$ an element of $Z$),
we can take $\BCformula(\psi(\aa)) = \BCformula(\varphi(f \circ \aa))$. Hence $\psi(\zz) \in F$ as well.

If $\varphi \in F$ then we can take $\BCformula(\neg \varphi) = \neg \BCformula(\varphi)$. Similarly, if $\Phi = \{\psi_i\}_{i \in \kappa} \subseteq F$ we can take $\BCformula(\bigvee_{i \in I} \Phi) = \bigvee_{i \in \kappa} \BCformula(\psi_i)$ and $\BCformula(\bigwedge_{i \in I} \Phi) = \bigwedge_{i \in \kappa} \BCformula(\psi_i)$. Therefore $F$ is closed under $\kappa$-sized Boolean operations. Hence $F$ contains all quantifier-free formulas. 

Now suppose $V\models |Z| \leq \kappa$. Further suppose $\varphi = (\exists y)\psi(\xx, y)$. We can take $\BCformula(\varphi) = \bigvee_{a \in Z} \BCformula({\psi(\xx, a)})$. Note that $\BCformula(\varphi) \in \BorelCodes<\kappa>[\Lang](Z)$ as $|Z| \leq \kappa$. Finally, if $\varphi = (\forall y)\psi(\xx, y)$ we can take $\BCformula(\varphi) = \BCformula({\neg (\exists y) \neg \psi(\xx, y)})$. 

Therefore $F$ is closed under variable substitution, $\kappa$-sized Boolean operations, and single-variable existential and universal quantification. Hence, if $V \models |Z| \leq \kappa$, then $F = \Lkpw(\Lang)$.  
\end{proof}

\cref{Exists Borel codes for formulas} motivates the following definition. 

\begin{definition}
Let $\varphi \in \Lkpw(\Lang)$ and let $\mu$ be a probability $\kappa$-measure
on $\Str<\kappa>[\Lang](Z)$. Suppose $\aa \subseteq Z$ is a tuple whose length is the arity of $\varphi$. Whenever either $\varphi$ is quantifier-free or $|Z| \leq \kappa$, then we define $\mu(\varphi(\aa)) = \mu(\BCformula(\varphi(\aa)))$. 
For $|Z| \leq \kappa$,
we define $\Th[\kappa](\mu)$ to be the collection of sentences $\varphi \in \Lkpw(\Lang)$ such that $\mu(\varphi) = 1$. We call $\Th[\kappa](\mu)$ the \defn{almost-sure theory of $\mu$}.
\end{definition}

The following 
provides
a useful way to construct probability $\kappa$-measures on $\Str<\kappa>[\Lang](Z)$.
 
\begin{proposition}
\label{Unique-extension}
Suppose $\mu^\circ \:\BasicBC<\kappa>[\Lang](Z) \to [0, 1]$
satisfies the following conditions.
\begin{itemize}
\item $\mu(\BCformula(\bigwedge \emptyset)) = 1$.

\item If $\varphi$ and $\psi$ are finite conjunctions of $\Lang$-literals 
and $\models \varphi(\xx) \rightarrow \psi(\xx)$ then for any $\aa \in Z$, 
we have
$\mu^\circ(\BCformula(\psi(\aa))) \geq \mu^\circ(\BCformula(\varphi(\aa)))$. 

\item For any $\Lang$-literal $\psi(\xx)$ and $\aa \in Z$,  
we have
$\mu(\psi(\aa) \And \neg \psi(\aa)) = 0$. 

\item For all tuples $\aa \in Z$ and all $\Lang$-literals $\<\psi_i(\xx)\>_{i \in [n+1]}$, 
we have
\[
\mu^\circ\bigg(\bigwedge_{i \in [n]} \psi_i(\aa)\bigg) = \mu^\circ\bigg(\psi_n(\aa) \And \bigwedge_{i \in [n]} \psi_i(\aa)\bigg) + \mu^\circ\bigg(\neg \psi_n(\aa) \And \bigwedge_{i \in [n]} \psi_i(\aa)\bigg).
\]
\end{itemize}
Then there is a unique probability $\kappa$-measure $\mu$ on $\Str<\kappa>[\Lang](Z)$ such that for all $\gamma \in \BasicBC<\kappa>[\Lang](Z)$, we have 
\[
\mu(\gamma) = \mu^\circ(\gamma). 
\]
\end{proposition}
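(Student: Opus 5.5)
Via the isomorphism $\iota_{\Lang, Z}$ of \cref{Isomorphism between Str_L and Cantor}, I will work on $\Cantor[W]$ with $W = \coprod_{R \in \Lang} Z^{\arity(R)}$ and the $\kappa$-algebra $\sigma_\kappa(\Cantor[W])$. Under this isomorphism, the interpretation of a basic code (a finite conjunction of literals instantiated in $Z$) is either empty or a basic clopen cylinder $[p] = \{f \st p \subseteq f\}$ with $p \in \Cantor[W_0]$ and $W_0 \in \PowersetFin(W)$. Equality atoms between elements of $Z$ are decided outright, so those codes give $\emptyset$ or everything; the third hypothesis forces value $0$ on contradictory conjunctions. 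The monotonicity hypothesis, applied in both directions, shows that $\mu^\circ$ depends only on the interpretation, so it induces a function $\nu$ on cylinders. The additivity hypothesis says $\nu([p]) = \nu([p \cup \{(w,0)\}]) + \nu([p \cup \{(w,1)\}])$, and the first hypothesis gives $\nu(\Cantor[W]) = 1$.

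For existence, for each finite $W_0 \subseteq W$, $\nu$ defines a probability distribution $\nu_{W_0}$ on the finite set $\Cantor[W_0]$. The splitting identity makes these consistent under restriction. Every cylinder is clopen and $\Cantor[W]$ is compact, so $\nu$ extends to a finitely additive probability on the clopen algebra that is automatically countably additive there, since any countable disjoint union of cylinders equal to a clopen set is finite. By Carath\'eodory (or the Kolmogorov extension theorem for the product $\{0,1\}^W$) we obtain a countably additive measure $\mu$ on the $\sigma$-algebra generated by cylinders. When $\kappa = \w$ we are done. For general $\kappa$ I must show that $\mu$ extends to $\sigma_\kappa(\Cantor[W])$ and is $\kappa$-additive; this is the step I expect to be the main obstacle.

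For this step I plan to use the standard fact that every set in the countably generated $\sigma$-algebra is determined by countably many coordinates. More strongly, I will show by induction on $\kappa$-Borel codes that every $A \in \sigma_\kappa(\Cantor[W])$ differs from a set in the ordinary Baire $\sigma$-algebra by a set of outer measure zero. The key point is that a union $\bigcup_{i\in\kappa} A_i$ of product-measurable sets has the same measure as some countable subunion. To see this, take a countable subfamily whose union $B$ approximates $\sup$ of the measures of countable subunions; then every $A_i \setminus B$ is null, and since the product measure on $\{0,1\}^W$ is completion-regular, an arbitrary union of null sets each depending on countably many coordinates still lies inside a null set. I would verify that last claim via the ccc of the measure algebra: any family of disjoint positive-measure sets is countable, so the measure algebra is complete, and the supremum in the measure algebra of $\{[A_i]\}$ is attained by a countable subfamily. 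Define $\mu(\bigcup A_i)$ as the measure of that countable union. To make this well defined I will assign to each $\kappa$-Borel set its class in the (complete) measure algebra of the Baire product measure, by recursion on codes: complement goes to complement and union goes to supremum. Then I will check, by induction on rank, that the assignment depends only on the set and not on the code, using the fact that the set-level operations commute with this homomorphism. That check amounts to showing that the map from $\sigma_\kappa$ to the measure algebra is a well-defined $\kappa$-complete homomorphism, via Sikorski-style arguments or by working inside the completion of $\mu$. Then $\kappa$-additivity follows from the fact that a disjoint family has at most countably many nonzero terms, and the supremum equals their sum.

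For uniqueness, any two probability $\kappa$-measures agreeing on basic codes agree on cylinders, which form a $\pi$-system. The class of sets on which they agree is closed under complements and under countable disjoint unions. It is also closed under $\kappa$-sized disjoint unions, because only countably many members have positive measure in each measure and the rest sum to zero. By a Dynkin $\pi$-$\lambda$ argument adapted to $\kappa$-unions, they agree on all of $\sigma_\kappa$.
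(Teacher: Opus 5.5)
The step you flagged as the main obstacle does fail, and more detail will not fix it. Your supporting claim that ``an arbitrary union of null sets each depending on countably many coordinates still lies inside a null set'' is false. In $2^\omega$ every singleton depends on countably many coordinates and is null, yet the union of all singletons is the whole space. The ccc and completeness of the measure algebra only tell you that the supremum of the classes $[A_i]$ \emph{in the measure algebra} is attained by a countable subfamily. They say nothing about the set $\bigcup_i A_i$, because the quotient map from measurable sets to the measure algebra is only $\sigma$-complete.

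As a result, your recursion on codes (union $\mapsto$ supremum) depends on the code, not just on the set. If $\gamma_f$ codes $\{f\}$, then $\bigvee_{f \in 2^\omega} \gamma_f$ is sent to $0$. Its interpretation, however, is $2^\omega$, which the code for $\bigwedge\emptyset$ sends to $1$.

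This cannot be repaired, because the existence half of the statement is false in this generality. Take $\Lang = \Lang_1$ and $Z = \omega$, and let $\mu^\circ$ be the Lebesgue values; these satisfy all four hypotheses. Let $|\kappa| \geq 2^{\aleph_0}$, or $\kappa = \omega_1$ under the continuum hypothesis. Each singleton $\{f\} = \bigcap_n \{g : g(n) = f(n)\}$ lies in $\sigma_\kappa$. It must receive measure $0$, since it is contained in cylinders of measure $2^{-n}$. But $2^\omega$ is a disjoint union of at most $|\kappa|$ singletons, so $\kappa$-additivity would force $\mu(2^\omega) = 0$.

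What survives is the following. For countable $\kappa$, $\sigma_\kappa$ is just the $\sigma$-algebra generated by the cylinders and $\kappa$-additivity is countable additivity. There your compactness-plus-Carath\'eodory argument is complete. It is exactly what the paper's one-line proof, an appeal to a ``$\kappa$-measurable version of the Carath\'eodory extension theorem'', amounts to. The paper does not address $\kappa$-additivity for uncountable $\kappa$ either, and the example above shows that no such extension theorem holds without an extra hypothesis.

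Your argument goes through verbatim if you add one. For example, assume the null ideal of the completed countably additive extension is closed under unions of at most $|\kappa|$ sets. Then your countable-subunion trick shows that every set in $\sigma_\kappa$ is measurable for the completion, and that the measure is $\kappa$-additive there. For $\Lang = \Lang_1$, $Z = \omega$ and $\kappa = \omega_1$, this holds under $\mathrm{MA}_{\aleph_1}$ and fails under the continuum hypothesis, so that instance of the statement is independent of ZFC.

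Your uniqueness argument via a $\kappa$-version of the $\pi$--$\lambda$ theorem is fine.

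One smaller point. Read literally, the four hypotheses only compare codes with the same tuple of parameters. So they do not by themselves force $\mu^\circ$ to depend only on the interpretation. Nor do they force it to vanish on an empty basic code such as $x_0 = x_1$ instantiated at two distinct elements. Your reduction to cylinders tacitly strengthens the hypotheses in this way.
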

\begin{proof}
This follows from the $\kappa$-measurable version of the \Caratheodory\ extension theorem.
\end{proof}

\begin{definition}
Let $\mu$ be a probability $\kappa$-measure on $\Str<\kappa>[\Lang](Z)$. For \linebreak $\gamma, \zeta \in\BorelCodes<\kappa>[\Lang](Z)$, we write $\gamma \preceq_\mu \zeta$ when $\mu(\gamma \And \neg \zeta) = 0$, and
we write $\gamma \sim_\mu \zeta$ when both $\gamma \preceq_\mu \zeta$ and $\zeta \preceq_\mu \gamma$ hold. 
Define
\[
\RandPO(\mu) = \bigl(\{\gamma \in \BorelCodes<\kappa>[\Lang](Z) \st \mu(\gamma) > 0\},\, \preceq_\mu\bigr).
\]
\end{definition}

It is straightforward to check that $\preceq_\mu$ is a pre-order
 and that $\sim_\mu$ is an equivalence relation. 

We will use the following to transfer
$\kappa$-Borel codes across injections.

\begin{definition}
Let $i\:Z_0 \to Z_1$ be an injection. 
For $\gamma \in \BorelCodes<\kappa>[\Lang](Z_0)$,
define $i[\gamma]\in \BorelCodes<\kappa>[\Lang](Z_1)$ 
by induction as follows. 
\begin{itemize}
\item $i[\<0, \varphi, \<a_0, \dots, a_{n-1}\>\>] = \<0, \varphi, \<i(a_0), \dots, i(a_{n-1})\>\>$. 

\item $i[\neg \varphi] = \neg i[\varphi]$. 

\item $i[\bigwedge \Gamma] = \bigwedge \{i[\gamma] \st \gamma \in \Gamma\}$.

\item $i[\bigvee \Gamma] = \bigvee \{i[\gamma] \st \gamma \in \Gamma\}$.
\end{itemize}
\end{definition}

\subsection{Invariant Measures}
We now review the definition of an invariant probability $\kappa$-measure and the associated notions of dissociatedness, extremality, and ergodicity.
Recall that $X$ is an infinite set.

\begin{definition}
A probability $\kappa$-measure $\mu$ on $\Str<\kappa>[\Lang](X)$ is \defn{invariant} if for all $g \in \Sym(X)$  and all 
$A \in \BorelSets<\kappa>[\Lang](X)$, we have
\[
\mu(A) = \mu(g``[A]).
\]
\end{definition}
For the rest of this paper, we will use the term \emph{invariant measure} to mean an invariant probability $\kappa$-measure. 

We will often be interested in invariant measures that exhibit certain key properties,
namely being \emph{dissociated},
\emph{extreme},
or
\emph{ergodic}.
In the case where $|X| = |\Lang| = |\kappa| = \w$, these three notions are known to be equivalent, 
as we state in Proposition \ref{If countable dissociated equals ergodic equals extreme}.
In \cref{Equivalences with ergodic},
we will show that they are equivalent even without this cardinality constraint.    

\begin{definition}
A probability $\kappa$-measure $\mu$ on $\Str<\kappa>[\Lang](X)$ is \defn{dissociated}
if for all disjoint finite $X_0, X_1 \subseteq X$, all $\gamma_0 \in \BorelCodes<\kappa>[\Lang](X_0)$, and all $\gamma_1 \in \BorelCodes<\kappa>[\Lang](X_1)$,
we have
\[
\mu(\gamma_0 \cap \gamma_1) 
= \mu(\gamma_0) \cdot \mu(\gamma_1).
\]
\end{definition}

In other words, $\kappa$-Borel codes that only refer to disjoint finite index sets are $\mu$-independent.

\begin{definition}
An invariant probability $\kappa$-measure $\mu$
 on $\Str<\kappa>[\Lang](X)$ is \defn{extreme} if, whenever $\nu$ and $\eta$ are distinct invariant probability $\kappa$-measures on $\Str<\kappa>[\Lang](X)$ with $\mu = r \cdot \nu + s \cdot \eta$, then either $r = 0$ or $s = 0$.
\end{definition}

In other words, an invariant measure is extreme if it is not 
a 
convex combination of two distinct invariant measures.

\begin{definition}
Let $\mu$ be an invariant probability $\kappa$-measure
on $\Str<\kappa>[\Lang](X)$ and let $A \in \BorelSets<\kappa>[\Lang](X)$. The set $A$ is \defn{$\mu$-almost invariant} if for all $g \in \Sym(X)$, we have $\mu(g``[A] \SymDiff A) = 0$. For $\gamma \in \BorelCodes<\kappa>[\Lang](X)$ we say that $\gamma$ is \defn{$\mu$-almost invariant} when $\BCvalue(\gamma)$ is.
\end{definition}

\begin{definition}   
An invariant probability $\kappa$-measure
$\mu$ on $\Str<\kappa>[\Lang](X)$ is \defn{ergodic} if, whenever $A \in \BorelSets<\kappa>[\Lang](X)$ is $\mu$-almost invariant, we have $\mu(A) \in \{0, 1\}$.
\end{definition}

The next proposition states that in the countable case, the notions of dissociatedness, extremality, and ergodicity all agree for invariant probability measures.
\begin{proposition}
\label{If countable dissociated equals ergodic equals extreme}
Suppose $|X| = |\Lang| = \w$ and let $\mu$ be an invariant probability measure on $\Str<\w>[\Lang](X)$. The following are equivalent. 
\begin{itemize}
\item $\mu$ is dissociated. 

\item $\mu$ is extreme. 

\item $\mu$ is ergodic. 
\end{itemize}
\end{proposition}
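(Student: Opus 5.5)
The plan is to prove the cycle dissociated $\Rightarrow$ ergodic $\Rightarrow$ extreme $\Rightarrow$ dissociated. Since $|X| = |\Lang| = \w$, $\Str<\w>[\Lang](X)$ is a standard Borel space (by \cref{Isomorphism between Str_L and Cantor} it is Cantor space), so the classical tools from probability theory are available. These are martingale convergence, the ergodic decomposition, and the Aldous--Hoover representation.

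\emph{Dissociated $\Rightarrow$ ergodic.} Let $A$ be $\mu$-almost invariant and fix $\varepsilon > 0$. Approximate $A$ within $\varepsilon$ in $\mu$-measure by a set $B$ that is a finite Boolean combination of basic sets. Such a $B$ depends only on a finite $X_0 \subseteq X$. Choose $g \in \SymFin(X)$ with $g``[X_0] \cap X_0 = \emptyset$. Then $g``[B]$ depends only on $g``[X_0]$, so dissociation gives $\mu(B \cap g``[B]) = \mu(B)\mu(g``[B]) = \mu(B)^2$. On the other hand, invariance of $\mu$ and almost invariance of $A$ give
\[
\mu(g``[B] \SymDiff A) \le \mu(g``[B] \SymDiff g``[A]) + \mu(g``[A]\SymDiff A) < \varepsilon,
\]
so $\mu(A)$ is within $O(\varepsilon)$ of both $\mu(B)$ and $\mu(B)^2$. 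Letting $\varepsilon \to 0$ yields $\mu(A) = \mu(A)^2$, hence $\mu(A) \in \{0,1\}$.

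\emph{Ergodic $\Rightarrow$ extreme.} Suppose $\mu = r\nu + s\eta$ with $r, s > 0$ and $\nu, \eta$ invariant. Then $\nu \ll \mu$, and the Radon--Nikodym derivative $d\nu/d\mu$ is $\mu$-a.e.\ invariant under each $g \in \Sym(X)$, by invariance of both measures and uniqueness of densities. Hence each of its superlevel sets is $\mu$-almost invariant, and ergodicity forces these sets to have measure $0$ or $1$. So the derivative is a.e.\ constant, and that constant equals $1$; thus $\nu = \mu = \eta$, contradicting distinctness.

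\emph{Extreme $\Rightarrow$ dissociated.} This is the main obstacle. I would invoke the Aldous--Hoover(--Kallenberg) representation: every invariant measure on $\Str<\w>[\Lang](X)$ is a mixture, over the value of the top-level random variable $\xi_\emptyset$, of measures that are dissociated. Concretely, the structure is generated by functions of $\xi_\emptyset$, of $\xi_{\{a\}}$ for $a \in X$, of $\xi_{\{a,b\}}$, and so on. This works even for unbounded arity, since the language is countable and one can represent each relation separately and jointly.

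Conditioning on $\xi_\emptyset$ yields invariant measures $\mu_t$ with $\mu = \int \mu_t \, dt$. Each $\mu_t$ is dissociated, because once $\xi_\emptyset$ is fixed, sets depending on disjoint finite $X_0, X_1$ are functions of disjoint families of independent $\xi$'s. Now suppose $\mu$ is extreme. If the map $t \mapsto \mu_t$ were not a.e.\ constant, we could split $[0,1]$ into two positive-measure pieces with different average measures, writing $\mu$ as a nontrivial convex combination of distinct invariant measures. So $\mu = \mu_t$ for a.e.\ $t$, and $\mu$ is dissociated.

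Alternatively, one can avoid Aldous--Hoover and instead use the reverse-martingale/de Finetti-type argument that the exchangeable $\sigma$-algebra is trivial for extreme $\mu$. One then shows directly that conditional independence of disjoint finite blocks given the tail implies independence. I expect the first route, citing the representation theorem, to be cleanest.
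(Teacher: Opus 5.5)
Your proposal is correct, and it gives an actual argument where the paper gives none. The paper's proof is only a pair of citations: Kallenberg's Lemma~7.35 and Corollary~2.46 of the cited work on $\mathrm{Aut}(M)$-invariance.

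You reconstruct the standard argument behind those references. Two implications are done by hand: dissociated $\Rightarrow$ ergodic (approximate an almost invariant set by a finite Boolean combination of basic sets, then move its finite support off itself with a finitary permutation), and ergodic $\Rightarrow$ extreme (the density $d\nu/d\mu$ is a.e.\ invariant, hence a.e.\ constant). Only extreme $\Rightarrow$ dissociated is reduced to the Aldous--Hoover--Kallenberg representation plus conditioning on $\xi_\emptyset$.

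For that last step, cite the version of the representation theorem for jointly exchangeable arrays of unbounded dimension, indexed by all finite tuples of distinct elements. That version drives all relation symbols of all arities by one common i.i.d.\ family $(\xi_s)$ over finite $s \subseteq X$, with tuples having repeated entries absorbed into the encoding. Representing the relations ``separately'' would not produce the single variable $\xi_\emptyset$ on which you condition.

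What your route buys is that it isolates where countability is really used. Your first two implications never use countability of $X$ or of the language. They go through verbatim for invariant probability $\kappa$-measures on uncountable $X$: the first via the paper's approximation of arbitrary codes by almost basic ones, the second because $\mu$ is in particular countably additive on a $\sigma$-algebra, so Radon--Nikodym applies. Consequently, in the paper's later equivalence theorem, only the return implication to dissociatedness genuinely needs the absoluteness machinery. The paper's citation is shorter and leaves the delicate unbounded-arity representation entirely to the literature.
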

\begin{proof}
See  \cite[Lemma~7.35]{MR2161313} 
as well as \cite[Corollary~2.46]{Aut(M)-invariance}.
\end{proof}

One of the reasons why ergodic invariant probability measures are so important is that, in the countable context, 
every invariant probability measure is a mixture of ergodic ones.

\begin{proposition}[\hbox{\cite[Theorem~A1.3]{MR2161313}}]
\label{prop:kal-ergodic}
Suppose $|X| = |\Lang| = \w$ and let $\mu$ be an invariant probability measure on $\Str<\w>[\Lang](X)$. Then $\mu$ is a mixture
of ergodic invariant probability measures on $\Str<\w>[\Lang](X)$. 
\end{proposition}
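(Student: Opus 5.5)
The plan is to reduce to the classical ergodic decomposition for countable structures, namely Kallenberg's theorem cited as \cite[Theorem~A1.3]{MR2161313}, and then pass from the ergodic invariant measures in that decomposition to the notions fixed in this paper. Since $|X| = |\Lang| = \w$, we may identify $X$ with $\w$. By \cref{Isomorphism between Str_L and Cantor}, $\Str<\w>[\Lang](X)$ is then isomorphic to $(\Cantor[Z_1], \sigma_\w(\Cantor[Z_1]))$ with $Z_1 = \coprod_{R \in \Lang} X^{\arity(R)}$ countable. So we are working with a standard Borel space on which the Polish group $\Sym(X)$ acts continuously via the logic action, and $\mu$ is an ordinary Borel probability measure invariant under this action.

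The first step is to invoke the ergodic decomposition for invariant measures under the action of a Polish group, or more concretely the exchangeable-array version. This gives a Borel map $\beta \mapsto \mu_\beta$ from a standard probability space $(B, \rho)$ into the invariant probability measures on $\Str<\w>[\Lang](X)$ with two properties. Each $\mu_\beta$ is ergodic for the $\Sym(X)$-action, and $\mu(A) = \int_B \mu_\beta(A)\, d\rho(\beta)$ for every $A \in \BorelSets<\w>[\Lang](X)$. This representation is what ``$\mu$ is a mixture'' means.

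The one point to check is that ``ergodic'' in the classical sense coincides with the definition used here, which is phrased with $\mu$-almost invariant sets, i.e.\ $\mu(g``[A] \SymDiff A) = 0$ for every $g$. Classical ergodicity usually says that strictly invariant sets have measure $0$ or $1$. For countable groups the two notions agree by a standard argument: replace $A$ by $\bigcap_{g \in G} g``[A]$. However, $\Sym(X)$ is uncountable, so I would argue through the finitary subgroup $\SymFin(X)$, which is countable. Since $\BorelSets<\w>[\Lang](X)$ is countably generated by the sets $\extent[X]{\varphi(\aa)}$, the approximation lemma for measures shows that an invariant measure is $\Sym(X)$-invariant if and only if it is $\SymFin(X)$-invariant. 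The same argument shows that $A$ is $\mu$-almost invariant under $\Sym(X)$ if and only if it is almost invariant under $\SymFin(X)$. For this, approximate $A$ by a set depending on finitely many coordinates, and observe that any $g$ agrees on those coordinates with some finitary permutation.

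Next I would apply the countable-group argument to $\SymFin(X)$: given an almost invariant $A$, the set $\bigcap_{g} g``[A]$ is strictly invariant and differs from $A$ by a null set. Hence the ergodic components from the decomposition (taken with respect to the countable group $\SymFin(X)$) are ergodic in the paper's sense. Alternatively, this can be closed out via \cref{If countable dissociated equals ergodic equals extreme}: the components are extreme, hence ergodic.

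I expect the main obstacle to be this identification of the ergodic notions together with the measurability of $\beta \mapsto \mu_\beta$. Otherwise the argument is a direct citation.
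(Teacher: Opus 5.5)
Your proposal is correct and matches the paper, which gives no argument beyond citing Kallenberg's ergodic decomposition \cite[Theorem~A1.3]{MR2161313}. Applying that decomposition to the countable group $\SymFin(X)$ and then checking that the components are $\Sym(X)$-invariant (as in \cref{Finite support invariance gives full invariance}) and ergodic in the paper's almost-invariance sense supplies the bookkeeping the paper leaves implicit. For the ergodicity check you only need the trivial direction: a $\Sym(X)$-almost invariant set $A$ is $\SymFin(X)$-almost invariant, hence a.e.\ equal to the strictly invariant set $\bigcap_{g \in \SymFin(X)} g``[A]$, so \cref{Almost invariant on finite permutations is full almost invariant} is not actually required.
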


\subsection{\Fraisse\ Theory}

We now recall some basic notions of \Fraisse\ theory. For more on this, see \cite[Chapter~7.1]{MR1221741}.

\begin{definition}
Suppose $\AgeK$ is a collection of finite $\Lang$-structures that is closed under isomorphism. 

We say that $\AgeK$ has the \defn{hereditary property \HP}\ if, whenever $\cM_1 \in \AgeK$ and $M_0 \subseteq M_1$, 
we have $\cM_1\rest[M_0] \in \AgeK$. 

We say that
$\AgeK$ has the \defn{joint embedding property \JEP}\ if, whenever $\cM_0, \cM_1 \in \AgeK$, there exist an $\cM_2\in \AgeK$ and embeddings $i\:\cM_0 \to \cM_2$ and $j\:\cM_1\to \cM_2$. 

We say that
$\AgeK$ has the \defn{amalgamation property \AP}\ if, whenever \linebreak $\cM_0, \cM_1, \cM_2 \in \AgeK$ and the maps $i_1\:\cM_0 \to \cM_1$ and $i_2\:\cM_0 \to \cM_2$ are embeddings, 
there 
exist $\cM_3 \in \AgeK$ and embeddings $j_1\:\cM_1\to \cM_3$ and $j_2\:\cM_2 \to \cM_3$ such that $j_1 \circ i_1 = j_2 \circ i_2$. 

If, further, $j_1, j_2$ can be chosen such that 
\[
j_1``[\cM_1] \cap j_2``[\cM_2] = (j_1 \circ i_1)``[\cM_0],
\]
then we say that $\AgeK$ has the \defn{strong amalgamation property \SAP}.

We say that $\AgeK$ is a \defn{\Fraisse\ class} (in $\Lang$) when it satisfies \HP, \JEP, and \AP. A \defn{strong \Fraisse\ class} is a \Fraisse\ class that satisfies \SAP. 
\end{definition}

\begin{definition}
Let $\cM$ be an $\Lang$-structure.
The \defn{age} of $\cM$ is the collection of those
finite $\Lang$-structures that embed into $\cM$. 
\end{definition}

\begin{definition}
Suppose $\AgeK$ is a \Fraisse\ class in $\Lang$. A \defn{\Fraisse\ limit}
of $\AgeK$ is an $\Lang$-structure $\cM$ such that 
\begin{itemize}
\item for every $\cN \in \AgeK$ there is an embedding from $\cN$ into $\cM$, and

\item if $\cN \in \AgeK$ and $i, j\:\cN \to \cM$ are embeddings then there is a $g \in \Aut(\cM)$ 
such that $i = g \circ j$. 

\end{itemize}
\end{definition}

The following is standard. 

\begin{proposition}
Let $\AgeK$ be a collection of finite $\Lang$-structures that is closed under isomorphism.
Suppose $\AgeK$ has only countably many isomorphism types. 
Then we have the following.
\begin{itemize}
\item $\AgeK$ is the age of an $\Lang$-structure if and only if $\AgeK$ has \HP\ and \JEP. 

\item $\AgeK$ has a countably infinite \Fraisse\ limit 
if and only if $\AgeK$ has \HP, \JEP, and \AP, i.e., $\AgeK$ is a \Fraisse\ class. 
\end{itemize}

\end{proposition}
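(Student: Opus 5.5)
We prove the two bullets in order, writing $\AgeK$ for the class. In both, the direction ``$\Rightarrow$'' is routine. If $\AgeK$ is the age of $\cM$, then \HP\ holds because a substructure of a finite substructure of $\cM$ is again a finite substructure of $\cM$. \JEP\ holds because given $\cM_0, \cM_1 \in \AgeK$ embedded in $\cM$, the substructure of $\cM$ on the union of the two images is finite, lies in $\AgeK$, and receives both embeddings. If $\cM$ is a countably infinite \Fraisse\ limit of $\AgeK$, we first check that $\AgeK$ is exactly the age of $\cM$, which gives \HP\ and \JEP\ as above. For \AP, take embeddings $i_1 \colon \cM_0 \to \cM_1$ and $i_2 \colon \cM_0 \to \cM_2$, and embed $\cM_1$ and $\cM_2$ into $\cM$ by $e_1$ and $e_2$. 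Then $e_1 \circ i_1$ and $e_2 \circ i_2$ are two embeddings of $\cM_0$ into $\cM$, so by the homogeneity clause there is $g \in \Aut(\cM)$ with $g \circ e_2 \circ i_2 = e_1 \circ i_1$. Let $\cM_3$ be the finite substructure of $\cM$ on $e_1``[M_1] \cup (g \circ e_2)``[M_2]$; it lies in $\AgeK$, and $j_1 = e_1$, $j_2 = g \circ e_2$ satisfy $j_1 \circ i_1 = j_2 \circ i_2$.

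For ``$\Leftarrow$'' in the first bullet, we use countability. Choose representatives $\<\cA_n\>_{n \in \w}$ of the isomorphism types in $\AgeK$, where the list may contain repetitions, and, if $\AgeK$ is empty, take the empty structure, assuming the convention allowing it. Using \JEP, build a chain $\cB_0 \subseteq \cB_1 \subseteq \cdots$ in $\AgeK$ such that $\cA_n$ embeds into $\cB_{n+1}$. Replacing structures by isomorphic copies makes the jointly embedding maps into inclusions. Let $\cM = \bigcup_n \cB_n$. Every $\cA_n$ embeds into $\cM$. Conversely, any finite substructure of $\cM$ lies inside some $\cB_n$, so it is in $\AgeK$ by \HP\ and closure under isomorphism. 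Hence $\AgeK$ is the age of $\cM$.

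The ``$\Leftarrow$'' direction of the second bullet is the main step, and it is the standard \Fraisse\ construction. Enumerate all pairs $(\cA \subseteq \cB)$ with $\cA, \cB \in \AgeK$ up to isomorphism over $\cA$; there are countably many such pairs because there are countably many isomorphism types. Build a chain $\cD_0 \subseteq \cD_1 \subseteq \cdots$ in $\AgeK$, starting from the chain above so that every member of $\AgeK$ embeds. Use a bookkeeping function on $\w \times \w$ so that every triple consisting of a pair $(\cA \subseteq \cB)$ from the enumeration, a stage $n$, and an embedding $f \colon \cA \to \cD_n$ is eventually handled at some later stage $m$. There are only finitely many such $f$ at each stage, which is why the bookkeeping works. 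To handle a triple, apply \AP\ to $f$ composed with the inclusion $\cD_n \subseteq \cD_m$ and to $\cA \subseteq \cB$, and take $\cD_{m+1}$ to be the resulting amalgam, renamed so that $\cD_m \subseteq \cD_{m+1}$. Then $f$ extends to an embedding of $\cB$ into $\cD_{m+1}$. The union $\cM = \bigcup_n \cD_n$ has age $\AgeK$ and satisfies the extension property: every embedding $\cA \to \cM$ with $\cA \subseteq \cB$ in $\AgeK$ extends to an embedding of $\cB$. If $\AgeK$ contains arbitrarily large finite structures, then $\cM$ is infinite. The degenerate case of a class with a bounded size of structures needs the statement's implicit hypothesis that the class has infinitely many sizes; I will note this convention rather than prove it.

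The remaining obstacle is deriving the homogeneity clause of the definition from the extension property. Given embeddings $i, j \colon \cN \to \cM$, the map $i \circ j^{-1}$ is a finite partial isomorphism of $\cM$. We extend it to an automorphism by a back-and-forth argument along an enumeration of the countable set $M$. At each step, to add a point $a$ to the domain of the current finite partial isomorphism $p$, let $\cB$ be the substructure on $\dom(p) \cup \{a\}$, which lies in $\AgeK$. The extension property applied to $p$ gives an image for $a$; adding points to the range is handled symmetrically using $p^{-1}$. The union of these partial isomorphisms is an automorphism $g$ with $g \circ j = i$, as required.
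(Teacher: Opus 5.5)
Your argument is the standard Fra\"iss\'e construction. For the first bullet you build a chain using \JEP. For the second you use \AP\ with bookkeeping to get the extension property, then back-and-forth, and conversely ultrahomogeneity gives \AP. This is all the paper relies on: it gives no proof, only calling the result standard and pointing to Chapter~7.1 of Hodges. The chain construction is correct, and so is your count of tasks (countably many pairs $\cA\subseteq\cB$ up to isomorphism, finitely many embeddings of each into a finite $\mathcal{D}_n$). The extension property and the back-and-forth are also correct. They use only that $\AgeK$ has countably many isomorphism types, not that $\Lang$ is countable, which fits the paper's setting.

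Two steps need repair, both caused by the loose phrasing of the statement.

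First, in the forward direction of the second bullet you propose to \emph{check} that $\AgeK$ is exactly the age of $\cM$. This does not follow from the two clauses in the paper's definition of a \Fraisse\ limit, and without it the forward direction is false. Take $\Lang=\{E\}$ with $E$ binary, and let $\AgeK$ be the isomorphism class of the two-element structure in which $E$ holds of both ordered pairs of distinct points. Let $\cM$ be a countably infinite set on which $E$ holds of all pairs of distinct elements. Every permutation of $M$ is an automorphism of $\cM$, so both clauses hold. Yet $\AgeK$ fails \HP, since its one-point restrictions are not in $\AgeK$. So you must make ``the age of $\cM$ is $\AgeK$'' (and countability of $\cM$) part of the definition, as Hodges does. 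With that, your derivations of \HP, \JEP\ and \AP\ are fine.

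Second, your treatment of the empty class is wrong. The age of the empty structure contains the empty structure, and every nonempty structure has one-point substructures. So $\AgeK=\emptyset$ is never an age, although it vacuously has \HP\ and \JEP. The first bullet needs the hypothesis $\AgeK\neq\emptyset$, again as in Hodges, not a workaround.

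You are right that ``countably infinite'' in the second bullet requires $\AgeK$ to contain arbitrarily large structures. For instance, the sets of size at most $3$ in the empty language form a \Fraisse\ class whose limit is finite.

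A minor point: you cannot literally start the second chain ``from the chain above''. Instead, interleave the steps that embed the $n$th representative via \JEP\ with the amalgamation tasks. Alternatively, when empty structures are allowed, get every $\cB\in\AgeK$ from the extension property applied to $\emptyset\subseteq\cB$.
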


\begin{definition}
A strong \Fraisse\ class 
$\AgeK$ in $\Lang$ is \defn{trivial} if for all literals $\psi$ in $\Lang$ of arity $n$, either (i) 
for all $\cA \in \AgeK$, we have
\[
\cA \models (\forall x_0, \dots x_{n-1})\, \bigl(\NonRed(x_0, \dots, x_{n-1}) \rightarrow \psi(x_0, \dots, x_{n-1})\bigr),
\]
or  (ii) 
for all $\cA \in \AgeK$, we have
\[
\cA \models (\forall x_0, \dots x_{n-1})\, \bigl(\NonRed(x_0, \dots, x_{n-1}) \rightarrow \neg \psi(x_0, \dots, x_{n-1})\bigr).
\]
A structure is \defn{trivial} when its age is a trivial strong \Fraisse\ class. 
\end{definition}

\begin{remark}
For the rest of the paper, unless we explicitly mention otherwise, we will assume that all strong \Fraisse\ classes are non-trivial and that all infinite structures 
are non-trivial. 
\end{remark}

\section{Cohen Forcing and Random Forcing on $\Cantor[X]$}
\label{Section: Examples on Cantor} 

In this section, we review properties of Cohen forcings and random forcings (with Lebesgue measure) over arbitrary sets.
Both of these can be seen as forcing with specific subsets of $\Cantor[X]$.

Let $\LangOne$ be the language with a single unary relation symbol $U_1$. By \cref{Isomorphism between Str_L and Cantor}, there is an isomorphism $\iota_{\LangOne, X}$ from $(\Cantor[X], \sigma_\kappa(\Cantor[X]))$ to $\Str<\kappa>[\LangOne](X)$. 
In general, we will not distinguish between $\Cantor[X]$ and $\Str[\LangOne](X)$ and we will abuse notation in the obvious way by referring to a $\kappa$-Borel code in $\Str<\kappa>[\Lang_1](X)$ as a $\kappa$-Borel code in $\Cantor[X]$, and likewise abuse notation by referring to the interpretation of such a $\kappa$-Borel code as a subset of $\Cantor[X]$. 

Since $\Cantor[X]$ and $\Str[\LangOne](X)$ are isomorphic,
Cohen forcing and random forcing with Lebesgue measure can also be thought of as forcing with subsets of $\Str[\LangOne](X)$. Hence Cohen forcings and random forcings with Lebesgue measure  on $\Cantor[X]$
are special cases, respectively, of forcing with a strong \Fraisse\ class and forcing with an invariant measure on the space of structures with a given underlying set $X$.

\subsection{Cohen Forcing}
The partial order for Cohen forcing over a set $X$, denoted  $\CantorFiniteSeq[X]$, is defined to be
\[
\CantorFiniteSeq[X]=\bigl\{p\st (\exists X_0 \in \PowersetFin(X))\  p \in \Cantor[X_0]\bigr\},
\]
ordered by reverse inclusion. Suppose $\Generic$ is $\CantorFiniteSeq[X]$-generic over $V$ and let \linebreak $\GenericUnion=\bigcup_{p \in \Generic}p.$ Then $\GenericUnion$ is a function from $X$
into $\{0, 1\}$ and $V[\Generic]=V[\GenericUnion]$. The function $\GenericUnion$ is called a \defn{Cohen generic function} and we say that $\GenericUnion$ is \defn{Cohen over} $V$. 
Note that $\GenericUnion$
has a canonical $\CantorFiniteSeq[X]$-name $\name{\GenericUnion}$, defined by
\[
\name{\GenericUnion} = \{(p, \name{n}) \st n \in X,\,\  p \in \CantorFiniteSeq[X],\text{ and } p(n) = 1\}.
\]
When $X = \w$ we call $\GenericUnion$ a \defn{Cohen generic real}.

The following is standard. 
\begin{lemma}
[\hbox{\cite[Lemma 14.35]{MR1940513}}]
	\label{chain condition lemma for cohen}
	The partial order $\CantorFiniteSeq[X]$ has c.c.c.
\end{lemma}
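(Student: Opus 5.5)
The plan is to run the standard $\Delta$-system argument. Suppose $A \subseteq \CantorFiniteSeq[X]$ is uncountable; I will find two compatible members of $A$. Shrinking $A$, we may assume it has size exactly $\w_1$, say $A = \{p_\alpha \st \alpha < \w_1\}$ with the $p_\alpha$ distinct. For each $\alpha$ let $d_\alpha = \dom(p_\alpha) \in \PowersetFin(X)$.

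The first step is to apply the $\Delta$-system lemma to $\{d_\alpha \st \alpha < \w_1\}$. If only countably many distinct domains occur, then some single domain $d$ is shared by uncountably many $p_\alpha$. But $\Cantor[d]$ is finite, so this cannot happen, since the $p_\alpha$ are distinct. Hence uncountably many distinct finite sets occur among the $d_\alpha$. The $\Delta$-system lemma for an uncountable family of finite sets then gives an uncountable $I \subseteq \w_1$ and a finite root $r$ such that $d_\alpha \cap d_\beta = r$ for all distinct $\alpha, \beta \in I$. Alternatively, one can skip the distinct-domains case split and use the version of the lemma for uncountable indexed families of finite sets, which allows repetitions and still yields pairwise intersections equal to $r$.

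The second step is pigeonhole. The restriction $p_\alpha\rest[r]$ lies in $\Cantor[r]$, which is finite. So there is an uncountable $J \subseteq I$ and a fixed $s \in \Cantor[r]$ with $p_\alpha\rest[r] = s$ for all $\alpha \in J$. Now take distinct $\alpha, \beta \in J$. Their domains meet exactly in $r$, and on $r$ both agree with $s$. Therefore $p_\alpha \cup p_\beta$ is a function with finite domain $d_\alpha \cup d_\beta$, so it is a condition in $\CantorFiniteSeq[X]$ extending both. Thus $p_\alpha$ and $p_\beta$ are compatible, and $A$ is not an antichain.

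The only nontrivial ingredient is the $\Delta$-system lemma, which I will quote as standard. If it had to be proved, the main work would be the usual induction on the common size $n$ of the sets, after first thinning to a uniform size. In the inductive step, either some element lies in uncountably many of the sets, in which case we remove it and apply the induction hypothesis, or each element lies in only countably many, in which case a recursive choice of pairwise disjoint sets gives a system with empty root. Everything else is routine; note in particular that no cardinality assumption on $X$ is needed.
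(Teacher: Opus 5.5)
Your proof is correct and is the standard argument. The paper gives no proof of its own and only cites Jech, where the c.c.c.\ of Cohen forcing is proved the same way: thin the domains to a $\Delta$-system, pigeonhole on the restriction to the finite root, and take the union of two such conditions.
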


We will be interested in understanding when a forcing notion is isomorphic to Cohen forcing over a cardinal. 

\begin{definition}
A partial order $\bP$ is \defn{Cohen} when $\RO(\bP) \cong \CantorFiniteSeq[X]$ for some infinite set $X$.
\end{definition}

The following results will allow us to determine when a partial order is Cohen.  

\begin{proposition}[\hbox{\cite[Comment after Theorem 30.10]{MR1940513}}]
\label{lem:countable_forcing_is_cohen} 
If $\bP$ is a non-trivial countable forcing notion, then $\bP$ is forcing isomorphic to $\CantorFiniteSeq[\w]$. \end{proposition}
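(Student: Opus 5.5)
The statement is the classical fact that every non-trivial countable forcing notion $\bP$ has the same regular open completion as $\CantorFiniteSeq[\w]$. I would prove it by showing that $\RO(\bP)$ and $\RO(\CantorFiniteSeq[\w])$ both contain a countable dense subset isomorphic to one and the same countable atomless partial order. The natural common target is the full binary tree $2^{<\w}$ ordered by reverse extension, with the empty sequence at the top. First I would reduce $\bP$: pass to its separative quotient, which has the same Boolean completion and is still countable and non-trivial. So we may assume $\bP$ is separative, hence embeds densely in $\RO(\bP)$, and non-triviality says every element has two incompatible extensions.

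The main step is to build, by recursion along an enumeration $\<p_n\>_{n\in\w}$ of $\bP$, a map $e\:2^{<\w}\to\bP$ with three properties:
\begin{itemize}
\item $e(s^\frown i)\le e(s)$ for $i\in\{0,1\}$;
\item $e(s^\frown 0)$ and $e(s^\frown 1)$ are incompatible;
\item for each level $n$, the conditions $\{e(s)\st |s|=n\}$ form a maximal antichain in $\bP$ below the top, i.e., $\bigvee_{|s|=n} e(s)=1$ in $\RO(\bP)$.
\end{itemize}
I would also arrange that $p_n$ lies above some $e(s)$, so that the image of $e$ is dense.

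One level does not split cleanly into two pieces, so I would work with finite maximal antichains refining one another. At stage $n$ we have a finite maximal antichain $A_n$, each element labelled by a node of a finite tree. Find some $a\in A_n$ compatible with $p_n$, and split $a$ into a common extension of $a$ and $p_n$ together with pieces whose join is $a\wedge\neg p_n$. In the countable separative poset, a complement below $a$ is a possibly infinite join of conditions. To keep antichains finite, I would instead interleave bookkeeping: split $a$ into just two incompatible conditions, one of them $\le p_n$, and put the remainder into later stages.

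The obstacle is making the level sets maximal in $\RO(\bP)$. This is exactly where I would \emph{not} use finite levels. Instead I would prove that $\RO(\bP)$ has a countable dense subset $D$ with no atoms. Then I would show that any two countable atomless Boolean algebras generating complete algebras densely are isomorphic, via the unique countable atomless Boolean algebra. Concretely, let $\cB_0$ be the Boolean subalgebra of $\RO(\bP)$ generated by $\bP$. It is countable, and it is atomless by non-triviality, since every nonzero element lies above some $p$, which splits. It is also dense in $\RO(\bP)$. By Cantor's back-and-forth argument, every countable atomless Boolean algebra is isomorphic to the free (clopen) algebra of $2^\w$, which is exactly the Boolean algebra generated by $\CantorFiniteSeq[\w]$ in $\RO(\CantorFiniteSeq[\w])$.

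Since the Boolean completion of a dense subalgebra is the whole completion, we get $\RO(\bP)\cong\RO(\cB_0)\cong\RO(\CantorFiniteSeq[\w])$. I expect the main point needing care to be the verification that $\bP$ is dense in $\cB_0$, that $\cB_0$ is atomless, and that separativity is handled first. The back-and-forth step itself is routine: extend finite partial isomorphisms using atomlessness to split atoms of finite subalgebras.
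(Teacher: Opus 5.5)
Your final argument is correct. The Boolean subalgebra $\mathcal{B}_0$ of $\RO(\bP)$ generated by the image of $\bP$ is countable and dense. It is also atomless: a nonzero $b$ lies above the image of some $p$, and two incompatible extensions of $p$ have meet $0$ in $\RO(\bP)$. The same holds for $\CantorFiniteSeq[\w]$. Uniqueness of the countable atomless Boolean algebra, together with the fact that a dense subalgebra determines the completion, then gives $\RO(\bP)\cong\RO(\CantorFiniteSeq[\w])$.

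The paper gives no argument of its own, only the citation to Jech, so there is nothing to match step by step. The usual direct proof is the tree construction you started and abandoned, but carried out in $\RO(\bP)$ rather than in $\bP$. There complements exist, so one builds nonzero $b_s$ for $s\in 2^{<\w}$ with $b_\emptyset=1$, $b_s=b_{s^\frown 0}\vee b_{s^\frown 1}$ and $b_{s^\frown 0}\wedge b_{s^\frown 1}=0$. At stage $n$ one picks some $b_s$ with $b_s\wedge p_n\neq 0$ and splits it into two nonzero pieces, one of them below $p_n$; this is possible by atomlessness. Each level is then automatically a finite partition of $1$, and $s\mapsto b_s$ is a dense embedding of $2^{<\w}$. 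That route is self-contained and produces an explicit dense copy of the binary tree. Yours is shorter because it hands the combinatorics to the classical back-and-forth theorem, whose proof is essentially the same splitting argument.

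Two small remarks. First, passing to the separative quotient is unnecessary, since the canonical map $\bP\to\RO(\bP)$ already has dense image and preserves incompatibility. Second, in a write-up you should simply omit the attempted construction inside $\bP$, because your worry there is real. For example, let $\bP$ consist of a top element together with the rational open intervals of length less than $1$. Then $\{1\}$ is its only finite maximal antichain, so no refining sequence of finite maximal antichains in $\bP$ exists.
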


\begin{theorem}[\hbox{\cite[Theorem 30.10]{MR1940513}}]
\label{Regular subalgebra definition of being Cohen}
Suppose $\bB$ is an infinite Boolean algebra of uniform density. Then $\cB$ is Cohen if and only if the set 
\[
\{\bA \subseteq \bB \st |A| \leq \w\text{ and }\bA\text{ is a regular subalgebra of } \bB\}
\]
has a club subset $C$ such that whenever $\cA_0, \cA_1 \in C$, if $\cA_2$ is the Boolean algebra generated 
(as a Boolean algebra)
by $A_0 \cup A_1$, 
then $\cA_2 \in C$. Further, if the uniform density of $\bB$
is $\kappa$, then $\cB \cong \CantorFiniteSeq[\kappa]$.
\end{theorem}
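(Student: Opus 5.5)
The plan is to prove the two directions separately, working throughout with $\bB = \RO(\bP)$ and using the density and ccc facts already recorded, namely \cref{chain condition lemma for cohen} and \cref{lem:countable_forcing_is_cohen}.

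\emph{($\Rightarrow$)} Suppose $\bB \cong \RO(\CantorFiniteSeq[X])$. For $Y \subseteq X$, let $\bB_Y = \RO(\CantorFiniteSeq[Y])$, viewed as a complete subalgebra of $\bB$. Let $\mathrm{Clop}(Y)$ be the Boolean algebra generated by the basic conditions with domain contained in $Y$.

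By ccc, every $b \in \bB$ is the supremum of countably many basic conditions. So $b$ has a countable \emph{support} $s(b) \subseteq X$ with $b \in \bB_{s(b)}$.

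I would take $C$ to be the set of countable subalgebras $\bA \subseteq \bB$ for which there is a countable $Y_\bA \subseteq X$ with the following properties:
\begin{itemize}
\item $\mathrm{Clop}(Y_\bA) \subseteq \bA \subseteq \bB_{Y_\bA}$;
\item $\mathrm{Clop}(Y_\bA)$ is dense in $\bA$.
\end{itemize}

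Such an $\bA$ is regular in $\bB$. The reason is that suprema in $\bA$ of subsets of $\bA$ are witnessed using the dense set $\mathrm{Clop}(Y_\bA)$. That set is dense in the complete regular subalgebra $\bB_{Y_\bA}$, so those suprema agree with the suprema in $\bB$.

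Next, $C$ is unbounded. Given a countable subset of $\bB$, close it under supports and Boolean operations in an $\w$-step iteration. To keep $\mathrm{Clop}(Y)$ dense, note that any $b \in \bB_Y$ with $b \neq 0$ lies above some basic condition over $Y$.

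$C$ is closed because unions of chains of such algebras are again such algebras, with $Y = \bigcup Y_{\bA_i}$. Finally, the algebra generated by $\bA_0 \cup \bA_1$ lies in $C$, witnessed by $Y_{\bA_0} \cup Y_{\bA_1}$. Here $\mathrm{Clop}(Y_{\bA_0} \cup Y_{\bA_1})$ is generated by the two clop algebras, and it is dense because every nonzero element of $\bB_{Y_{\bA_0} \cup Y_{\bA_1}}$ dominates a basic condition.

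\emph{($\Leftarrow$)} Let $\kappa$ be the uniform density of $\bB$, and fix a dense set $\{d_\alpha \st \alpha < \kappa\}$. Using the club $C$ and its closure under generated joins, I would build a continuous increasing chain $\<\bA_\alpha\>_{\alpha < \kappa}$ with the following properties:
\begin{itemize}
\item each $\bA_\alpha$ is a regular subalgebra of $\bB$ of size $\le |\alpha| + \w$;
\item $d_\alpha \in \bA_{\alpha+1}$;
\item each $\bA_\alpha$ is the union or join of members of $C$, so that all $\bA_\alpha$ are regular and the union is dense in $\bB$.
\end{itemize}
Write $\overline{\bA}_\alpha$ for the completion of $\bA_\alpha$ inside $\bB$.

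The key step is the successor stage. Here I would show that $\overline{\bA}_{\alpha+1}$ is obtained from $\overline{\bA}_\alpha$ by forcing with a quotient that, in $V^{\overline{\bA}_\alpha}$, has a countable dense subset. This uses that $\bA_{\alpha+1}$ is generated by $\bA_\alpha$ together with a countable member of $C$; this is exactly where closure under generated joins enters.

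Uniform density of $\bB$ makes this quotient nontrivial below every condition. Then \cref{lem:countable_forcing_is_cohen}, applied in the intermediate extension, makes it Cohen, that is, forcing-isomorphic to adding one Cohen real (or $\w$ of them). Iterating, and taking direct limits at limit stages, which is harmless by continuity of the chain and ccc, gives $\bB \cong \RO(\CantorFiniteSeq[\kappa \times \w]) \cong \RO(\CantorFiniteSeq[\kappa])$.

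The main obstacle is this successor step: showing that the quotient $\overline{\bA}_{\alpha+1} / \overline{\bA}_\alpha$ is a two-step iteration whose second factor is countable and nontrivial. In particular, one must show that the generic for $\overline{\bA}_\alpha$ does not collapse the quotient to something trivial. I would first try to derive this from the regularity of $\bA_\alpha$ in $\bA_{\alpha+1}$ together with uniform density applied below each condition of $\overline{\bA}_\alpha$.
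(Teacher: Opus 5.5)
The paper does not prove this statement; it quotes it from Jech, where it is Koppelberg's theorem. So your proposal has to stand on its own. Your ($\Rightarrow$) direction is fine for complete $\mathbb B$. In ($\Leftarrow$), your observation is also correct: each successor quotient is forced to have a countable dense set, namely the image of the new countable member of $C$.

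\textbf{The gap.} The gap is the claim that ``uniform density of $\mathbb B$ makes this quotient nontrivial below every condition.'' For an arbitrary choice of the countable member of $C$ this is false, and regularity does not help. Here is a counterexample.
\begin{itemize}
\item Let $\mathbb B=\mathrm{RO}(\mathbb C[\omega_1])$, which has uniform density $\omega_1$.
\item Write $[x_\beta=i]$ for the basic condition $\{(\beta,i)\}$, put $e=[x_0=1]\wedge[x_\omega=1]$, and let $\mathbb A_e$ be the subalgebra generated by $\mathrm{Clop}(\omega)\cup\{e\}$.
\item Then $\mathbb A_e$ is a countable regular subalgebra.
\item Let $C_{\mathrm{can}}$ be your club from ($\Rightarrow$). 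Then $\{\mathbb A_e\}\cup\{\mathbb A\in C_{\mathrm{can}} : \omega+1\subseteq Y_{\mathbb A}\}$ is a club with the join property.
\item Suppose your chain has $\mathbb A_\alpha=\mathbb A_e$ and you next join $\mathrm{Clop}(\omega+1)$. The quotient $\bar{\mathbb A}_{\alpha+1}/G_\alpha$ is trivial when $[x_0=1]\in G_\alpha$, because $e$ already decides $x_\omega$. It adds a Cohen real when $[x_0=0]\in G_\alpha$.
\end{itemize}
So, as written, you only get an iteration whose steps are Cohen or trivial depending on the generic. The identification with $\mathrm{RO}(\mathbb C[\kappa\times\omega])$ does not follow.

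\textbf{The missing idea.} You need to choose the countable piece so as to kill the atoms of the quotient; this is where uniform density really enters.

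First, for $\kappa>\omega$ and $|\mathbb A_\alpha|<\kappa$, the full quotient $\mathbb B/\dot G_\alpha$ is forced to be atomless. Indeed, suppose $b\le p$ and $p\Vdash$ ``$[b]$ is an atom''. Let $\pi$ be the projection onto $\bar{\mathbb A}_\alpha$. Then every $c\le b$ equals $b\wedge\pi(c)$, so $\mathbb B\restriction b$ has density at most $|\mathbb A_\alpha|<\kappa$, contradicting uniform density.

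Now build $\mathbb A=\bigcup_n\mathbb A^n\in C$ with $d_\alpha\in\mathbb A^0$.
\begin{itemize}
\item Given $\mathbb A^n$ and $a\in\mathbb A^n$, consider the Boolean value of ``$[a]$ is an atom of the quotient of $\langle\mathbb A_\alpha\cup\mathbb A^n\rangle$''.
\item Below it, choose a countable maximal antichain of $p\in\bar{\mathbb A}_\alpha$, each with a witness $b_{a,p}\in\mathbb B$ such that $p\Vdash$ ``$[b_{a,p}]$ splits $[a]$''.
\item Put all the $b_{a,p}$ into some $\mathbb A^{n+1}\in C$ containing $\mathbb A^n$.
\end{itemize}
An atom of the final quotient would be $[a]$ for some $a\in\mathbb A^n$. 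It would then already be an atom at stage $n$, hence split at stage $n+1$. So the final quotient is countable and atomless, and \cref{lem:countable_forcing_is_cohen} applies in $V[G_\alpha]$. The case $\kappa=\omega$ follows directly from the same proposition.

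\textbf{A smaller point.} You also need to justify that uncountable directed unions of members of $C$ are regular.
\begin{itemize}
\item $\mathbb B$ is c.c.c. Take a countable $M\prec H(\theta)$ containing $C$, so $M\cap\mathbb B\in C$. If $W\in M$ is a maximal antichain of $\mathbb B$, then $W\cap M$ is maximal in $M\cap\mathbb B$, hence in $\mathbb B$, so $W\subseteq M$.
\item A countable maximal antichain of such a union lies in the union of a countable chain from $C$, which is in $C$ and hence regular.
\end{itemize}
With these repairs the rest of your plan goes through.
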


\subsection{Lebesgue $\kappa$-measure}      

While we will eventually be interested in arbitrary $\Sym(X)$-invariant probability $\kappa$-measures on $\Str<\kappa>[\Lang](X)$, there is one specific measure on $\Cantor[X]$ which is especially important and provides us with a good example for what is to follow.

\begin{definition}
\label{Definition of Lebesgue}
Define the \defn{Lebesgue $\kappa$-measure} on $(\Cantor[X], \sigma_\kappa(\Cantor[X]))$ to be the 
probability $\kappa$-measure
$\Lebesgue<\kappa>[X]$ such that for any $X_0 \in \PowersetFin(X)$ and $\tau\in \Cantor[X_0]$, we have\[
\Lebesgue<\kappa>[X]\bigl(\{f \in \Cantor[X] \st \tau\subseteq f\}\bigr) = 2^{-|X_0|}. 
\]
\end{definition}

We will abuse notation and use $\Lebesgue<\kappa>[X]$ to denote the corresponding 
probability $\kappa$-measure on $\Str[\LangOne](X)$, i.e., the pushforward of $\Lebesgue<\kappa>[X]$ along $\iota_{\LangOne, X}$.

Note that by \cref{Unique-extension}, there is a unique probability $\kappa$-measure satisfying \cref{Definition of Lebesgue}.

\begin{definition}
For $\gamma \in \BorelCodes<\kappa>[\LangOne](X)$ we write $\Lebesgue<\kappa>[X](\gamma)$ to mean $\Lebesgue<\kappa>[X](\BCvalue(\gamma))$. 
\end{definition}

We have the following immediate result. 
\begin{lemma}
The measure $\Lebesgue<\kappa>[X]$ is $\Sym(X)$-invariant. 
\end{lemma}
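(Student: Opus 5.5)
To prove that $\Lebesgue<\kappa>[X]$ is $\Sym(X)$-invariant, I would compare two probability $\kappa$-measures and use the uniqueness clause of \cref{Unique-extension}. Fix $g \in \Sym(X)$ and define $\nu$ on $\sigma_\kappa(\Cantor[X])$ by $\nu(A) = \Lebesgue<\kappa>[X](g``[A])$.

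The first step is to check that $\nu$ is a probability $\kappa$-measure. The map $A \mapsto g``[A]$ is the image under a bijection $\Cantor[X] \to \Cantor[X]$. It sends $\sigma_\kappa(\Cantor[X])$ onto itself, as noted after the definition of the logic action. Since it is induced by a bijection, it preserves complements, disjointness and unions of size $\le\kappa$. Hence $\nu$ is $\kappa$-additive and $\nu(\Cantor[X]) = 1$.

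The second step is to compute $\nu$ on basic sets. Take $X_0 \in \PowersetFin(X)$ and $\tau \in \Cantor[X_0]$, and let $A_\tau = \{f \st \tau \subseteq f\}$. In the $\LangOne$ picture, $f \in A_\tau$ means the structure satisfies the literals $U_1(a)$ or $\neg U_1(a)$ prescribed by $\tau$ for $a \in X_0$. By the definition of the logic action, $g``[A_\tau] = A_{\tau \circ g^{-1}}$, where $\tau \circ g^{-1} \in \Cantor[g``[X_0]]$. Since $|g``[X_0]| = |X_0|$, we get
\[
\nu(A_\tau) = 2^{-|g``[X_0]|} = 2^{-|X_0|} = \Lebesgue<\kappa>[X](A_\tau).
\]

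The last step is to invoke uniqueness. Every basic $\kappa$-Borel code is a finite conjunction of literals $U_1(a)$ and $\neg U_1(a)$. Its interpretation is therefore either empty, when it is contradictory, or of the form $A_\tau$, possibly after removing duplicated literals. So $\nu$ and $\Lebesgue<\kappa>[X]$ agree on all basic codes. The uniqueness part of \cref{Unique-extension} then gives $\nu = \Lebesgue<\kappa>[X]$, which is exactly invariance.

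There is no real obstacle. The only point needing care is the bookkeeping that identifies basic codes with cylinder sets, including the empty case, and that verifies $g``[A_\tau]$ is again a cylinder over a set of the same size.
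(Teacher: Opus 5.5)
Your proof is correct. It is exactly the reasoning the paper leaves implicit when it calls this lemma immediate: the cylinder values $2^{-|X_0|}$ depend only on $|X_0|$, so the action of $g$ preserves them. The uniqueness clause of \cref{Unique-extension}, used the same way in the proof of \cref{Finite support invariance gives full invariance}, then turns agreement on basic codes into equality of the two measures.

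One small point you skip is that basic codes can also contain equality literals between parameters. Such literals either make the interpretation empty or leave it unchanged, so your case split into empty sets and cylinders still covers them.
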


Suppose $V_0 \subseteq V_1$  and suppose $\gamma \in [\BorelCodes<\kappa>[\LangOne](X)]^{V_0}$ and $Y = \BCvalue(\gamma)^{V_0}$. In general, we do not have that $\bigl[\Lebesgue<\kappa>[X](Y)\bigr]^{V_0} = \bigl[\Lebesgue<\kappa>[X](Y)\bigr]^{V_1}$. In fact, if $[2^X]^{V_0}$ is countable in $V_1$ then we have $\bigl[\Lebesgue<\kappa>[X]((2^X)^{V_0})\bigr]^{V_1} = 0$. 

The purpose of using $\kappa$-Borel codes instead of subsets when studying 
Lebesgue $\kappa$-measure is that the Lebesgue $\kappa$-measure of a $\kappa$-Borel code is independent of the model of set theory in which we are working, as the following result 
asserts. (We will not prove this here as it is a special case of \cref{Relativizations give same values} applied to $\Str[\LangOne](X)$, and we will not use this specific version beyond our discussion here.)

\begin{proposition}
\label{Lebesgue measure of Borel codes is absolute}
Suppose $V_0 \subseteq V_1$, let $X \in V_0$, and suppose
$\gamma \in [\BorelCodes<\kappa>[\LangOne](X)]^{V_0}$.  
Then for any $r \in [0, 1]^{V_0}$, we have
\[
V_0 \models \Lebesgue<\kappa>[X](\gamma) = r
\qquad
\text{if and only if}
\qquad
V_1 \models \Lebesgue<\kappa>[X](\gamma) = r.
\]
\end{proposition}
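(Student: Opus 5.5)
We argue by induction on the rank of $\gamma$: for every $\kappa$-Borel code $\gamma \in [\BorelCodes<\kappa>[\LangOne](X)]^{V_0}$, the value $\Lebesgue<\kappa>[X](\gamma)$ computed in $V_0$ equals the value computed in $V_1$. Since the reals of $V_0$ embed into those of $V_1$ and the map $r \mapsto r$ is faithful, this gives the stated equivalence. The preliminary observation is that $\gamma \in V_0$ remains a $\kappa$-Borel code in $V_1$: the construction clauses are absolute, even though $\kappa$ may cease to be a cardinal in $V_1$. Its interpretation in $V_1$ is computed by the same recursion over the larger space $(\Cantor[X])^{V_1}$.

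\textbf{Base case.} Let $\gamma = \<0, \varphi, \aa\>$ be basic, so $\BCvalue(\gamma)$ is a cylinder determined by a finite partial function. Both models compute its measure as $2^{-|X_0|}$ (or $0$ if the literals are inconsistent), where $X_0$ is the finite set of coordinates involved. This computation is arithmetic and therefore absolute. The same holds for almost basic codes, computed by inclusion--exclusion over finitely many cylinders.

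\textbf{Inductive step.} The negation step is immediate: $\lambda(\neg\gamma) = 1 - \lambda(\gamma)$ in both models. The main obstacle is the union step $\gamma = \bigvee \Gamma$ with $\Gamma = \{\gamma_i\}_{i\in\kappa}$, since countable additivity over a family of size $\kappa$ is not obviously absolute. Here we use that every $\kappa$-Borel code mentions only a set $\mathrm{supp}(\gamma) \subseteq X$ of coordinates of size $\le |\kappa|$ in $V_0$. In $V_0$, a probability $\kappa$-measure that is positive on a family of pairwise disjoint sets has only countably many of them nonzero, so
\[
\lambda\bigl(\textstyle\bigvee\Gamma\bigr) \;=\; \sup_{F \subseteq \kappa \text{ countable}} \lambda\bigl(\textstyle\bigvee_{i\in F}\gamma_i\bigr).
\]
Moreover, we may choose a countable $F_0 \in V_0$ attaining the supremum, with $\lambda\bigl(\bigvee\Gamma \wedge \neg\bigvee_{i\in F_0}\gamma_i\bigr) = 0$. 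For each $i \notin F_0$, this says $\lambda\bigl(\gamma_i \wedge \neg \bigvee_{F_0}\gamma_j\bigr) = 0$. That code has lower rank than $\gamma$, so by the inductive hypothesis (applied to these lower-rank codes, closing the induction under $\wedge$ and $\neg$ simultaneously) the equation also holds in $V_1$. In $V_1$ the family $\Gamma$ may be re-indexed by a countable or other set, but it is still a union of sets each of which is null modulo $\bigvee_{F_0}\gamma_j$.

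\textbf{Remaining gap in $V_1$.} A union of $\kappa$-many null sets is null in $V_1$ provided $\lambda^{V_1}$ is a probability $\kappa$-measure there. This is exactly where working with $\kappa$-measures (rather than $\w$-measures) matters, and we would check it via \cref{Unique-extension} applied inside $V_1$. The countable union $\bigvee_{F_0}\gamma_j$ then reduces, by ordinary countable additivity, to limits of finite unions, and limits of reals are absolute. The delicate point is the choice of $F_0$ and ensuring that it works uniformly. If that fails, the fallback is to use the projection to $\Cantor[\mathrm{supp}(\gamma)]$: identify $\mathrm{supp}(\gamma)$ with a countable set in a generic extension collapsing it, and appeal to absoluteness of Lebesgue measure of ordinary Borel codes on $\Cantor[\w]$, which is a $\Pi^1_1$/$\Delta^1_2$ statement and hence absolute by Shoenfield between $V_0$, its collapse extension, and a common extension with $V_1$.
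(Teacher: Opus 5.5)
There is a genuine gap in the union step. Your inductive hypothesis records only that each individual code of lower rank has the same measure in $V_0$ and $V_1$. The union step, however, uses it for Boolean combinations of such codes: the finite unions $\bigvee_{j\in F}\gamma_j$ behind your ``limits of finite unions'', and the codes $\gamma_i\wedge\neg\bigvee_{j\in F_0}\gamma_j$.

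These are not of lower rank than $\gamma=\bigvee\Gamma$. If every $\gamma_j$ has rank $\beta$, then $\gamma_{j_0}\vee\gamma_{j_1}$ already has rank $\beta+1=\rank(\gamma)$, and $\gamma_i\wedge\neg\bigvee_{j\in F_0}\gamma_j$ has rank $\beta+3$. More fundamentally, absoluteness of $\lambda(\gamma_0)$ and of $\lambda(\gamma_1)$ says nothing about $\lambda(\gamma_0\wedge\gamma_1)$. So ``closing the induction under $\wedge$ and $\neg$'' is exactly the step that needs a real argument, and none is given.

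The Shoenfield fallback does not supply one. When $\kappa$ is uncountable in $V_0$, the code $\gamma$ is not a real there, so Shoenfield only relates models in which $\kappa$ has been collapsed. The link between $\lambda^{V_0}(\gamma)$ and its value in a collapse $V_0[H]$ is then the proposition itself with $V_1=V_0[H]$, so the fallback is circular.

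The paper obtains this proposition as the case $\Lang=\LangOne$ of \cref{Relativizations give same values}. That proof uses a stronger invariant: the class $I$ of codes $\gamma\in V_0$ for which $\mu(\gamma\wedge\zeta)$ and $\mu(\gamma\vee\zeta)$ agree in all relevant models, for every almost basic $\zeta$.

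Closure of $I$ under $\wedge$ is proved by approximation. In a collapse extension where $X$, $\Lang$ and $\kappa$ are countable, each $\gamma_i$ is within $\epsilon$ of an almost basic $\eta_i$. Since $\gamma_i\in I$, the bound $\mu(\gamma_i\SymDiff\eta_i)<\epsilon$ holds in every model. Hence $\mu(\gamma_0\wedge\gamma_1\wedge\zeta)$ is everywhere within $2\epsilon$ of the absolute number $\mu(\eta_0\wedge\eta_1\wedge\zeta)$.

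Once finite unions are handled, the $\kappa$-union step is easy and needs no countable $F_0$. By $\kappa$-additivity in each model, $\mu(\bigvee\Gamma)=\sup\{\mu(\bigvee\Gamma_0)\st\Gamma_0\in\PowersetFin(\Gamma)\}$, and $\PowersetFin(\Gamma)$ is the same in $V_0$ and $V_1$.

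Alternatively, you can keep your rank induction if you strengthen the hypothesis to ``every finite Boolean combination of codes of rank $<\alpha$ has absolute measure''. Then expand complements of $\kappa$-unions by inclusion--exclusion, distribute finite intersections over $\kappa$-unions, and take the sup over finite subfamilies as above.
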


In particular, if $V_0 \subseteq V_1$ and $X \in V_0$ and $\gamma_0, \gamma_1 \in [\BorelCodes<\kappa>[\LangOne](X)]^{V_0}$ then \linebreak $V_0 \models \gamma_0 \preceq_{\Lebesgue<\kappa>[X]} \gamma_1$ if and only if $V_1 \models \gamma_0 \preceq_{\Lebesgue<\kappa>[X]} \gamma_1$. Hence we have \linebreak $\bigl[\RandPO(\Lebesgue<\kappa>[X])\bigr]^{V_0} \subseteq \bigl[\RandPO(\Lebesgue<\kappa>[X])\bigr]^{V_1}$ 
as partial order.

\subsection{Random Forcing} 
We now recall facts about random forcing. The relationship between random forcing and forcing with invariant measures is similar to the relationship between Cohen forcing and forcing with strong \Fraisse\ classes. 

Let $\RRF<\kappa>(X) = \RandPO(\Lebesgue<\kappa>[X])$. We call forcing with $\RRF<\kappa>(X)$ \defn{random $\kappa$-forcing} over $X$. 
Let $\name{r}$ be the $\RRF<\kappa>(X)$-name 
\begin{align*}
\name{r} = \bigg\{\Bigl(\BCformula(\bigwedge_{i \in [k]} \psi(x_i)),\, \name{n}\Bigr) &\st \{x_i\}_{i \in [k]} \subseteq X \ \text{and}\  \bigwedge_{i \in [k]} \bigl(\psi(x_i) \rightarrow U(n)\bigr) \ 
\\
&\hspace*{15pt}
\text{and}\  
\neg \bigl(\bigwedge_{i \in [k]}\psi(x_i) \rightarrow \bot\bigr)\bigg\}.
\end{align*}
If $\Generic$ is generic for $\RRF<\kappa>(X)$ then we call $\name[\Generic]{r}$ the \defn{random function} associated to $\Generic$ and we say $\name[\Generic]{r}$ is \defn{random over $V$}. Further, $\name{r}$ is the canonical name of such a function.  When $X = \w$, we call $\name[\Generic]{r}$ a \defn{random real} (over $V$). 

The following lemmas are standard; see, e.g., 
\cite[Lemma 3.1.1]{bartoszynski1995set} and 
\cite[Lemma 8.8]{schindler2014set}.
\begin{lemma}
	\label{chain condition lemma for random}
	$\RRF<\kappa>(X)$ is c.c.c.
\end{lemma}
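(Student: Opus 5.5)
To show that $\RRF<\kappa>(X)$ is c.c.c., I will use the standard measure-counting argument and reduce it to a finite-additivity computation for the Lebesgue $\kappa$-measure. Suppose toward a contradiction that $A \subseteq \RRF<\kappa>(X)$ is an uncountable antichain. Elements $\gamma, \zeta$ of $\RandPO(\Lebesgue<\kappa>[X])$ are compatible precisely when $\Lebesgue<\kappa>[X](\gamma \And \zeta) > 0$. Indeed, if this holds, then $\gamma \And \zeta$ is a condition below both. Conversely, if some $\delta$ with positive measure satisfies $\delta \preceq \gamma$ and $\delta \preceq \zeta$, then $\Lebesgue<\kappa>[X](\delta \And \neg(\gamma\And\zeta)) = 0$, so $\gamma \And \zeta$ has positive measure. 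Hence $A$ is a family of codes of positive measure whose pairwise intersections are null.

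Next I pigeonhole on the measure. Since $A = \bigcup_{n \geq 1} \{\gamma \in A \st \Lebesgue<\kappa>[X](\gamma) > 1/n\}$ and $A$ is uncountable, some $n$ yields an uncountable, in particular infinite, subfamily $A_n$. Choose $n+1$ distinct elements $\gamma_0, \dots, \gamma_n \in A_n$. Because the pairwise intersections are null, finite additivity of the probability $\kappa$-measure, applied after subtracting off the null overlaps, gives
\[
\Lebesgue<\kappa>[X]\Bigl(\bigvee_{i \le n} \gamma_i\Bigr) = \sum_{i \le n} \Lebesgue<\kappa>[X](\gamma_i) > (n+1)/n > 1.
\]
This contradicts the fact that $\Lebesgue<\kappa>[X]$ is a probability measure.

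The only step requiring care is justifying the additivity computation. I would replace each $\gamma_i$ by $\gamma_i \And \bigwedge_{j<i} \neg\gamma_j$. These codes are pairwise disjoint as sets, and each differs from $\gamma_i$ by a null set, so additivity over disjoint sets applies. The argument uses nothing about Lebesgue measure beyond its being a finitely additive probability measure on the interpreted codes. So it goes through for any $\kappa$ and $X$, and will equally serve later for $\RandPO(\mu)$ with arbitrary probability $\kappa$-measures $\mu$.
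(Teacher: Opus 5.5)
Your proof is correct and is essentially the paper's argument. The paper cites this lemma as standard, but later proves the general version, \cref{Random forcings on structure are ccc}, that $\RandPO(\mu)$ is c.c.c.\ for any probability $\kappa$-measure $\mu$, by exactly this pigeonhole on $\mu(\gamma_\alpha) > \frac{1}{n}$ followed by the finite-additivity count, and your compatibility characterization and disjointification step just make explicit details the paper leaves implicit.
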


\begin{lemma}
If $\Generic$ is generic for $\RRF<\kappa>(X)$ then $V[\Generic] = V\bigl[\name[\Generic]{r}\bigr]$. 
\end{lemma}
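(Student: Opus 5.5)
We show that $V[\Generic]$ and $V\bigl[\name[\Generic]{r}\bigr]$ each contain the other. The inclusion $V\bigl[\name[\Generic]{r}\bigr] \subseteq V[\Generic]$ is immediate, since $\name{r}$ is an $\RRF<\kappa>(X)$-name in $V$. For the converse, it suffices to recover $\Generic$ inside $V\bigl[\name[\Generic]{r}\bigr]$. The natural candidate is
\[
\Generic' = \bigl\{\gamma \in \RRF<\kappa>(X) \cap V \st \name[\Generic]{r} \in \BCvalue(\gamma)^{V[\Generic]}\bigr\},
\]
i.e., the set of ground-model codes whose reinterpretation in the extension contains the random function. This set is definable in $V\bigl[\name[\Generic]{r}\bigr]$ from $\name[\Generic]{r}$ and the ground-model set $\RRF<\kappa>(X)^V$. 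Membership of a function in the interpretation of a $\kappa$-Borel code is computed by recursion on the well-founded code, so it is absolute between transitive models containing both. Hence it is enough to prove $\Generic = \Generic'$.

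We prove, by induction on $\rank(\gamma)$, that for every $\gamma \in [\BorelCodes<\kappa>[\LangOne](X)]^V$ with positive measure, $\gamma \in \Generic$ iff $\name[\Generic]{r} \in \BCvalue(\gamma)$. We also handle codes of measure zero, where the claim is that $\name[\Generic]{r} \notin \BCvalue(\gamma)$.

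For basic codes, the claim follows from the definition of $\name{r}$. Genericity shows that $\Generic$ decides every value $\name[\Generic]{r}(n)$: the codes forcing $U(n)$ and those forcing $\neg U(n)$ together form a dense set. A basic condition lies in $\Generic$ iff the finite pattern it describes agrees with $\name[\Generic]{r}$, using that $\Generic$ is a filter.

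For $\neg\gamma$, note that $\gamma$ and $\neg\gamma$ are incompatible and $\{\gamma,\neg\gamma\}$ is predense whenever both have positive measure. So exactly one of them lies in $\Generic$, and the inductive hypothesis for $\gamma$ transfers to $\neg\gamma$.

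For $\bigvee\Gamma$, we use $\kappa$-additivity together with \cref{chain condition lemma for random}. The set of conditions that either extend some $\gamma_i \in \Gamma$ or are incompatible with $\bigvee\Gamma$ is dense. This works because if $\zeta \And \bigvee\Gamma$ has positive measure, then some $\zeta \And \gamma_i$ has positive measure, which needs only countable (indeed $\kappa$-) additivity in $V$. Consequently $\bigvee\Gamma \in \Generic$ iff some $\gamma_i \in \Generic$. By induction this holds iff $\name[\Generic]{r} \in \bigcup_i \BCvalue(\gamma_i)$. The $\bigwedge$ case follows by De Morgan.

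For null codes, we show that $\neg\gamma \sim \top$ forces $\name{r} \notin \BCvalue(\gamma)$. This reduces to the $\neg$ case applied to $\neg\gamma$, which has measure $1$ and so lies in every filter by density.

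The main obstacle is the $\bigvee$ step. There we must ensure that every $\gamma_i$ with $\mu(\gamma_i)=0$ is excluded, which the null-code clause handles, and that the relevant dense set lies in $V$. Both rely only on measure computations inside $V$, and these computations are legitimate because the partial order $\RRF<\kappa>(X)$ is evaluated in the ground model $V$.
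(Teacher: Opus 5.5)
Your architecture is the right one, and it is the argument the paper uses. The lemma itself is only cited as standard, but the proof of \cref{mu-generic filters are determined by their intersection}(c), specialised to $\Lebesgue<\kappa>[X]$, does exactly what you propose. It recovers $\Generic$ as the set of ground-model conditions whose codes contain the generic object, by induction on rank, using the predense pair $\{\zeta,\neg\zeta\}$ and a density argument for $\bigvee$.

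The gap is in your null-code clause. Your induction genuinely depends on it: the $\neg$ step for a code $\neg\zeta$ with $\zeta$ null, and the $\bigvee$ step for null disjuncts, both need $\name[\Generic]{r}\notin\BCvalue(\zeta)$ for null $\zeta$ of lower rank. But your justification of the clause is circular. To get $\name[\Generic]{r}\notin\BCvalue(\gamma)$ you apply the $\neg$ step to $\neg\gamma$. The $\neg$ step for $\neg\gamma$ works by transferring the inductive hypothesis \emph{for $\gamma$}. Since $\gamma$ is null, that hypothesis is precisely the statement $\name[\Generic]{r}\notin\BCvalue(\gamma)$ you set out to prove. Moreover $\rank(\neg\gamma)>\rank(\gamma)$, so this is not an appeal to lower rank. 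The point is not cosmetic: that $\name[\Generic]{r}$ avoids every null set coded in $V$ is the real content of the lemma, and as written your argument never establishes it.

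The repair is to prove the null clause by its own case analysis, using only lower-rank instances of both clauses.

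A null basic code has empty interpretation. A null $\bigvee\Gamma$ has only null disjuncts. A null $\neg\zeta$ has $\Lebesgue<\kappa>[X](\zeta)=1$, so every condition lies below $\zeta$. Hence $\zeta\in\Generic$, and the positive clause for $\zeta$ gives $\name[\Generic]{r}\in\BCvalue(\zeta)$.

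The essential case is a null $\bigwedge\Gamma$ whose conjuncts all have positive measure, e.g.\ $\bigwedge_{n\in\w}U(x_n)$ for distinct $x_n\in X$. By $\kappa$-additivity, $\bigvee_{\gamma\in\Gamma}\neg\gamma$ has measure $1$, so the conditions lying below some $\neg\gamma$ form a dense set in $V$. Thus some $\neg\gamma\in\Generic$, so $\gamma\notin\Generic$, and the inductive hypothesis for $\gamma$ gives $\name[\Generic]{r}\notin\BCvalue(\gamma)$.

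More economically, follow the paper and state the induction for \emph{all} ground-model codes as $\gamma\in\Generic\iff\name[\Generic]{r}\in\BCvalue(\gamma)$, with null codes automatically outside $\Generic$. Then ``exactly one of $\zeta,\neg\zeta$ lies in $\Generic$'' holds in every case, since if one is null the other has measure $1$. The density arguments for $\bigvee$, and for $\bigwedge$ (conditions either below $\bigwedge\Gamma$ or incompatible with some member of $\Gamma$), then cover null codes with no separate clause. Treating $\bigwedge$ directly this way also sidesteps the fact that the De Morgan dual of $\bigwedge\Gamma$ has strictly larger rank.
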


\subsection{$\kappa$-Measurable Space Equivalence} 

In this paper we are interested in the connections between invariant measures on different spaces of structures as well as the structures which come from random forcing with invariant measures. However, if we are only interested in the space of structures as a $\kappa$-measurable space, then all classes of structures with languages and sets of the same size are cryptomorphic. The following immediate lemma, along with \cref{Isomorphism between Str_L and Cantor}, expresses this fact. 

\begin{lemma}
\label{Isomorphism between Borel codes on Str_L and Cantor}
There is a bijection $\iota_{\BorelCodes<\kappa>[\Lang](X)}\: \BorelCodes<\kappa>[\LangOne](\coprod_{R \in \Lang}\, X^{\arity(R)}) \to \BorelCodes<\kappa>[\Lang](X)$ such that for every $\gamma \in \BorelCodes<\kappa>[\LangOne](\coprod_{R \in \Lang}\, X^{\arity(R)})$, we have
\begin{itemize}
\item $\rank(\gamma) = \rank\bigl(\iota_{\BorelCodes<\kappa>[\Lang](X)}(\gamma)\bigr)$ and

\vspace*{5pt}
\item $\iota_{\Lang, X}``[\BCvalue(\gamma)] = \BCvalue(\iota_{\BorelCodes<\kappa>[\Lang](X)}(\gamma))$,
\end{itemize}
where $\iota_{\Lang, X}$ is as in \cref{Isomorphism between Str_L and Cantor}.
\end{lemma}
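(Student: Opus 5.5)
We build the bijection by recursion on the construction of $\kappa$-Borel codes, using the obvious identification between basic codes on the two sides. Write $W = \coprod_{R \in \Lang} X^{\arity(R)}$.

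\emph{Base case.} A basic code in $\BorelCodes<\kappa>[\LangOne](W)$ has the form $\<0, \varphi, \<w_0, \dots, w_{n-1}\>\>$, where $\varphi(x_0,\dots,x_{n-1})$ is a finite conjunction of $\LangOne$-literals $(\neg)U_1(x_{j})$. Each $w_j$ is a pair $(R_j, \aa_j)$ with $R_j \in \Lang$ and $\aa_j \in X^{\arity(R_j)}$. We send this code to $\<0, \varphi', \bb\>$. Here $\bb$ is the concatenation of the tuples $\aa_j$. The formula $\varphi'$ is obtained from $\varphi$ by replacing each literal $(\neg)U_1(x_j)$ with $(\neg)R_j(\yy_j)$, where $\yy_j$ is the block of fresh variables corresponding to $\aa_j$.

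\emph{Inductive step.} The map commutes with $\neg$, $\bigvee$ and $\bigwedge$, exactly as in the definition of $i[\gamma]$. A straightforward induction on rank then shows that ranks are preserved, since the connective structure is copied verbatim. The same induction gives $\iota_{\Lang,X}``[\BCvalue(\gamma)] = \BCvalue(\iota(\gamma))$. At rank $0$ this is the definition of $\iota_{\Lang,X}$: we have $f(R_j,\aa_j)=1$ iff $\iota_{\Lang,X}(f)\models R_j(\aa_j)$. At higher ranks we use that $\iota_{\Lang,X}$ is a bijection by \cref{Isomorphism between Str_L and Cantor}, so its image commutes with complements, unions and intersections.

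\emph{Bijectivity: the main obstacle.} The base-case map need not be injective at the level of literal syntax. Distinct variable patterns can yield the same $\Lang$-code, and conversely an $\Lang$-literal such as $R(a,a)$ with repeated variables, or a relabelling of variables, has several syntactic presentations.

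I would handle this in the following order. First, normalize: on each side, replace the basic codes by canonical representatives, say with a fixed variable ordering and no repeated variables, by pushing the element tuple into the third coordinate. Under this normalization each basic $\Lang$-code corresponds exactly to a finite set of signed pairs $(R,\aa)$, that is, a finite partial function from $W$ to $\{0,1\}$. The same description applies to normalized basic $\LangOne$-codes on $W$, so the base-case map becomes a genuine bijection between basic codes. The recursion then extends it to a bijection on all codes: surjectivity and injectivity follow by induction on rank, because the recursive clauses are free constructors.

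If the paper's notion of code does not permit this normalization, I would instead settle for a bijection chosen via a fixed pairing. The main ingredient for that is the cardinality count at rank $0$: both sides have $|W|+|\Lang|+\w$ many basic codes. Building the recursion on top of an arbitrary bijection between the basic codes would preserve rank, but the interpretation identity then only holds for the canonical correspondence, not for an arbitrary pairing. For this reason I expect the normalization route to be the intended one.
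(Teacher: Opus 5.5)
Your recursive extension is sound. Given an interpretation-preserving bijection between basic codes, the free constructors $\neg$, $\bigvee$, $\bigwedge$ extend it to a bijection on all codes, and your two inductions give rank and interpretation. The paper gives no proof, calling the lemma immediate, so your canonical translation is presumably what it has in mind.

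The gap is at rank $0$, and neither of your routes closes it. The lemma concerns the sets of codes exactly as defined, and these are syntactic: every triple $\langle 0,\varphi,\langle a_0,\dots,a_{n-1}\rangle\rangle$ is a separate code. That includes codes with repeated or padded parameters and codes with equality literals such as $x_0=x_1$; your base translation does not handle the latter. It also includes contradictory conjunctions such as $U_1(x_0)\wedge\neg U_1(x_0)$, which correspond to no partial function $W\to\{0,1\}$. Passing to canonical representatives therefore gives a bijection between different sets. Comparing the total numbers of basic codes is also beside the point, as your own remark about arbitrary pairings shows.

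The missing idea is to match codes within interpretation classes. A rank-preserving map sends basic codes to basic codes. So the lemma amounts to choosing, for every set $S$, a bijection between two classes:
\begin{itemize}
\item $A_S$, the basic $\LangOne$-codes on $W$ interpreting as $S$;
\item $B_S$, the basic $\Lang$-codes on $X$ interpreting as $\iota_{\Lang,X}``[S]$.
\end{itemize}
Exactly the same sets occur on both sides: $\emptyset$ and the cylinders $\{f\in\Cantor[W] \st p\subseteq f\}$ for finite partial $p$, transported by $\iota_{\Lang,X}$. After gluing such bijections, your recursion finishes the proof. So everything reduces to checking $|A_S|=|B_S|$.

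That count is where the content lies. With equality among the atomic formulas, a code can carry an arbitrary extra parameter through a literal like $x_j=x_j$. Hence for $S\neq\emptyset$ we get $|A_S|=|W|$ and $|B_S|=|X|$. For $S=\emptyset$ both classes have size $|X|+|\Lang|=|W|$, as soon as $\Lang$ has a symbol of positive arity. So your argument, repaired this way, works when $|W|=|X|$, for instance when $|\Lang|\le|X|$. This covers the countable setting where the paper actually uses the lemma.

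If instead $|\Lang|>|X|$, the $|W|$ distinct codes $\langle 0,x_0=x_0,\langle w\rangle\rangle$ for $w\in W$ all interpret as the whole space. A basic $\Lang$-code with that interpretation can contain only equality literals, so there are at most $|X|$ of them. In that case no bijection as in the statement exists. Some cardinality hypothesis must then be added, or syntactically equivalent basic codes must be identified, which is what your normalization tacitly does.
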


\section{Forcing with Strong \Fraisse\ Classes}
\label{Section: Forcing with Strong Fraisse Classes}

We now recall from \cite{Golshani} and \cite{Cohen} the notion of a Cohen generic structure as well as some properties of forcing with a strong \Fraisse\ class.

\begin{definition}
Let $\AgeK$ 
be a strong \Fraisse\ class. Define $\Cohen[X](\AgeK)$ to be the partial order consisting of elements of $\AgeK$ whose underlying set
is a subset of $X$, with $\leq$ defined by
$\cA \leq \cB$ if and only if $\cB \subseteq \cA$.
We say that a structure $\GenericUnion$ is \defn{Cohen generic} for $\AgeK$ on $X$ over $V$ if there is a generic $\Generic$ for $\Cohen[X](\AgeK)$ such that $\GenericUnion = \bigcup \Generic$. 
\end{definition}

The following is immediate. 
\begin{lemma}
A strong \Fraisse\ class is non-trivial if and only if $\Cohen[X](\AgeK)$ is non-trivial as a forcing notion.
\end{lemma}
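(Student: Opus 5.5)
The plan is to prove the two directions separately, using only \HP\ and \SAP\ together with the fact that $X$ is infinite. Throughout, literals may have repeated variables: for example, $R(x_0,x_0,x_1)$ counts as a literal of arity $2$. With this convention, every assertion $\cA\models R(\aa)$ about an arbitrary tuple $\aa$ can be rewritten as the truth of some literal at a \emph{non-redundant} tuple, namely the tuple obtained by listing the distinct entries of $\aa$.

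\emph{Non-trivial class gives non-trivial forcing.} Suppose $\AgeK$ is non-trivial. Then there is a literal $\psi$ of arity $n$, structures $\cA,\cB\in\AgeK$, and non-redundant tuples $\aa\subseteq A$ and $\bb\subseteq B$ with $\cA\models\psi(\aa)$ and $\cB\models\neg\psi(\bb)$. By \HP\ we may replace $\cA$ by $\cA\rest[\aa]$ and $\cB$ by $\cB\rest[\bb]$, so both have exactly $n$ elements.

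Now fix a condition $\cC\in\Cohen[X](\AgeK)$. Since $X$ is infinite and $C$ is finite, we can choose a non-redundant tuple $\bm{d}$ of $n$ elements of $X\setminus C$. Apply \SAP\ to $\cC$ and $\cA$ over the empty structure, which lies in $\AgeK$ by \HP. Strong amalgamation makes the two images disjoint. Transporting the amalgam along a bijection that is the identity on $C$ and sends $\aa$ to $\bm{d}$ gives a structure $\cC_0\in\AgeK$ with underlying set $C\cup\bm{d}\subseteq X$, extending $\cC$, with $\cC_0\models\psi(\bm{d})$. The same construction with $\cB$ gives $\cC_1$ with $\cC_1\models\neg\psi(\bm{d})$. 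A common extension of $\cC_0$ and $\cC_1$ would have to satisfy both $\psi(\bm{d})$ and $\neg\psi(\bm{d})$, so $\cC_0$ and $\cC_1$ are incompatible. Hence every condition has two incompatible extensions.

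\emph{Trivial class gives trivial forcing.} Suppose $\AgeK$ is trivial. By the remark about repeated variables, every structure in $\AgeK$ is then uniquely determined by its underlying set: each atomic fact about any tuple is a literal evaluated at a non-redundant tuple, and triviality fixes its truth value. Given two conditions $\cC,\cD$, restrict to get $\cC\rest[C\cap D]=\cD\rest[C\cap D]$, which holds by this uniqueness (and both restrictions lie in $\AgeK$ by \HP). Applying \SAP\ over this common substructure yields an amalgam that can be realized on $C\cup D\subseteq X$, with the identity on both pieces. This amalgam is a common extension, so any two conditions are compatible. Therefore $\Cohen[X](\AgeK)$ is not non-trivial, since $\AgeK$, and hence the poset, is nonempty.

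The only delicate point is bookkeeping rather than mathematics. One must treat literals with repeated variables correctly, so that triviality really pins down whole structures. One must also justify amalgamating over the empty structure, or equivalently use strong joint embedding, which follows from \SAP\ with the empty base once the empty structure is in $\AgeK$ by \HP.
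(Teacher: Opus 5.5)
Your proof is correct, and it is essentially the paper's approach: the paper calls this lemma immediate and gives no argument, and your verification (disjoint amalgamation over the empty structure to force $\psi(\bm d)$ versus $\neg\psi(\bm d)$, and, for a trivial class, agreement on overlaps plus \SAP) is the routine check it leaves to the reader. The only step you leave implicit is that, before transporting an \SAP\ amalgam onto $C\cup\bm d$ or $C\cup D$, you must first cut it down by \HP\ to the union of the two images; and your reading of literals as allowing repeated variables is exactly what makes the trivial-class direction go through.
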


The following is 
a key
result regarding Cohen generic structures. 
\begin{lemma}[\hbox{\cite[Lemma~2.3]{Golshani}}]
If $\AgeK$ is a strong \Fraisse\ class then $\Cohen[X](\AgeK)$ has c.c.c. 
\end{lemma}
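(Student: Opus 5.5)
The plan is the standard $\Delta$-system argument, with strong amalgamation used to merge conditions whose underlying sets overlap in a common finite root. Let $\{\cA_\alpha\}_{\alpha<\w_1}$ be an uncountable family of conditions in $\Cohen[X](\AgeK)$. It suffices to find $\alpha\neq\beta$ such that $\cA_\alpha$ and $\cA_\beta$ have a common extension, i.e., some $\cC\in\AgeK$ with $C\subseteq X$ and $\cA_\alpha,\cA_\beta\subseteq\cC$.

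First, the underlying sets $A_\alpha$ form an uncountable family of finite subsets of $X$. By the $\Delta$-system lemma we pass to an uncountable subfamily, still indexed by $\w_1$, whose underlying sets form a $\Delta$-system with root $R$. Thus $A_\alpha\cap A_\beta=R$ for all $\alpha\neq\beta$.

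Second, we make the restrictions to the root agree. On the finite set $R$ there are only finitely many $\Lang$-structures if $\Lang$ is finite. In general, $\AgeK$ restricted to a fixed finite set could have uncountably many structures, so this step is the main obstacle. If $\AgeK$ has only countably many isomorphism types, then only countably many structures in $\AgeK$ live on $R$ (a fixed labelled set), and the pigeonhole principle yields an uncountable subfamily on which $\cA_\alpha\rest[R]$ is a constant structure $\cA_*$. For an arbitrary relational language, I would first try to reduce to this case: the conditions involve only finitely many relation symbols, but since non-mentioned symbols are still interpreted, I would instead rely on the standing hypothesis, implicit in the cited result from \cite{Golshani}, that $\AgeK$ has countably many isomorphism types.

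Third, we amalgamate. Fix $\alpha\neq\beta$ in the refined family and apply (SAP) to the inclusions $\cA_*\hookrightarrow\cA_\alpha$ and $\cA_*\hookrightarrow\cA_\beta$. This gives $\cM_3\in\AgeK$ and embeddings $j_1,j_2$ that agree on $\cA_*$ and whose images intersect exactly in the image of $\cA_*$. Since $A_\alpha\cap A_\beta=R$, the map $j_1\cup j_2^{-1}$ on images is well defined and injective onto $A_\alpha\cup A_\beta$. We use it to transport $\cM_3\rest[j_1[A_\alpha]\cup j_2[A_\beta]]$ to a structure $\cC$ with underlying set $A_\alpha\cup A_\beta\subseteq X$. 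By (HP) and closure under isomorphism, $\cC\in\AgeK$. By construction $\cC\rest[A_\alpha]=\cA_\alpha$ and $\cC\rest[A_\beta]=\cA_\beta$, so $\cC$ extends both conditions, and therefore no antichain is uncountable.
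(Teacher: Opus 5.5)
Your argument is correct and is the standard one behind the cited lemma; the paper gives no proof beyond its citation to Golshani. It runs: a $\Delta$-system on the underlying sets, pigeonhole to fix the structure on the root, then (SAP) and (HP) to produce a common extension living on $A_\alpha\cup A_\beta\subseteq X$.

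Your hesitation in the second step is justified, and no reduction can get around it. The hypothesis that $\AgeK$ has only countably many isomorphism types is genuinely necessary, because two distinct conditions with the same underlying set are always incompatible. For example, take $\Lang=\{U_n\}_{n\in\w}$ consisting of unary symbols and $\AgeK$ the class of all finite $\Lang$-structures, which is a non-trivial strong \Fraisse\ class by free amalgamation. The one-point structures on a fixed $x\in X$ then form an antichain of size $2^{\aleph_0}$ in $\Cohen[X](\AgeK)$. So the lemma as printed is false and must be read with that hypothesis added; it holds automatically when $\Lang$ is finite. Under that hypothesis your proof is complete.

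One notational slip: the map you transport along is $h=j_1\cup j_2\colon A_\alpha\cup A_\beta\to M_3$. It is well defined because $j_1$ and $j_2$ agree on $R$, and injective by (SAP). What you wrote as $j_1\cup j_2^{-1}$ should be $(j_1\cup j_2)^{-1}$.
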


Given two strong \Fraisse\ classes $\AgeK_0$ and $\AgeK_1$ in $V$ and a set $X \in V$, 
a natural
question to consider
is: ``When does the existence of a generic over $V$ for $\AgeK_0$ ensure the existence of a generic over $V$ for $\AgeK_1$?'' The following results give a partial answer.

\begin{proposition}
\label{All Cohen structures define Cohen real}
If $\AgeK$ is a (non-trivial) strong \Fraisse\ class over $\Lang$ and $\Generic$ is generic for $\Cohen[X](\AgeK)$ over $V$ then there is an $\Generic[H]$ which is generic for $\CantorFiniteSeq[X]$ such that $\Generic[H] \in V[\Generic]$. 
\end{proposition}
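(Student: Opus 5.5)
Since $\AgeK$ is non-trivial, I will fix a literal $\psi(x_0,\dots,x_{n-1})$ such that some $\cA\in\AgeK$ has a non-redundant tuple satisfying $\psi$ and some $\cB\in\AgeK$ has a non-redundant tuple satisfying $\neg\psi$. The idea is to read off one Cohen bit for each $a\in X$ from the generic structure $\GenericUnion=\bigcup\Generic$ via $\psi$.

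To arrange a reading that uses only finitely many fresh elements, I will use strong amalgamation and a bijection in $V$. Fix in $V$ a partition $X=\bigsqcup_{a\in X}B_a$ into countably infinite blocks, with a fixed enumeration $B_a=\{b_{a,j}\}_{j\in\w}$. Let $H(a)=1$ if and only if $\GenericUnion\models\psi(\bb)$, where $\bb$ is the least $n$-tuple of $B_a$, in a fixed $V$-enumeration of $(B_a)^{(n)}$, that is not yet decided by the finitely many elements at ``stage zero.'' A cleaner choice is to simply let $H(a)=1$ if and only if $\GenericUnion\models\psi(b_{a,0},\dots,b_{a,n-1})$. Then $H\in V[\Generic]$ is a function $X\to\{0,1\}$, and I will take $\Generic[H]=\{p\in\CantorFiniteSeq[X]\st p\subseteq H\}$.

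It remains to show genericity. Given a dense $D\subseteq\CantorFiniteSeq[X]$ in $V$, I will show that the set $E$ of conditions $\cC\in\Cohen[X](\AgeK)$ whose induced partial function $p_\cC$ extends to some element of $D$ and is forced into $H$ is dense. Here $p_\cC$ is defined on those $a$ with all of $b_{a,0},\dots,b_{a,n-1}\in C$, taking value $1$ exactly when $\cC\models\psi(b_{a,0},\dots,b_{a,n-1})$. Given $\cC$, I let $p=p_\cC$ and pick $q\le p$ in $D$. For each $a\in\dom(q)\setminus\dom(p)$, I must extend $\cC$ to a structure $\cC'$ on $C\cup\{b_{a,i}\}$ in which the tuple $(b_{a,0},\dots,b_{a,n-1})$ satisfies $\psi$ or $\neg\psi$ as $q(a)$ dictates. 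This is the main obstacle, because some of the $b_{a,i}$ may already lie in $C$, which constrains the tuple.

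To avoid that obstacle, I will redefine $H(a)$ through $\Generic$-many choices. Let $H(a)$ be the $\psi$-value on the first tuple (in a fixed $V$-order on $(B_a)^{(n)}$ of order type $\w$) consisting of elements not in the finite set $C_0$, where $C_0$ is fixed in $V$ as the universe of some condition. Since arbitrary $\cC$ may contain elements of $B_a$, I will instead use the following reading: $H(a)=1$ if and only if, for the least $j$ such that the block $\{b_{a,jn},\dots,b_{a,jn+n-1}\}$ has $\GenericUnion$ satisfying $\psi$ or $\neg\psi$ with a ``marker'' pattern, the value is $\psi$. Simplest of all, I let $H(a)=$ the parity of the least $j$ such that $\GenericUnion\models\psi(b_{a,jn},\dots,b_{a,jn+n-1})$. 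Genericity of $\Cohen[X](\AgeK)$ makes such a $j$ exist. Given $\cC$, only finitely many $j$-blocks meet $C$. So for each needed $a$, I first decide all blocks below a fresh index $j^*$ of the right parity to satisfy $\neg\psi$, then make block $j^*$ satisfy $\psi$. I do this by repeatedly applying strong amalgamation of $\cC$ with copies of $\cA$ and $\cB$ over the empty structure, placing them on fresh elements of $X$. This works because strong amalgamation over $\emptyset$ (JEP with disjointness) lets me adjoin disjoint copies, and \HP\ ensures the result is in $\AgeK$. There is one subtlety: blocks partially meeting $C$ may already have a determined value. However, only finitely many such blocks exist, and the value of each is fixed once the block is fully in the condition. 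I will choose $j^*$ beyond all of them and, where those earlier blocks come out $\psi$ with the wrong least index, adjust by noting that $H(a)$ is then already determined and consistent with $p_\cC$ by definition of $p_\cC$. The resulting $\cC'\le\cC$ has $p_{\cC'}\supseteq q$, so it lies in $E$, and $\Generic[H]$ meets $D$.
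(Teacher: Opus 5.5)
Your final density argument has a genuine gap, and it sits at the obstacle you identified yourself: coding tuples that meet the universe $C$ of the given condition without being contained in it.

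Such a tuple can be constrained by $\cC$. Let $\AgeK$ be the class of finite $\{P,R\}$-structures in which $P(x)$ and $x\neq y$ imply $R(x,y)$; this is a non-trivial strong \Fraisse\ class. Let $\psi=R(x_0,x_1)$, and suppose $b_{a,0}\in C$ with $\cC\models P(b_{a,0})$, but $b_{a,1}\notin C$. Then every generic structure through $\cC$ satisfies $\psi$ on block $0$ of $B_a$, so under your parity coding $H(a)=0$ is forced. Yet $a\notin\dom(p_\cC)$ for any $p_\cC$ read off from tuples lying entirely in $C$ (you never actually redefine $p_\cC$ for the parity coding).

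Your $q\in D$ is an arbitrary extension of $p_\cC$, so it may have $q(a)=1$, and then no $\cC'\le\cC$ realizes $q$. Choosing $j^*$ large cannot help, since the least $\psi$-block is already $0$. Your remark that ``$H(a)$ is then already determined and consistent with $p_\cC$'' does not address the problem either: what you need is agreement with $q$, and consistency with $p_\cC$ is vacuous for $a\notin\dom(p_\cC)$. The parity coding fails even when the relevant blocks lie fully inside $C$. If block $0$ is outside $C$ while blocks $1$ and $2$ lie in $C$ and satisfy $\neg\psi$ and $\psi$ respectively, then $H(a)=0$ is forced, although no initial segment of full blocks decides it.

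The missing idea is a preliminary extension step. With it, your first, ``cleaner'' coding ($H(a)=1$ if and only if $\GenericUnion\models\psi(b_{a,0},\dots,b_{a,n-1})$) works unchanged.

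\begin{itemize}
\item Given $\cC$, first extend it to $\cC^*$ whose universe contains every coding tuple that $C$ meets. Adjoin the finitely many missing points via strong amalgamation over $\emptyset$ and \HP.
\item The coding tuples for distinct $a$ are pairwise disjoint, so each one is now either contained in $C^*$ (and $a\in\dom(p_{\cC^*})$), or disjoint from $C^*$.
\item On a tuple disjoint from $C^*$, either truth value of $\psi$ can be placed by disjointly adjoining a copy of the relevant $n$-element substructure of $\cA$ or $\cB$.
\item Only then choose $q\le p_{\cC^*}$ in $D$, and realize it.
\end{itemize}

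This is exactly the paper's proof. It codes via a bijection $\iota\colon X\times[n]\to X$ and first passes to a condition whose universe has the form $\iota[Y\times[n]]$. The partition into infinite blocks and the parity trick are unnecessary.
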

\begin{proof}
As $\AgeK$ is non-trivial and has \HP\ there exist 
\begin{itemize}
\item an $n \in \w$,

\item an atomic formula $\psi \in \Lang$ of arity $n$, 

\item $\cA, \cB \in \AgeK$, and

\item enumerations (without repetitions) $\<a_i\>_{i \in [n]}$  of $A$ and  $\<b_i\>_{i \in [n]}$ of $B$ 
\end{itemize}
for which
$\cA \models \psi(a_0, \dots, a_{n-1})$ and $\cB \models \neg \psi(b_0, \dots, b_{n-1})$. 

For $\alpha \in X$ let $\cA_\alpha$ be a structure where the map $a_j \mapsto (\alpha, j)$ is an isomorphism from $\cA$ to $\cA_\alpha$.
Define $\cB_\alpha$ similarly. 
Let $\iota\:X \times [n] \to X$ be a bijection. 

Suppose $\cM \in \AgeK$ with $M \subseteq X$. 
Let
$Z_{\cM} = \{\ell \in X \st
\iota``[\{\ell\} \times [n]] \subseteq M
\}$,
and let
$p_{\cM}\:Z_{\cM} \to \{0, 1\}$ be the map defined by
\[
p_{\cM}(\ell)  
 = 
 \begin{cases}
1 & \text{if } \cM \models \psi\bigl(\iota(\<\ell, 0\>), \iota(\<\ell, 1\>), \dots, \iota(\<\ell, n-1\>)\bigr)\\
0 & \text{otherwise}.
 \end{cases}
\]
If $\Generic$ is generic for $\Cohen[X](\AgeK)$ we let $\Generic[H]= \{p_{\cM} \st \cM \in \Generic\}$. It suffices to prove the following claim. 
\begin{claim}
$\Generic[H]$ is generic for $\CantorFiniteSeq[X]$.
\end{claim}
\begin{proof}
First note that $\Generic[H]$ is a filter as $\Generic$ is a filter and $\AgeK$ has \HP. Now suppose $D \subseteq \CantorFiniteSeq[X]$ is dense. Let $D^* = \{ \cM \in \Cohen[X](\AgeK) \st p_{\cM} \in D\}$. 
Let $\cN \in \Cohen[X](\AgeK)$. Let $E = \{\cM \in \AgeK \st (\exists Y \subseteq X)\ \,M = \iota``[Y \times [n]]\}$. The set $E$ is dense and so we can find an $\cN^* \in \Generic \cap E$ such that $\cN \subseteq \cN^*$. 

Then there must be some $q \in D$ such that $p_{\cN^*} \subseteq q$. Let $\<x_i\>_{i \in [s]}$ be an enumeration of $\dom(q) \setminus \dom(p_{\cN^*})$. 

Let $\cN_{-1} = \cN^*$. For $i \in [s]$ let $\cN_i^0$ be a structure on $N_{i-1} \cup \{x_i\} \times [n]$ containing $\cN_{i-1}$ and $\cA_{x_i}$, and let $\cN_i^1$ be a structure on $N_{i-1} \cup \{x_i\} \times [n]$ containing $\cN_{i-1}$ and $\cB_{x_i}$. Note that we can always find such structures as $\AgeK$ has \JEP\ and \HP. Let $\cN_i = \cN_i^{q(x_i)}$. 

We then have $p_{\cN_{s-1}} = q$, and so $\cN_{s-1} \in D^*$. Therefore $\Generic[H] \cap D \neq \emptyset$. As $D$ was an arbitrary dense subset of $\CantorFiniteSeq[X]$, this proves the claim. 
\end{proof}

This completes the proof of
\cref{All Cohen structures define Cohen real}.
\end{proof}

The following is one of two partial converses to \cref{All Cohen structures define Cohen real} that we obtain.

\begin{proposition}
\label{All Fraisse classes give the same universe}
Let
$\AgeK_0$ and $\AgeK_1$ be (non-trivial) strong \Fraisse\ classes over $\Lang$ with only countably many isomorphism types, and
suppose
$\Generic_0$ is generic for $\Cohen[\w](\AgeK_0)$. 
Then there is a $\Generic_1 \in V[\Generic_0]$ which is generic for $\Cohen[\w](\AgeK_1)$. 
\end{proposition}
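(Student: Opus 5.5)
The plan is to show that both forcing notions are forcing isomorphic to $\CantorFiniteSeq[\w]$, and then transfer generics through the common Boolean completion. First, $\Cohen[\w](\AgeK_i)$ is a countable partial order for $i \in \{0,1\}$. Its conditions are elements of $\AgeK_i$ whose underlying set is a finite subset of $\w$. There are only countably many finite subsets of $\w$. Since $\AgeK_i$ has only countably many isomorphism types, for each finite $A\subseteq \w$ there are only countably many $\AgeK_i$-structures with universe $A$: each such structure is determined by an isomorphism type together with one of finitely many bijections from a representative onto $A$.

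Next I will verify that $\Cohen[\w](\AgeK_i)$ is non-trivial in the sense of the Definition of non-trivial forcing notions. This holds by the immediate lemma that a strong \Fraisse\ class is non-trivial if and only if $\Cohen[X](\AgeK)$ is non-trivial as a forcing notion, applied with $X=\w$. If an explicit argument is wanted, it runs as in \cref{All Cohen structures define Cohen real}. Given a condition $\cM$, choose fresh elements of $\w\setminus M$. Using \JEP\ and \HP\ (or \SAP\ over $\cM$), extend $\cM$ once with a copy of $\cA$ and once with a copy of $\cB$ on those fresh points, where $\cA\models\psi$ and $\cB\models\neg\psi$ on the chosen enumerations. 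The two resulting extensions disagree on $\psi$ at the same tuple, so they are incompatible.

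By \cref{lem:countable_forcing_is_cohen}, each $\Cohen[\w](\AgeK_i)$ is then forcing isomorphic to $\CantorFiniteSeq[\w]$. Hence $\RO(\Cohen[\w](\AgeK_0))\cong\RO(\Cohen[\w](\AgeK_1))$, and both are isomorphic to $\RO(\CantorFiniteSeq[\w])$; fix such an isomorphism $\Phi$ in $V$. From $\Generic_0$, form the generic ultrafilter $\hat{\Generic}_0=\{b\in\RO(\Cohen[\w](\AgeK_0)) \st \exists p\in\Generic_0\ i(p)\le b\}$ on the completion. Next let $\hat{\Generic}_1=\Phi``[\hat{\Generic}_0]$, and then $\Generic_1=\{q\in\Cohen[\w](\AgeK_1)\st i(q)\in\hat{\Generic}_1\}$. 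Genericity transfers at each step because dense sets correspond under the dense embedding $i$ and under $\Phi$. Since $\Phi \in V$, we get $\Generic_1\in V[\Generic_0]$. This is exactly the remark after the definition of forcing isomorphism, so I would simply cite it.

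The only step needing any care is non-triviality: every condition must have two incompatible extensions, not just the empty condition. The fresh-points argument handles this because $\w\setminus M$ is infinite and \SAP\ lets us amalgamate $\cM$ with the witnessing structure over the empty set. The rest is routine.
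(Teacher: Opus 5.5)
Your proposal is correct and is essentially the paper's argument: both $\Cohen[\w](\AgeK_0)$ and $\Cohen[\w](\AgeK_1)$ are countable non-trivial forcing notions, so by \cref{lem:countable_forcing_is_cohen} each is forcing isomorphic to $\CantorFiniteSeq[\w]$, and generics transfer through the resulting isomorphism of Boolean completions. The paper's proof is just that one-line citation. Your checks of countability, of non-triviality, and of the transfer of generics fill in details the paper leaves implicit; for non-triviality, strong amalgamation over the empty structure is the right justification, since joint embedding and the hereditary property alone would not guarantee disjoint copies.
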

\begin{proof}
By \cref{lem:countable_forcing_is_cohen}, we have
$\Cohen[\w](\AgeK_0) \forcingisom  \CantorFiniteSeq[\w] \forcingisom \Cohen[\w](\AgeK_1)$,
from which the proposition follows.
\end{proof}

\begin{lemma}
Suppose $X$ is a set and $\AgeK$ is a strong \Fraisse\ class with at most $|X|$-many isomorphism classes. Then $\RO(\Cohen[X](\AgeK))$ has uniform density $|X|$. 
\end{lemma}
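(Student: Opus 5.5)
We have to show two things: every nonzero element of $\RO(\Cohen[X](\AgeK))$ generates a relative algebra of density at most $|X|$, and also of density at least $|X|$. Since the image of $\Cohen[X](\AgeK)$ is dense in $\RO(\Cohen[X](\AgeK))$, it suffices to work with conditions. For $b$ nonzero, choose a condition $\cA \in \Cohen[X](\AgeK)$ with $i(\cA) \le b$. Then the density of the algebra below $b$ lies between the density of the algebra below $i(\cA)$ and the density of the whole algebra. It is therefore enough to prove two bounds:
\begin{itemize}
\item $|\Cohen[X](\AgeK)| \le |X|$, which bounds the density of the whole algebra by $|X|$;
\item for every condition $\cA$, every dense subset of the algebra below $i(\cA)$ has size at least $|X|$.
\end{itemize}

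For the upper bound, a condition is determined by a finite subset $A \subseteq X$ together with an $\Lang$-structure on $A$ lying in $\AgeK$. There are $|X|$ finite subsets of $X$ because $X$ is infinite. For a fixed finite $A$, the structures on $A$ in $\AgeK$ are obtained from the at most $|X|$ isomorphism types by choosing one of the finitely many bijections onto $A$. Hence there are at most $|X|\cdot|X| = |X|$ conditions. The image $i``[\Cohen[X](\AgeK)]$ is dense in $\RO(\Cohen[X](\AgeK))\setminus\{0\}$, so the density is at most $|X|$.

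For the lower bound, fix $\cA$ and suppose toward a contradiction that $D$ is dense below $i(\cA)$ with $|D| < |X|$. Replace each element of $D$ by a condition below it to get a family $D'$ of conditions with $|D'| < |X|$ that is still dense below $\cA$. Consider $S = A \cup \bigcup_{\cB \in D'} B$. This is a union of fewer than $|X|$ finite sets, so when $X$ is uncountable we have $|S| < |X|$, and we can pick $x \in X \setminus S$.

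Non-triviality of $\AgeK$, as in the proof of \cref{All Cohen structures define Cohen real}, gives a literal $\psi$ and two structures in $\AgeK$ disagreeing on $\psi$ at a non-redundant tuple. Using \JEP, \HP, and strong amalgamation over the appropriate substructure, we can then build an extension $\cA' \le \cA$ with $x \in A'$ such that $\cA'$ is incompatible with every extension of $\cA$ that gives the opposite value to $\psi$ on the chosen tuple involving $x$. Any $\cB \in D'$ with $\cB \le \cA'$ does not contain $x$. Strong amalgamation lets us amalgamate $\cB$ with a structure placing the opposite $\psi$-configuration on fresh points together with $x$, giving a common extension of $\cB$ that is incompatible with $\cA'$. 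So $\cB \not\le \cA'$ in the Boolean sense, and $D'$ is not dense below $\cA$, a contradiction.

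In the countable case $|X| = \w$, the lower bound is simply that the algebra below $i(\cA)$ is infinite, since the forcing is non-trivial, and an atomless algebra has no finite dense set.

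The main obstacle is the incompatibility argument in the lower bound: arranging, via \SAP, that a condition avoiding $x$ cannot decide the $\psi$-configuration at $x$. This may require taking the witnessing tuple to use only $x$ together with new points also outside $S$. Those new points exist because $|X \setminus S| = |X|$.
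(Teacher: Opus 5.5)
Your proposal is correct. The upper bound is exactly the counting that the paper's one-line proof (``clear by our assumption on $\AgeK$'') relies on, and your support argument supplies the lower bound below every condition, which the paper leaves unstated.

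The one thing to tighten is in the lower bound: commit to choosing the entire non-redundant $n$-tuple outside $S$ from the start, rather than offering it as a possible fix at the end. Then $\cA'$, and the extension of any $\cB \in D'$ putting $\neg\psi$ on that tuple, both come from a disjoint joint embedding (strong amalgamation over the empty structure) together with \HP. You then need neither amalgamation over a nonempty substructure nor any compatibility between $\cB$ and $\cA$.
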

\begin{proof}
This is clear by our assumption on $\AgeK$.
\end{proof}

We then have our second partial converse to \cref{All Cohen structures define Cohen real}.

\begin{proposition}
\label{prop:second-partial-converse}
Suppose $\AgeK$ is a strong \Fraisse\ class with at most $\w_1$-many elements up to isomorphism. Then $\Cohen[\w_1](\AgeK)$ and $\CantorFiniteSeq[\w_1]$ are forcing isomorphic, and in particular $\Cohen[\w_1](\AgeK)$ is Cohen.
\end{proposition}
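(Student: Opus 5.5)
We plan to apply \cref{Regular subalgebra definition of being Cohen} to $\bB = \RO(\Cohen[\w_1](\AgeK))$. By the preceding lemma, $\bB$ has uniform density $\w_1$. It is infinite because $\AgeK$ is non-trivial. So it suffices to find a club $C$ of countable regular subalgebras of $\bB$ that is closed under generating the subalgebra of a union of two members. The final clause of \cref{Regular subalgebra definition of being Cohen} then gives $\bB \cong \CantorFiniteSeq[\w_1]$, i.e.\ $\Cohen[\w_1](\AgeK) \forcingisom \CantorFiniteSeq[\w_1]$.

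The natural candidates are subalgebras determined by countable subsets of $\w_1$. For an ordinal $\alpha < \w_1$, consider $\Cohen[\alpha](\AgeK) \subseteq \Cohen[\w_1](\AgeK)$. The key step is to show that $\Cohen[\alpha](\AgeK)$ is a complete suborder. The restriction map $\cA \mapsto \cA\rest[A \cap \alpha]$ works as a projection: $\cA\rest[A\cap\alpha] \in \AgeK$ by \HP. Suppose $\cD \in \Cohen[\alpha](\AgeK)$ extends $\cA\rest[A\cap\alpha]$. By \SAP, we amalgamate $\cD$ and $\cA$ over $\cA\rest[A\cap \alpha]$, realizing the amalgam on $D \cup A$; strongness is exactly what lets the union of underlying sets work without identifying points. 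The result is a common extension of $\cD$ and $\cA$. Hence every maximal antichain of $\Cohen[\alpha](\AgeK)$ remains maximal in $\Cohen[\w_1](\AgeK)$. Let $\bA_\alpha$ be the complete subalgebra of $\bB$ generated by $\Cohen[\alpha](\AgeK)$, which is isomorphic to $\RO(\Cohen[\alpha](\AgeK))$.

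There is a countability subtlety. $\RO(\Cohen[\alpha](\AgeK))$ is uncountable, and $\Cohen[\alpha](\AgeK)$ may have $\w_1$ many conditions when $\AgeK$ has $\w_1$ isomorphism types. So instead we take $C$ to consist of the countable Boolean algebras $\bA$ generated (as Boolean algebras, not completely) by $P_\alpha \subseteq \Cohen[\alpha](\AgeK)$. Here $P_\alpha$ is a countable set of conditions closed under restrictions to subsets and under the amalgamation witnesses above, and $\alpha$ ranges over a club of countable ordinals. For such $\bA$, regularity follows by the same projection argument. Every nonzero element of $\bA$ is a finite Boolean combination of conditions and is bounded below by some condition $\cA$. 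Given $b \in \bB$ with $b \cdot a = 0$ for all $a \in A_0$, we want $b \cdot (\sup A_0)^{\bA} = 0$. For this we use that the restriction of any $\cA \le b$ to $\alpha$ lies in $P_\alpha$ and is compatible with everything in $P_\alpha$ that is compatible with it. So a sup in $\bA$ which fails in $\bB$ would yield an element of $\bA$ disjoint from $A_0$ but below the sup, a contradiction. We make $P_\alpha$ closed by an $\w$-step closure argument; there are countably many requirements at each step.

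Unboundedness and closedness of $C$ then follow from the standard construction: take an increasing continuous chain of countable elementary submodels $N$ of a large $H(\theta)$ containing $\AgeK$, and let $P = \Cohen[\w_1](\AgeK)\cap N$. Countable unions of such $P$'s are again of this form. The condition about the algebra generated by $A_0 \cup A_1$ holds because $P \cup P'$, for $P, P'$ of this form, is again closed under restrictions. The projection argument only needs this closure, plus closure under amalgamations over restrictions, which may require closing $P\cup P'$ further.

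I expect this last point to be the main obstacle: verifying that the Boolean algebra generated by two members of $C$ is again a regular subalgebra in $C$. I would handle it by defining $C$ intrinsically, as all countable $\bA$ generated by some set $P$ of conditions closed under restrictions $\cA\mapsto\cA\rest[A\cap S_P]$ with $S_P = \bigcup_{\cA\in P} A$. The union of two such $P$'s is again of this form, because the restriction of a condition to $S_P\cup S_{P'}$ splits via \SAP\ into pieces coming from each side.
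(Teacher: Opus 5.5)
Your \SAP\ argument that $\Cohen[\alpha](\AgeK)$ is a complete suborder of $\Cohen[\w_1](\AgeK)$, via the reduction $\cA\mapsto\cA\rest[A\cap\alpha]$, is exactly the regularity fact the paper's proof uses without comment. The gap is in your handling of the ``countability subtlety''.

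Regularity of the algebra generated by $P_\alpha$ needs the reduction of \emph{every} condition of $\Cohen[\w_1](\AgeK)$ to lie in $P_\alpha$. Every element of $\Cohen[\alpha](\AgeK)$ is its own reduction, so this forces $P_\alpha=\Cohen[\alpha](\AgeK)$. Your $\w$-step closure meets only countably many requirements per step, but here there are $\w_1$ of them.

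No other choice of $P_\alpha$ can work, because the statement is false in that case. Suppose $\AgeK$ has uncountably many isomorphism types. Then for some $n$ there are uncountably many distinct structures in $\AgeK$ with universe $[n]$. These are pairwise incompatible conditions, since a common extension would restrict to both on $[n]$. So $\Cohen[\w_1](\AgeK)$ is not c.c.c.\ and cannot be forcing isomorphic to $\CantorFiniteSeq[\w_1]$.

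Such classes with exactly $\w_1$ types exist. Take the finite structures in the unary language $\{U_\xi\st\xi<\w_1\}$ in which each point satisfies at most one $U_\xi$. This is a non-trivial strong \Fraisse\ class (free amalgamation), and a density argument shows that every $\xi<\w_1$ labels some point of $\w$ in the generic structure, so $\w_1$ is even collapsed.

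So your argument only works when $\AgeK$ has countably many isomorphism types. Then $\Cohen[\alpha](\AgeK)$ is already countable for $\alpha<\w_1$ and no closure is needed. Your proof then becomes the paper's, which takes $C$ to be the chain of images of $\Cohen[\alpha](\AgeK)$, $\alpha<\w_1$, and tacitly uses this countability. In that setting you should also follow the paper in two further respects.

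First, the paper applies the characterization theorem to the subalgebra $\mbb{B}$ of $\RO(\Cohen[\w_1](\AgeK))$ generated by the conditions, not to the completion. Inside the complete algebra, your algebras generated by conditions are not unbounded among the countable regular subalgebras. Those subalgebras may contain elements depending on infinitely many points, which are not finite Boolean combinations of conditions.

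Second, the paper indexes $C$ by initial segments $\alpha<\w_1$, so any two members are comparable and the join requirement is automatic; this is why the result is stated for $\w_1$. Your ``intrinsic'' $C$ with arbitrary countable supports violates the join requirement. A condition whose universe meets both $S_P\setminus S_{P'}$ and $S_{P'}\setminus S_P$ is its own restriction to $S_P\cup S_{P'}$, yet lies in neither $P$ nor $P'$. \SAP\ only guarantees that some amalgam exists; it does not make the relations on such mixed tuples determined by the two sides.
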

\begin{proof}
Let $\mbb{B}$ be the 
Boolean
subalgebra of $\RO(\Cohen[X](\AgeK))$ generated by $\Cohen[X](\AgeK)$. Then 
$\mbb{B} \forcingisom \Cohen[X](\AgeK)$, so it suffices to show that $\mbb{B} \forcingisom  \CantorFiniteSeq[\w_1].$
Let \linebreak $C = \{i_\alpha``[\Cohen[\alpha](\AgeK)] \st \alpha < \w_1\}$, where $i_\alpha$ is the canonical embedding of $\Cohen[\alpha](\AgeK)$ into $\mbb{B}$.
Then $C$ witnesses the club requested in
\cref{Regular subalgebra definition of being Cohen} for $\mbb{B}$.
\end{proof}

\begin{remark}
\label{old-Kostana-remark}
Note that 
Propositions~\ref{All Fraisse classes give the same universe} and
\ref{prop:second-partial-converse}
fully answer \cite[Question~1]{Cohen} when the underlying set has size at most $\w_1$.
\end{remark}

\section{Lifting Invariant Measures}
\label{Section: Invariant Measures}

\subsection{Liftings and Relativizations of Invariant Measures}
\label{subsec:liftings}

We begin by proving some basic facts about invariant measures on $\Str<\kappa>[\Lang](X)$.

\begin{definition}  
Suppose $f\:X \to Y$ and $\Lang_0 \subseteq \Lang$. Then we define \linebreak $\Proj<f>[\Lang_0]\: \Str<\kappa>[\Lang](Y) \to \Str<\kappa>[\Lang_0](X)$ to be the map which takes an $\Lang$-structure $\cM$ 
with underlying set $Y$
to the $\Lang_0$-structure $\cN$ 
with underlying set $X$ such that 
\[
\cN \models R(x_0, \dots, x_{n-1}) 
\qquad
\text{if and only if}
\qquad
\cM \models R(f(x_0), \dots, f(x_{n-1}))
\] 
for every $R \in \Lang_0$ of arity $n$ and $x_0, \dots, x_{n-1} \in X$. 
For $X \subseteq Y$, 
we will write $\Proj<X>[\Lang_0]$ to mean $\Proj<f>[\Lang_0]$,
where $f\colon X\to Y$ is the inclusion map. 
\end{definition}

\begin{definition}
Let $\mu$ 
be a probability $\kappa$-measure on $\Str<\kappa>[\Lang](Y)$.
For $f\:X \to Y$ and $\Lang_0 \subseteq \Lang$, we write $\Proj*<f>[\Lang_0](\mu)$ to denote the ``pushforward'' of $\mu$ along $\Proj<f>[\Lang_0]$, i.e., the probability $\kappa$-measure on $\Str<\kappa>[\Lang_0](X)$ where
\[
\Proj*<f>[\Lang_0](\mu)(A) = \mu(\Proj<f>[\Lang_0]^{-1}(A))
\]
for every  $A \in \BorelSets<\kappa>[\Lang_0](X)$.
\end{definition}

Note that a probability $\kappa$-measure $\mu$ on $\Str<\kappa>[\Lang](X)$ is invariant if and only if $\Proj*<g>[\Lang](\mu) = \mu$
for all $g \in \Sym(X)$.  
In fact, we have the following.

\begin{lemma}
\label{Finite support invariance gives full invariance}
Let $\mu$ be a probability $\kappa$-measure on $\Str<\kappa>[\Lang](X)$.
Suppose that 
$\Pushforward[\alpha](\mu) = \mu$
for every $\alpha \in \SymFin(X)$.
Then $\mu$ is invariant. 
\end{lemma}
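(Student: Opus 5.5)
Fix $g \in \Sym(X)$; we need $\Pushforward[g](\mu) = \mu$ on all of $\BorelSets<\kappa>[\Lang](X)$. The plan is to reduce to the basic $\kappa$-Borel codes and then use the uniqueness half of \cref{Unique-extension}. Both $\mu$ and $\nu := \Pushforward[g](\mu)$ are probability $\kappa$-measures on $\Str<\kappa>[\Lang](X)$: the logic action by $g$ is measurable, so $\nu$ is a well-defined pushforward. By the uniqueness clause of \cref{Unique-extension}, it therefore suffices to show that $\mu(\gamma) = \nu(\gamma)$ for every $\gamma \in \BasicBC<\kappa>[\Lang](X)$. (Equivalently, the basic sets form a $\pi$-system, closed under finite intersections up to equality of interpretations, that generates the $\kappa$-Borel sets, and we can apply a $\kappa$-version of the $\pi$-$\lambda$ argument.)

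Next, fix a basic code $\gamma = \<0, \varphi, \<a_0, \dots, a_{n-1}\>\>$, where $\varphi$ is a finite conjunction of literals. Then $\BCvalue(\gamma) = \extent[X]{\varphi(a_0,\dots,a_{n-1})}$, and its preimage (or image, depending on the direction convention) under the logic action of $g$ is $\extent[X]{\varphi(b_0,\dots,b_{n-1})}$ with $b_i = g^{-1}(a_i)$ (or $g(a_i)$). The key observation is that this set depends only on the finitely many values of $g$ on $\{a_0, \dots, a_{n-1}\}$. Choose $\alpha \in \SymFin(X)$ agreeing with $g$ (or with $g^{-1}$, as appropriate) on this finite set. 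Such an $\alpha$ exists: the map $a_i \mapsto g(a_i)$ is a finite partial injection, and any finite partial injection of $X$ extends to a finitely supported permutation by swapping along the finitely many orbits/chains inside the finite set $F = \{a_i\} \cup \{g(a_i)\}$.

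With this $\alpha$ in hand, $\alpha$ and $g$ induce the same image of $\BCvalue(\gamma)$. Hence
\[
\nu(\gamma) = \mu\bigl(\text{$g$-image of } \BCvalue(\gamma)\bigr) = \mu\bigl(\text{$\alpha$-image of } \BCvalue(\gamma)\bigr) = \Pushforward[\alpha](\mu)(\gamma) = \mu(\gamma),
\]
using the hypothesis on $\alpha$ in the last step. This verifies agreement on all basic codes, and uniqueness then gives $\nu = \mu$.

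The main obstacle is the care needed in the uniqueness step. \cref{Unique-extension} is phrased as producing a unique measure from a premeasure $\mu^\circ$ on basic codes. We can apply it with $\mu^\circ$ taken to be the restriction of $\mu$ to basic codes: this restriction automatically satisfies the listed conditions because it comes from a genuine measure, and then both $\mu$ and $\nu$ are extensions of it. The other point needing care is the direction of the logic action, i.e., whether $\alpha$ should match $g$ or $g^{-1}$ on the finite set. That choice is a routine bookkeeping matter, and matching whichever of the two is appropriate works.
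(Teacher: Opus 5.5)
Your proposal is correct and follows the paper's own proof. Both fix an arbitrary permutation, replace it on each basic code by a finitely supported permutation that agrees with it on the finitely many parameters, and then conclude by the uniqueness clause of \cref{Unique-extension}.
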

\begin{proof}
Suppose $\beta \in \Sym(X)$ and $\gamma \in \BasicBC<\kappa>[\Lang](X)$. There is some $\beta_- \in \SymFin(X)$ such that $\beta(\gamma) = \beta_-(\gamma)$. Then $\mu(\gamma) = \mu(\beta_-(\gamma)) = \mu(\beta(\gamma))$. Therefore, by  \cref{Unique-extension}, we have $\mu = \Proj*<\beta>[\Lang](\mu)$.  
\end{proof}

The following immediate fact is 
a key property
of invariant measures. 

\begin{lemma}
\label{Invariant measures have unique finite projections}
Suppose
$\Lang_0 \subseteq \Lang$, 
and let $n \in \w$.
Suppose $i_0, i_1\:[n] \to X$ are injective. Then for any invariant measure $\mu$ on $\Str<\kappa>[\Lang](X)$, we have $\Proj*<i_0>[\Lang_0](\mu) = \Proj*<i_1>[\Lang_0](\mu)$. 
\end{lemma}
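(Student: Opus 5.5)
The plan is to realize $i_1$ as $g \circ i_0$ for a permutation $g \in \Sym(X)$, so that invariance of $\mu$ transfers directly across the two projections. Since $i_0$ and $i_1$ are injective with finite images, the partial bijection $i_1 \circ i_0^{-1}$ from $i_0``[[n]]$ onto $i_1``[[n]]$ extends to some $g \in \SymFin(X)$. For instance, $g$ can be taken to agree with $i_1 \circ i_0^{-1}$ on $i_0``[[n]]$, to map $i_1``[[n]] \setminus i_0``[[n]]$ bijectively onto $i_0``[[n]] \setminus i_1``[[n]]$ (these sets have equal size), and to be the identity elsewhere. Then $i_1 = g \circ i_0$.

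Next I verify the composition law $\Proj<i_1>[\Lang_0] = \Proj<i_0>[\Lang_0] \circ \Proj<g'>[\Lang]$ for the appropriate $g'$ (either $g$ or $g^{-1}$, depending on the direction of the logic action). Unwinding the definition, $\Proj<i_1>[\Lang_0](\cM) \models R(\bar x)$ iff $\cM \models R(g(i_0(\bar x)))$. Also $\Proj<g>[\Lang](\cM)$ is the structure $\cN$ on $X$ with $\cN \models R(\bar y)$ iff $\cM \models R(g(\bar y))$. Hence $\Proj<i_0>[\Lang_0](\Proj<g>[\Lang](\cM)) \models R(\bar x)$ iff $\cM \models R(g(i_0(\bar x)))$. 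So in fact $\Proj<i_1>[\Lang_0] = \Proj<i_0>[\Lang_0]\circ \Proj<g>[\Lang]$ directly, which avoids having to pin down the direction of the logic action.

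Taking pushforwards, which respect composition, gives
\[
\Proj*<i_1>[\Lang_0](\mu) = \Proj*<i_0>[\Lang_0]\bigl(\Proj*<g>[\Lang](\mu)\bigr) = \Proj*<i_0>[\Lang_0](\mu),
\]
using the remark before \cref{Finite support invariance gives full invariance} that invariance is equivalent to $\Proj*<g>[\Lang](\mu)=\mu$ for all $g \in \Sym(X)$. Strictly, that remark is phrased via $g``[A]$, and $\Proj<g>[\Lang]$ is the inverse of the logic action of $g$ (or of $g^{-1}$). Preimages under $\Proj<g>[\Lang]$ are therefore images under some permutation, whose measure is preserved by invariance.

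The argument has no real obstacle. The only steps needing care are the bookkeeping of the composition direction and the measurability of the maps, which follows because preimages of basic sets under $\Proj<f>[\Lang_0]$ are basic sets. Alternatively, one could check equality on basic codes and then invoke the uniqueness in \cref{Unique-extension}.
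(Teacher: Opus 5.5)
Your proposal is correct and is exactly the argument the paper leaves implicit; the paper states the lemma as an immediate fact without proof. You extend $i_1\circ i_0^{-1}$ to a finitely supported permutation $g$ with $i_1 = g\circ i_0$, factor the projection along $i_1$ as the projection along $i_0$ composed with the projection along $g$, and apply invariance of $\mu$ under $g$. Your factorization and your observation that preimages under the projection along $g$ are images of the logic action of $g$ are both right, so the direction bookkeeping causes no trouble.
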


\cref{Invariant measures have unique finite projections} is 
useful
because it will allow us to uniquely ``lift'' an invariant measure on one infinite set to an invariant measure on any other infinite set.

\begin{definition}
Suppose $\mu$ is an invariant measure on $\Str<\kappa>[\Lang](X)$. For $n \in \w$, let $i_n\:[n] \to X$ be 
an
injection. We define the \defn{skeleton}
of $\mu$ to be 
\[
\Skeleton(\mu) = \<\Proj*<i_n>[\Lang_0](\mu)\>_{n \in \w, \, \Lang_0 \in \PowersetFin(\Lang)}.
\]
We let $\Skeleton<\Lang_0>[n](\mu) = \Proj*<i_n>[\Lang_0](\mu)$. 
\end{definition}

\begin{proposition}
\label{Invariant-measures-lift}
For any invariant measure $\mu$ on $\Str<\kappa>[\Lang](Y)$, there is a unique invariant measure $\nu$ on $\Str<\kappa>[\Lang](X)$ such that $\Skeleton(\mu) = \Skeleton(\nu)$. 
\end{proposition}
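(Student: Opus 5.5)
Uniqueness and existence will both be handled through \cref{Unique-extension}, which says that a probability $\kappa$-measure on $\Str<\kappa>[\Lang](X)$ is determined by its values on basic codes. The plan is to define $\nu$ on the basic codes $\BasicBC<\kappa>[\Lang](X)$ directly from $\Skeleton(\mu)$, verify the hypotheses of \cref{Unique-extension}, and then check invariance.

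For uniqueness, let $\gamma = \<0, \varphi, \<a_0, \dots, a_{n-1}\>\>$ be a basic code. It mentions only finitely many relation symbols, say those in $\Lang_0 \in \PowersetFin(\Lang)$, and only finitely many distinct elements $b_0, \dots, b_{m-1}$ of $X$. Let $j\:[m] \to X$ be the injection $k \mapsto b_k$. Then $\BCvalue(\gamma) = \Proj<j>[\Lang_0]^{-1}(\BCvalue(\gamma'))$, where $\gamma'$ is the corresponding basic code on $\Str<\kappa>[\Lang_0]([m])$ obtained by pulling the tuple back along $j$. Consequently $\nu(\gamma) = \Proj*<j>[\Lang_0](\nu)(\gamma')$. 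By \cref{Invariant measures have unique finite projections}, this equals $\Skeleton<\Lang_0>[m](\nu)(\gamma')$. Two invariant measures with the same skeleton therefore agree on all basic codes, and hence coincide by \cref{Unique-extension}.

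For existence, define $\nu^\circ(\gamma) = \Skeleton<\Lang_0>[m](\mu)(\gamma')$ with $\Lang_0$, $m$ and $\gamma'$ as above. Equivalently, $\nu^\circ(\gamma) = \mu(i[\gamma])$ for any injection $i\:X_0 \to Y$, where $X_0$ is the finite support of $\gamma$.

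The main obstacle is showing that this is well defined, i.e.\ independent of the choices made. There are three choices to control.
\begin{itemize}
\item The enumeration of the support: any two enumerations differ by a permutation of $[m]$. Realising that permutation by some $g \in \Sym(Y)$ and using the invariance of $\mu$ shows the value does not change.
\item The finite set $\Lang_0$: enlarging it does not change the value, because the pushforwards are compatible under reduct maps.
\item The support itself: enlarging it by extra points also leaves the value unchanged, because of the compatibility $\Proj<i\circ e>[\Lang_0] = \Proj<e>[\Lang_0]\circ\Proj<i>[\Lang_0]$ for an inclusion $e$.
\end{itemize}
Once $\nu^\circ$ is well defined, the four conditions of \cref{Unique-extension} for $\nu^\circ$ follow immediately. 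Each condition involves only finitely many codes with a common finite support $X_0 \subseteq X$, so we may transport everything along a single injection $X_0 \to Y$. The conditions then hold because $\mu$ is a probability measure. \cref{Unique-extension} therefore yields a probability $\kappa$-measure $\nu$ extending $\nu^\circ$.

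It remains to check that $\nu$ is invariant and has the right skeleton. For invariance, by \cref{Finite support invariance gives full invariance} (or directly), it suffices that $\nu(\alpha(\gamma)) = \nu(\gamma)$ for basic $\gamma$ and $\alpha \in \Sym(X)$. This holds because $\alpha(\gamma)$ is computed by the same skeleton entry, using the enumeration composed with $\alpha$; the full measures then agree by the uniqueness part of \cref{Unique-extension}. Finally, $\Skeleton(\nu) = \Skeleton(\mu)$ holds because the two sides agree on basic codes of each $\Str<\kappa>[\Lang_0]([n])$, again invoking \cref{Unique-extension}.
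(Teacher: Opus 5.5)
Your proposal is correct and follows essentially the same route as the paper. Both define $\nu^\circ$ on basic codes from the skeleton of $\mu$ via an enumeration of the parameters, use invariance of $\mu$ to make this independent of the enumeration, extend by \cref{Unique-extension}, and obtain invariance by checking basic codes as in \cref{Finite support invariance gives full invariance}. You are also more careful than the paper, which leaves implicit both the independence from the choice of $\Lang_0$ and of the support and the uniqueness half; your argument via \cref{Invariant measures have unique finite projections} and \cref{Unique-extension} is exactly what is needed to fill in those points.
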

\begin{proof}
Suppose $\varphi(a_0, \dots, a_{n-1})$ is a finite conjunction of literals in $\Lww(\Lang_0)$, where $\Lang_0 \subseteq \Lang$, with parameters in $X$. Let $X_0 = \{a_i\}_{i \in [n]}$, let $k = |X_0|$, and let $\zeta\:X_0 \to [k]$ 
be a bijection. Let \[
\nu^\circ\bigl(\varphi(a_0, \dots, a_{n-1})\bigr) = \Skeleton<\Lang_0>[k](\mu)\bigl(\BCformula(\varphi(\zeta(a_0), \dots, \zeta(a_{n-1})))\bigr)
.
\]

Note that as $\mu$ is invariant, the value of $\nu^\circ$ does not depend on the bijection $\zeta$. But then by \cref{Unique-extension} there is a unique probability \linebreak
$\kappa$-measure $\nu$ on $\Str<\kappa>[\Lang](X)$ such that 
$\nu^\circ(\gamma) = \nu(\gamma)$
for any $\gamma \in \BasicBC<\kappa>[\Lang](X)$.

For any $\alpha$ with finite support, we have $\Pushforward[\alpha](\nu) = \nu$. Hence $\nu$ is invariant by \cref{Finite support invariance gives full invariance}.

It is immediate from 
the way
$\nu$ was constructed that $\Skeleton(\mu) = \Skeleton(\nu)$. 
\end{proof}

\begin{definition}
When $\mu$ is an invariant measure on $\Str<\kappa>[\Lang](Y)$ we write $\mu_X$ for the unique 
invariant measure
on $\Str<\kappa>[\Lang](X)$ such that $\Skeleton(\mu) = \Skeleton(\mu_X)$, and call $\mu_X$ the \defn{lifting} of $\mu$ to $X$. 
\end{definition}

As a consequence of \cref{Invariant-measures-lift}, for any invariant measure $\mu$ on $\Str<\kappa>[\Lang](X)$ and any injection $i\:\w \to X$, the measure $\mu$ is completely determined by $\Proj*<i>[\Lang](\mu)$. 

The following is also an immediate consequence of \cref{Invariant-measures-lift}.
\begin{corollary}
Let $f\:X \to Y$ be an injection and let $\mu$ be an invariant measure on $\Str<\kappa>[\Lang](Y)$. Then $\Proj*<f>[\Lang](\mu) = \mu_X$. 
\end{corollary}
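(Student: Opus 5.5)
By the uniqueness clause of \cref{Invariant-measures-lift}, it suffices to show two things about $\Proj*<f>[\Lang](\mu)$: that it is an invariant measure on $\Str<\kappa>[\Lang](X)$, and that its skeleton equals $\Skeleton(\mu)$. The identity $\Proj*<f>[\Lang](\mu) = \mu_X$ then follows. Throughout I use functoriality of the projection maps: for maps $g\:W \to X$ and $f\:X \to Y$ and $\Lang_0 \subseteq \Lang$, we have $\Proj<g>[\Lang_0] \circ \Proj<f>[\Lang] = \Proj<f \circ g>[\Lang_0]$. This is immediate from the definition, since $R(x_0,\dots)$ holds in the composite exactly when $R(f(g(x_0)),\dots)$ holds in $\cM$. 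Consequently $\Proj*<g>[\Lang_0](\Proj*<f>[\Lang](\mu)) = \Proj*<f\circ g>[\Lang_0](\mu)$.

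For the skeleton, I fix $n$, a finite $\Lang_0 \subseteq \Lang$, and an injection $i_n\:[n] \to X$. Functoriality gives
\[
\Skeleton<\Lang_0>[n](\Proj*<f>[\Lang](\mu)) = \Proj*<f\circ i_n>[\Lang_0](\mu).
\]
Since $f \circ i_n\:[n] \to Y$ is injective, \cref{Invariant measures have unique finite projections} shows this equals $\Skeleton<\Lang_0>[n](\mu)$.

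For invariance, by \cref{Finite support invariance gives full invariance} it is enough to handle $\alpha \in \SymFin(X)$, and by \cref{Unique-extension} it is enough to check equality on basic codes. Let $\gamma$ be a basic code with parameters $a_0,\dots,a_{n-1} \in X$ in a finite sublanguage $\Lang_0$. Its $\Proj*<f>[\Lang](\mu)$-measure depends only on the pushforward along an injection $[k] \to X$ enumerating the parameters, composed with $f$. The measure of its translate under $\alpha$ is computed the same way, with that injection precomposed by $\alpha$. Both composites are injections $[k] \to Y$, so \cref{Invariant measures have unique finite projections} makes the two values equal.

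The only mild obstacle is bookkeeping the direction of the logic action against the contravariance of $\pi$, and confirming that pushforwards of basic codes are computed through finite injections. Both are routine, so the corollary follows.
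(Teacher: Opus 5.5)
Your proposal is correct and is essentially the paper's argument. The paper states the corollary as an immediate consequence of the uniqueness clause in \cref{Invariant-measures-lift}, and your two checks supply the details it leaves to the reader: the functoriality identity $\pi_{i_n:\Lang_0}^*\bigl(\pi_{f:\Lang}^*(\mu)\bigr) = \pi_{f\circ i_n:\Lang_0}^*(\mu)$ for the skeleton, and the finite-support verification of invariance on basic codes.
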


Because we will want to move between different models of set theory, it will be important to know, for $V_0 \subseteq V_1$, when a probability $\kappa$-measure $\mu \in V_0$ is the same as a probability $\kappa$-measure $\mu^* \in V_1$. The general notion of relativization is defined in \cite{MR2652199}, but for our purposes the following definition suffices. 
(We will generally use square brackets when we need to parenthesize objects being relativized.)

\begin{definition}
Suppose $V_0 \subseteq V_1$ and suppose that for 
$i \in \{0, 1\}$, we are given
a probability
$\kappa$-measure 
$\mu_i \in V_i$ 
on $[\Str<\kappa>[\Lang](X)]^{V_i}$. 
We say that $\mu_1$ is the \defn{relativization} of $\mu_0$ to $V_1$ if 
\[
\mu_0(\gamma) = \mu_1(\gamma)
\]
for all $\gamma \in [\BasicBC<\kappa>[\Lang](X)]^{V_0}$,
and in this case we write $[\mu_0]^{V_1}$ for $\mu_1$. 
\end{definition}

Note that by \cref{Unique-extension} and the fact that $[\BasicBC<\kappa>[\Lang](X)]^{V_0} = [\BasicBC<\kappa>[\Lang](X)]^{V_1}$, any probability $\kappa$-measure on $\Str<\kappa>[\Lang](X)$ in $V_0$ has a unique relativization to $V_1$.

The following shows that a measure and its relativization always agree on the measure of any $\kappa$-Borel code in the domain of the original measure.

\begin{theorem}
\label{Relativizations give same values}
Suppose
$V_0 \subseteq V_1$.
Let $\kappa, X, \Lang \in V_0$
and let
$\gamma \in [\BorelCodes<\kappa>[\Lang](X)]^{V_0}$.
Suppose
$\mu \in V_0$ is a probability $\kappa$-measure on $\Str<\kappa>[\Lang](X)$.
Then for any \linebreak $r \in [0, 1]^{V_0}$, we have
\[
V_0 \models \mu(\gamma) = r
\qquad
\text{if and only if}
\qquad
V_1 \models \mu^{V_1}(\gamma) = r
.
\]
\end{theorem}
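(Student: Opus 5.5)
Since $\mu^{V_1}$ is determined by $\mu$ on basic codes, I would prove the statement by induction on the rank of $\gamma \in [\BorelCodes<\kappa>[\Lang](X)]^{V_0}$. The statement is slightly strengthened along the way so that the induction goes through:
\begin{quote}
for every $\gamma \in [\BorelCodes<\kappa>[\Lang](X)]^{V_0}$ and every $\delta \in [\AlmostBasicBC<\kappa>[\Lang](X)]^{V_0}$ (or every finite conjunction of basic codes), the values $\mu(\gamma \And \delta)$ computed in $V_0$ and $\mu^{V_1}(\gamma \And \delta)$ computed in $V_1$ coincide.
\end{quote}
Since $\mu^{V_1}$ exists uniquely by \cref{Unique-extension}, and real numbers of $V_0$ are real numbers of $V_1$, it suffices to show $\mu(\gamma) = \mu^{V_1}(\gamma)$ as reals.

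\emph{Base cases.} For rank $0$, and for finite Boolean combinations of basic codes, equality holds by the definition of relativization together with finite additivity. Both measures are determined on the finite Boolean algebra generated by finitely many literal instances, which is the same algebra in $V_0$ and in $V_1$.

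\emph{Negation.} This step is immediate: $\mu(\neg\gamma \And \delta) = \mu(\delta) - \mu(\gamma\And\delta)$, and the same identity holds in $V_1$.

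\emph{Disjunctions.} This is the main obstacle. Let $\gamma = \bigvee \Gamma$ with $\Gamma = \{\gamma_i\}_{i\in\kappa} \in V_0$. Working in $V_0$, I would disjointify by setting $\gamma'_i = \gamma_i \And \bigwedge_{j<i} \neg\gamma_j$. Then $\mu(\gamma) = \sum_{i\in\kappa} \mu(\gamma'_i)$ by $\kappa$-additivity in $V_0$. The same disjointification, viewed in $V_1$, is still a sequence of codes with pairwise disjoint interpretations and union $\BCvalue(\gamma)^{V_1}$, since interpretation of codes commutes with the Boolean operations in any model. Because $\kappa$ need not remain a cardinal in $V_1$, $\mu^{V_1}$ is still $\kappa$-additive along the index set $\kappa$ in $V_1$, so $\mu^{V_1}(\gamma) = \sum_{i\in\kappa}\mu^{V_1}(\gamma'_i)$. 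The sum $\sum_{i\in\kappa} r_i$ of nonnegative reals is defined as a supremum over finite subsets, and that supremum is absolute once the sequence $\<r_i\>$ lies in $V_0$.

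The difficulty is that $\gamma'_i$ involves a conjunction of size $<\kappa$, so its rank need not be below $\rank(\gamma)$. I would therefore arrange the induction as a nested induction: an outer induction on $\rank(\gamma)$, and within a fixed rank, an induction on $i$ proving $\mu(\gamma'_i \And \delta) = \mu^{V_1}(\gamma'_i \And \delta)$. The partial unions $\bigvee_{j<i}\gamma_j$ are handled again by disjointification and continuity at limit stages. This is why the strengthened statement, with an auxiliary conjunct, is convenient.

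An alternative I would fall back on, which avoids ranks entirely, is to define $\mathcal{G} = \{\gamma \in V_0 \st \mu(\gamma) = \mu^{V_1}(\gamma)\}$. The approach is to show that $\mathcal{G}$ contains the basic codes, is closed under finite intersections (via $\delta$), complements, and increasing $\kappa$-unions, and then apply a $\pi$-$\lambda$ style argument inside $V_0$ on codes. Increasing unions are handled by continuity from below, and the value is the supremum of the same $V_0$-sequence of reals. Conjunctions $\bigwedge\Gamma$ reduce to disjunctions via $\neg\bigvee\neg$.
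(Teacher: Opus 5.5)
There is a genuine gap in the disjunction step. It is not really the rank problem you point to. Your invariant only controls $\mu(\gamma\wedge\delta)$ for a single code $\gamma$ against almost basic $\delta$, so it says nothing about how two codes that are not almost basic behave jointly. Disjointifying needs exactly that, and already for a binary join. For $\gamma_0\vee\gamma_1$ you need $\mu(\gamma_1\wedge\neg\gamma_0\wedge\delta)=\mu(\gamma_1\wedge\delta)-\mu(\gamma_0\wedge\gamma_1\wedge\delta)$, and nothing in your hypothesis makes $\mu(\gamma_0\wedge\gamma_1\wedge\delta)$ agree with $\mu^{V_1}(\gamma_0\wedge\gamma_1\wedge\delta)$. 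The nested induction on $i$ hits the same wall at every stage, because handling $\gamma'_i\wedge\delta$ requires $\mu\bigl(\gamma_i\wedge\bigvee_{j<i}\gamma_j\wedge\delta\bigr)$.

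Your fallback has the same hole. You assert that $\mathcal{G}$ is closed under finite intersections ``via $\delta$'', but that is precisely what has to be proved. In a $\pi$-$\lambda$ argument, intersection-closure is assumed of the generating class (the almost basic codes) and \emph{concluded} for $\mathcal{G}$. Also, emptiness of a code is not absolute (see the paper's Example). So any ``disjoint'' or ``increasing'' families used there must be so in both models, for instance syntactically.

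The paper supplies the missing step by approximation. It considers the class $I$ of codes $\gamma\in V_0$ for which $\mu(\gamma\wedge\zeta)$ and $\mu(\gamma\vee\zeta)$ agree, for every almost basic $\zeta$, across a family of models. That family includes $V_0$, $V_1$ and a model $V^*$ in which $X$, $\Lang$ and $\kappa$ are countable. In $V^*$ the space is standard Borel, so each $\gamma$ has an almost basic $\eta$ with $\mu^{V^*}(\gamma\triangle\eta)<\epsilon$, and membership in $I$ transfers this bound to every model. Hence for $\gamma_0,\gamma_1\in I$, the value of $\gamma_0\wedge\gamma_1\wedge\zeta$ is within $2\epsilon$ of the value of the finite Boolean combination $\eta_0\wedge\eta_1\wedge\zeta$ in every model. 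That latter value is the same everywhere, so $\gamma_0\wedge\gamma_1\in I$. Only after this are $\kappa$-joins treated, via $\mu(\bigvee\Gamma)=\sup$ of $\mu$ over finite subjoins.

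Alternatively, you can repair your rank induction without approximation by strengthening the invariant to: \emph{every finite Boolean combination of codes of rank $<\alpha$ has the same measure in $V_0$ and $V_1$}. A finite Boolean combination of codes of rank $\le\alpha$ then splits into finitely many atoms that are disjoint in every model. The measure of each atom is an iterated $\sup$/$\inf$ over finite subfamilies of measures of finite Boolean combinations of lower-rank codes; this uses continuity from below and above, which holds for probability $\kappa$-measures. Such sups and infs of $V_0$-families of reals are absolute, so the values agree.
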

\begin{proof}
For $\gamma, \zeta \in \BorelCodes<\kappa>[\Lang](X)$ let $\gamma \SymDiff \zeta = (\gamma \And \neg \zeta) \Or (\neg \gamma \And \zeta)$. 
Let $\cV$ be a collection of models of set theory such that 
\begin{itemize}
\item 
$V_0 \subseteq V$
for all $V \in \cV$, 

\item $V_0, V_1 \in \cV$, and

\item there is a $V \in \cV$ such that $V \, \models \, |X| = |\Lang| = |\kappa| = \w$. 
\end{itemize}

Note that for $V^+, V^- \in \cV$ we have $[\AlmostBasicBC<\kappa>[\Lang](X)]^{V^+} = [\AlmostBasicBC<\kappa>[\Lang](X)]^{V^-}$. We will therefore refer to this set simply as $\AlmostBasicBC<\kappa>[\Lang](X)$. 

We say that two $\kappa$-Borel codes $\gamma_0, \gamma_1$ are $\mu$-equivalent when
\linebreak
$\mu^{V}(\gamma_0) = \mu^V(\gamma_1)$
for all $V \in \cV$.

Suppose that $\gamma = \bigvee \Gamma$ is an almost basic $\kappa$-Borel code. Then there is an almost basic $\kappa$-Borel code $\gamma^*  = \bigvee \Gamma^*$
that is $\mu$-equivalent to $\gamma$ such that
for any distinct $\zeta_0, \zeta_1\in \Gamma^*$, we have $\BCvalue(\zeta_0 \And \zeta_1)^V = \emptyset$ for all $V \in \cV$. Therefore 
$\mu(\gamma) = \mu^V(\gamma)$
for all $V \in \cV$.

Let $I\subseteq [\BorelCodes<\kappa>[\Lang](X)]^{V_0}$ be the collection of $\gamma$ such that
\[
\mu(\gamma \And \zeta) = \mu^{V}(\gamma \And \zeta) 
\qquad
\text{and}
\qquad
\mu(\gamma \Or \zeta) = \mu^{V}(\gamma \Or \zeta)
\]
for any $\zeta \in \AlmostBasicBC<\kappa>[\Lang](X)$  and any  $V \in \cV$.

Note that there are functions $A, O \:\AlmostBasicBC<\kappa>[\Lang](X) \times \AlmostBasicBC<\kappa>[\Lang](X) \to \AlmostBasicBC<\kappa>[\Lang](X)$ such that $A(\gamma_0, \gamma_1)$ is $\mu$-equivalent to $\gamma_0 \And \gamma_1$ and $O(\gamma_0, \gamma_1)$ is $\mu$-equivalent to $\gamma_0 \Or \gamma_1$. Therefore $\AlmostBasicBC<\kappa>[\Lang](X) \subseteq I$. 

Also note that there is an $n\:\AlmostBasicBC<\kappa>[\Lang](X) \to \AlmostBasicBC<\kappa>[\Lang](X)$ such that 
$n(\zeta)$ is \linebreak$\mu$-equivalent to $\neg \zeta$
for all $\zeta \in \AlmostBasicBC<\kappa>[\Lang](X)$.
Suppose $\gamma \in I$ and $\zeta \in \AlmostBasicBC<\kappa>[\Lang](X)$. Then $\mu(\neg \gamma \Or \zeta) = 1 - \mu(\gamma \And \neg \zeta)$ and $\mu(\neg \gamma \And \zeta) = 1 - \mu(\gamma \Or \neg \zeta)$. Therefore $I$ is closed under negation. In particular for $\gamma \in I$, for $\zeta \in \AlmostBasicBC<\kappa>[\Lang](X)$ and for any $V \in \cV$, we have $\mu(\gamma \SymDiff \zeta) = \mu^{V}(\gamma \SymDiff \zeta)$. 

Suppose $V^* \in\cV$ is such that $V^* \,\models \, |X| = |\Lang| = |\kappa| = \w$. Then $V^* \models \Str<\kappa>[\Lang](X)$ is a standard Borel space. Hence for every $\gamma \in [\BorelCodes<\kappa>[\Lang](X)]^{V^*}$ and $\epsilon \in \Rationals^+$ there is an $\eta(\epsilon, \gamma) \in \AlmostBasicBC<\kappa>[\Lang](X)$ such that $\mu^{V^*}(\gamma \SymDiff \zeta) < \epsilon$. 

Now suppose $\gamma_0, \gamma_1 \in I$ and $\zeta \in \AlmostBasicBC<\kappa>[\Lang](X)$. For $\epsilon \in \Rationals^+$ we have \linebreak $\mu^{V^*}(\gamma_0 \SymDiff \eta(\epsilon, \gamma_0)) < \epsilon$ and $\mu^{V^*}(\gamma_1 \SymDiff \eta(\epsilon, \gamma_1)) < \epsilon$. 
Therefore, as $\gamma_0, \gamma_1 \in I$, we have $\mu^{V}(\gamma_0 \SymDiff \eta(\epsilon, \gamma_0)) < \epsilon$ and $\mu^{V}(\gamma_1 \SymDiff \eta(\epsilon, \gamma_1)) < \epsilon$ 
for any $V \in \cV$. Therefore for any $V \in \cV$, we have 
\[
\bigl|\mu^V(\gamma_0 \And \gamma_1 \And \zeta) - \mu(\eta(\epsilon, \gamma_0) \And \eta(\epsilon, \gamma_1) \And \zeta)\bigr| < 2 \epsilon 
.\]

Then for any $V^-, V^+ \in \cV$  and $\zeta \in \AlmostBasicBC<\kappa>[\Lang](X)$ we have 
\[\bigl|\mu^{V^-}(\gamma_0 \And \gamma_1 \And \zeta) - \mu^{V^+}(\gamma_0 \And \gamma_1 \And \zeta)\bigr| <  4 \epsilon.
\]
But, as $\epsilon$ was arbitrary, this implies $\mu^{V^-}(\gamma_0 \And \gamma_1 \And \zeta) = \mu^{V^+}(\gamma_0 \And \gamma_1 \And \zeta)$. \linebreak A similar argument shows that $\mu^{V^-}((\gamma_0 \And \gamma_1) \Or \zeta) = \mu^{V^+}((\gamma_0 \And \gamma_1) \Or \zeta)$. Therefore $\gamma_0\And \gamma_1 \in I$.  As $I$ is closed under negation and $\mu$-equivalence, this also shows that $\gamma_0 \Or \gamma_1 \in I$.

Now suppose $\gamma = \bigvee \Gamma$ with $\gamma \in V_0$ and $\Gamma \subseteq I$. For $V \in \cV$ we have $\mu^V(\gamma) = \sup\{\mu^V(\bigvee \Gamma_0) \st \Gamma_0  \in \PowersetFin(\Gamma)\}$. But by the above 
we know that $\mu^V(\bigvee \Gamma_0) = \mu^{V_0}(\bigvee \Gamma_0)$ for any $\Gamma_0 \in \PowersetFin(\Gamma)$. Hence $\mu(\bigvee \Gamma) = \mu^{V}(\bigvee \Gamma)$ and $\bigvee \Gamma \in I$. Therefore, $I = [\BorelCodes<\kappa>[\Lang](X)]^{V_0}$. 

The result then follows as 
$\mu(\gamma) = \mu^{V_1}(\gamma)$
for $\gamma \in I$.
\end{proof}

By \cref{Relativizations give same values}, for $V_0 \subseteq V_1$ 
we have
$[\RandPO(\mu)]^{V_0} = [\RandPO(\mu^{V_1})]^{V_1} \cap V_0$, and for $\gamma, \zeta \in [\RandPO(\mu)]^{V_0}$ we have $V_0 \models \gamma \preceq_\mu \zeta$ if and only if $V_1 \models \gamma \preceq_{\mu^{V_1}} \zeta$. 
We also have the following 
corollary.

\begin{corollary}
\label{We can approximate all Borel codes by almost basic Borel codes}
Suppose $\gamma \in \BorelCodes<\kappa>[\Lang](X)$ and $\mu$ is a probability $\kappa$-measure on $\Str<\kappa>[\Lang](X)$. Then for all $\epsilon \in \Rationals^+$ there is a $\zeta_\epsilon \in \AlmostBasicBC<\kappa>[\Lang](X)$ such that $\mu(\gamma \SymDiff\zeta_\epsilon) < \epsilon$. 
\end{corollary}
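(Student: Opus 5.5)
The plan is to pass to a model of set theory in which $X$, $\Lang$ and $\kappa$ are all countable. There, the approximation property is the classical one for standard Borel spaces. We then pull the resulting inequality back to the original model using \cref{Relativizations give same values}. The key observation is that almost basic codes are the same in every model and have only finitely many constituents, so the witness $\zeta_\epsilon$ found in the larger model already lies in the original one.

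\emph{Step 1: the collapsing extension.} Work in the ambient model $V_0$ containing $\gamma$ and $\mu$. Let $V_1 \supseteq V_0$ be a model of set theory with $V_1 \models |X| = |\Lang| = |\kappa| = \w$; for instance, take a generic extension by the Lévy collapse of a sufficiently large cardinal to $\w$. (If one is uneasy about the existence of generics over the background model, the same argument can be run with a countable elementary submodel or in the Boolean-valued setting.) Let $\mu^{V_1}$ be the relativization of $\mu$.

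\emph{Step 2: approximation in $V_1$.} In $V_1$, $\Str<\kappa>[\Lang](X)$ is a standard Borel space isomorphic to Cantor space, and $\kappa$-unions are countable unions. So $\BCvalue(\gamma)^{V_1}$ is an honest Borel set, and $\mu^{V_1}$ is a Borel probability measure. The sets $\BCvalue(\zeta)$ for $\zeta \in \AlmostBasicBC<\kappa>[\Lang](X)$ form the algebra of clopen sets, which generates the Borel $\sigma$-algebra. The standard approximation lemma (the family of Borel sets approximable in $\mu^{V_1}$-measure by the generating algebra is a $\sigma$-algebra) therefore gives $\zeta_\epsilon \in \AlmostBasicBC<\kappa>[\Lang](X)$ with $V_1 \models \mu^{V_1}(\gamma \SymDiff \zeta_\epsilon) < \epsilon$. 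This is exactly the fact already invoked in the proof of \cref{Relativizations give same values}.

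\emph{Step 3: pulling back.} An almost basic code is a finite disjunction of finite conjunctions of literals with parameters in $X$, so $\zeta_\epsilon \in V_0$. Hence $\gamma \SymDiff \zeta_\epsilon = (\gamma \And \neg\zeta_\epsilon) \Or (\neg\gamma \And \zeta_\epsilon)$ is a $\kappa$-Borel code in $V_0$. Let $r = \mu(\gamma \SymDiff \zeta_\epsilon)$, computed in $V_0$. By \cref{Relativizations give same values}, $V_1 \models \mu^{V_1}(\gamma \SymDiff \zeta_\epsilon) = r$, so $r < \epsilon$, and $\zeta_\epsilon$ is the required witness.

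The main obstacle is Step 1: justifying the existence of $V_1$ as a transitive model extending $V_0$ inside the fixed background universe. Everything else is routine once \cref{Relativizations give same values} is available. If this step is problematic, the fallback is a direct transfinite induction on $\rank(\gamma)$ in $V_0$. The base case and the negation step are easy. The step for $\bigvee\Gamma$ with $|\Gamma| \le \kappa$ uses $\kappa$-additivity: $\mu(\bigvee\Gamma) = \sup_{\Gamma_0 \in \PowersetFin(\Gamma)} \mu(\bigvee\Gamma_0)$, so one picks a finite $\Gamma_0$ within $\epsilon/2$ and approximates each member of $\Gamma_0$ within $\epsilon/(2|\Gamma_0|)$. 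This induction bypasses the set-theoretic issue entirely.
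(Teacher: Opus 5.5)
Your Steps 1--3 are exactly the paper's proof: pass to a model $V_1$ in which $X$, $\Lang$ and $\kappa$ are countable, approximate by almost basic (clopen) codes there using standard Borel measure theory, and pull the bound back to the original model with \cref{Relativizations give same values}, using that almost basic codes are the same in every model. Your fallback induction on rank is also correct, and it is more elementary: $\kappa$-additivity already gives $\mu(\bigvee\Gamma)=\sup_{\Gamma_0\in\PowersetFin(\Gamma)}\mu(\bigvee\Gamma_0)$ in the original model (the $\bigwedge$ case is dual), so that argument avoids the question of whether the collapsing extension $V_1$ exists.
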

\begin{proof}
Let $V_1$ be such that $V_1 \,\models\, |X| = |\Lang| = |\kappa| = \w$. Then $[\Str<\kappa>[\Lang](X)]^{V_1}$ is a standard Borel space and hence a Radon space. So for all $\epsilon \in \Rationals^+$ there must be a $\zeta_\epsilon \in [\AlmostBasicBC<\kappa>[\Lang](X)]^{V_1}$ such that $\mu^{V_1}(\gamma \SymDiff \zeta_\epsilon) < \epsilon$. The result then follows from \cref{Relativizations give same values} and the fact that $\AlmostBasicBC<\kappa>[\Lang](X)$ is the same in any model of set theory.
\end{proof}

Note that  for
$V_0 \subseteq V_1$,
in general it need not be the case that if 
\linebreak
$\gamma \in [\BorelCodes<\kappa>[\Lang](X)]^{V_0}$ and $V_0 \models \BCvalue(\gamma) = \emptyset$ then $V_1 \models \BCvalue(\gamma) = \emptyset$, as the following example shows.
\begin{example}
Let $\Lang = \{U, \preccurlyeq\} \cup \{f\}$ where $U$ is a unary relation symbol, $\preccurlyeq$~is a binary relation symbol,
and $f$ is a unary function symbol. For $\alpha, \beta \in \kappa^+$, let $T_{\alpha, \beta}$ be the sentence of $\Lkpw(\Lang)$ which asserts the conjunction of the following statements. 
\begin{itemize}
\item $\preccurlyeq$ is a linear ordering 
of order type $\alpha$
on the elements that satisfy $U$.

\item $\preccurlyeq$ is a linear ordering 
of order type $\beta$
on the elements that satisfy $\neg U$.

\item 
$U(x) \leftrightarrow \neg U(f(x))$ for all $x$.

\item 
$f(f(x)) = x$
for all $x$.
\end{itemize}
Note that $T_{\alpha, \beta}$ implies that $f$ is a bijection from the set of elements that satisfy $U$ to the set of elements that satisfy $\neg U$. Hence $T_{\alpha, \beta}$ has a model precisely when $|\alpha| =|\beta|$. 

Hence for $V_0 \subseteq V_1$, if
$V_0 \models \kappa > \w$ and $V_1 \models |\kappa|^{V_0} = \w$ then 
\[
V_0 \models \BCvalue(\BCformula(\bigwedge T_{\w, {\kappa}})) = \emptyset
\]
whereas 
\[
V_1 \models \BCvalue(\BCformula(\bigwedge T_{\w, {\kappa}})) \neq \emptyset.
\]
\end{example}

The next lemma essentially says that emptiness of almost basic
$\kappa$-Borel codes is absolute.

\begin{lemma}
\label{Almost basic Borel codes being empty is absolute}
Suppose $V_0 \subseteq V_1$ and $\gamma \in [\BorelCodes<\kappa>[\Lang](X)]^{V_0}$ is a finitary Boolean combination of basic codes.
Then $V_0 \models \BCvalue(\gamma) = \emptyset$ if and only if $V_1 \models \BCvalue(\gamma) = \emptyset$. 
\end{lemma}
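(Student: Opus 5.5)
Since $\gamma$ is a finitary Boolean combination of basic codes, only finitely many atomic facts about finitely many elements of $X$ are involved. Let $X_0 \subseteq X$ be the finite set of parameters occurring in the basic codes that make up $\gamma$. Let $\Lang_0 \subseteq \Lang$ be the finite set of relation symbols occurring in the conjunctions of literals in those basic codes. Note that $X_0$, $\Lang_0$, and the finite list of these basic codes are all finite objects, so they lie in $V_0$. Moreover, they are the same whether they are computed in $V_0$ or in $V_1$.

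The key step is a reduction to a finite configuration space. Let $S$ be the finite set of $\Lang_0$-structures with underlying set $X_0$; this set is the same in $V_0$ and in $V_1$, by absoluteness of finite objects. Define the restriction map $\cM \mapsto \cM\rest[\Lang_0, X_0]$. By induction on the (finite) construction of $\gamma$, show that for any $\cM \in \Str[\Lang](X)$, in any model of set theory, membership $\cM \in \BCvalue(\gamma)$ depends only on $\cM\rest[\Lang_0, X_0]$. Thus there is a subset $S_\gamma \subseteq S$ such that
\[
\BCvalue(\gamma) = \{\cM \st \cM\rest[\Lang_0,X_0] \in S_\gamma\}.
\]
The set $S_\gamma$ is defined by a recursion on the finite code that mentions only finite objects: basic codes are evaluated by checking satisfaction of a finite conjunction of literals in a finite structure, and negation, finite conjunction and finite disjunction become complement, intersection and union in $S$. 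Hence $S_\gamma$ is computed identically in $V_0$ and $V_1$.

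It remains to show, in each model, that $\BCvalue(\gamma) = \emptyset$ if and only if $S_\gamma = \emptyset$. One direction is trivial. For the other, any $\cN \in S_\gamma$ extends to a full structure on $X$: interpret every relation symbol outside $\Lang_0$ as empty, and every tuple not contained in $X_0$ as false. This extension can be carried out in either model. So both sides of the lemma are equivalent to the single absolute statement $S_\gamma = \emptyset$.

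The main obstacle is only bookkeeping: the statement has to be checked carefully for codes whose conjunction/disjunction index sets are written with index $\kappa$ but are really finite. I would handle this by taking ``finitary Boolean combination'' to mean that every $\Gamma$ occurring in the construction is finite, so the induction is over a finite well-founded tree, all of which belongs to $V_0$ and is absolute to $V_1$.
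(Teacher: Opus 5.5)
Your proof is correct and follows essentially the same route as the paper, which observes that in every model the interpretation of $\gamma$ is the extent of one fixed first-order quantifier-free formula with finitely many parameters from $X$. That is exactly the information your finite set $S_\gamma$ of configurations on $X_0$ encodes, and your extension argument (setting all remaining relations false) spells out the step the paper leaves implicit, namely why emptiness of such an extent is the same in $V_0$ and $V_1$.
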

\begin{proof}
This is because there is a first-order quantifier-free formula $\psi$ of arity $k$ and elements $x_0, \dots, x_{k-1}$ such that 
$V\models \BCvalue(\gamma) = \extent<\Lang>[X]{\psi(x_0,\dots, x_{k-1})}$
for any model of set theory $V$.
\end{proof}

This motivates the following definition. 
\begin{definition}
Suppose $\gamma_0, \gamma_1$ are finitary Boolean combinations of basic $\kappa$-Borel codes. We say that $\gamma_0 \sqsubseteq \gamma_1$ if $\BCvalue(\gamma_0 \And \neg \gamma_1) = \emptyset$. We say that $\gamma_0, \gamma_1$ are \defn{equivalent} if $\gamma_0 \sqsubseteq \gamma_1$ and $\gamma_1 \sqsubseteq \gamma_0$. 
\end{definition}

Note that by \cref{Almost basic Borel codes being empty is absolute}, 
equivalence of almost basic Borel codes is absolute.

\begin{theorem} 
\label{Basic results about relativization}
Suppose $V_0 \subseteq V_1$ and
$X, Y \in V_0 $. Let 
$\mu\in V_0$ be a probability $\kappa$-measure on $\Str<\kappa>[\Lang](X)$. 
Then the following hold.
\begin{itemize}
\item[(a)] There is a unique  probability $\kappa$-measure
$\mu^{V_1} \in V_1$ that is the relativization of $\mu$.

\item[(b)] $\mu$ is invariant if and only if $\mu^{V_1}$ is invariant.

\item[(c)] If $\eta \in [\BorelCodes<\kappa>[\Lang](X)]^{V_0}$  then $\mu(\eta) = \mu^{V_1}(\eta)$.

\item[(d)] If $\mu$ is invariant then $[\mu_Y]^{V_1} = (\mu^{V_1})_{Y}$. 

\item[(e)] If $\mu$ is invariant and $i\:X \to Y$ is injective then 
$\mu(\gamma) = \mu_Y(i(\gamma))$
for any $\gamma \in [\BorelCodes<\kappa>[\Lang](X)]^{V_0}$.
\end{itemize}
\end{theorem}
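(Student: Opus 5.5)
The plan is to prove the five parts in order. Parts (a) and (c) are essentially already in hand; (b), (d) and (e) reduce to them through \cref{Unique-extension}, because every measure in sight is pinned down by its values on basic codes.

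For (a), the basic codes do not depend on the model: $[\BasicBC<\kappa>[\Lang](X)]^{V_0} = [\BasicBC<\kappa>[\Lang](X)]^{V_1}$. The restriction $\mu\rest[\BasicBC<\kappa>[\Lang](X)]$ lies in $V_0 \subseteq V_1$, and $V_1$ still sees it satisfying the four hypotheses of \cref{Unique-extension}, since these are finitary identities among basic codes. Applying \cref{Unique-extension} inside $V_1$ gives a unique extension, and this extension is $\mu^{V_1}$. Part (c) is exactly \cref{Relativizations give same values}, taking $r = \mu(\eta)$.

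For (b), we work with finite-support permutations. A permutation $g \in \SymFin(X)$ is a finite object, so $\SymFin(X)^{V_0} = \SymFin(X)^{V_1}$, and $g$ sends basic codes to basic codes.
\begin{itemize}
\item If $\mu$ is invariant, then $\mu^{V_1}(g(\gamma)) = \mu(g(\gamma)) = \mu(\gamma) = \mu^{V_1}(\gamma)$ for every basic $\gamma$. By the uniqueness clause of \cref{Unique-extension} (in $V_1$), $\Pushforward[g](\mu^{V_1}) = \mu^{V_1}$ for all $g \in \SymFin(X)$. \cref{Finite support invariance gives full invariance} applied in $V_1$ then gives full invariance.
\item Conversely, if $\mu^{V_1}$ is invariant, the same equalities read backwards give $\Pushforward[g](\mu) = \mu$ in $V_0$ for all finite-support $g$, and \cref{Finite support invariance gives full invariance} applies in $V_0$.
\end{itemize}

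For (d), I would compare skeletons on basic codes. By the construction in \cref{Invariant-measures-lift}, for a basic code $\varphi(\aa)$ with parameters in $Y$ we have $\mu_Y(\varphi(\aa)) = \mu(\varphi(\zeta(\aa)))$, where $\zeta$ is an injection from the parameter set into $X$; this is well defined by invariance. Hence $[\mu_Y]^{V_1}(\varphi(\aa)) = \mu_Y(\varphi(\aa)) = \mu(\varphi(\zeta(\aa))) = \mu^{V_1}(\varphi(\zeta(\aa)))$. The last expression equals $(\mu^{V_1})_Y(\varphi(\aa))$, computed in $V_1$, which is legitimate because $\mu^{V_1}$ is invariant by (b). The two measures therefore agree on basic codes, and \cref{Unique-extension} in $V_1$ makes them equal.

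For (e), I expect the main obstacle: $\gamma$ is an arbitrary $\kappa$-Borel code, not a basic one, and the pushforward statement is only immediate on basic codes. The plan is as follows.
\begin{enumerate}
\item Invariance of $\mu$ and the construction of $\mu_Y$ give $\Proj*<i>[\Lang](\mu_Y) = \mu$ on basic codes, hence everywhere by \cref{Unique-extension}; this is the Corollary following \cref{Invariant-measures-lift}.
\item Prove, by induction on the rank of $\gamma$, that $\Proj<i>[\Lang]^{-1}(\BCvalue(\gamma)) = \BCvalue(i[\gamma])$. At rank $0$ this holds because both sides describe $R(i(a_0), \dots)$. The inductive steps hold because preimages commute with complements and with $\kappa$-sized unions and intersections.
\item Combining the two, $\mu_Y(i[\gamma]) = \mu_Y\bigl(\Proj<i>[\Lang]^{-1}(\BCvalue(\gamma))\bigr) = \Proj*<i>[\Lang](\mu_Y)(\gamma) = \mu(\gamma)$.
\end{enumerate}
Everything here happens inside $V_0$, so no relativization issue arises in this part.
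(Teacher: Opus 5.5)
Your proposal is correct and follows the same route as the paper, whose proof cites \cref{Unique-extension} for (a), \cref{Finite support invariance gives full invariance} for (b), \cref{Relativizations give same values} for (c), \cref{Unique-extension} together with \cref{Invariant-measures-lift} for (d), and ``a straightforward induction on the complexity of Borel codes'' for (e). For (e), your induction on the set identity $\pi_{i:\Lang}^{-1}(\BCvalue(\gamma)) = \BCvalue(i[\gamma])$, combined with $\pi_{i:\Lang}^*(\mu_Y) = \mu$, is a good way to make that induction precise; the base case for equality literals is where the injectivity of $i$ is used.
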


\begin{proof} 
We have (a) by \cref{Unique-extension}, and (b) by \cref{Finite support invariance gives full invariance}. We have (c) by \cref{Relativizations give same values}, and (d) by Propositions \ref{Unique-extension} and \ref{Invariant-measures-lift}. Finally, (e) follows from a straightforward induction on the complexity of Borel codes. 
\end{proof}

Note that for $\varphi \in \Lkpw(\Lang)$, if $X$ and $Y$ are each of size at most $\kappa$, then
the $\kappa$-Borel code in $\BorelCodes<\kappa>[\Lang](X)$ which defines $\extent<\Lang>[X]{\varphi}$ is in general different from the $\kappa$-Borel code in $\BorelCodes<\kappa>[\Lang](Y)$ that defines $\extent<\Lang>[Y]{\varphi}$, because quantifiers are converted to conjunctions and disjunctions.
However, the next result shows that the almost-sure theory of an invariant measure $\mu$ on $\Str<\kappa>[\Lang](X)$ is the same as the almost-sure theory of $\mu_Y$. Further, this theory is independent of the model of set theory in which one is working.

\begin{theorem}
\label{Theory of an invariant is absolute}
Suppose $V_0 \subseteq V_1$.
Let $\varphi \in [\Lkpw(\Lang)]^{V_0}$ be a sentence. 
Suppose 
$X, Y \in V_0$ with $V_0 \models \max\{|X|, |Y|\} \leq \kappa$. 
Then for any invariant measure $\mu$ on $\Str<\kappa>[\Lang](X)$, we have
\[
\mu(\varphi) = 1 
\qquadiff 
\mu_Y(\varphi) = 1,
\]
i.e.,
\[
[\Th[\kappa](\mu)]^{V_0}=[\Th[\kappa]([\mu_Y]^{V_1})]^{V_1} \cap V_0.
\]
\end{theorem}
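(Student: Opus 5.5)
We reduce to the countable case by moving to a collapse extension and then using invariance to swap underlying sets. Both sides of the equivalence are statements about values of $\kappa$-Borel codes. By \cref{Relativizations give same values} and \cref{Basic results about relativization}(c),(d), these values are absolute between $V_0$ and any larger model.

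Concretely, fix a sentence $\varphi \in [\Lkpw(\Lang)]^{V_0}$. Since $V_0 \models \max\{|X|,|Y|\} \le \kappa$, \cref{Exists Borel codes for formulas} gives codes $\BCformula(\varphi)_X \in \BorelCodes<\kappa>[\Lang](X)$ and $\BCformula(\varphi)_Y \in \BorelCodes<\kappa>[\Lang](Y)$ in $V_0$. These interpret $\extent[X]{\varphi}$ and $\extent[Y]{\varphi}$ in every outer model. Let $V^*\supseteq V_0$ be a generic extension (by $\mathrm{Col}(\w,\kappa)$) in which $\kappa$, $X$, $Y$ and $\Lang$ are countable. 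Also, $\varphi$ becomes a formula of $\Lwow(\Lang)$ there.

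By \cref{Relativizations give same values}, $\mu(\BCformula(\varphi)_X)=\mu^{V^*}(\BCformula(\varphi)_X)$. By \cref{Basic results about relativization}(c),(d), $\mu_Y(\BCformula(\varphi)_Y)=[\mu_Y]^{V^*}(\BCformula(\varphi)_Y) = (\mu^{V^*})_Y(\BCformula(\varphi)_Y)$. It therefore suffices to work in $V^*$ and show that $\mu^{V^*}(\extent[X]{\varphi}) = (\mu^{V^*})_Y(\extent[Y]{\varphi})$.

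In $V^*$, since $X$ and $Y$ are both countably infinite, fix a bijection $b\colon Y\to X$. The map $\Proj<b>[\Lang]$ sends a structure on $X$ to an isomorphic structure on $Y$. Hence $\Proj<b>[\Lang]^{-1}(\extent[Y]{\varphi}) = \extent[X]{\varphi}$, since $\varphi$ is a sentence and satisfaction is isomorphism invariant. By the corollary to \cref{Invariant-measures-lift}, the pushforward satisfies $\Proj*<b>[\Lang](\mu^{V^*}) = (\mu^{V^*})_Y$. So $(\mu^{V^*})_Y(\extent[Y]{\varphi}) = \mu^{V^*}(\extent[X]{\varphi})$. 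This gives equality of the values, and in particular value $1$ on one side iff on the other.

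For the displayed equality of theories, apply the same argument with $V_1$ in place of $V_0$ on the right-hand side. In $V_1$, a sentence $\varphi\in V_0$ is in $\Th[\kappa]([\mu_Y]^{V_1})$ exactly when its $V_0$-code has $[\mu_Y]^{V_1}$-value $1$. Here we should be careful that the code built in $V_1$ for $\varphi$ over $Y$ may differ from the $V_0$ code when $|Y|$ changes. However, both codes interpret $\extent[Y]{\varphi}$ in $V_1$, so they have the same measure. Absoluteness then reduces this to $\mu_Y(\varphi)=1$ in $V_0$.

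The main obstacle I expect is this bookkeeping: we must check that all the relevant codes denote $\extent{\varphi}$ in each model, even though the codes depend on $X$ and $Y$ and on the model in which quantifiers are unwound. If $V_1\not\models |Y|\le\kappa$, we instead interpret $\Th[\kappa]$ in $V_1$ via the $V_0$-code. The key input here is the uniform-interpretation clause of \cref{Exists Borel codes for formulas}.
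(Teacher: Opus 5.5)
Your proof is correct and follows essentially the same route as the paper: values of $\kappa$-Borel codes are absolute by \cref{Exists Borel codes for formulas}, \cref{Relativizations give same values} and \cref{Basic results about relativization}(c),(d), and you then pass to an outer model where $|X|=|Y|$, in which the two almost-sure theories coincide. The differences are cosmetic. The paper only needs some extension of $V_1$ in which $|X|=|Y|$, and it asserts the equal-cardinality case without comment; you instead collapse $\kappa$ to $\omega$ over $V_0$ and spell out that case via a bijection $b\colon Y\to X$ and $\pi_b^*(\mu^{V^*})=(\mu^{V^*})_Y$ (note $\mathrm{Col}(\omega,\kappa)$ need not make $\Lang$ countable, but you never use that).
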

\begin{proof}
First note that for any $\varphi \in [\Lkpw(\Lang)]^{V_0}$, the equivalence between \linebreak $\varphi \in [\Th[\kappa](\mu)]^{V_0}$ and $\varphi \in [\Th[\kappa](\mu^{V_1})]^{V_1}$ follows from \cref{Exists Borel codes for formulas} and \cref{Relativizations give same values}. 

But if $|X| = |Y|$ then $\Th[\kappa](\mu) = \Th[\kappa](\mu_Y)$. 
Therefore if $V_1 \subseteq V^*$ and $V^* \models |X| = |Y|$, we have 
\begin{align*}
[\Th[\kappa](\mu)]^{V_0} 
&= [\Th[\kappa](\mu^{V^*})]^{V^*} \cap V_0 \\
&= [\Th[\kappa]((\mu^{V^*})_Y)]^{V^*} \cap V_0 \\
&= [\Th[\kappa]([\mu_Y]^{V^*})]^{V^*} \cap V_0 \\
&= [\Th[\kappa](\mu_Y)]^{V_0}.
\end{align*}
The result then follows. 
\end{proof}

If $\Lang$ is countable and $\cM$ is a countably infinite $\Lang$-structure then there is a sentence $\tau_{\cM} \in \Lwow(\Lang)$, called a Scott sentence of $\cM$, such that a countably infinite 
$\Lang$-structure
satisfies $\tau_{\cM}$ if and only if it is isomorphic to $\cM$. Therefore for any countably infinite $X_0 \subseteq X$, if $\mu_{X_0}$ is concentrated on the isomorphism class of $\cM$, then
for any countably infinite $X_1 \subseteq X$ the measure $\mu_{X_1}$ is concentrated on the isomorphism class of $\cM$,
by \cref{Theory of an invariant is absolute}.

\begin{remark}
Suppose $\Lang$ contains a relation symbol
$R$ of arity $n+1$.  There is a sentence $\psi_R^\mathrm{f}$ which holds in any $\Lang$-structure $\cM$
if and only if $R^\cM$ is the graph of a function with $n$ inputs. There is also a sentence $\psi_R^\mathrm{cf}$ which holds in any $\Lang$-structure $\cM$ if and only if $R^\cM$ is the graph of a choice function with $n$ inputs. 

By \cite{complete-classification},
for any relation symbol $R \in \Lang$ and any invariant measure $\mu$, we have $(\psi_R^\mathrm{f} \rightarrow \psi_R^\mathrm{cf}) \in \Th[\kappa](\mu)$. In other words, 
$\mu$ assigns measure 1 to the set of structures in which any function defined by a relation must be a choice function.
Therefore, relaxing the assumption that our language is relational 
would not add much
generality 
to
the study of invariant measures. 
\end{remark}

\subsection{Dissociated, Ergodic and Extreme Invariant Measures} 
\label{Section: Ergodic Extreme dissociated}

In this subsection we show that for any invariant measure, being dissociated, ergodic, or extreme are all equivalent notions.
We do this by showing that each of these notions is absolute and then using \cref{If countable dissociated equals ergodic equals extreme}.

\begin{proposition}
\label{Dissociated is absolute}
Suppose $V_0 \subseteq V_1$, and let $\mu\in V_0$ be an invariant measure on $\Str<\kappa>[\Lang](X)$. 
Then $\mu$ is dissociated in $V_0$ if and only if $\mu^{V_1}$ is dissociated in $V_1$. 
\end{proposition}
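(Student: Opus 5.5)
We argue both directions by passing between Borel codes and almost basic codes. The key observation is that for disjoint finite $X_0, X_1 \subseteq X$, the defining equation $\mu(\gamma_0 \And \gamma_1) = \mu(\gamma_0)\cdot\mu(\gamma_1)$ is checked on codes. In $V_1$ there may be new codes in $\BorelCodes<\kappa>[\Lang](X_0)$ that do not lie in $V_0$. So the plan is to reduce dissociatedness to a condition on almost basic codes, since these are the same in every model of set theory.

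\emph{Step 1 (reduction).} We show that $\mu$ is dissociated if and only if
\[
\mu(\zeta_0 \And \zeta_1) = \mu(\zeta_0)\cdot \mu(\zeta_1)
\]
for all disjoint finite $X_0, X_1$ and all $\zeta_j \in \AlmostBasicBC<\kappa>[\Lang](X_j)$. The forward direction is trivial. For the converse, take $\gamma_j \in \BorelCodes<\kappa>[\Lang](X_j)$. By \cref{We can approximate all Borel codes by almost basic Borel codes}, applied to the pushforward of $\mu$ to $\Str<\kappa>[\Lang](X_j)$, we obtain $\zeta_j \in \AlmostBasicBC<\kappa>[\Lang](X_j)$ with $\mu(\gamma_j \SymDiff \zeta_j) < \epsilon$. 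It matters that the approximants mention only parameters from $X_j$; this is exactly why we work in $\Str<\kappa>[\Lang](X_j)$, transported back along the inclusion as in the definition of $\Proj<X_j>[\Lang]$.

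We then estimate
\[
|\mu(\gamma_0 \And \gamma_1) - \mu(\zeta_0 \And \zeta_1)| < 2\epsilon
\qquad\text{and}\qquad
|\mu(\gamma_0)\mu(\gamma_1) - \mu(\zeta_0)\mu(\zeta_1)| < 2\epsilon.
\]
Letting $\epsilon \to 0$ gives the product formula for $\gamma_0, \gamma_1$.

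One technicality: the definition is phrased with codes on $X_j$ but evaluated by $\mu$ on $X$. We read $\gamma_j$ as a code on $X$ via the inclusion $i[\gamma_j]$. Under this reading the approximation is valid, because $\mu(i[\gamma]) = \Proj*<X_j>[\Lang](\mu)(\gamma)$.

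\emph{Step 2 (absoluteness).} The almost basic codes over $X_0$, $X_1$ and $X$ coincide in $V_0$ and $V_1$. The pairs of disjoint finite subsets of $X$ also coincide, since finite subsets of a set in $V_0$ lie in $V_0$. By \cref{Relativizations give same values}, or \cref{Basic results about relativization}(c), $\mu$ and $\mu^{V_1}$ agree on every almost basic code. Hence the criterion of Step 1 holds in $V_0$ if and only if it holds in $V_1$.

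\emph{Step 3 (conclusion).} Applying Step 1 inside $V_0$ and inside $V_1$ yields the proposition. Note that Step 1 is applied in $V_1$ to $\mu^{V_1}$; this is legitimate because \cref{We can approximate all Borel codes by almost basic Borel codes} is a theorem of $\ZFC$ and so holds in any model.

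I expect the main obstacle to be the bookkeeping in Step 1: ensuring the approximating almost basic code for $\gamma_j$ uses only parameters from $X_j$. I would handle this by approximating in $\Str<\kappa>[\Lang](X_j)$ with respect to the pushforward measure, and then transferring along the inclusion, checking by induction that $\mu(i[\gamma]) = \Proj*<X_j>[\Lang](\mu)(\gamma)$.
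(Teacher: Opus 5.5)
Your proposal is correct and takes essentially the paper's approach: you approximate arbitrary codes on $X_0, X_1$ by almost basic codes via \cref{We can approximate all Borel codes by almost basic Borel codes}, and you use \cref{Relativizations give same values} to see that $\mu$ and $\mu^{V_1}$ agree on the model-independent almost basic codes. The differences are minor. The paper gets the direction from $V_1$ to $V_0$ directly from \cref{Relativizations give same values} applied to $V_0$-codes rather than through your two-sided reduction, and your pushforward to $\Str<\kappa>[\Lang](X_j)$ spells out a point the paper leaves implicit, namely that the approximants can be chosen with parameters only from $X_j$.
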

\begin{proof}
First note that if $\mu^{V_1}$ is dissociated 
then $\mu$ is dissociated 
by \cref{Relativizations give same values}. 

Now suppose $\mu$ is dissociated.
Let $X_0, X_1 \subseteq X$ be disjoint finite sets and let $A_0 \in \BorelCodes<\kappa>[\Lang](X_0)$ and 
$A_1 \in \BorelCodes<\kappa>[\Lang](X_1)$ be in $V_1$. By \cref{We can approximate all Borel codes by almost basic Borel codes}, for each $\epsilon \in \Rationals^+$ there 
exist 
$\gamma^0_\epsilon \in \AlmostBasicBC<\kappa>[\Lang](X_0)$ and $\gamma^1_\epsilon \in \AlmostBasicBC<\kappa>[\Lang](X_1)$
such that $\mu^{V_1}(A_i \SymDiff \gamma_\epsilon^i) < \epsilon$ for $i \in \{0, 1\}$. Hence 
\[
|\mu^{V_1}(A_0 \cap A_1) -  \mu^{V_1}(\gamma_\epsilon^0 \And \gamma_\epsilon^1)| \leq 2 \epsilon
\]
and 
\[|\mu^{V_1}(A_0) \cdot \mu^{V_1}(A_1) -  \mu^{V_1}(\gamma_\epsilon^0) \cdot \mu^{V_1}(\gamma_\epsilon^1)| < 3 \epsilon 
.
\]
But $\mu$ is dissociated
and so \[
|\mu^{V_1}(\gamma_\epsilon^0 \And \gamma_\epsilon^1) -\mu^{V_1}(\gamma_\epsilon^0) \cdot \mu^{V_1}(\gamma_\epsilon^1)| = 0.
\]
Therefore we have 
\[|\mu^{V_1}(A_0 \cap A_1) -  \mu^{V_1}(A_0) \cdot \mu^{V_1}(A_1)| <  5 \epsilon.
\]
Because $\epsilon$ was arbitrary, this implies that $\mu^{V_1}(A_0 \cap A_1) =  \mu^{V_1}(A_0) \cdot \mu^{V_1}(A_1)$. Hence $\mu^{V_1}$ is dissociated.
\end{proof}

We now 
show that 
extremality
of 
an invariant measure
is absolute. 
We will first need a combinatorial lemma. 

\begin{lemma}
\label{Finite presheaf compactness}
Let $(\sC, \sCle)$
be a directed partial order (i.e., a poset in which every finite collection of elements has an upper bound),
and let $F$ be a presheaf on $(\sC, \sCle)$.
For $A \sCle B$, write
$i_{A, B}$ for the corresponding map in $(\sC, \sCle)$ when treated as a category.
For $A \sCle B$ and $b \in F(B)$, write $b\rest[A]$ for $F(i_{A, B})(b)$.

Suppose that 
$0<|F(A)| < \w$ 
for all $A \in \sC$.
Then there is a sequence $\<a_A\>_{A \in \sC}$ 
such that
\begin{itemize}
\item 
$a_A \in F(A)$
for all $A \in \sC$ and

\item 
$a_A = a_B \rest[A]$
whenever $A \sCle B \in \sC$. 
\end{itemize}
\end{lemma}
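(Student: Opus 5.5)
This is the standard compactness fact that an inverse limit of nonempty finite sets over a directed index set is nonempty. I would prove it with Tychonoff's theorem, in the finite intersection property form.

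Give each $F(A)$ the discrete topology. Since $F(A)$ is finite and nonempty, it is a nonempty compact Hausdorff space. Form the product $P = \prod_{A \in \sC} F(A)$, which is compact by Tychonoff. For each pair $A \sCle B$ let
\[
K_{A,B} = \{ \<a_C\>_{C \in \sC} \in P \st a_A = a_B\rest[A] \}.
\]
This set depends only on the two coordinates $A$ and $B$, and on those coordinates it is a subset of the discrete finite space $F(A) \times F(B)$. So each $K_{A,B}$ is clopen in $P$. A point of $\bigcap_{A \sCle B} K_{A,B}$ is exactly the required sequence, so it suffices to show that the family $\{K_{A,B}\}$ has the finite intersection property.

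Take finitely many pairs $A_1 \sCle B_1, \dots, A_m \sCle B_m$. Because $(\sC,\sCle)$ is directed, there is some $D \in \sC$ with $B_j \sCle D$ for all $j$, and hence also $A_j \sCle D$. Since $F(D) \neq \emptyset$, pick $d \in F(D)$. Define a point of $P$ by setting $a_C = d\rest[C]$ whenever $C \sCle D$, and letting $a_C$ be an arbitrary element of $F(C)$ otherwise; this uses $F(C) \neq \emptyset$. Functoriality of the presheaf gives $(d\rest[B_j])\rest[A_j] = d\rest[A_j]$, since $i_{A_j,D} = i_{B_j,D} \circ i_{A_j,B_j}$ in the poset category and $F$ is contravariant. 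So this point lies in every $K_{A_j,B_j}$. By compactness of $P$, the full intersection is nonempty, which finishes the proof.

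The only delicate points are two. First, one must note that every $K_{A,B}$ is closed, which holds because only finitely many coordinates are involved and each factor is discrete. Second, one must check the functoriality identity used in the finite step. The argument uses the axiom of choice through Tychonoff, which is available since we work in $\ZFC$. As an alternative, one could apply Zorn's lemma to families of nonempty subsets $S_A \subseteq F(A)$ closed under restriction and minimize them; but the Tychonoff argument is the one I would write.
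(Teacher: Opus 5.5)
Your proof is correct and is essentially the paper's argument. The finite step is identical: take an upper bound $D$ of the finitely many indices, restrict a point $d \in F(D)$, and choose arbitrarily elsewhere. Where you invoke Tychonoff for the product of the finite discrete spaces $F(A)$, the paper invokes first-order compactness for a many-sorted theory with constants $c_a$, function symbols $f_{A,B}$ for the restriction maps, and new constants $d_A$.

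One small advantage of your formulation is that a point of $\prod_{A} F(A)$ is literally the desired sequence. Extracting $\langle a_A\rangle$ from a model of the paper's theory tacitly needs the extra axioms $c_a \neq c_{a'}$ for distinct $a, a' \in F(A)$. These hold in the paper's finite models but are omitted from its theory. Without them, $d_A = c_{a_A}$ together with $d_A = f_{A,B}(d_B)$ yields only $c_{a_A} = c_{a_B\rest[A]}$, not $a_A = a_B\rest[A]$.
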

\begin{proof}
Let $\Lang$ be the language containing
\begin{itemize}
\item a sort $S_A$ for each $A \in \sC$,

\item 
a constant symbol $c_a$ of sort $S_A$  for each $a \in F(A)$, 

\item 
a function symbol $f_{A, B}$ from $S_B$ to $S_A$
for each $A \sCle B$, and 

\item 
a constant symbol $d_A$
for each $A \in \sC$.
\end{itemize}
Let $T$ be the theory which says,
for each $A \sCle B \in \sC$, that 
\begin{itemize}
\item 
$(\forall x:S_A)\bigvee_{a \in F(A)} (x = c_a)$,

\item $f_{A, B}(c_b) = c_{b\,\rest[A]}$, and

\item $d_A = f_{A, B}(d_B)$.
\end{itemize}

Suppose $T_0$ is a finite subset of $T$.
Note that $T_0$ mentions $d_A$ for at most finitely many elements $A_0, \ldots, A_{n-1} \in \sC$.
As $\sC$ is directed, there must be some $B$ such that $A_i \sCle B$ for all $i \in [n]$.
Further, as $F(B) \neq \emptyset$, there is some $b \in F(B)$.
Let $\cM_0$ be an $\Lang$-structure with underlying set \linebreak $\{(A, a) \st a \in F(A)\}$ where $c_a^{\cM_0} = (A, a)$ for $a \in F(A)$. 
Let 
$d_{A_i}^{\cM_0} = c_{b\,\rest[A_i]}^{\cM_0}$
for $i \in [n]$.
For $B \in \sC \setminus \{A_i\}_{i \in [n]}$, let $d_B^{\cM_0}$ be an arbitrary element of $M_0$ of the form $(B, b)$, which must exist as $F(B) \neq \emptyset$. 
Then $\cM_0$ clearly satisfies $T_0$. As $T_0$ was arbitrary, $T$ is first-order and finitely satisfiable, and hence has a model $\cM$.
But then for all $A \in \sC$ there is an $a_A$ such that $\cM \models d_{A} = c_{a_A}$. 
Hence $\<a_A\>_{A \in \sC}$
is the desired collection. 
\end{proof}

\begin{proposition}
\label{Extreme is absolute}
Suppose $V_0 \subseteq V_1$, and let $\mu\in V_0$ be an invariant measure on $\Str<\kappa>[\Lang](X)$. Then $\mu$ is extreme in $V_0$ if and only if $\mu^{V_1}$ is extreme in $V_1$. 
\end{proposition}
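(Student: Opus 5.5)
The direction from $V_1$ down to $V_0$ should be routine. The direction from $V_0$ up to $V_1$ needs a compactness argument, and I would reduce it to finitely many linear constraints on finite skeletons.

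\emph{If $\mu^{V_1}$ is extreme then $\mu$ is extreme.} Suppose in $V_0$ that $\mu = r\nu + s\eta$ with $\nu \neq \eta$ invariant and $r, s > 0$. Relativize to get $\nu^{V_1}$ and $\eta^{V_1}$, which are invariant by \cref{Basic results about relativization}(b).
\begin{itemize}
\item The measures $\mu^{V_1}$ and $r\nu^{V_1} + s\eta^{V_1}$ agree on the basic codes of $V_0$, which are the same in every model. So they are equal by the uniqueness in \cref{Unique-extension}.
\item Since $\nu \neq \eta$, uniqueness in \cref{Unique-extension} gives a basic code on which they differ. Hence $\nu^{V_1} \neq \eta^{V_1}$.
\end{itemize}
This contradicts extremality of $\mu^{V_1}$.

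\emph{If $\mu$ is extreme then $\mu^{V_1}$ is extreme.} First I would reformulate non-extremality: an invariant $\mu$ fails to be extreme iff there is an invariant $\nu \neq \mu$ and a real $t \in (0,1)$ with $t\nu \le \mu$. Given such $\nu$, set $\eta = (\mu - t\nu)/(1-t)$. Nonnegativity of $\eta$ and the inequality $t\nu \le \mu$ need only be checked on basic codes, because a difference of two $\kappa$-measures that is nonnegative on the generating algebra is nonnegative everywhere. Now suppose $\mu^{V_1} = r\nu + s\eta$ in $V_1$ with $r,s>0$ and $\nu \neq \eta$ invariant. Then $\nu \neq \mu^{V_1}$, so there is a basic code $c_0$ and a rational $\delta>0$ with $|\nu(c_0) - \mu(c_0)| \ge \delta$. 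Choose a rational $t$ with $0 < t < r$.

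Next I encode $\nu$ by finite data. By \cref{Invariant-measures-lift}, $\nu$ is determined by its skeleton. For $n\in\w$ and finite $\Lang_0 \subseteq \Lang$, the component $\Skeleton<\Lang_0>[n](\nu)$ is a vector in a finite-dimensional space: its values on the finitely many complete quantifier-free $\Lang_0$-types on $[n]$. For each such pair $(n,\Lang_0)$ let $K_{n,\Lang_0}$ be the set of families of vectors indexed by all $(m,\Lang_1)$ with $m\le n$ and $\Lang_1\subseteq \Lang_0$ satisfying the following.
\begin{itemize}
\item The vectors are probability vectors invariant under $\Sym(m)$.
\item They are coherent under restriction to smaller sets and sublanguages.
\item Coordinatewise $t\cdot(\text{vector}) \le \Skeleton<\Lang_1>[m](\mu)$.
\item If $(n,\Lang_0)$ is large enough to mention $c_0$, the coordinate for $c_0$ differs from $\mu(c_0)$ by at least $\delta$.
\end{itemize}
This is a compact polytope cut out by finitely many linear (in)equalities with parameters from $V_0$; the $\delta$ condition is a union of two half-spaces. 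It is nonempty in $V_1$, witnessed by $\nu$. Nonemptiness of such a set is a first-order statement over the real field with parameters in $V_0$, so it is absolute and $K_{n,\Lang_0}\ne\emptyset$ in $V_0$.

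The main obstacle is assembling a single coherent family in $V_0$. The sets $K_{n,\Lang_0}$ form an inverse system over the directed set of pairs $(n,\Lang_0)$, with restriction maps, and each is nonempty and compact. I would show the inverse limit is nonempty in one of two ways.
\begin{itemize}
\item Apply Tychonoff compactness directly to the product of the $K_{n,\Lang_0}$.
\item Discretize to make \cref{Finite presheaf compactness} applicable. For each precision $1/k$, take the presheaf of elements of a finite $1/k$-net that are $1/k$-close to the restriction of a point of the next stage. Then pass to the limit in $k$.
\end{itemize}
A coherent family gives a function $\nu^\circ$ on basic codes satisfying the hypotheses of \cref{Unique-extension}, as in the proof of \cref{Invariant-measures-lift}. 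This yields an invariant $\nu'\in V_0$ with $t\nu'\le\mu$ on basic codes, hence everywhere, and $\nu'(c_0)\neq\mu(c_0)$. By the reformulation above, $\mu$ is not extreme in $V_0$, a contradiction.
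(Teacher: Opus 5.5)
Your proof is correct, but it takes a different route from the paper's. The paper's construction has four features:
\begin{itemize}
\item it works over the directed set of triples $(n,\Lang_0,X_0)$ with $X_0\subseteq X$ finite;
\item it discretizes everything into dyadic data, namely the $2^{-n}$-approximate measures and the presheaf $F_q$ of tuples $(\nu,\eta,r,s)$ with $\mu\approx r\nu+s\eta$ on almost basic codes;
\item absoluteness comes for free, because each $F_q(A)$ is a finite set of finite objects that is literally the same in $V_0$ and $V_1$;
\item the global decomposition in $V_0$ is assembled by \cref{Finite presheaf compactness}, that is, by first-order compactness.
\end{itemize}
You proceed differently at each step:
\begin{itemize}
\item you replace the two-measure decomposition by the domination criterion ``$t\nu\le\mu$ for some invariant $\nu\neq\mu$'', so there is one unknown and every constraint is linear;
\item you parametrize invariant measures by their skeletons on $\Str[\Lang_1]([m])$, so invariance is itself a finite list of linear equalities;
\item you get stagewise absoluteness from Tarski transfer (or Fourier--Motzkin) for finite linear systems with parameters in $\Reals^{V_0}$;
\item you glue the stages with Tychonoff in $V_0$.
\end{itemize}

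The paper's route stays purely combinatorial and reuses its presheaf lemma. Yours costs Tychonoff and real-closed-field transfer, but it makes the non-triviality of the decomposition part of the finite data: the $c_0$/$\delta$ separation constraint and the skeleton invariance constraints do this. That matters, because the paper's $F_q$ as written records neither invariance of $\nu,\eta$ nor $\nu\neq\eta$. The trivial choice of $\nu=\eta$ a dyadic rounding of $\mu$, with $r=s=\tfrac12$, meets its constraints for every $q<\tfrac12$ whatever $\mu$ is. A separation condition like yours is exactly what is needed there.

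One small cleanup: drop your second gluing option (finite nets and ``the next stage''). A directed set that is not linearly ordered has no next stage, so that option is underspecified, and the Tychonoff argument already suffices.
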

\begin{proof}
Let $\sC$ be the collection of triples $(n, \Lang_0, X_0)$
where $n \in \w$, where \linebreak $\Lang_0 \subseteq \Lang$ is finite, and where $X_0 \subseteq X$ is finite. Let $(n_0, \Lang_0, X_0) \sCle (n_1, \Lang_1, X_1)$ if and only if $n_0 \leq n_1$ and $\Lang_0 \subseteq \Lang_1$ and $X_0 \subseteq X_1$. Note that $(\sC, \sCle)$ is directed. 

Define an $\epsilon$-approximate measure on $\Str<\kappa>[\Lang_0](X_0)$ to be a map 
\linebreak
$\nu\:\BasicBC<\kappa>[\Lang_0](X_0) \to [0, 1]$ such that
\begin{itemize}
\item $\nu(\BCformula(\bigwedge \emptyset)) \geq 1 - \epsilon$, 

\item if $\varphi$ and $\psi$ are finite conjunctions of $\Lang_0$-literals with $\models \varphi(\xx) \rightarrow \psi(\xx)$, then 
$\nu^\circ(\BCformula(\psi(\aa))) + \epsilon \geq \nu^\circ(\BCformula(\varphi(\aa))) - \epsilon$
for every $\aa \in X_0$, 

\item 
$\nu(\psi(\aa) \And \neg \psi(\aa)) \leq \epsilon$
for any $\Lang_0$-literal $\psi(\xx)$ and $\aa \in X_0$,   and

\item for all tuples $\aa \in X_0$ and 
sequences $\<\psi_i(\xx)\>_{i \in [n+1]}$
of $\Lang_0$-literals, we have 
\[
\bigg|\nu^\circ\bigg(\bigwedge_{i \in [n]} \psi_i(\aa)\bigg) - \mu^\circ\bigg(\psi_n(\aa) \And \bigwedge_{i \in [n]} \psi_i(\aa)\bigg) + \mu^\circ\bigg(\neg \psi_n(\aa) \And \bigwedge_{i \in [n]} \psi_i(\aa)\bigg)\bigg| \leq \epsilon.
\]
\end{itemize}

For a measure $\nu$ on $\Str<\kappa>[\Lang_0](X_0)$ and $n \in \w$ we let 
\[
a(\nu, n)(\gamma) = \frac{\floor{2^{n+1} \cdot \nu(\gamma)}}{2^{n+1}}.
\]
Note that $a(\nu, n)$ is a $2^{-n}$-approximate measure.

For $q \in \Rationals^+$ let $F_q$ be the presheaf on $\sC$ where
$F_q(n,\Lang_0, X_0)$ is the collection of tuples $(\nu, \eta, r, s)$ such that 
\begin{itemize}
\item $\nu$ and $\eta$ are $2^{-n}$-approximate measures, 

\item $r, s \in \{\frac{k}{2^n}\}_{0 < k < 2^n}$
where
$r + s = 1$  and
$q < r, s$, 

\item for all $\gamma \in \AlmostBasicBC<\kappa>[\Lang_0](X_0)$, 
\begin{itemize}
\item $\nu(\gamma), \eta(\gamma) \in \{\frac{k}{2^n}\}_{0 \leq k \leq 2^n}$ and

\item $|\mu(\gamma) - (r\cdot \nu(\gamma)+ s \cdot \eta(\gamma))| < 2^{-n+2}$.
\end{itemize}
\end{itemize}

If $(n_0, \Lang_0, X_0) \sCle (n_1, \Lang_1, X_1)$ and $(\nu, \eta, r, s) \in F(n_1, \Lang_1, X_1)$ let \linebreak $(\nu, \eta, r, s)\rest[(n_0, \Lang_0, X_0)]$ be the unique element $(\nu_0, \eta_0, \frac{\floor{2^{n_0} \cdot r}}{2^{n_0}}, \frac{\floor{2^{n_0} \cdot s}}{2^{n_0}})$ for which $\nu_0(\gamma) = \frac{\floor{2^{n_0} \cdot \nu(\gamma)}}{2^{n_0}}$ and $\eta_0(\gamma) = \frac{\floor{2^{n_0} \cdot \eta(\gamma)}}{2^{n_0}}$ for all $\gamma \in \AlmostBasicBC<\kappa>[\Lang_0](X_0)$. 

Suppose that $\mu$ is not extreme, i.e., there are distinct invariant 
measures $\nu, \eta$ and
$r, s \in (0, 1)$ such that $\mu = r \cdot \nu + s \cdot \eta$. Let $q \in \Rationals^+$ be such that $q < r, s$. Then for any $(n_0, X_0, \Lang_0) \in \sC$, we have 
\[\Bigl(a(\nu, {n_0}), a(\eta, n_0), \frac{\floor{2^{n_0} \cdot r}}{2^{n_0}}, \frac{\floor{2^{n_0} \cdot s}}{2^{n_0}}\Bigr) \in F(n_0, X_0, \Lang_0),
\]
and hence $F_q(n_0, X_0, \Lang_0)$ is not empty.

Now suppose that $q \in \Rationals^+$ and  $F_q(n_0, X_0, \Lang_0) \neq \emptyset$ for every \linebreak 
$(n_0, X_0, \Lang_0) \in \sC$. By \cref{Finite presheaf compactness} there is a function $g$ with domain $\sC$ where $g(A) \in F_q(A)$ for $A \in \sC$ and where $g(B)\rest[A] = g(A)$ for $A \sCle B \in \sC$. If $\gamma \in \BasicBC<\kappa>[\Lang](X)$ then there must be an $\Lang_0 \in \PowersetFin(\Lang)$ and $X_0 \in \PowersetFin(X)$ such that $\gamma \in \BasicBC<\kappa>[\Lang_0](X_0)$. Define
\begin{itemize}
\item $\nu^\circ(\gamma) = \lim_{n \to \infty} \pi_0(g(n, \Lang_0, X_0))(\gamma)$, 

\item $\eta^\circ(\gamma) = \lim_{n \to \infty} \pi_1(g(n, \Lang_0, X_0))(\gamma)$, 

\item $r = \lim_{n \to \infty}  \pi_2(g(n, \Lang_0, X_0))$,  and 

\item $s =  \lim_{n \to \infty} \pi_3(g(n, \Lang_0, X_0))$,
\end{itemize}
where $\pi_i$ is the projection to coordinate $i$ of its $4$-tuple input.
Such limits always exist by the definition of $F_q$.  Further, because of how
$\nu^\circ$ and $\eta^\circ$ were defined, by \cref{Unique-extension} there are unique 
probability $\kappa$-measures $\nu, \eta$ on $\Str<\kappa>[\Lang](X)$ such that $\nu$ agrees with $\nu^\circ$ on $\BasicBC<\kappa>[\Lang](X)$ and $\eta$ agrees with $\eta^\circ$ on $\BasicBC<\kappa>[\Lang](X)$. But also, for any $\gamma \in \BasicBC<\kappa>[\Lang](X)$, we have 
\[
\mu(\gamma) = r \cdot \nu^\circ(\gamma) + s\cdot \eta^\circ(\gamma) = r \cdot \nu(\gamma) + s\cdot \eta(\gamma). 
\]
Hence 
$\mu = r \cdot \nu + s \cdot \eta$
by \cref{Unique-extension}.

Therefore $\mu$ is not extreme if and only if there is some $q \in \Rationals^+$ such that for all $A \in \sC$ we have  $F_q(A) \neq \emptyset$.  But in fact, for each $q \in \Rationals^+$, we have $[F_q]^{V_0}  = [F_q]^{V_1}$. Hence $\mu$ is extreme in $V_0$ if and only if it is extreme in $V_1$. 
\end{proof}

We now characterize
$\mu$-almost invariant sets 
in terms of Borel codes. 

\begin{lemma}
\label{Almost invariant on finite permutations is full almost invariant}
Let $\mu$ be an invariant 
measure
on $\Str<\kappa>[\Lang](X)$ and suppose $\gamma \in \BorelCodes<\kappa>[\Lang](X)$ satisfies
$\mu(h[\gamma] \SymDiff \gamma) = 0$
for all $h \in \SymFin(X)$.
Then $\gamma$ is $\mu$-almost invariant. 
\end{lemma}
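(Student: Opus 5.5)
The plan is to approximate $\gamma$ by an almost basic code and exploit the fact that an almost basic code mentions only finitely many elements of $X$. Fix $g \in \Sym(X)$ and $\epsilon \in \Rationals^+$. By \cref{We can approximate all Borel codes by almost basic Borel codes}, choose $\zeta \in \AlmostBasicBC<\kappa>[\Lang](X)$ with $\mu(\gamma \SymDiff \zeta) < \epsilon$. Since $\zeta$ is a finite disjunction of basic codes, it mentions only a finite set $X_0 \subseteq X$ of parameters. Hence there is $h \in \SymFin(X)$ agreeing with $g$ on $X_0$ (extend the finite partial injection $g\rest[X_0]$ to a finitely supported permutation), so that $g[\zeta] = h[\zeta]$ as codes, and in particular $\BCvalue(g[\zeta]) = \BCvalue(h[\zeta])$. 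Here I identify $g``[\BCvalue(\gamma)]$ with $\BCvalue(g[\gamma])$, which is a routine induction on codes.

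Next I transfer the approximation along $g$ and $h$ using invariance. Since $\mu$ is invariant, $\mu(g[\gamma] \SymDiff g[\zeta]) = \mu(g``[\BCvalue(\gamma \SymDiff \zeta)]) = \mu(\gamma \SymDiff \zeta) < \epsilon$, and likewise $\mu(h[\gamma] \SymDiff h[\zeta]) < \epsilon$. By the triangle inequality for the pseudometric $d(A,B) = \mu(A \SymDiff B)$,
\[
\mu(g[\gamma] \SymDiff \gamma) \le \mu(g[\gamma] \SymDiff g[\zeta]) + \mu(h[\zeta] \SymDiff h[\gamma]) + \mu(h[\gamma] \SymDiff \gamma),
\]
where the middle step uses $g[\zeta] = h[\zeta]$. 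The last term is $0$ by hypothesis, so $\mu(g[\gamma] \SymDiff \gamma) < 2\epsilon$. Since $\epsilon$ was arbitrary, this measure is $0$, and because $g$ was arbitrary, $\gamma$ is $\mu$-almost invariant.

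The only point needing care is the bookkeeping between the action on sets and the action on codes: checking that $g``[\BCvalue(\gamma)] = \BCvalue(g[\gamma])$ and that codes differing only in unused parameters have the same interpretation. Both follow by induction on rank, as in \cref{Basic results about relativization}(e). Everything else is the standard $\epsilon/2$-style approximation argument.
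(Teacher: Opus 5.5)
Your proposal is correct and follows essentially the same route as the paper: approximate $\gamma$ by an almost basic $\zeta$ via \cref{We can approximate all Borel codes by almost basic Borel codes}, choose $h \in \SymFin(X)$ with $h[\zeta] = g[\zeta]$, and conclude by invariance and the triangle inequality that $\mu(g[\gamma] \SymDiff \gamma) < 2\epsilon$. The only difference is cosmetic: the paper writes a four-term chain with $\mu(h[\zeta] \SymDiff g[\zeta]) = 0$, while you substitute $g[\zeta] = h[\zeta]$ directly.
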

\begin{proof}
Let $g \in \Sym(X)$. For $\epsilon > 0$, by \cref{We can approximate all Borel codes by almost basic Borel codes} there is some $\zeta \in \AlmostBasicBC<\kappa>[\Lang](X)$ such that $\mu(\gamma \SymDiff \zeta) < \epsilon$. As $\zeta \in \AlmostBasicBC<\kappa>[\Lang](X)$, there is an $h \in \SymFin(X)$ such that $h[\zeta] = g[\zeta]$. Therefore 
\begin{align*}
\mu(\gamma \SymDiff g[\gamma]) &\leq \mu(\gamma \SymDiff h[\gamma]) +  \mu(h[\gamma] \SymDiff h[\zeta]) + \mu(h[\zeta] \SymDiff g[\zeta]) + \mu(g[\zeta] \SymDiff g[\gamma]) \\
&= 0 + \epsilon + 0 + \epsilon \\
&= 2 \epsilon.
\end{align*}
As $\epsilon$ was arbitrary, $\gamma$ is $\mu$-almost invariant.
\end{proof}

\begin{proposition}
\label{Ergodic is absolute}
Suppose $V_0 \subseteq V_1$ and suppose $\mu\in V_0$ is an invariant measure on $\Str<\kappa>[\Lang](X)$. Then $\mu$ is ergodic in $V_0$ if and only if $\mu^{V_1}$ is ergodic in $V_1$. 
\end{proposition}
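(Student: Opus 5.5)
Two implications need proof, and they are not equally hard.

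\emph{From $V_1$ down to $V_0$.} Suppose $\mu^{V_1}$ is ergodic in $V_1$, and let $\gamma \in [\BorelCodes<\kappa>[\Lang](X)]^{V_0}$ be $\mu$-almost invariant in $V_0$. For each $h \in \SymFin(X)$, which lies in $V_0$, the code $h[\gamma] \SymDiff \gamma$ is in $V_0$ and has $\mu$-measure $0$. By \cref{Relativizations give same values} it also has $\mu^{V_1}$-measure $0$. Every finitely supported permutation in $V_1$ already lies in $V_0$, so \cref{Almost invariant on finite permutations is full almost invariant}, applied in $V_1$, shows that $\gamma$ is $\mu^{V_1}$-almost invariant. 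Ergodicity in $V_1$ gives $\mu^{V_1}(\gamma) \in \{0,1\}$, and \cref{Relativizations give same values} transfers this back to $\mu(\gamma) \in \{0,1\}$.

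\emph{From $V_0$ up to $V_1$.} This is the main obstacle, because $V_1$ may contain almost invariant codes that do not belong to $V_0$. Transferring Kallenberg-type characterizations directly looks awkward. I would instead route through the previous absoluteness results and the countable equivalence theorem. Pass to an extension $V^* \supseteq V_1$ in which $|X| = |\Lang| = |\kappa| = \w$, for example by a collapse. The relativization is transitive: $(\mu^{V_1})^{V^*} = \mu^{V^*}$, since both agree with $\mu$ on basic codes. In $V^*$, \cref{If countable dissociated equals ergodic equals extreme} applies, because $\kappa$-Borel codes over a countable $\kappa$ coincide with ordinary Borel sets. There is a small technical point here: the proposition is stated for $\w$-measures, so I must note that when $\kappa$ is countable the $\kappa$-measurable space is just the usual Borel space. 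With this in hand, the plan is to show
\[
\mu \text{ ergodic in } V_0 \;\Longrightarrow\; \mu \text{ dissociated in } V_0 \;\Longrightarrow\; \mu^{V_1} \text{ dissociated in } V_1 \;\Longrightarrow\; \mu^{V_1} \text{ ergodic in } V_1.
\]
The middle implication is \cref{Dissociated is absolute}. The last is the easy direction, valid in any model: if $\gamma$ is almost invariant, approximate it by an almost basic $\zeta$ using \cref{We can approximate all Borel codes by almost basic Borel codes}. Then choose $h \in \SymFin(X)$ moving the finite support of $\zeta$ to a disjoint set. Dissociation together with almost invariance gives $\mu(\gamma) \approx \mu(\zeta \And h[\zeta]) = \mu(\zeta)^2 \approx \mu(\gamma)^2$, hence $\mu(\gamma) \in \{0,1\}$.

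The remaining step, ergodic implies dissociated in $V_0$ for arbitrary cardinalities, is the genuine difficulty. I would argue by contrapositive using \cref{Dissociated is absolute} and \cref{Extreme is absolute}. If $\mu$ is not dissociated in $V_0$, then $\mu^{V^*}$ is not dissociated in $V^*$. By the countable theorem, $\mu^{V^*}$ is not ergodic, hence not extreme in $V^*$. By \cref{Extreme is absolute}, $\mu$ is not extreme in $V_0$, so $\mu = r\nu + s\eta$ with $\nu \neq \eta$ invariant and $r, s \in (0,1)$. It remains to produce a nontrivial almost invariant set in $V_0$ from this decomposition. Since $\nu \neq \eta$, they disagree on some basic code $\zeta$. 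I would take averages of $\zeta$ translated along disjoint copies, for instance $\limsup_n \frac1n \sum_{i<n} \mathbf 1_{h_i[\zeta]}$ with $h_i \in \SymFin(X)$ moving the support to pairwise disjoint blocks in a fixed countable $X_0 \subseteq X$. This is a $\kappa$-Borel code in $V_0$, and almost invariance under $\SymFin(X)$ holds because changing finitely many terms does not affect the limit. Its measure should be strictly between $0$ and $1$, since the empirical frequencies converge to different values under the ergodic components. The cleanest way to verify this is to compute the measure of the code in $V^*$ and pull back via \cref{Relativizations give same values}. Then \cref{Almost invariant on finite permutations is full almost invariant} shows $\mu$ is not ergodic in $V_0$, completing the chain.
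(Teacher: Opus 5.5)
Your proof is correct. The downward direction is exactly the paper's, but the upward direction takes a genuinely different route.

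\emph{The paper's route.} The paper argues directly. From a witness of non-ergodicity in $V_1$ it passes to $V^*$ and builds the tree of almost basic codes that are $2^{-n}$-almost invariant with measure bounded away from $0$ and $1$, consecutive nodes being $2^{-m}$-close. This tree is the same in $V_0$ and $V^*$, so absoluteness of ill-foundedness yields a branch in $V_0$. The $\limsup$ of that branch is a nontrivial almost invariant code, first for $\mu_\w$ and then for $\mu$. The argument uses only \cref{Relativizations give same values}, almost basic approximation, and \cref{Almost invariant on finite permutations is full almost invariant}. It does not use the countable equivalence theorem or Propositions~\ref{Dissociated is absolute} and~\ref{Extreme is absolute}, which is what lets the paper afterwards derive \cref{Equivalences with ergodic} from it.

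\emph{Your route.} You reverse that order. You prove ergodic $\Leftrightarrow$ dissociated inside every model: one direction by the translate-and-square argument, the other via the countable theorem in $V^*$, \cref{Extreme is absolute}, and an explicit empirical-frequency code built in $V_0$. You then read off absoluteness of ergodicity from absoluteness of dissociation. This gives explicit witnesses and avoids the well-foundedness argument, but it inherits heavier dependencies.

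\emph{Points to watch.} First, your proof now rests on \cref{Extreme is absolute}. Its presheaf $F_q$, as written, does not force the limiting $\nu,\eta$ to be invariant or distinct: roundings of $\nu=\eta=\mu$ with $r=s=\frac12$ lie in $F_q$ for every $q<\frac12$. That proof needs repair before you rely on it. Second, in your last step $\nu$ and $\eta$ need not be ergodic, so ``different values under the ergodic components'' is not quite the right justification. The clean argument is this: the a.s.\ limit $L$ of your averages has $\nu$-mean $\nu(\zeta)$ and $\eta$-mean $\eta(\zeta)$, which differ, while $\nu,\eta\le\mu/\min(r,s)$. Hence $L$ is not $\mu^{V^*}$-a.s.\ constant, and the set $\{\limsup>q\}$ works for a suitable rational $q$.

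\emph{A shortcut.} Once you have non-extreme $\Rightarrow$ non-ergodic, the dissociation detour is unnecessary. Ergodic in $V_0$ gives extreme in $V_0$, hence extreme in $V^*$ by \cref{Extreme is absolute}, hence ergodic in $V^*$ by the countable theorem. Your downward argument applied to $V_1\subseteq V^*$ then gives ergodic in $V_1$.
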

\begin{proof}
By 
\cref{Relativizations give same values}
and \cref{Almost invariant on finite permutations is full almost invariant}, 
a given $\gamma \in [\BorelCodes<\kappa>[\Lang](X)]^{V_0}$ is $\mu$-almost invariant (in $V_0$) if and only if it is $\mu^{V_1}$-almost invariant (in $V_1$). Therefore, because $[\BorelCodes<\kappa>[\Lang](X)]^{V_0} \subseteq [\BorelCodes<\kappa>[\Lang](X)]^{V_1}$, if $\mu^{V_1}$ is ergodic (in $V_1$) then $\mu$ is ergodic (in $V_0$).

\begin{claim}
If $\mu^{V_1}$ is non-ergodic (in $V_1$) then $\mu_\w$ is non-ergodic (in $V_0$).
\end{claim}
\begin{proof}
Suppose $\mu^{V_1}$ is non-ergodic, i.e., there is a $\gamma\in [\BorelCodes<\kappa>[\Lang](X)]^{V_1}$ such that 
$\mu^{V_1}(\gamma) \not \in \{0, 1\}$
and
$\mu^{V_1}(g[\gamma] \SymDiff \gamma) = 0$ 
for all  $g \in [\SymFin(X)]^{V_1}$.

Let $V^*$ be a model of set theory where $V_1 \subseteq V^*$ and where \linebreak $V^* \models |X| = |\Lang| =  \w$. Note that $[\SymFin(X)]^{V_1} = [\SymFin(X)]^{V^*}$, and so by \cref{Almost invariant on finite permutations is full almost invariant}, the code $\gamma$ witnesses that $\mu^{V^*}$ is not ergodic. But as $V^* \models |X| = \w$, this implies that $(\mu^{V^*})_\w$ 
is not ergodic. Hence 
$[\mu_\w]^{V^*}$ is not ergodic by \cref{Basic results about relativization}~(d).
Let $q \in \Rationals^+$ be such that $q < [\mu_\w]^{V_*}(\gamma) < 1 - q$. 

We now define a tree $(T, \preceq)$ where level $n$ of $T$ consists of pairs $(n, \zeta)$
such that 
there is a
$\gamma \in \AlmostBasicBC<\kappa>[\Lang](\w)$ for which 
\begin{itemize}
\item $q - 2^{-n} < \mu_\w(\gamma) < 1-q + 2^{-n}$, and

\item 
$\mu_\w(\zeta \SymDiff g[\zeta]) < 2^{-n}$
for all $g \in \SymFin(\w)$.
\end{itemize}
We let 
$(m,\zeta_m) \prec_1 (n, \zeta_n)$ if and only if $n = m+1$ and $\mu(\zeta_m \SymDiff \zeta_n) \preceq 2^{-m}$. We let $\preceq$ be the transitive closure of $\prec_1$ (along with equality).

Note that $(T, \preceq)$ is the same whether  it is defined in $V_0$ or $V^*$. Also note that 
$(T, \preceq)$ is ill-founded 
in $V^*$, 
and so $(T, \preceq)$ is ill-founded in $V_0$. 

Suppose $\<(k, \zeta_k)\>_{k \in \w}$ is an infinite branch of $(T, \preceq)$ in $V_0$. Let \linebreak $\eta = \bigwedge_{n \in \w} \bigvee_{k \geq n} \zeta_k$. 
Note that for any $n \in \w$ and $g \in \SymFin(X)$ we have 
\[
\mu_\w\bigg(\bigvee_{k \geq n} \zeta_k \SymDiff g\bigg[\bigvee_{k \geq n} \zeta_k\bigg]\bigg) \leq \sum_{k \geq n} \mu_\w\bigg(\zeta_k\SymDiff g[\zeta_k]\bigg) \leq \sum_{k \geq n}2^{-k} = 2^{-n+1}.
\]
Therefore $\mu_\w(\eta \SymDiff g[\eta]) = 0$. 

Let $n \in \w$ be such that $2^{-n+1} < q$. We then have 
\[
\mu_\w(\zeta \SymDiff \gamma_n) \leq \sum_{k \geq n} \mu_\w(\gamma_n \SymDiff \gamma_k) \leq \sum_{k \geq n} 2^{-k} = 2^{-n+1}.
\]
Hence $0 < \mu_\w(\zeta) < 1$ and $\zeta$ witnesses that $\mu_\w$ is not ergodic, completing the proof of the claim.
\end{proof}

Now for an infinite set $X_0$ and an injection $i\:X_0 \to X$, if $\mu$ is ergodic then $\mu_{X_0}$ is ergodic as well by \cref{Basic results about relativization} (e)
and the fact that \linebreak $\BorelCodes<\kappa>[\Lang](X_0) \subseteq \BorelCodes<\kappa>[\Lang](X)$. 

In particular, if $\mu_\w$ is not ergodic then $\mu$ is not ergodic either. Hence if $\mu^{V_1}$ is not 
ergodic then $\mu$ is not ergodic.
Therefore ergodicity is absolute, completing the proof of the proposition.
\end{proof}

One of the reasons we are interested in ergodic invariant measures is 
that they give rise to complete theories, as we show in the next lemma.

\begin{lemma}
Let $\varphi \in \Lkpw(\Lang)$ be a sentence
and suppose $|X| \leq \kappa$.
Let $\mu$ be an ergodic invariant 
measure on $\Str<\kappa>[\Lang](X)$. Then either $\varphi \in \Th[\kappa](\mu)$ or $\neg \varphi \in \Th[\kappa](\mu)$. 
\end{lemma}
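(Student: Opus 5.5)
The plan is to show that the set $\BCvalue(\BCformula(\varphi))$ is invariant under the logic action, not merely $\mu$-almost invariant. Ergodicity then forces its measure to be $0$ or $1$.

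Since $|X| \leq \kappa$ and $\varphi$ is a sentence, \cref{Exists Borel codes for formulas} gives a code $\BCformula(\varphi) \in \BorelCodes<\kappa>[\Lang](X)$ with $\BCvalue(\BCformula(\varphi)) = \extent<\Lang>[X]{\varphi}$. For every $g \in \Sym(X)$, the map $\circ_{\Lang, X}(\pars, g)$ sends each structure $\cM$ to an isomorphic copy of $\cM$. Indeed, by the definition of the logic action, $g$ itself is an isomorphism from $\cM$ onto $g(\cM)$. Satisfaction of $\Lkpw(\Lang)$-sentences is preserved under isomorphism, which is a routine induction on formulas and holds for infinitary formulas as well. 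Hence $g``[\extent<\Lang>[X]{\varphi}] = \extent<\Lang>[X]{\varphi}$, and so $\mu\bigl(g``[\extent{\varphi}] \SymDiff \extent{\varphi}\bigr) = \mu(\emptyset) = 0$ for every $g \in \Sym(X)$.

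It follows that $\extent<\Lang>[X]{\varphi}$ is a $\mu$-almost invariant set in $\BorelSets<\kappa>[\Lang](X)$. Because $\mu$ is ergodic, we get $\mu(\varphi) = \mu(\BCformula(\varphi)) \in \{0, 1\}$. If $\mu(\varphi) = 1$, then $\varphi \in \Th[\kappa](\mu)$. Otherwise $\mu(\varphi) = 0$. We take $\BCformula(\neg\varphi) = \neg\BCformula(\varphi)$, as in the proof of \cref{Exists Borel codes for formulas}, so $\mu(\neg\varphi) = 1 - \mu(\varphi) = 1$ and $\neg\varphi \in \Th[\kappa](\mu)$.

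There is no real obstacle here. The only point needing care is that $\mu(\varphi)$ is computed from the particular code $\BCformula(\varphi)$ given by \cref{Exists Borel codes for formulas}, whose interpretation in the current model is exactly $\extent{\varphi}$. The hypothesis $|X| \leq \kappa$ is what makes that code available, since quantifiers over $X$ are turned into disjunctions and conjunctions of size at most $|X|$. With the code in hand, invariance of the interpreted set under the logic action is purely semantic, and no absoluteness arguments are needed.
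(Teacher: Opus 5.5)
Your proof is correct and is the paper's argument: the paper observes that $g[\BCformula(\varphi)]$ and $\BCformula(\varphi)$ interpret the same set for every $g \in \Sym(X)$, which is your observation that $\extent<\Lang>[X]{\varphi}$ is invariant under the logic action, not merely $\mu$-almost invariant, so ergodicity forces its measure to be $0$ or $1$. Your justification via each $g$ being an isomorphism $\cM \to g(\cM)$, and your treatment of $\neg\varphi$ by passing to the complement, fill in details the paper leaves implicit.
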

\begin{proof}
This follows immediately from the fact that for any $g \in \Sym(X)$, the codes $g[\BCformula({\varphi(\xx)})]$ and $\BCformula({\varphi(\xx)})$ interpret the same set. 
\end{proof}

In other words, every ergodic invariant measure has a complete almost-sure theory in $\Lkpw(\Lang)$.

Putting the above results together,
we have the following equivalences with ergodicity.
\begin{theorem}
\label{Equivalences with ergodic}
Suppose  $\mu$ is an invariant measure on $\Str<\kappa>[\Lang](X)$ with \linebreak $\Lang, X, \mu \in V$. Then the following are equivalent. 
\begin{itemize}
\item[(a)] $V \models \mu\text{ is ergodic}$. 

\item[(b)] 
$V^* \models \mu^{V^*}\text{ is ergodic}$,
for some model of set theory $V^*$ with $V \subseteq V^*$.

\item[(c)] 
$V^* \models \mu^{V^*}\text{ is ergodic}$,
for any model of set theory $V^*$ with $V \subseteq V^*$.

\item[(d)] 
$V \models \mu_Y\text{ is ergodic}$,
for any infinite set $Y \in V$.

\item[(e)] $V \models \mu\text{ is extreme}$. 

\item[(f)] $V \models \mu\text{ is dissociated}$. 

\end{itemize}
\end{theorem}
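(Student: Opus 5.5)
The plan is to pass everything through a single forcing extension $V^*\supseteq V$ in which $X$, $\Lang$ and $\kappa$ are countable (for instance, collapsing $|X|+|\Lang|+|\kappa|$ to $\w$). In $V^*$ the space $\Str<\kappa>[\Lang](X)$ is a standard Borel space. Moreover, since $X$ is countable there, \cref{If countable dissociated equals ergodic equals extreme} applies to $\mu^{V^*}$, after transporting along a bijection $X\to\w$ that lies in $V^*$. Each property in (a), (e), (f) will then be transferred between $V$ and $V^*$ using the absoluteness results already proved.

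\emph{The equivalences (a)$\Leftrightarrow$(b)$\Leftrightarrow$(c).} By \cref{Ergodic is absolute}, for every $V^*\supseteq V$ we have that $\mu$ is ergodic in $V$ if and only if $\mu^{V^*}$ is ergodic in $V^*$. This gives (a)$\Rightarrow$(c) and (b)$\Rightarrow$(a). The implication (c)$\Rightarrow$(b) is trivial, since $V$ itself is such a model (or, if one prefers, since the collapse extension exists).

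\emph{The equivalences (a)$\Leftrightarrow$(e)$\Leftrightarrow$(f).} Fix the collapse extension $V^*$. Then:
\begin{itemize}
\item $\mu$ is extreme in $V$ if and only if $\mu^{V^*}$ is extreme in $V^*$, by \cref{Extreme is absolute}.
\item $\mu$ is dissociated in $V$ if and only if $\mu^{V^*}$ is dissociated in $V^*$, by \cref{Dissociated is absolute}.
\item $\mu$ is ergodic in $V$ if and only if $\mu^{V^*}$ is ergodic in $V^*$, by \cref{Ergodic is absolute}.
\end{itemize}
In $V^*$ all three properties coincide by \cref{If countable dissociated equals ergodic equals extreme}.

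Two technical points need care here. First, \cref{If countable dissociated equals ergodic equals extreme} is stated for ordinary ($\w$-)measures, whereas $\mu^{V^*}$ is a $\kappa$-measure. Since $\kappa$ is countable in $V^*$, a $\kappa$-measure and its $\kappa$-Borel codes coincide with the usual $\sigma$-additive measure and its Borel sets. Second, the definitions of dissociated and ergodic use $\kappa$-Borel codes, whereas the classical notions use arbitrary Borel sets. These agree in $V^*$, because every Borel set there has a code.

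\emph{The equivalence (a)$\Leftrightarrow$(d).} The implication (d)$\Rightarrow$(a) is immediate by taking $Y=X$, since $\mu_X=\mu$ by the uniqueness in \cref{Invariant-measures-lift}. For (a)$\Rightarrow$(d), I would go through dissociatedness, which depends only on the skeleton. Dissociatedness of an invariant measure is determined by its values on basic codes over disjoint finite sets: by \cref{We can approximate all Borel codes by almost basic Borel codes} and the $\epsilon$-argument in the proof of \cref{Dissociated is absolute}, it suffices to verify independence on almost basic codes. Those values are read off from $\Skeleton(\mu)=\Skeleton(\mu_Y)$. Hence $\mu$ is dissociated if and only if $\mu_Y$ is, and (d) follows from (a)$\Leftrightarrow$(f) applied to both $\mu$ and $\mu_Y$.

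The main obstacle I expect is this reduction of dissociatedness to almost basic codes. One must check that the approximating almost basic codes can be chosen over the same finite sets $X_0$ and $X_1$. I would handle this by applying \cref{We can approximate all Borel codes by almost basic Borel codes} to the restricted measures $\Proj*<X_i>[\Lang](\mu)$ on $\Str<\kappa>[\Lang](X_i)$. A second point to verify is the transfer of \cref{If countable dissociated equals ergodic equals extreme} from $\w$ to an arbitrary countable $X$ in $V^*$, which is routine via the bijection.
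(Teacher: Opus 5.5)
Your proposal is correct. For (a)$\Leftrightarrow$(b)$\Leftrightarrow$(c) and (a)$\Leftrightarrow$(e)$\Leftrightarrow$(f) it follows the paper's route: absoluteness of ergodicity, extremality and dissociatedness, plus \cref{If countable dissociated equals ergodic equals extreme} applied in an extension where everything is countable. Your choice to collapse $\kappa$ as well, so that $\mu^{V^*}$ is literally an ordinary measure, makes precise a point the paper leaves implicit.

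The genuine difference is in (a)$\Rightarrow$(d).

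\emph{The paper's route.} It passes to an extension $V^*$ in which $X$ and $Y$ are both countable. There a bijection $X\to Y$ carries $\mu^{V^*}$ to $(\mu^{V^*})_Y=[\mu_Y]^{V^*}$, using \cref{Basic results about relativization}(d). Ergodicity is preserved under this isomorphism, and absoluteness of ergodicity for both $\mu$ and $\mu_Y$ brings the equivalence back to $V$.

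\emph{Your route.} You note that dissociatedness depends only on finite-dimensional marginals, hence only on the skeleton. So $\mu$ is dissociated if and only if $\mu_Y$ is, and you then apply (a)$\Leftrightarrow$(f) to each measure.

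\emph{What each buys.} Your argument needs no bijection between $X$ and $Y$ and does not use the commutation of lifting with relativization. In exchange, it leans on the deep equivalence (a)$\Leftrightarrow$(f), which ultimately rests on the countable-case theorem. The paper instead uses only the formal invariance of ergodicity under isomorphism together with its absoluteness.

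The obstacle you anticipated is lighter than you expect, and no approximation by almost basic codes is needed. For codes $\gamma_0,\gamma_1$ over disjoint finite $X_0,X_1$, the quantities $\mu(\gamma_0\wedge\gamma_1)$, $\mu(\gamma_0)$ and $\mu(\gamma_1)$ are all values of the marginal $\Proj*<X_0\cup X_1>[\Lang](\mu)$ on $\Str<\kappa>[\Lang](X_0\cup X_1)$. By \cref{Unique-extension}, that marginal is determined by its values on basic codes, which are read off from the skeleton.
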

\begin{proof}
The implication from (c) to (b) is immediate. The implications from (b) to (a) and from (a) to (c) both follow from \cref{Ergodic is absolute}. 

Let $V^*$ be a model of set theory where $V \subseteq V^*$ and where \linebreak $V^* \models |\Lang| = |X| = |Y| = \w$.
We then have 
\[
V^* \models \mu\text{ is ergodic if and only if } \mu_Y\text{ is ergodic}.
\]
Therefore, by the equivalence of (a) and (b) we have that (a) and (d) are equivalent as well. 

By \cref{If countable dissociated equals ergodic equals extreme},
we also have that
\[
V^* \models \mu\text{ is ergodic if and only if }\mu \text{ is extreme}
\]
and 
\[
V^* \models \mu\text{ is ergodic if and only if }\mu \text{ is dissociated}.
\]
Therefore by 
Propositions~\ref{Dissociated is absolute} and \ref{Extreme is absolute}, we have
that (a) is equivalent to (e) and that (a) is equivalent to (f). 
\end{proof}

\section{Random Generic Structures}
\label{Random Generic Structures Section}

In this section we consider forcing with an invariant measure. 

\begin{definition}
Let $\mu$ be a probability $\kappa$-measure on $\Str<\kappa>[\Lang](X)$. Define a \linebreak \defn{$\mu$-random generic filter} over $V$ to be a filter on $\RandPO(\mu)$ that is generic over $V$. 
\end{definition}

Note that if $|X|\leq \kappa$ and $|\Lang| \leq \kappa$ then for any $\zeta \in \BorelCodes<\kappa>[\Lang](X)$ we have $\bigcap_{\zeta \sim_\mu \zeta^*} \BCvalue(\zeta^*) = \emptyset$, as for every $\cM \in \Str<\kappa>[\Lang](X)$ there is some $\gamma_\cM \in \BorelCodes<\kappa>[\Lang](X)$ such that $\BCvalue(\gamma_{\cM}) = \{\cM\}$ and $(\zeta \And \neg \gamma_\cM) \sim_\mu \zeta$. 

However, if $V_0 \subseteq V_1$, then we need not have 
\[ V_1 \models \bigcap \{\BCvalue(\gamma^*) \st \gamma^* \in [\BorelCodes<\kappa>[\Lang](X)]^{V_0}\text{ and } \gamma \sim_\mu \gamma^*\} = \emptyset.
\]
In fact, the following theorem shows that the intersection of the interpretations of elements of a $\mu$-random generic filter, which is necessarily closed under $\sim_\mu$, always contains a unique element. 

\begin{theorem}
\label{mu-generic filters are determined by their intersection}
Let $\mu \in V$ be a probability $\kappa$-measure on $[\Str<\kappa>[\Lang](X)]^V$, and suppose $\Generic$ is a $\mu$-random generic filter over $V$. Let $\Generic_b = \Generic \cap \BasicBC<\kappa>[\Lang](X)$. 
Then the following hold.
\begin{itemize}
\item[(a)] $V[\Generic] \models |\bigcap_{g \in \Generic} \BCvalue(g)|  =  1$. 

\item[(b)] $V[\Generic] \models \bigcap_{g \in \Generic} \BCvalue(g) = \bigcap_{g \in \Generic_b} \BCvalue(g)$. 

\item[(c)] If \, $V[\Generic]\models \{\cM_{\Generic}\} = \bigcap_{g \in \Generic_b} \BCvalue(g)$ \, then 
\[
\Generic = \{\zeta  \in [\RandPO(\mu)]^{V}\st V[\Generic] \models \cM_{\Generic} \in \BCvalue(\zeta)\}.
\]
\end{itemize}

\end{theorem}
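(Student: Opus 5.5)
The plan is to first build the candidate structure $\cM_{\Generic}$ from the basic part of the filter, and then use genericity together with \cref{Relativizations give same values} to show that every code in $\Generic$, and only those, contains it.

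\emph{Step 1 (defining $\cM_{\Generic}$).} Fix $R \in \Lang$ of arity $n$ and $\aa \in X^n$. The almost basic code $\BCformula(R(\aa)) \Or \BCformula(\neg R(\aa))$ has full measure. Hence the set
\[
D_{R,\aa} = \{\zeta \in \RandPO(\mu) \st \zeta \preceq_\mu \BCformula(R(\aa)) \text{ or } \zeta \preceq_\mu \BCformula(\neg R(\aa))\}
\]
is dense and lies in $V$. Since $\Generic$ is upward closed, exactly one of $\BCformula(R(\aa))$, $\BCformula(\neg R(\aa))$ lies in $\Generic$; both cannot, because their conjunction has measure $0$ and $\Generic$ is a filter. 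Define $\cM_{\Generic}$ by declaring $R(\aa)$ true exactly when $\BCformula(R(\aa)) \in \Generic$.

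Next, a basic code $\gamma$ is a finite conjunction of literals. By the filter property, $\gamma \in \Generic_b$ if and only if each of its literals is in $\Generic$, and this holds if and only if $\cM_{\Generic} \in \BCvalue(\gamma)$. For the "if" direction, finite meets of members of $\Generic$ are $\sim_\mu$-represented in $\Generic$. This gives $\bigcap_{g \in \Generic_b} \BCvalue(g) = \{\cM_{\Generic}\}$, since the literals determine a structure completely.

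\emph{Step 2 (key claim).} For every $\zeta \in [\RandPO(\mu)]^V$, we claim $\zeta \in \Generic$ if and only if $V[\Generic] \models \cM_{\Generic} \in \BCvalue(\zeta)$. This yields (a), (b) and (c) at once: the full intersection contains $\cM_{\Generic}$ and is contained in the basic intersection. Note also that if $\zeta \notin \Generic$, then density of $\{\xi \st \xi \preceq_\mu \zeta \text{ or } \xi \preceq_\mu \neg\zeta\}$ puts $\neg\zeta \in \Generic$. So it suffices to prove the forward direction, for all codes of $V$ of positive measure, simultaneously for $\zeta$ and $\neg \zeta$. Equivalently, we show by induction on $\rank(\zeta)$ that for every $\zeta \in [\BorelCodes<\kappa>[\Lang](X)]^V$,
\[
\zeta \in \Generic \iff \cM_{\Generic} \in \BCvalue(\zeta)^{V[\Generic]},
\]
with the convention that a code of measure $0$ is never in $\Generic$. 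Conjunctions of measure $0$ are handled by this convention.

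\emph{Step 3 (induction on rank).}
\begin{itemize}
\item \emph{Rank $0$.} This is Step 1.
\item \emph{Negation.} Use the dichotomy noted in Step 2: exactly one of $\zeta$, $\neg\zeta$ is in $\Generic$ (when a code has measure $0$, its negation has full measure and is in every filter by density). Combine this with the inductive hypothesis.
\item \emph{Disjunction $\bigvee \Gamma$.} If some $\gamma \in \Gamma$ is in $\Generic$, then $\bigvee\Gamma \in \Generic$ by upward closure. Conversely, if $\bigvee\Gamma \in \Generic$, consider the set
\[
\{\xi \st \xi \preceq_\mu \neg\bigvee\Gamma\} \cup \{\xi \st (\exists \gamma\in\Gamma)\ \xi \preceq_\mu \gamma\}.
\]
This set is in $V$ and is dense: any $\xi$ with $\mu(\xi \And \bigvee\Gamma) > 0$ has $\mu(\xi\And\gamma)>0$ for some $\gamma$, by $\kappa$-additivity in $V$. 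Hence some $\gamma \in \Gamma$ lies in $\Generic$. Then the inductive hypothesis gives $\cM_{\Generic} \in \BCvalue(\gamma) \subseteq \BCvalue(\bigvee\Gamma)$, computed in $V[\Generic]$.
\item \emph{Conjunction $\bigwedge\Gamma$.} Reduce to the disjunction case by De Morgan. Here $\bigwedge\Gamma$ and $\neg\bigvee\{\neg\gamma \st \gamma \in \Gamma\}$ have the same interpretation in every model, so they are $\sim_\mu$-equivalent; hence membership in $\Generic$ agrees for them.
\end{itemize}

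The main obstacle I expect is bookkeeping rather than a single hard step. Throughout, the relation $\preceq_\mu$ and all measures are evaluated in $V$, whereas $\BCvalue$ is evaluated in $V[\Generic]$. I would invoke \cref{Relativizations give same values} only to confirm that the codes being manipulated keep their measures. The genuinely delicate part is the disjunction case, where the density argument must be carried out entirely in $V$, using that $\mu$ is a probability $\kappa$-measure there.
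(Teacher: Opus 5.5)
Your proposal is correct and follows essentially the same route as the paper: you build $\cM_{\Generic}$ from the basic part of $\Generic$ by density, then prove $\gamma \in \Generic \iff \cM_{\Generic} \in \BCvalue(\gamma)$ by induction on rank, using the $\zeta$/$\neg\zeta$ dichotomy for negations and the same dense set for disjunctions. The differences are cosmetic: the paper decides complete finite diagrams $\tau_{\cM}$ rather than single literals, and calls the conjunction case ``similar''. One caveat is that your De Morgan reduction produces a code of larger rank than $\bigwedge\Gamma$, so state the negation and disjunction steps as closure properties of the class of codes satisfying the claim, rather than as direct appeals to the rank hypothesis.
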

\begin{proof} 
Note that (b) follows from (a) and (c). We first prove (a). Suppose $\Lang_0 \in \PowersetFin(\Lang)$ and $X_0 \in \PowersetFin(X)$, and let $\aa$ be an enumeration of $X_0$. For $\cM \in \Str[\Lang_0](X_0)$ let $\tau_{\cM}(\xx)$ be a first-order quantifier-free formula of arity $|X_0|$ such that 
\begin{itemize}
\item  $\cM \models \tau_{\cM}(\aa)$, and

\item if $\bb$ is an enumeration of $X_0$ and $\cN \in \Str[\Lang_0](X_0)$ where $\cN \models \tau_{\cM}(\bb)$ then the map $\aa \mapsto \bb$ is an isomorphism from $\cM$ to $\cN$. 
\end{itemize}
Note that we can always find such a $\tau_{\cM}$ as both $X_0$ and $\Lang_0$ are finite. Let
\[
P(\cM) = \{\gamma \in [\RandPO(\mu)]^{V} \st \gamma \preceq_\mu \BCformula(\tau_{\cM}(\aa))\}.
\]     
We then let 
\[
P(X_0, \Lang_0) = \bigcup_{\cM \in \Str[\Lang_0](X_0)} P(\cM).
\]
Note that $P(X_0,\Lang_0)$ is dense in $\RandPO(\mu)$. Therefore there must be some $Z(X_0, \Lang_0) \in \Generic$ and $\cM_{X_0, \Lang_0} \in \Str[\Lang_0](X_0)$ with $Z(X_0, \Lang_0) \in P(\cM_{X_0, \Lang_0})$. Further, if $\cM^* \in \Str[\Lang_0](X_0)$ and $Z'(X_0, \Lang_0) \in \Generic \cap P(\cM^*)$ then we must have $\cM^* = \cM_{X_0, \Lang_0}$ as $\Generic$ is a filter and all elements are compatible. Therefore we have $|\bigcap_{g \in \Generic_b} \BCvalue(g)| \leq 1$.

If $X_0 \subseteq X_1 \subseteq X$ and $\Lang_0 \subseteq \Lang_1 \subseteq\Lang$ then $\cM_{X_1, \Lang_1}\rest[X_0,\Lang_0] = \cM_{X_0, \Lang_0}$. Hence there is an $\cM_{\Generic} \in [\Str[\Lang](X)]^{V[\Generic]}$ such that for any finite $X_0 \subseteq X$ and finite $\Lang_0 \subseteq \Lang$, we have $\cM_G\rest[X_0,\Lang_0] = \cM_{X_0, \Lang_0}$. Therefore $\{\cM_{\Generic}\} = \bigcap_{g \in \Generic_b} \BCvalue(g)$ and so (a) holds. 

We now prove (c). It 
suffices to show that for any $\gamma \in [\BorelCodes<\kappa>[\Lang](X)]^{V}$ 
we have
\begin{align*}
\label{mu-generic filters are determined by their intersection: Eq 1}
\tag{$\circ$}
\gamma \in \Generic &\quadiff V[\Generic] \models \cM_{\Generic} \in \BCvalue(\gamma).
\end{align*}
We prove \eqref{mu-generic filters are determined by their intersection: Eq 1} by induction on the rank of $\gamma$. 

First note that \eqref{mu-generic filters are determined by their intersection: Eq 1} holds 
when
$\rank(\gamma) = 0$, i.e., $\gamma \in \BasicBC<\kappa>[\Lang](X)$. 

Now 
assume $\rank(\gamma) = \alpha$
and suppose that \eqref{mu-generic filters are determined by their intersection: Eq 1} holds for all elements of $\BasicBC<\kappa>[\Lang](X)$ of rank $< \alpha$. 

If $\gamma = \neg \zeta$ then, as $\Generic$ is generic, we have precisely one of $\gamma \in \Generic$ or $\zeta \in \Generic$. Therefore $\gamma \in \Generic$ if and only if $\zeta \not \in \Generic$ if and only if $V[\Generic] \not\models \cM_{\Generic} \in \BCvalue(\zeta)$ if and only if $V[\Generic] \models \cM_{\Generic} \in \BCvalue(\neg \zeta)$. 

Suppose we can write $\gamma = \bigvee \Gamma$ where each element of $\Gamma$ has rank $< \alpha$.
Suppose $V[\Generic] \models \cM_{\Generic} \in \BCvalue(\gamma)$. Then $V[\Generic] \models \cM_{\Generic} \in \bigcup_{\zeta \in \Gamma} \BCvalue(\zeta)$ and so \linebreak $V[\Generic] \models (\exists \zeta \in \Gamma)\, (\cM_{\Generic} \in \BCvalue(\zeta))$.
But then  $V[\Generic] \models (\exists \zeta \in \Gamma)\, (\zeta \in \Generic)$,
which implies 
that 
$\gamma \in \Generic$. 

Towards the converse 
when $\gamma = \bigvee \Gamma$, now suppose $\gamma \in \Generic$.  Let 
\[
D_\gamma = \{\neg \gamma\} \cup \{\eta \in \RandPO(\mu) \st (\exists \zeta \in \Gamma)\, (\eta \preceq_\mu \zeta)\}.
\]
Let $\nu \in \BorelCodes<\kappa>[\Lang](X)$ with $\nu \not \preceq_\mu \neg \gamma$, i.e., 
with 
$\mu(\gamma \And \nu) > 0$. Then, as \linebreak $\mu(\gamma\And \nu) \leq \sum_{\zeta \in \Gamma} \mu(\zeta \And \nu)$, there must be some $\zeta \in \Gamma$ with $\mu(\zeta \And \nu) > 0$. Hence $(\zeta \And \nu) \in D_\gamma$ (and in particular $(\zeta \And \nu)\in\RandPO(\mu)$) with $\zeta \And \nu \preceq_\mu \nu$. Therefore $D_\gamma$ is dense, and so there must be some $\beta \in \Generic \cap D_\gamma$. However, as $\gamma \in \Generic$, there must be some $\zeta \in \Gamma$ such that $\beta \preceq_\mu \zeta$. Therefore,  there must be some $\zeta \in \Gamma \cap \Generic$. By induction, we therefore have $V[\Generic] \models \cM_G \in \BCvalue(\zeta) \subseteq \BCvalue(\gamma)$. 

The proof of \eqref{mu-generic filters are determined by their intersection: Eq 1} when $\gamma = \bigwedge \Gamma$ is similar. 
Therefore, by induction, \linebreak \eqref{mu-generic filters are determined by their intersection: Eq 1} holds and so (c) holds as well. 
\end{proof}

\begin{definition}
Suppose $\mu$ is a probability $\kappa$-measure on $\Str<\kappa>[\Lang](X)$ and $\Generic$ is a $\mu$-random generic filter over $V$. Let $\cM_{\Generic}$ be the $\Lang$-structure such that \linebreak $V[\Generic] \models \bigcap_{g \in \Generic}\BCvalue(g) = \{\cM_{\Generic}\}$. We call $\cM_{\Generic}$ a \defn{$\mu$-random generic structure}. 
\end{definition}

The following is a key property of $\mu$-random generic structures. 

\begin{proposition}
\label{Random generic model satisfies theory of the measure}
Suppose that $X, \Lang \in V$ and that  $V\models |X| \leq \kappa$.
Let $\mu \in V$ be a probability $\kappa$-measure on $[\Str<\kappa>[\Lang](X)]^V$ and let $\Generic$ be a $\mu$-random generic filter over $V$. Then $\cM_{\Generic} \models [\Th[\kappa](\mu)]^{V}$. 
\end{proposition}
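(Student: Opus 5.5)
The plan is to combine \cref{Exists Borel codes for formulas}, which converts sentences into $\kappa$-Borel codes, with part (c) of \cref{mu-generic filters are determined by their intersection}, which describes exactly which ground-model codes contain the generic structure.

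Fix a sentence $\varphi \in [\Th[\kappa](\mu)]^V$, so $\varphi \in [\Lkpw(\Lang)]^V$ and $V \models \mu(\varphi) = 1$. Since $V \models |X| \le \kappa$, \cref{Exists Borel codes for formulas} yields a code $\gamma = \BCformula(\varphi) \in [\BorelCodes<\kappa>[\Lang](X)]^V$. This code has the property that in every model of set theory $V^* \supseteq V$,
\[
V^* \models \extent<\Lang>[X]{\varphi} = \BCvalue(\gamma).
\]
We apply this with $V^* = V[\Generic]$. Note that $\mu(\gamma) = 1$ in $V$ by definition of $\mu(\varphi)$.

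Next we show $\gamma \in \Generic$. Since $\mu(\gamma) = 1 > 0$, we have $\gamma \in [\RandPO(\mu)]^V$. Moreover every $\eta \in \RandPO(\mu)$ satisfies $\mu(\eta \And \neg \gamma) = 0$, so $\eta \preceq_\mu \gamma$. Hence $\gamma$ lies above every condition. A filter is nonempty and upward closed, so $\gamma \in \Generic$. Alternatively, the set $\{\eta : \eta \preceq_\mu \gamma\}$ is all of $\RandPO(\mu)$ and hence dense.

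By \cref{mu-generic filters are determined by their intersection}(c), $\gamma \in \Generic$ gives $V[\Generic] \models \cM_\Generic \in \BCvalue(\gamma)$. By the absoluteness clause of \cref{Exists Borel codes for formulas}, this means $V[\Generic] \models \cM_\Generic \in \extent<\Lang>[X]{\varphi}$, that is, $\cM_\Generic \models \varphi$. Satisfaction of an infinitary sentence by a set-sized structure is absolute between transitive models containing both, so this is unambiguous. Since $\varphi$ was arbitrary, $\cM_\Generic \models [\Th[\kappa](\mu)]^V$.

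The only delicate step is transferring the meaning of the code from $V$ to $V[\Generic]$. In $V[\Generic]$ the set $X$ may no longer have size at most $\kappa$, and $\kappa$ may collapse. This is exactly what the ``any model $V^* \supseteq V$'' clause of \cref{Exists Borel codes for formulas} provides: the code replaces each quantifier by a disjunction or conjunction over the fixed set $X$, and that replacement remains correct in any extension. So I expect no real obstacle, provided that lemma is applied with $V^* = V[\Generic]$.
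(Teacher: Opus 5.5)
Your proposal is correct and matches the paper's proof: take the code $\BCformula(\varphi)$ from \cref{Exists Borel codes for formulas}, note that having $\mu$-measure $1$ puts it in $\Generic$, and conclude via \cref{mu-generic filters are determined by their intersection}(c); your argument that a measure-one code lies above every condition just spells out a step the paper states without comment. One small correction to your closing remark: $|X|\le\kappa$ cannot fail in $V[\Generic]$, because the injection witnessing it in $V$ persists, and since $\RandPO(\mu)$ is c.c.c.\ no cardinals collapse either; the actual reason the ``any $V^*\supseteq V$'' clause is needed is that $\BCvalue(\gamma)$ is recomputed in $V[\Generic]$ over a space of structures that contains new elements such as $\cM_{\Generic}$.
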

\begin{proof}
By \cref{Exists Borel codes for formulas}, for every sentence $\varphi \in [\Lkpw(\Lang)]^{V}$ 
there is a \linebreak $\BCformula(\varphi) \in [\BorelCodes<\kappa>[\Lang](X)]^{V}$  such that $V[\Generic] \models \BCvalue(\BCformula(\varphi)) = \extent<\Lang>[X]{\varphi}$. But for all $\varphi \in [\Th[\kappa](\mu)]^{V}$ 
we have $V \models \mu(\BCformula(\varphi)) = 1$, and so  $\BCformula(\varphi) \in \Generic$. But then by \cref{mu-generic filters are determined by their intersection} 
we have $V[\Generic] \models \cM_{\Generic} \in \BCvalue(\BCformula(\varphi))$, and so $\cM_{\Generic} \models \varphi$. 
\end{proof}

An ergodic invariant measure on $\Str<\w>[\Lang](X)$ is said to be \defn{properly} \linebreak \defn{ergodic} when it assigns measure~$0$ to the isomorphism class of each structure in $\Str<\w>[\Lang](X)$.
In \cite{AFKrP}
it was shown that for every properly ergodic invariant measure $\mu \in V$, there is no model in $V$ of $[\Th[\kappa](\mu)]^V$.
However, \cref{Random generic model satisfies theory of the measure} establishes that
for any $\mu$-random generic filter $\Generic$, there is such a model in $V[\Generic]$,
even though $V$ and $V[\Generic]$ have the same cardinals.

In the case of 
non-properly 
ergodic invariant measures,
we have the following consequence of
\cref{Random generic model satisfies theory of the measure}.
Recall that two $\Lang$-structures are \emph{potentially isomorphic} when they satisfy the same sentences of $\Liw(\Lang)$.

\begin{proposition}
\label{Generics for invariant measure concentrated on a structure give rise to potentially isomorphic structures}
Let $\Lang$ be a countable language, and
let $\mu$ be an invariant 
measure on $\Str<\kappa>[\Lang](\w)$
that concentrates on the isomorphism class of some structure.
Suppose $\Generic$ is a $\mu_X$-random generic filter over $V$ and 
suppose $\Generic[H]$ is a $\mu_Y$-random generic filter over $V$.
Then $\cM_{\Generic}$ and $\cM_{\Generic[H]}$ are potentially isomorphic. 
\end{proposition}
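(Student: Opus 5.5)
The plan is to show that $\cM_{\Generic}$ and $\cM_{\Generic[H]}$ satisfy the same $\Liw(\Lang)$-sentences by passing to a common extension in which both structures become countable, and then comparing each with a Scott sentence. Let $\cN \in V$ be the countable structure on whose isomorphism class $\mu$ concentrates, and let $\tau_{\cN} \in \Lwow(\Lang)^V$ be its Scott sentence. The first step is to check that $\tau_{\cN} \in \Th[\w_1](\mu)$ in the appropriate sense. The isomorphism class of $\cN$ in $\Str[\Lang](\w)$ is exactly $\extent[\w]{\tau_{\cN}}$, which has a code $\BCformula(\tau_{\cN})$ by \cref{Exists Borel codes for formulas}, using $|\w| \le \kappa$. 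Hence $\mu(\tau_{\cN}) = 1$.

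The second step transfers this to $\mu_X$ and $\mu_Y$. \cref{Theory of an invariant is absolute} requires $|X|,|Y| \le \kappa$, which need not hold in $V$, so I will argue in an outer model instead. Fix any sentence $\varphi \in \Liw(\Lang)^{V}$, and pass to a model $V^* \supseteq V[\Generic]$ (and similarly for $\Generic[H]$) in which $X$, $\Lang$, the ordinal $\kappa' \ge \kappa$ with $\varphi, \tau_{\cN} \in \Lomega{\kappa'^+}$, and all relevant sets are countable. Since $\kappa$ is only required to be an ordinal, I can treat $\mu_X$ as a $\kappa'$-measure; alternatively, I can restrict attention to codes via \cref{Relativizations give same values}.

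The cleaner route, however, is to avoid forcing with a different measure and use genericity directly. The statement ``$\cM_{\Generic} \models \tau_{\cN}$'' does not make sense when $X$ is uncountable, so what is needed is potential isomorphism, namely that $\cM_{\Generic} \equiv_{\Liw} \cN$. For this I use the countable-approximation characterization: a structure $\cM$ is potentially isomorphic to the countable $\cN$ if and only if, in a collapse extension, $\cM$ is isomorphic to $\cN$; equivalently, for every countable $X_0 \subseteq X$ in a suitable club, $\cM\rest[X_0] \cong \cN$ and these substructures are $\Liw$-elementary.

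So the key steps are as follows. First, show that for every countably infinite $X_0 \subseteq X$ with $X_0 \in V$, the set $\BCformula(\tau_{\cN})$ computed on $X_0$, pulled back along $\Proj<X_0>[\Lang]$, has $\mu_X$-measure $1$; this follows from the corollary $\Proj*<f>[\Lang](\mu_X) = \mu_\w$ after a bijection $X_0 \cong \w$. Second, deduce that $\cM_{\Generic}\rest[X_0] \cong \cN$ for all such ground-model countable $X_0$, by \cref{mu-generic filters are determined by their intersection}(c). Third, since the forcing is c.c.c. (I would either invoke the later chain-condition result or argue it directly), every countable subset of $X$ in $V[\Generic]$ is covered by a countable ground-model set. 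Consequently $\cM_{\Generic}$ is the directed union of copies of $\cN$. The same holds for $\cM_{\Generic[H]}$.

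Finally, I will conclude with a back-and-forth argument. Because $\cN$ is ultrahomogeneous for its $\Lwow$-types, a directed union of copies of $\cN$ satisfies the same $\Liw$-sentences as $\cN$; this uses the standard fact that the family of finite partial isomorphisms between such unions that preserve $\Lwow$-types in $\cN$ is a back-and-forth system. Both generic structures are therefore potentially isomorphic to $\cN$, and hence to each other.

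I expect the main obstacle to be verifying that embeddings within this directed union are elementary, i.e.\ that for $X_0 \subseteq X_1$ the inclusion $\cM\rest[X_0] \to \cM\rest[X_1]$ preserves types. My first attempt will be to build this into the measure-one set: for ground-model $X_0 \subseteq X_1$, the event that the inclusion is $\Lwow$-elementary is coded by a Borel code. Its $\mu_{X_1}$-measure should be $1$ by invariance, since it is the event that all tuples from $X_0$ have the same Scott-type in both restrictions, and that event is almost sure because both restrictions are almost surely isomorphic to $\cN$ and invariance forces the Scott-type distribution of a tuple to be independent of the ambient set.
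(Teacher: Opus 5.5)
\textbf{There is a genuine gap in your final step.} You flag it yourself as the main obstacle, and it needs more than verification.

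First, the claim as stated is false. A directed union of copies of $\cN$ need not be $\Liw$-equivalent to $\cN$: the chain $\{-n,-n+1,\dots\}$ of copies of $(\w,<)$ has union $(\mathbb{Z},<)$. So everything hinges on $\cM_{\Generic}\rest[X_0]\preceq_{\infty\omega}\cM_{\Generic}\rest[X_1]$ for ground-model $X_0\subseteq X_1$.

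Your justification for that does not work. Invariance, via a bijection $X_0\to X_1$ fixing $\bar a$ pointwise, only shows that the type of $\bar a$ in $\cM\rest[X_0]$ and its type in $\cM\rest[X_1]$ have the same distribution. Two random variables with the same law, even both a.s.\ realized in copies of $\cN$, can differ with positive probability.

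The missing ingredient is monotonicity. Proceed as follows.
\begin{enumerate}
\item Fix a countable fragment $F$, closed under subformulas, containing the Scott formulas of tuples of $\cN$. Countability is needed anyway for ``the inclusion is elementary'' to be a $\kappa$-Borel code; quantifying over all of $\Lwow$ does not give one.
\item Show by induction on $\theta\in F$ that, for every tuple $\bar a\subseteq X_0$, the events $\cM\rest[X_0]\models\theta(\bar a)$ and $\cM\rest[X_1]\models\theta(\bar a)$ agree up to a null set. Atomic formulas, negations and countable Boolean combinations are immediate.
\item For $\exists y\,\psi$ (dually $\forall y\,\psi$), the induction hypothesis for the countably many $\psi(\bar a,b)$ with $b\in X_0$ shows the first event is a.s.\ contained in the second. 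Your equal-measure computation then upgrades inclusion to a.e.\ equality.
\end{enumerate}
Since Scott formulas determine $\Aut(\cN)$-orbits, genericity now yields elementarity of every ground-model inclusion. The Tarski--Vaught union argument then gives $\cM_{\Generic}\equiv_{\infty\omega}\cN$. That argument is valid for $\Liw$ and needs only that each finite tuple lies in some ground-model countable set, so you do not need the c.c.c.\ there.

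\textbf{The paper's route avoids all of this.} It notes $\tau_{\cN}\in\mathrm{Th}_{\kappa}(\mu)$, transfers this to $\mu_X$ and $\mu_Y$ by \cref{Theory of an invariant is absolute}, and obtains $\cM_{\Generic},\cM_{\Generic[H]}\models\tau_{\cN}$ from \cref{Random generic model satisfies theory of the measure}. Any structure of any cardinality satisfying the Scott sentence of a countable $\cN$ is $\equiv_{\infty\omega}\cN$.

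Your assertion that $\cM_{\Generic}\models\tau_{\cN}$ ``does not make sense'' for uncountable $X$ is wrong. $\Lwow$-sentences are interpreted in structures of every size, and this is exactly the target. What genuinely requires $|X|\le\kappa$ is a $\kappa$-Borel code for $\extent[X]{\tau_{\cN}}$ (\cref{Exists Borel codes for formulas}), which is a hypothesis of both results the paper cites. So your worry is a fair point about their scope. Once repaired as above, your countable-subset route is a legitimate way to cover $|X|>\kappa$ using only codes over countable sets.
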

\begin{proof}
Suppose $\mu$ is concentrated on the isomorphism class of $\cN$. Because $\Lang$ is countable there is a sentence $\varphi_{\cN} \in \Lwow(\Lang)$ such that for any \linebreak $\cN^* \in \Str[\Lang](\w)$, we have $\cN^* \models \varphi$ if and only if $\cN^* \cong \cN$. In particular, $\mu$ concentrates on $\BCvalue(\BCformula(\varphi_{\cN}))$ and hence $\varphi_{\cN} \in \Th[\kappa](\mu)$. 

Therefore, by \cref{Theory of an invariant is absolute}, we have that $\varphi_{\cN} \in \Th[\kappa](\mu_X)$ and $\varphi_{\cN} \in \Th[\kappa](\mu_Y)$. So,  by \cref{Random generic model satisfies theory of the measure}, $\cM_{\Generic} \models \varphi_{\cN}$ and $\cM_{\Generic[H]} \models \varphi_{\cN}$. Hence $\cM_{\Generic}$ and $\cM_{\Generic[H]}$ are potentially isomorphic.
\end{proof}

For a countable language $\Lang$,
suppose $\mu$ is a properly ergodic invariant probability measure
on $\Str<\w>[\Lang](\w)$. If $\Generic$ is a $\mu_X$-random generic and $\Generic[H]$ is a $\mu_Y$-random generic,
 then $\cM_{\Generic}$ need not be potentially isomorphic to $\cM_{\Generic[H]}$, as the following example shows. 
\begin{example}
\label{Example that two generics give rise to different structures}
Suppose $\Lang = \{U_i\}_{i \in \w}$ where $U_i$ is a unary relation symbol for each $i \in \w$. Let $\mu$ be the measure where $\extent[\w]{U_i(n)} = \frac{1}{2}$ for all $i, n \in \w$ and where all such events are independent. 

Let $\Generic$ be generic for the measure $\mu \times \mu$ on $\Str<\kappa>[\Lang](\w) \times \Str<\kappa>[\Lang](\w)$.  Let $\Generic_0 = \{X \st X \times \Str<\kappa>[\Lang](\w) \in \Generic\}$ and $\Generic_1 = \{X \st \Str<\kappa>[\Lang](\w) \times X \in \Generic\}$. Note that both $\Generic_0$ and $\Generic_1$ are generic for $\mu$. 

Consider the event 
\[
E_{n_0, n_1} = \Bigl\{\cN_0 \times \cN_1 \in \Str[\Lang](\w) \times \Str[\Lang](\w) \st \bigwedge_{i \in \w} \bigl[\cM_0 \models U_i(n_0) \leftrightarrow \cM_1 \models U_i(n_1)\bigr]\Bigr\}.
\]
Note that $(\mu \times \mu)(E_{n_0, n_1}) = 0$ for all $n_0, n_1 \in \w$. Therefore we must have $\cM_{\Generic_0} \not \cong \cM_{\Generic_1}$. 
\end{example}

The ideas in \cref{Example that two generics give rise to different structures} can be combined with the results of \cite{AFKrP} to show that for any properly ergodic 
invariant probability
measure $\mu$ on $\Str<\w>[\Lang](\w)$, whenever $\Generic$ is generic for $\mu \times \mu$ and $\Generic_0, \Generic_1$ are as in
\cref{Example that two generics give rise to different structures},  
we have
$\cM_{\Generic_0} \not \cong \cM_{\Generic_1}$.

The following is a key property 
of $\mu$-random forcings. 

\begin{proposition}
\label{Random forcings on structure are ccc}
Let $\mu$ be a probability $\kappa$-measure
on $\Str<\kappa>[\Lang](X)$.
Then $\RandPO(\mu)$ has c.c.c.
\end{proposition}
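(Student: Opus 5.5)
The plan is the standard measure-theoretic argument for random forcing. Suppose $A \subseteq \RandPO(\mu)$ is an antichain, i.e., for distinct $\gamma, \zeta \in A$ there is no $\eta \in \RandPO(\mu)$ with $\eta \preceq_\mu \gamma$ and $\eta \preceq_\mu \zeta$. First I will check that incompatibility means $\mu(\gamma \And \zeta) = 0$. If instead $\mu(\gamma \And \zeta) > 0$, then the code $\gamma \And \zeta$ itself lies in $\RandPO(\mu)$. It satisfies $(\gamma \And \zeta) \preceq_\mu \gamma$ and $(\gamma \And \zeta) \preceq_\mu \zeta$, since $\mu((\gamma\And\zeta) \And \neg\gamma) = 0$ and likewise for $\zeta$. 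It is therefore a common extension, which is a contradiction. So the interpretations $\BCvalue(\gamma)$, for $\gamma \in A$, are pairwise $\mu$-almost disjoint, and each has positive measure.

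Next, for each $n \in \w$ let $A_n = \{\gamma \in A \st \mu(\gamma) > 2^{-n}\}$. Since $\mu(\gamma) > 0$ for all $\gamma \in A$, we have $A = \bigcup_n A_n$. It then suffices to show $|A_n| < 2^n$ for each $n$, which makes $A$ countable.

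To bound $A_n$, suppose $\gamma_0, \dots, \gamma_{m-1} \in A_n$ are distinct. Apply inclusion--exclusion, or simply finite additivity after disjointification, to the finitely many sets $\BCvalue(\gamma_i)$. Let $\delta_i = \gamma_i \And \bigwedge_{j<i} \neg \gamma_j$. The $\delta_i$ are genuinely disjoint, and $\mu(\gamma_i \SymDiff \delta_i) \le \sum_{j<i} \mu(\gamma_i \And \gamma_j) = 0$. Hence
\[
1 \ \ge\ \mu\Bigl(\bigvee_{i<m} \gamma_i\Bigr) \ =\ \sum_{i<m} \mu(\delta_i) \ =\ \sum_{i<m} \mu(\gamma_i) \ >\ m \cdot 2^{-n},
\]
so $m < 2^n$. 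This uses only finite additivity and monotonicity of $\mu$ on $\BorelSets<\kappa>[\Lang](X)$, together with closure of the codes under finite Boolean operations, all available from the definitions.

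I do not expect a real obstacle. The only point needing care is the first step: the witness of compatibility must be an element of $\RandPO(\mu)$, i.e., a code of positive measure, and $\preceq_\mu$ must be interpreted as a preorder on codes rather than on sets. Both are immediate from the definition of $\RandPO(\mu)$. No absoluteness results such as \cref{Relativizations give same values} are needed, since the argument takes place entirely in the ground model.
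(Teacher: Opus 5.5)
Your proposal is correct and follows essentially the same route as the paper: incompatible conditions have $\mu$-null intersection, so an antichain contains only finitely many conditions of measure above any fixed positive threshold, and is therefore countable. The paper phrases this as a contradiction from an $\omega_1$-sequence via pigeonhole with threshold $\frac{1}{n}$, whereas you bound each $A_n$ directly and also make explicit two steps the paper leaves implicit: that incompatibility forces $\mu(\gamma\wedge\zeta)=0$, and the disjointification justifying $\mu(\bigvee_{i<m}\gamma_i)=\sum_{i<m}\mu(\gamma_i)$.
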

\begin{proof}
Suppose $\langle \gamma_\alpha: \alpha < \omega_1  \rangle \subseteq \RandPO(\mu)$ is a sequence of pairwise incompatible elements. In particular, for each $\alpha \neq \beta$ we have $\mu(\gamma_\alpha \And   \gamma_\beta)=0$.
For some $n<\omega$, the set
$\{ \alpha < \omega_1 : \mu(\gamma_\alpha) > \frac{1}{n} \}$ is uncountable. Let $\alpha_0 < \cdots < \alpha_n < \omega_1$ be such that $\mu(\gamma_{\alpha_i}) > \frac{1}{n}$, for all $i \leq n$. Then
\[
\mu\bigg(\bigvee_{i \in [n+1]}\gamma_{\alpha_i}\bigg) = \sum_{i \in [n+1]}\mu(\gamma_{\alpha_i}) > \frac{n+1}{n} > 1,
\]
a contradiction.
\end{proof}

The following tells us that from the perspective of asking which generics exist, forcing with any non-atomic invariant probability measure 
is as good as 
forcing with any other,
provided the languages and sets are countably infinite.

\begin{proposition}
\label{Any two invariant measures on countable sets force to give the same universes}
For $i \in \{0, 1\}$, suppose that
$\mu_i$ is an invariant
probability
measure on $\Str<\w>[\Lang_i](X_i)$ which assigns measure $0$ to each trivial structure.
Let $V$ be a model of set theory such that 
$X_i, \Lang_i, \mu_i \in V$ and \[
V\models |X_i| =\w    \qquad \text{and} \qquad V \models |\Lang_i| \le \w
\]
for each $i \in \{0, 1\}$.
Let $\Generic_0$ be a $\mu_0$-random generic.
Then there is a $\Generic_1 \in V[\Generic_0]$ which is $\mu_1$-random generic over $V$. 
\end{proposition}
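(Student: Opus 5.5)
The plan is to show that each $\RandPO(\mu_i)$ is forcing isomorphic to random forcing $\RRF<\w>(\w)$. The statement then follows at once: forcing isomorphic notions produce the same extensions, so $V[\Generic_0]$ contains a filter generic for $\RO(\RandPO(\mu_1))$, and hence one generic for $\RandPO(\mu_1)$. This parallels the proof of \cref{All Fraisse classes give the same universe}, with the measure algebra classification replacing \cref{lem:countable_forcing_is_cohen}.

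First I would identify $\RO(\RandPO(\mu_i))$ with the measure algebra $\BorelSets<\w>[\Lang_i](X_i)/\mathrm{null}_{\mu_i}$. Incompatibility in $\RandPO(\mu_i)$ is exactly the intersection being $\mu_i$-null. By \cref{Random forcings on structure are ccc} and $\sigma$-additivity, this quotient is a complete Boolean algebra in which the classes of $\RandPO(\mu_i)$ are dense. By \cref{Isomorphism between Str_L and Cantor}, since $X_i$ and $\Lang_i$ are countable in $V$, $\Str<\w>[\Lang_i](X_i)$ is a standard Borel space (a copy of $\Cantor[\w]$), and $\mu_i$ is a Borel probability measure on it. This measure algebra is separable: the almost basic codes form a countable set, and by \cref{We can approximate all Borel codes by almost basic Borel codes} they are dense in the measure metric.

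Second, and this is the main point, I would show that $\mu_i$ is \emph{atomless}. Then Maharam's theorem (equivalently, the isomorphism theorem for standard atomless probability spaces) gives an isomorphism of the measure algebra with that of Lebesgue measure on $\Cantor[\w]$, so $\RandPO(\mu_i)\forcingisom\RRF<\w>(\w)$ for $i\in\{0,1\}$.

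To prove atomlessness, I would first show that every point has measure $0$. Suppose $\mu_i(\{\cM\})=c>0$. By invariance, every $g\cdot\cM$ with $g\in\Sym(X_i)$ also has mass $c$, so the orbit of $\cM$ is finite. A structure on an infinite set with finite $\Sym(X_i)$-orbit must be trivial in the sense of the paper. The reason: if some literal were satisfied by one non-redundant tuple and failed by another, then moving tuples around would produce infinitely many distinct images of $\cM$. This contradicts the hypothesis that trivial structures have measure $0$.

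Having no point masses is not yet enough, since an atom in the measure algebra need not be a point. For the genuine atom case I would argue as follows. Suppose $A$ is an atom. Then for each basic code $\gamma$, $\mu_i(A\cap\gamma)\in\{0,\mu_i(A)\}$. Deciding these values for all countably many basic codes picks out a single structure $\cM$, and one checks that $\mu_i(A\setminus\{\cM\})=0$ by countable additivity over the complements of the decided basic sets. So $A$ is essentially a point, which was excluded above. On a standard Borel space this reduction of atoms to points is the standard fact.

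I expect the only delicate point to be the triviality/finite-orbit argument. To handle it, I would take a literal $\psi$ and non-redundant tuples $\aa,\bb$ with $\cM\models\psi(\aa)\wedge\neg\psi(\bb)$, and use finite-support permutations to send $\aa$ to arbitrarily many distinct positions. This shows that the images $g\cdot\cM$ are pairwise distinct for infinitely many $g$.
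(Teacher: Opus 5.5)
Your proposal is correct and follows the paper's route: identify each space of structures with $2^\omega$, deduce that $\mu_i$ is non-atomic from invariance together with the hypothesis on trivial structures, and apply Maharam's theorem. Your measure-algebra formulation and your reduction of measure-algebra atoms to point masses spell out what the paper asserts in one line.

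The one step to tighten is ``finite orbit implies trivial''. Moving $\aa$ around does not by itself make the images $g\cdot\cM$ distinct: if $\psi$ fails only at $\bb$, any $g$ fixing $\bb$ pointwise leaves the $\psi$-part of $\cM$ unchanged. Argue instead as follows. Let $D\subseteq X_i$ be a finite set on which the finitely many members of the orbit have pairwise distinct restrictions. Then every member of the orbit is fixed by the pointwise stabilizer of $D$. Applying this to $g^{-1}\cdot\cM$ shows that $\cM$ is fixed by the pointwise stabilizers of both $D$ and any translate $gD$ disjoint from $D$. These two subgroups generate a group containing $\SymFin(X_i)$, which is transitive on non-redundant tuples of each length, so $\cM$ is trivial.
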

\begin{proof}
Because
$V \models \max\{|X_i|, |\Lang_i|\} = \w$ 
for $i \in \{0, 1\}$,  
by Lemmas~\ref{Isomorphism between Str_L and Cantor} and  
\ref{Isomorphism between Borel codes on Str_L and Cantor} there must be an isomorphism between $\Str<\w>[\Lang_i](X_i)$ and $\Cantor[\w]$ which lifts to an isomorphism of Borel codes. Further, as $\mu_i$ is invariant and assigns measure $0$ to each trivial structure, $\mu_i$ has no point masses, i.e., is non-atomic. Suppose $\nu_i$ is the measure on $\Cantor[\w]$ which corresponds
to $\mu_i$. Then $\nu_i$ is non-atomic as well. Hence by Maharam's theorem there is a measure-preserving bijection $f$ from $\Cantor[\w]$ to itself such that $\nu_0 = \Pushforward[f](\nu_1)$. The result follows immediately. 
\end{proof}

\section{Distinguishing Generic Structures}
\label{Section: Distinguishing Generic Structures}

In this section we consider, for $\mu$ an invariant measure on $\Str<\kappa>[\Lang](X)$, how much information about $\mu$ we can recover from a single $\mu$-random generic filter. In particular, we will be interested in 
determining
when we can uniquely identify $\mu$ from any $\mu$-random generic filter.

Unfortunately,
from a single $\mu$-random generic filter,
we cannot completely identify $\mu$ 
among the invariant measures on $\Str<\kappa>[\Lang](X)$.
This is because for any invariant measure $\mu^*$,
every $\mu$-random generic filter is also a 
$\frac{1}{2}(\mu + \mu^*)$-random generic filter, 
as the following result shows. 

\begin{lemma}
\label{Generic filters are generic for mixtures}
Let $\mu$ be a probability $\kappa$-measure on $\Str<\kappa>[\Lang](X)$.
Suppose \linebreak $X, \Lang, \mu \in V$.
Let $\Generic$ be a $\mu$-random generic filter and let $\gamma \in \Generic$.
For $\zeta \in [\BorelCodes<\kappa>[\Lang](X)]^V$ define $\nu(\BCvalue(\zeta)) = \frac{\mu(\zeta \And \gamma)}{\mu(\gamma)}$. Then the following hold. 
\begin{itemize}
\item[(a)] $\nu$ is a probability $\kappa$-measure 
on $(\Str<\kappa>[\Lang](X))^V$.

\item[(b)] 
If $\mu$ is invariant and $\gamma$ is $\mu$-almost invariant, then $\nu$ is invariant.
\item[(c)] $\Generic$ is a $\nu$-random generic filter over $V$. 
\end{itemize}

\end{lemma}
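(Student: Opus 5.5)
Part (a) is the usual conditioning of a measure. Since $\gamma \in \Generic \subseteq \RandPO(\mu)$, we have $\mu(\gamma) > 0$, so $\nu$ is well defined. It is well defined on sets: if $\BCvalue(\zeta) = \BCvalue(\zeta')$, then $\mu(\zeta \And \gamma) = \mu(\zeta' \And \gamma)$, because $\mu$ is a measure on the interpreted sets in $V$. We have $\nu(\Str[\Lang](X)) = \mu(\gamma)/\mu(\gamma) = 1$. Finally, $\kappa$-additivity of $\nu$ follows from $\kappa$-additivity of $\mu$ applied to the pairwise disjoint family $\{\BCvalue(\zeta_i) \cap \BCvalue(\gamma)\}_{i \in \kappa}$.

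For (b), take $g \in \Sym(X)$ and a code $\zeta$. Invariance of $\mu$ gives $\mu(g[\zeta] \And g[\gamma]) = \mu(\zeta \And \gamma)$. Almost invariance of $\gamma$ gives $\mu(g[\gamma] \SymDiff \gamma) = 0$, so $\mu(g[\zeta] \And g[\gamma]) = \mu(g[\zeta] \And \gamma)$. Hence $\nu(g``[\BCvalue(\zeta)]) = \nu(\BCvalue(\zeta))$. Here I will use that $g``[\BCvalue(\zeta)] = \BCvalue(g[\zeta])$, which follows by a routine induction on codes.

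The real content is (c). The key observation is that $\nu(\zeta) > 0$ iff $\mu(\zeta \And \gamma) > 0$, and that for $\zeta, \zeta'$ with $\zeta \preceq_\mu \gamma$ we have
\[
\zeta \preceq_\nu \zeta' \quad\text{iff}\quad \zeta \preceq_\mu \zeta'.
\]
Write $\RandPO(\mu)\rest[\gamma]$ for the set of conditions $\preceq_\mu \gamma$. The map $\zeta \mapsto \zeta \And \gamma$ sends $\RandPO(\nu)$ onto $\RandPO(\mu)\rest[\gamma]$, preserves and reflects $\preceq$, and satisfies $\zeta \sim_\nu \zeta \And \gamma$. So the two pre-orders have isomorphic quotients.

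The plan is then as follows. First note that $\Generic \subseteq \RandPO(\nu)$: for $\zeta \in \Generic$ there is a common extension $\eta \in \Generic$ of $\zeta$ and $\gamma$, so $\mu(\zeta \And \gamma) \ge \mu(\eta) > 0$. Next, $\Generic$ is a filter for $\preceq_\nu$. Upward closure holds because if $\zeta \in \Generic$ and $\zeta \preceq_\nu \zeta'$, then $\zeta \And \gamma \preceq_\mu \zeta'$, and $\zeta \And \gamma \in \Generic$ (it is $\sim_\mu$ to some common extension of $\zeta$ and $\gamma$ in $\Generic$; more carefully, the set of conditions $\preceq_\mu \zeta\And\gamma$ or incompatible with it is dense, and a common extension lies below it), so $\zeta' \in \Generic$. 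Downward directedness is inherited from $\preceq_\mu$, since $\preceq_\mu$ implies $\preceq_\nu$.

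For genericity, let $D \in V$ be dense in $\RandPO(\nu)$. Put
\[
D' = \{\eta \And \gamma \st \eta \in D\} \cup \{\eta \in \RandPO(\mu) \st \mu(\eta \And \gamma) = 0\}.
\]
This $D'$ is dense in $\RandPO(\mu)$. Given $\theta$, either $\mu(\theta \And \gamma) = 0$ and $\theta \in D'$, or $\theta \And \gamma \in \RandPO(\nu)$ has an extension $\eta \in D$, in which case $\eta \And \gamma \preceq_\mu \theta$. Because $\gamma \in \Generic$, the filter cannot contain a condition $\mu$-incompatible with $\gamma$. So $\Generic$ meets $D'$ at some $\eta \And \gamma$, and by upward closure $\eta \in \Generic \cap D$.

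The main obstacle is just bookkeeping with the pre-order versus the quotient partial order and with upward closure. I expect it to be routine once the observation above is in place.
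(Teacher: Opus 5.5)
Your proposal is correct and follows the same route as the paper. Parts (a) and (b) are the same conditioning and invariance computations. For (c), the paper likewise meets a dense subset of $\RandPO(\mu)$ built from $D$ together with the conditions $\preceq_\mu \neg\gamma$, and then uses $\gamma \in \Generic$ to rule out the second kind. Your version is in fact tighter than the paper's: the paper uses $D$ itself, which need not give a dense set in $\RandPO(\mu)$ because $\eta \preceq_\nu \zeta$ does not imply $\eta \preceq_\mu \zeta$, whereas your replacement $\{\eta \And \gamma \st \eta \in D\}$ plus upward closure repairs this, and you also verify that $\Generic$ is a $\preceq_\nu$-filter, which the paper omits.
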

\begin{proof}
To prove (a), note that as $\gamma \in \Generic$ we have $\mu(\gamma) > 0$.

To prove (b), suppose $\gamma$ is $\mu$-almost invariant and $g \in [\Sym(X)]^V$.
Then 
\[
\nu(g[\zeta]) = \frac{\mu(g[\zeta] \And \gamma)}{\mu(\gamma)} =  \frac{\mu(g[\zeta] \And g[\gamma])}{\mu(g[\gamma])} = \frac{\mu(\zeta \And \gamma)}{\mu(\gamma)}  = \nu(\zeta).
\]

Finally, to prove (c), suppose $D$ is dense in $\RandPO(\nu)$. Let \linebreak $D^* = D \cup \{\zeta \preceq_\mu \neg \gamma\}$. Suppose $\zeta \in \RandPO(\mu)$, and hence $\mu(\zeta) > 0$. Then either $\mu(\zeta \And \gamma) > 0$ or $\mu(\zeta \And \neg \gamma) > 0$. But if $\mu(\zeta \And \gamma) > 0$ then $\zeta \in \RandPO(\nu)$.
Therefore $D^*$ is dense in $\RandPO(\mu)$. 

Let $\zeta \in \RandPO(\nu)$. Then $\nu(\zeta) > 0$ and so $\mu(\zeta \And \gamma) > 0$. Therefore \linebreak $\zeta \And \gamma \in \RandPO(\mu)$ and so there must be an $\eta \in D^* \cap \Generic$ with $\eta \preceq_{\mu} \zeta \And \gamma$. But then $\eta \in D$. Hence $\Generic$ is a $\nu$-random generic filter. 
\end{proof}

\cref{Generic filters are generic for mixtures} (b) and (c) suggest that if we want to be able to identify an invariant measure from a single random generic filter for the measure, we should restrict attention to measures $\mu$ that assign measure $0$ or $1$ to
every $\mu$-almost invariant set,
i.e., which are ergodic.

\begin{definition}
Let $\varphi$ be a first-order non-redundant quantifier-free $\Lang$-formula. For $\cN$ a finite $\Lang$-structure, 
define the \defn{density of $\varphi$ in $\cN$} by 
\[
t(\cN, \varphi) = \frac{|\{\xx \in \NonRedTuple[\arity(\varphi)](N) \st \cN \models \varphi(\xx)\}|}{|\NonRedTuple[\arity(\varphi)](N)|}.
\]
For $I \subseteq [0, 1]$ and $i\:\w \to X$ let 
\[L(i, \varphi, I) = \bigl\{\cM \in \Str<\kappa>[\Lang](X) \st \lim_{n \to \infty} t(\cM\rest[{i``[n]}], \varphi) \in I\bigr\}.
\]
In particular, $L(i, \varphi, [0, 1]) = \{\cM \in \Str<\kappa>[\Lang](X) \st \lim_{n \to \infty} t(\cM\rest[{i``[n]}], \varphi)\text{ exists}\}$.
\end{definition}

We will need the following.

\begin{lemma}
\label{Borel codes for limiting probabilities}
For every first-order non-redundant quantifier-free $\Lang$-formula $\varphi$, every open $I \subseteq [0, 1]$, and every injective $i\:\w \to X$, there is a \linebreak $\zeta(i, \varphi, I) \in \BorelCodes<\kappa>[\Lang](X)$ of rank 4 such that $\BCvalue(\zeta(i, \varphi, I)) = L(i, \varphi, I)$ in any model of set theory containing  $X$ and $\Lang$ and a code for $I$. 
\end{lemma}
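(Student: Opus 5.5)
The plan is to write the limit condition as a countable Boolean combination of basic codes of the form \(\lim \in I\) iff \(\exists\) rational-interval-like witnesses with eventual bounds, and then count the alternations. Since \(I\) is open, write \(I = \bigcup_{j \in \w} [p_j, q_j]\) as a countable union of closed rational intervals contained in \(I\), chosen so that every point of \(I\) lies in the interior of some \([p_j,q_j]\) (for example, all closed rational intervals contained in \(I\); a code for \(I\) gives such an enumeration uniformly). For a rational interval \(J=[p,q]\) we use the fact that \(\lim_n t(\cM\rest[{i``[n]}],\varphi)\) exists and lies in \(I\) iff the sequence is Cauchy and, for some \(j\), the sequence is eventually in \([p_j,q_j]\). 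The point of using all closed rational subintervals is that if the limit exists and lies in the open set \(I\), then some \([p_j,q_j]\) contains a neighbourhood of it, so the sequence is eventually inside.

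First I handle the base level. For fixed \(n\) and a rational interval \([p,q]\), the event \(t(\cM\rest[{i``[n]}],\varphi)\in[p,q]\) depends only on \(\cM\rest[\Lang_0, i``[n]]\), where \(\Lang_0\) is the finite set of symbols in \(\varphi\). It is therefore a finite disjunction of basic codes (diagrams \(\tau_{\cN}\) of the finitely many \(\cN\in\Str[\Lang_0](i``[n])\) with the right density), i.e. an almost basic code \(\theta_{n,p,q}\) of rank \(1\). Its interpretation is evaluated the same way in every model of set theory, by \cref{Almost basic Borel codes being empty is absolute} and the remark that almost basic codes are model-independent. Similarly, for rational \(\epsilon\) and \(m,n\), the event \(|t_m - t_n| \le \epsilon\) is an almost basic code \(\delta_{m,n,\epsilon}\) of rank \(1\). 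If rank exactly \(1\) is an issue for disjunctions of basic codes, pad as needed; ranks can be raised by trivially wrapping in a one-element \(\bigwedge\).

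Next I assemble the code
\[
\zeta(i,\varphi,I) = \bigwedge_{\epsilon}\bigvee_{N}\bigwedge_{m,n\ge N}\delta_{m,n,\epsilon} \;\And\; \bigvee_{j}\bigvee_{N}\bigwedge_{n\ge N}\theta_{n,p_j,q_j}.
\]
Merging the outer conjunction with the first \(\bigwedge_\epsilon\), and absorbing \(\bigvee_j\bigvee_N\) into a single countable disjunction, the ranks are as follows: \(\delta,\theta\) have rank \(1\), the inner \(\bigwedge\) gives \(2\), the \(\bigvee\) gives \(3\), and the outer \(\bigwedge\) gives \(4\). The second conjunct has rank \(3\) and can be placed as an extra member of the outer conjunction. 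All index sets are countable, so this is a valid \(\kappa\)-Borel code for any infinite \(\kappa\), padding by repetition to index by \(\kappa\).

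Correctness is a pure real-analysis equivalence, valid in any model containing \(X\), \(\Lang\) and the code for \(I\), since the definition of the code is uniform and its interpretation is computed pointwise. The first conjunct says the sequence is Cauchy, and the second says it is eventually in some \([p_j,q_j]\subseteq I\). Together they give a limit in \([p_j,q_j]\subseteq I\), and the converse is the neighbourhood remark above. The main obstacle is bookkeeping the exact rank \(4\), rather than merely \(\le 4\): one must arrange the nesting so the rank comes out exactly \(4\), and check that the rank of the finite disjunctions is counted as the paper intends.
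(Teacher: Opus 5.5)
Your proof is correct, and it codes both halves of the condition differently from the paper. The paper uses $\alpha(J)=\bigvee_k\bigwedge_{n\ge k}K(n,J)$ (``eventually in $J$'', with $K(n,J)$ almost basic). It expresses existence of the limit by $\bigwedge_k\beta_k$, where $\beta_k$ says the densities are eventually in one of the fixed closed dyadic intervals $[\ell/2^k,(\ell+1)/2^k]$, and sets $\zeta=\alpha(I)\wedge\bigwedge_k\beta_k$. You instead use the Cauchy criterion together with ``eventually inside some closed rational interval contained in $I$''.

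The paper's version is shorter, but read literally it has two soft spots that your choices avoid. First, ``eventually in $I$'' plus convergence only puts the limit in $\overline{I}$: for $I=(0,1)$, densities decreasing to $0$ are wrongly accepted. Second, a sequence converging to a dyadic rational in $(0,1)$ while crossing it infinitely often (as sampled densities typically do) is eventually in no single grid interval, so $\bigwedge_k\beta_k$ is strictly stronger than convergence.

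Your closed subintervals, with the limit in their relative interior, are exactly where openness of $I$ is used. Your merging of adjacent like connectives is also what the paper's code needs in order to have rank $4$ rather than the literal $6$: the finite $\bigvee_\ell$ must be absorbed into $\bigvee_k$, and $\alpha(I)$ into the outer conjunction.

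For absoluteness of your list of intervals, note that $[p,q]\subseteq I$ is arithmetic in the code of $I$ by compactness. Alternatively, simply take the rational closed subintervals of the basic intervals appearing in the code.
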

\begin{proof}
For $J \subseteq [0, 1]$ let $K(n, i, \varphi, J)\in \AlmostBasicBC<\kappa>[\Lang](X)$ be such that
\[
\BCvalue(K(n, i, \varphi, J)) = \{\cM \in \Str<\kappa>[\Lang](X) \st  t(\cM\rest[{i``[n]}], \varphi) \in J\}.
\]
Let $\alpha(i, \varphi, J) = \bigvee_{k \in \w} \bigwedge_{ n \geq k }K(n, i, \varphi, J)$. 
Then $\BCvalue(\alpha(i, \varphi, J))$ consists of those structures for which the density of $\varphi$ in increasingly large restrictions
is eventually in $J$.

For $k \in \w$, let $\beta_k(i, \varphi) = \bigvee_{0 \leq \ell < 2^k}\alpha(i, \varphi,  [\frac{\ell}{2^k}, \frac{\ell+1}{2^k}])$. 
Then 
$\BCvalue(\beta_k(i, \varphi))$ is the collection of $\cM$ 
n $\Str<\kappa>[\Lang](X)$
such that for some interval of length $2^{-k}$, the values of $t(\cM\rest[{i``[n]}], \varphi)$ are always eventually in that interval. 
Because $I$ is open, we can then let $\zeta(i, \varphi, I) = \alpha(i, \varphi, I) \And \bigwedge_{k \in \w} \beta_k(i, \varphi)$. 
\end{proof}    

The following is a useful property about sampling from invariant measures. 

\begin{proposition}
\label{Sampling finite structures approximates a measure}
Let $\mu$ be an invariant measure on $\Str<\kappa>[\Lang](X)$.
Let $i\:\w \to X$ be an injection, 
and let $\varphi(\xx)$ be a first-order quantifier-free $\Lang$-formula.
Suppose $\cM$ is a $\mu$-random $\Lang$-structure.
Then for every $\epsilon > 0$, there is some value $k(\epsilon, \varphi)$ 
such that 
\[
\mu \bigl(\bigl\{ \cM \st (\forall m > k(\epsilon, \varphi))\, \bigl|t(\cM \rest[{i[m]}], \varphi) - \mu(\BCformula(\varphi(i(0), \dots, i(|\xx|-1))))\bigr| < \epsilon \bigr\}\bigr) > 1 - \epsilon.
\] 
\end{proposition}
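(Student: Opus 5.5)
The plan is to reduce to a computation about U-statistics on $\w$ and then use a fourth-moment estimate with a union bound.

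First, a caveat on the hypotheses. As literally stated, with $\mu$ merely invariant, the conclusion fails for a nontrivial mixture $\mu=\frac12(\nu_0+\nu_1)$ of ergodic measures that give $\varphi$ different probabilities: the limiting density is then random. So I will prove the statement for $\mu$ dissociated. By \cref{Equivalences with ergodic} this is the same as ergodic or extreme, and I believe this is the intended reading.

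Write $a=|\xx|$ and $p=\mu(\BCformula(\varphi(i(0),\dots,i(a-1))))$. Replacing $\varphi$ by $\varphi\And\NonRed$ does not change $t$, so assume $\varphi$ is non-redundant. The set in question depends only on $\cM\rest[i``[\w]]$, so by the corollary to \cref{Invariant-measures-lift} we may push forward along $i$ and work with $\mu_\w$ on $\Str<\kappa>[\Lang](\w)$. That measure is again invariant and dissociated, by \cref{Equivalences with ergodic}~(d) together with \cref{Basic results about relativization}. Let $i$ be the identity.

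For a fixed $m$, write $t_m(\cM)=t(\cM\rest[{[m]}],\varphi)=\frac{1}{|\NonRedTuple[a]([m])|}\sum_{\bb}\mathbf 1_{\varphi(\bb)}(\cM)$, the sum over $\bb\in\NonRedTuple[a]([m])$. By invariance (\cref{Invariant measures have unique finite projections}) each indicator has mean $p$, so $\mathbb E_\mu t_m=p$. Now let $Y_{\bb}=\mathbf 1_{\varphi(\bb)}-p$ and expand
\[
\mathbb E(t_m-p)^4=|\NonRedTuple[a]([m])|^{-4}\sum_{\bb_1,\dots,\bb_4}\mathbb E\bigl(Y_{\bb_1}Y_{\bb_2}Y_{\bb_3}Y_{\bb_4}\bigr).
\]
Suppose some tuple $\bb_j$ has underlying set disjoint from the union of the other three. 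Then dissociation makes $Y_{\bb_j}$ independent of the product of the others, and that term vanishes. In every surviving quadruple each tuple shares an element with another, which forces at least two coincidences among the $4a$ entries. The number of such quadruples is therefore $O(m^{4a-2})$, with a constant depending only on $a$. Each term is bounded by $1$, so $\mathbb E(t_m-p)^4\le C_a m^{-2}$.

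Markov's inequality then gives $\mu(|t_m-p|\ge\epsilon)\le C_a\epsilon^{-4}m^{-2}$. Since this is summable in $m$, we can choose $k(\epsilon,\varphi)$ with $\sum_{m>k}C_a\epsilon^{-4}m^{-2}<\epsilon$. A union bound over $m>k$ then shows the event in the statement has measure $>1-\epsilon$. The event is a countable intersection of almost basic codes, and by \cref{Relativizations give same values} its measure does not depend on the model of set theory, so the computation can be done anywhere. The step I expect to be the main obstacle is the combinatorial count of the non-vanishing quadruples. If that count becomes messy, the fallback is the second-moment bound $\mathrm{Var}(t_m)=O(1/m)$, which only needs pairwise dissociation. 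Chebyshev along the subsequence $m=j^2$ then gives a summable bound, and one interpolates between consecutive squares using $|t_m-t_{m'}|\le 2a(m'-m)/m$.
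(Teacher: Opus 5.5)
\textbf{Your caveat about the hypothesis.} You are right. Take a single unary predicate $U$ and let $\mu$ be the average of the i.i.d.\ Bernoulli$(1/4)$ and Bernoulli$(3/4)$ measures. Then $t(\cdot,U)$ converges $\mu$-almost surely to $1/4$ or to $3/4$, so for $\epsilon<1/4$ the event in the statement is $\mu$-null. The paper's proof is a union bound on top of a relational version of Lov\'asz's concentration result (Theorem~10.3) for samples from a graphon. Sampling from a fixed graphon is exactly the ergodic situation, so that proof only covers ergodic $\mu$. Every use of the proposition in the paper is for ergodic measures: \cref{Subgraph densities define a measure}, and Claim~1 in the proof of \cref{Limiting densities don't exist for Cohen generics}. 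The paper's own later remark, that for non-ergodic $\mu$ the set $L$ need not be conull, also contradicts the literal statement.

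\textbf{Comparison with the paper's route.} Under the dissociated (equivalently ergodic) hypothesis, your argument is correct, and it is a genuinely different route.

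The paper gets an exponential tail bound for $|t_m-p|$ by viewing the ergodic measure as sampling from a graphon-like object. For general relational languages this implicitly relies on an Aldous--Hoover type representation, which is available once $X$ and $\Lang$ are countable.

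You use the definition of dissociated directly, and the counting is sound:
\begin{itemize}
\item Terms with a tuple disjoint from the other three vanish.
\item In every remaining quadruple, the intersection graph on the four tuples has no isolated vertex. Its components therefore have sizes $2+2$ or $4$, so there are at most $4a-2$ distinct entries.
\item Hence $\int (t_m-p)^4\,d\mu \le C_a m^{-2}$.
\end{itemize}
This gives only a polynomial tail $C_a\epsilon^{-4}m^{-2}$, but that tail is summable, which is all the union bound needs.

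What your approach buys is self-containment. It needs no representation theorem. It involves only finitely supported events, so it runs in the ground model for arbitrary $X$, $\Lang$, $\kappa$, and the closing appeal to \cref{Relativizations give same values} is unnecessary. It also yields a $k(\epsilon,\varphi)$ depending only on $\epsilon$ and the arity of $\varphi$, uniformly over all dissociated $\mu$. What the paper's route buys is the quantitatively sharper exponential concentration.

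One simplification: you do not need the pushforward to $\mu_\omega$ or \cref{Equivalences with ergodic}(d). Dissociation of $\mu$ on $X$ already gives the independence you use for tuples drawn from $i(0),\dots,i(m-1)$.
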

\begin{proof}
This follows from a union bound applied to a straightforward generalization to finite relational structures of the concentration result 
\cite[Theorem~10.3]{MR3012035}.
\end{proof}   

Thus we have the following result. 

\begin{proposition}
\label{Subgraph densities define a measure}
Let $\mu$ be an invariant measure on $\Str<\kappa>[\Lang](X)$.
Let $i\:\w \to X$ be an injection, 
and let $\varphi(\xx)$ be a first-order quantifier-free $\Lang$-formula.
If $\mu$ is ergodic,
then the set
\[
L\bigl(i, \varphi, \bigl\{\mu(\BCformula(\varphi(i(0), \dots, i(\arity(\varphi)- 1)))\bigr\}\bigr)
\]
has $\mu$-measure $1$.
\end{proposition}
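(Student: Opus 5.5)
The plan is to deduce the statement from \cref{Sampling finite structures approximates a measure}, with ergodicity entering through dissociatedness via \cref{Equivalences with ergodic}. Write $p = \mu(\BCformula(\varphi(i(0), \dots, i(\arity(\varphi)-1))))$. By \cref{Borel codes for limiting probabilities} (or directly, since $\{p\}$ is a closed set and the limit condition can be written as $\bigwedge_{k}\bigvee_{m}\bigwedge_{n\ge m} K(n,i,\varphi,[p-2^{-k},p+2^{-k}])$), the set $L(i,\varphi,\{p\})$ is the interpretation of a $\kappa$-Borel code, so it makes sense to compute its $\mu$-measure. It equals $\bigcap_{k\in\w} E_k$, where $E_k$ is the set of $\cM$ such that $|t(\cM\rest[i``[m]],\varphi) - p| < 2^{-k}$ for all sufficiently large $m$. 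It therefore suffices to show $\mu(E_k) = 1$ for each $k$.

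Fix $k$. For every $\epsilon \le 2^{-k}$, \cref{Sampling finite structures approximates a measure} gives a set $F_\epsilon\subseteq E_k$, namely the set of $\cM$ with $|t(\cM\rest[i``[m]],\varphi)-p|<\epsilon$ for all $m>k(\epsilon,\varphi)$. This set has $\mu$-measure $>1-\epsilon$. Letting $\epsilon\to 0$ gives $\mu(E_k)\ge 1-\epsilon$ for every $\epsilon$, hence $\mu(E_k)=1$. Countable intersection then yields $\mu(L(i,\varphi,\{p\}))=1$.

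The main obstacle is that \cref{Sampling finite structures approximates a measure} is stated for a general invariant $\mu$. Its concentration content (the cited \cite[Theorem~10.3]{MR3012035}) really concerns sampling from a fixed dissociated object, and for a non-ergodic mixture the densities converge to a random limit rather than to $p$. I would therefore make explicit where ergodicity is used, by verifying the concentration directly for dissociated $\mu$.

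By \cref{Equivalences with ergodic}, $\mu$ is dissociated. Consider the random variable $t_m = t(\cM\rest[i``[m]],\varphi)$. Invariance gives $\mathbb{E}\, t_m = p$. Dissociatedness implies that indicator events for disjoint non-redundant tuples are independent, so in the variance $\mathrm{Var}(t_m)$ only pairs of tuples sharing an element contribute. There are $O(m^{2a-1})$ such pairs out of $\Theta(m^{2a})$, where $a=\arity(\varphi)$, so $\mathrm{Var}(t_m)=O(1/m)$.

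Passing to the fourth moment in the same way gives $\mathbb{E}(t_m-p)^4 = O(1/m^2)$, which is summable. By Markov's inequality and Borel--Cantelli we get $t_m\to p$ almost surely. This gives the bounds needed above, or directly the conclusion, and to run this argument cleanly one can first pass to a model where $X$ is countable, using \cref{Relativizations give same values}.
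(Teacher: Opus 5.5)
Your proposal is correct. Its first half is exactly the paper's proof: the paper applies \cref{Sampling finite structures approximates a measure} with tolerances $\epsilon\cdot 2^{-n-1}$ and takes a union bound over the exceptional sets. That is the same bookkeeping as your $\bigcap_k E_k$ with $\epsilon\to 0$.

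Where you genuinely depart is in not leaning on that proposition, and your suspicion is justified. The paper's argument uses ergodicity only implicitly, through \cref{Sampling finite structures approximates a measure}. That proposition is stated for arbitrary invariant $\mu$ but fails without ergodicity. For the equal mixture of the point masses at the empty and the complete graph on $X$, one has $p=\tfrac12$, while every $t_m$ with $m\ge 2$ is $0$ or $1$. The cited concentration theorem is really about sampling from a graphon, i.e., the dissociated case.

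Your fourth-moment argument fills this in self-containedly. Ergodic implies dissociated by \cref{Equivalences with ergodic}. By linearity, dissociatedness makes any simple function of the substructure on one finite set independent of any simple function of the substructure on a disjoint finite set. So every term in the expansion of the expectation of $(t_m-p)^4$ in which some tuple misses the union of the other three vanishes. The surviving quadruples span at most $4a-2$ points, where $a=\arity(\varphi)$, so they contribute $O(m^{4a-2})$ against a normalization of order $m^{4a}$. Markov plus Borel--Cantelli then give $t_m\to p$ almost surely. That is the proposition outright, and it makes your first half redundant.

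The paper's route is shorter but outsources the probabilistic content to a citation whose needed hypothesis is missing from the stated proposition. Yours gives only almost-sure convergence rather than uniform tail bounds, but that is all that is asserted here.

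The detour to a model where $X$ is countable is harmless but unnecessary. Each $t_m$ is a simple function measurable with respect to an almost basic code, and a probability $\kappa$-measure is countably additive. So Markov's inequality and Borel--Cantelli apply directly in $V$.
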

\begin{proof}
Let $\epsilon > 0$ and let \[
I_\epsilon = \Bigl(\mu\bigl(\BCformula(\varphi(i(0), \dots, i(\arity(\varphi)- 1)))\bigr) - \epsilon,\ \mu\bigl(\BCformula(\varphi(i(0), \dots, i(\arity(\varphi)- 1)))\bigr) + \epsilon\Bigr). 
\]

For $n \in \w$, let $\ell_n = k(\epsilon \cdot 2^{-n-1}, \varphi)$ where $k$ is as in \cref{Sampling finite structures approximates a measure}. Let 
\[
J_{n, \epsilon} =\bigl\{\cM \st (\exists m > \ell_n)\,  
\bigl|t(\cM \rest[{i[m]}], \varphi) - \mu(\BCformula(\varphi(i(0), \dots, i(\arity(\varphi)- 1)))\bigr| \geq \epsilon \cdot 2^{-n-1}\bigr\}.
\]
Then $\Str<\kappa>[\Lang](X) \setminus \bigcup_{n \in \w} J_{n, \epsilon} \subseteq L(i, \varphi, I_\epsilon)$.

But by \cref{Sampling finite structures approximates a measure} we have that $\mu(J_n) \leq \epsilon \cdot 2^{-n-1}$. Therefore \linebreak $\mu(L(i, \varphi, I_\epsilon)) \geq 1 - \epsilon$. As $\epsilon$ was arbitrary and 
\[\bigcap_{\epsilon > 0} I_\epsilon = \bigl\{\mu(\BCformula(\varphi(i(0), \dots, i(\arity(\varphi)- 1)))\bigr\},
\]
we have $\mu\Bigl(L\bigl(i, \varphi, \{\mu(\BCformula(\varphi(i(0), \dots, i(\arity(\varphi)- 1)))\})\bigr)\Bigr) = 1$. 
\end{proof}

In particular, this implies that if $\mu_0$ and $\mu_1$ are distinct ergodic invariant measures
and $\Generic$ is either a generic for $\mu_0$ or a generic for $\mu_1$, we can determine 
the measure for which $\Generic$ is a generic
by looking at $\cM_{\Generic}$.

\begin{corollary}
\label{There are Borel codes isolating ergodic invariant measures}
Let $\mu_0$ and $\mu_1$ be distinct ergodic invariant measures
on \linebreak $\Str<\kappa>[\Lang](X)$. Then there is some $U \in \BorelCodes<\kappa>[\Lang](X)$ of rank 5 such that $\mu_0(U) = 1$ and $\mu_1(U) = 0$. 
\end{corollary}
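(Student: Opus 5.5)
The plan is to separate $\mu_0$ from $\mu_1$ using a single quantifier-free formula. Then Lemma~\ref{Borel codes for limiting probabilities} gives a rank-4 code, and Proposition~\ref{Subgraph densities define a measure} computes its measure under each $\mu_j$.

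\emph{Step 1: find a separating formula.} Fix an injection $i\colon \w \to X$. Since $\mu_0 \neq \mu_1$, Proposition~\ref{Unique-extension} says they must differ on some basic code. That code has the form $\BCformula(\psi(\aa))$, where $\psi$ is a finite conjunction of literals and $\aa$ is a tuple from $X$. Now pass to a non-redundant formula. Identify repeated entries of $\aa$ by substituting variables, and conjoin $\NonRed$; this gives a non-redundant quantifier-free $\varphi$ and a non-redundant tuple $\bb$ with $\extent{\psi(\aa)} = \extent{\varphi(\bb)}$. Both measures are invariant, so by Lemma~\ref{Invariant measures have unique finite projections} we may move $\bb$ to $\<i(0), \dots, i(k-1)\>$ with $k = \arity(\varphi)$. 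Write
\[
r_j = \mu_j\bigl(\BCformula(\varphi(i(0), \dots, i(k-1)))\bigr)
\]
for $j \in \{0,1\}$. By construction $r_0 \neq r_1$.

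\emph{Step 2: build the code.} Choose disjoint open intervals $I_0 \ni r_0$ and $I_1 \ni r_1$ in $[0,1]$. Lemma~\ref{Borel codes for limiting probabilities} gives a rank-4 code $\zeta(i,\varphi,I_0)$ interpreting $L(i,\varphi,I_0)$. Since $\mu_0$ and $\mu_1$ are ergodic, Proposition~\ref{Subgraph densities define a measure} gives
\[
\mu_0\bigl(L(i,\varphi,\{r_0\})\bigr) = 1 \quad\text{and}\quad \mu_1\bigl(L(i,\varphi,\{r_1\})\bigr) = 1.
\]
We have $L(i,\varphi,\{r_0\}) \subseteq L(i,\varphi,I_0)$, so $\mu_0(\zeta(i,\varphi,I_0)) = 1$. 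Also $L(i,\varphi,\{r_1\})$ is disjoint from $L(i,\varphi,I_0)$, because a limit in $I_1$ cannot lie in $I_0$; hence $\mu_1(\zeta(i,\varphi,I_0)) = 0$.

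\emph{Step 3: meet the rank bound.} This already gives a code of rank $4 \le 5$. To land on exactly rank $5$, as the statement asserts, take
\[
U = \bigwedge\{\zeta(i,\varphi,I_0)\}.
\]
This adds one to the rank and leaves the interpretation unchanged. Alternatively, rank $5$ arises naturally if $U$ is taken to be $\zeta(i,\varphi,I_0) \And \neg\zeta(i,\varphi,I_1)$.

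\emph{Main obstacle.} The only delicate point is Step 1. One must check that a difference between the measures on some basic code really yields a difference $r_0 \neq r_1$ on a \emph{non-redundant} formula evaluated at the initial segment of $i$. This rests on invariance and on rewriting equalities among the parameters as variable identifications; the rest follows directly from the cited results.
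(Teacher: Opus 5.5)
Your proposal is correct and follows the paper's argument: fix $i$, find a quantifier-free formula on which $\mu_0$ and $\mu_1$ differ at $i(0),\dots,i(k-1)$, and combine \cref{Borel codes for limiting probabilities} with \cref{Subgraph densities define a measure}; your reduction to a non-redundant $\varphi$, the disjoint open intervals, and the padding $\bigwedge\{\zeta(i,\varphi,I_0)\}$ to reach rank $5$ fill in details the paper leaves implicit. One slip is in your aside: $\zeta(i,\varphi,I_0)\And\neg\zeta(i,\varphi,I_1)$ has rank $6$, not $5$, because the negation already has rank $5$ and the conjunction adds one more, but your main construction does not depend on it.
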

\begin{proof}
Let
$i\:\w \to X$ be an injection. 
As $\mu_0$ and $\mu_1$ are distinct invariant measures
there must be a first-order quantifier-free $\Lang$-formula $\psi$ 
such that 
\[\mu_0(\BCformula(\psi(i(0), \dots, i(n-1)))) \neq \mu_1(\BCformula(\psi(i(0), \dots, i(n-1)))),
\]
where $n$ is the arity of $\psi$.
The result then follows from \cref{Borel codes for limiting probabilities} and \cref{Subgraph densities define a measure}.
\end{proof}

This immediately implies the following corollary. 

\begin{corollary}
Let $\mu_0$ and $\mu_1$ be distinct ergodic invariant measures
on \linebreak $\Str<\kappa>[\Lang](X)$.
If $\Generic$ is a $\mu_0$-random generic filter, then $\Generic$ is not a $\mu_1$-random generic filter.  
\end{corollary}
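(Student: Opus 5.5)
The plan is to use the separating code supplied by \cref{There are Borel codes isolating ergodic invariant measures} and then let genericity decide membership of $\cM_{\Generic}$ in its interpretation. Fix $U \in [\BorelCodes<\kappa>[\Lang](X)]^V$ with $\mu_0(U) = 1$ and $\mu_1(U) = 0$. Since $\mu_0$, $\mu_1$ and $U$ all lie in $V$, the code $U$ is available when we form both partial orders $\RandPO(\mu_0)$ and $\RandPO(\mu_1)$ in $V$.

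First I would show that $U \in \Generic$. For every $\zeta \in \RandPO(\mu_0)$ we have $\mu_0(\zeta \And U) = \mu_0(\zeta) > 0$, and $\zeta \And U \preceq_{\mu_0} \zeta$ as well as $\zeta \And U \preceq_{\mu_0} U$. Hence the set of conditions below $U$ is dense, and genericity gives $U \in \Generic$; in fact $U \sim_{\mu_0} \bigwedge\emptyset$. By \cref{mu-generic filters are determined by their intersection}(c) applied to $\mu_0$, we then get $V[\Generic] \models \cM_{\Generic} \in \BCvalue(U)$.

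Now suppose, toward a contradiction, that $\Generic$ is also $\mu_1$-random generic. Apply the same density argument in $\RandPO(\mu_1)$ to the code $\neg U$, which satisfies $\mu_1(\neg U) = 1$. This gives $\neg U \in \Generic$. Applying \cref{mu-generic filters are determined by their intersection}(c) for $\mu_1$, the unique element of $\bigcap_{g\in\Generic}\BCvalue(g)$ lies in $\BCvalue(\neg U)$. The structure $\cM_{\Generic}$ is defined as that unique element by part (a), so it is the same structure under either reading. Hence $\cM_{\Generic} \notin \BCvalue(U)$, which is a contradiction.

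A more direct alternative is to note that $U$ and $\neg U$ would both lie in the filter $\Generic$. They would then need a common extension $r$ in $\Generic$, but $\BCvalue(r) \subseteq \BCvalue(U \And \neg U) = \emptyset$, which is impossible. This relies only on the fact that both filters sit inside the same ambient set of codes.

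The only delicate step is making precise what it means for the single set $\Generic$ to be a filter for both orders at once. The two preorders differ, but both live on codes of positive measure, and $\cM_{\Generic}$ is defined from $\Generic$ alone. I expect the intersection argument via \cref{mu-generic filters are determined by their intersection} to be the cleanest way to make this rigorous.
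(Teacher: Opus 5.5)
Your main argument is correct and is essentially the paper's: the paper derives this corollary immediately from the separating code $U$ of \cref{There are Borel codes isolating ergodic invariant measures}, since a $\mu_0$-random generic filter must contain $U$ (so $\cM_{\Generic} \in \BCvalue(U)$), while a $\mu_1$-random generic filter must contain $\neg U$. Your alternative argument needs one correction: a common $\preceq_{\mu_0}$-extension $r$ of $U$ and $\neg U$ does not give $\BCvalue(r) = \emptyset$. It only gives $\mu_0(r \And \neg U) = \mu_0(r \And U) = 0$, hence $\mu_0(r) = 0$, which still contradicts $r \in \RandPO(\mu_0)$ (and more simply, $\neg U$ is not in $\RandPO(\mu_0)$ at all, since $\mu_0(\neg U) = 0$).
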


In general, when $\mu$ is not ergodic, $L(\varphi, \{\mu(\extent[\w]{\varphi})\})$ need not have \linebreak $\mu$-measure $1$.
 In other words, there might not be a single limiting probability to which almost all samples converge.
However, any invariant measure will concentrate on the collection of structures for which there is almost always some limiting value.

\begin{proposition}
\label{Limiting densities exist for random generics}
Let $\mu$ be an invariant measure
on $\Str<\kappa>[\Lang](X)$.
Then \linebreak $\mu(\zeta(i, \varphi, [0, 1])) = 1$ for any injection $i\:\w \to X$ and first-order quantifier-free $\Lang$-formula $\varphi$.
\end{proposition}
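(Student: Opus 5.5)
Our plan is to reduce to the ergodic case through the ergodic decomposition, working in a model where everything is countable, and then to pull the conclusion back to $V$ using absoluteness. The main tools are \cref{Relativizations give same values}, \cref{prop:kal-ergodic}, and \cref{Subgraph densities define a measure}.

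\emph{Step 1 (reduction to the countable case).} Choose a model $V^* \supseteq V$ in which $|X| = |\Lang| = |\kappa| = \w$. By \cref{Borel codes for limiting probabilities}, the code $\zeta(i,\varphi,[0,1])$ lies in $V$ and has the same interpretation $L(i,\varphi,[0,1])$ in every model. By \cref{Relativizations give same values}, $\mu(\zeta(i,\varphi,[0,1])) = \mu^{V^*}(\zeta(i,\varphi,[0,1]))$. By \cref{Basic results about relativization}, $\mu^{V^*}$ is again invariant. It therefore suffices to prove the statement in $V^*$, so we may assume that $X$, $\Lang$ and $\kappa$ are countable.

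\emph{Step 2 (further simplification).} The set $L(i,\varphi,[0,1])$ depends only on the pushforward of $\mu$ to $\Str[\Lang_0](i``[\w])$, where $\Lang_0$ is the finite set of symbols occurring in $\varphi$. This pushforward is the lifting $\mu_\w$, up to the identification given by $i$, restricted to $\Lang_0$, and it is invariant. So we may take $X = \w$ and $i = \mathrm{id}$. With these reductions in place, \cref{prop:kal-ergodic} expresses $\mu$ as a mixture $\mu = \int \nu \, d\rho(\nu)$ of ergodic invariant probability measures $\nu$.

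\emph{Step 3 (ergodic case and integration).} For each ergodic $\nu$ in the decomposition, \cref{Subgraph densities define a measure} gives
\[
\nu\bigl(L(i,\varphi,\{\nu(\BCformula(\varphi(i(0),\dots,i(\arity(\varphi)-1))))\})\bigr) = 1.
\]
Since that set is contained in $L(i,\varphi,[0,1])$, we get $\nu(L(i,\varphi,[0,1])) = 1$. Integrating over $\rho$ then gives $\mu(L(i,\varphi,[0,1])) = 1$.

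We expect the main obstacle to be the integration in Step 3. It requires that the mixture in \cref{prop:kal-ergodic} be a genuine integral, so that $\nu \mapsto \nu(A)$ is measurable for each Borel $A$ and $\mu(A) = \int \nu(A)\,d\rho$. This is the content of the cited theorem of Kallenberg, and we would invoke it in that form.

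If that route proved problematic, the fallback is the standard reverse-martingale argument. For a non-redundant quantifier-free $\varphi$ of arity $k$, the quantities $t(\cM\rest[{[n]}],\varphi)$ are U-statistics of an exchangeable array. They therefore form a reverse martingale with respect to the exchangeable filtration, and hence converge $\mu$-almost surely. If $\varphi$ is not non-redundant, we would first split it into its non-redundant parts.
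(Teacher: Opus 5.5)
Your proposal is correct and follows the paper's proof. The paper likewise passes to an extension in which $X$, $\Lang$ and $\kappa$ are countable, using \cref{Relativizations give same values} and \cref{Borel codes for limiting probabilities}. It then writes $\mu$ as a mixture of ergodic measures via \cref{prop:kal-ergodic} and applies \cref{Subgraph densities define a measure} to each component. Your extra reduction to $X=\w$ is harmless but not needed. Your reverse-martingale fallback is also a valid, self-contained alternative that avoids the ergodic decomposition entirely.
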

\begin{proof}
By \cref{Relativizations give same values} and \cref{Borel codes for limiting probabilities}, and by moving to a larger model of set theory if needed, we can assume that $|X| = |\Lang| = |\kappa| = \w$. 

By \cite[Theorem~A1.3]{MR2161313}, the measure $\mu$ is a mixture of ergodic invariant 
measures. But by \cref{Subgraph densities define a measure}, every ergodic invariant 
measure assigns measure $1$ to $\zeta(i, \varphi, [0, 1])$. Therefore $\mu$ is a mixture of measures for which $\zeta(i, \varphi, [0, 1])$ has measure $1$, and hence $\mu(\zeta(i, \varphi, [0, 1])) = 1$.  
\end{proof}

\begin{definition}
A structure $\cM$ is \defn{highly homogeneous} if whenever $A, B \subseteq \cM$ with $|A| = |B| < \w$, there is some $g \in\Aut(\cM)$ such that $g``[A] = B$. 
\linebreak A strong \Fraisse\ class is \defn{highly homogeneous} when any two structures in 
the class
of the same size
are isomorphic.
\end{definition}
Note that a strong \Fraisse\ class is highly homogeneous precisely when it has a highly homogeneous \Fraisse\ limit.

Up to interdefinability, there are only five countably infinite highly homogeneous structures, namely the reducts of $(\Rationals, \leq)$, as shown in  \cite{MR401885}. Highly homogeneous structures are important for us because of the following result. 

\begin{proposition}[\hbox{\cite{MR3564374}}]
\label{Unique invariant measure}
The following are equivalent for a countably infinite structure $\cM$ in a countable language $\Lang$. 
\begin{itemize}
\item $\cM$ is highly homogeneous.

\item There is a unique invariant 
probability
measure concentrated on the isomorphism class of $\cM$ in $\Str<\w>[\Lang](\w)$, and further this invariant probability measure is ergodic. 
\end{itemize}
\end{proposition}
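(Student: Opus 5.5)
The plan is to prove the two directions separately. The forward direction reduces everything to finite skeletons via \cref{Invariant-measures-lift}. The backward direction relies on the Ackerman--Freer--Patel construction of invariant measures from Borel ``sampling'' structures.

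\emph{($\Rightarrow$), uniqueness.} Suppose $\cM$ is highly homogeneous and $\mu$ is an invariant probability measure on $\Str<\w>[\Lang](\w)$ concentrated on the isomorphism class of $\cM$. By \cref{Invariant-measures-lift}, $\mu$ is determined by its skeleton $\Skeleton(\mu)$, so I fix $n$ and a finite $\Lang_0\subseteq\Lang$ and identify $\Skeleton<\Lang_0>[n](\mu)$.
\begin{itemize}
\item Since $\mu$ concentrates on copies of $\cM$, $\mu$-almost every structure restricted to $[n]$ lies in the age of $\cM$.
\item By high homogeneity, all $n$-element substructures of $\cM$ are isomorphic. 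So almost surely the restriction to $[n]$ is a labelled copy of one fixed structure $\cA_n$.
\item The labelled copies of $\cA_n$ on $[n]$ form a single $\Sym(n)$-orbit. Invariance of $\mu$ (restricted to permutations of $[n]$) forces $\Skeleton<\Lang_0>[n](\mu)$ to be uniform on the $\Lang_0$-reducts of that orbit.
\end{itemize}
Hence the skeleton is independent of $\mu$, and uniqueness follows from \cref{Invariant-measures-lift} together with \cref{Unique-extension}.

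\emph{($\Rightarrow$), existence.} By the classification in \cite{MR401885}, $\cM$ is interdefinable with a reduct of $(\Rationals,\le)$. Sample $\<\xi_k\>_{k\in\w}$ i.i.d.\ uniform on $[0,1]$, order $\w$ by $j\prec k \iff \xi_j<\xi_k$, and interpret $\cM$'s relations through the corresponding reduct of $\prec$ (order, betweenness, circular order, separation, or the trivial reduct). Almost surely $\prec$ is a dense order without endpoints, so the resulting law concentrates on the isomorphism class of $\cM$. It is invariant because the $\xi_k$ are exchangeable.

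\emph{($\Rightarrow$), ergodicity.} Write $\mu=r\nu+s\eta$ with $\nu,\eta$ invariant and $r,s>0$. Both $\nu$ and $\eta$ must also concentrate on the isomorphism class of $\cM$, so by uniqueness $\nu=\eta=\mu$. Thus $\mu$ is extreme, and it is ergodic by \cref{Equivalences with ergodic}.

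\emph{($\Leftarrow$).} I argue by contraposition: assume $\cM$ is not highly homogeneous but some invariant measure $\mu$ concentrates on its isomorphism class, and produce a second one. (If no such measure exists, the second clause already fails.)
\begin{itemize}
\item Existence of $\mu$ gives, by the Ackerman--Freer--Patel characterization, that $\cM$ has trivial definable closure. Their construction then realizes $\mu$-type measures as the law of the substructure induced on i.i.d.\ samples, drawn from a nonatomic probability measure $m$, from a Borel $\Lang$-structure $\cP$ on $\Reals$. Every sample from $\cP$ is almost surely isomorphic to $\cM$, and this holds for any nonatomic $m$ in a suitable class (e.g.\ those with full support).
\item Failure of high homogeneity gives some $n$ and two non-isomorphic $n$-element substructures $\cA,\cB$ of $\cM$. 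I would then choose two sampling measures $m_0,m_1$ assigning different probabilities to the event that $n$ samples induce a copy of $\cA$. For instance, reweight $m$ on an open region of $\Reals^n$ where the induced type is $\cA$, using that the set of $n$-tuples realizing each type has positive measure by the extension property built into $\cP$.
\item The resulting invariant measures differ on $\Skeleton[n](\cdot)$, so they are distinct. Both concentrate on the isomorphism class of $\cM$, contradicting uniqueness.
\end{itemize}

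The main obstacle is the $(\Leftarrow)$ direction: ensuring that reweighting the sampling measure preserves concentration on the isomorphism class of $\cM$ while genuinely changing some $n$-type density. I would first check this in the construction where $\cP$ has the property that every nonempty open box contains witnesses for all one-point extensions, since then any full-support $m$ yields $\cM$ almost surely.
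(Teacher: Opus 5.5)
The paper gives no proof of this proposition; it quotes it from \cite{MR3564374}. So your argument has to stand on its own.

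\textbf{The $(\Rightarrow)$ direction is sound.} Invariance makes the law on the single $\Sym(n)$-orbit of labelled copies of $\mathcal{A}_n$ uniform, which pins down the skeleton. The i.i.d.\ order construction gives existence. Extremality together with \cref{Equivalences with ergodic} gives ergodicity.

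\textbf{First gap in $(\Leftarrow)$.} You infer that failure of high homogeneity gives two non-isomorphic $n$-element substructures. This is unjustified, and false in general. High homogeneity says $\Aut(\mathcal{M})$ is transitive on $n$-sets. Having a single isomorphism type of $n$-element substructure is much weaker, and the two notions agree only for ultrahomogeneous $\mathcal{M}$. For instance, $(\mathbb{Z},<)$ has all $n$-element substructures isomorphic, yet it is not highly homogeneous. That example has nontrivial dcl, but nothing in your sketch excludes such behaviour under trivial dcl, and ruling it out directly would take real work.

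The standard repair is to replace $\mathcal{M}$ by its canonical expansion $\mathcal{M}^*$, which has a new relation symbol for each $\Aut(\mathcal{M})$-orbit on $M^k$.
\begin{itemize}
\item These orbits are $\Lwow(\Lang)$-definable, so the reduct map is an equivariant Borel bijection between the isomorphism classes of $\mathcal{M}^*$ and $\mathcal{M}$. Invariant measures concentrated on the two classes therefore correspond.
\item Since $\Aut(\mathcal{M}^*)=\Aut(\mathcal{M})$, trivial dcl and high homogeneity are unchanged.
\item $\mathcal{M}^*$ is ultrahomogeneous, so two $n$-sets in different orbits now induce non-isomorphic substructures $\mathcal{A},\mathcal{B}$ of $\mathcal{M}^*$.
\end{itemize}
You should then run the Ackerman--Freer--Patel construction for $\mathcal{M}^*$.

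\textbf{Second gap in $(\Leftarrow)$.} The step you flag as the main obstacle is genuinely missing, and the proposed mechanism does not work as described. You control only the marginal $m$, and $n$ samples have law $m^{\otimes n}$, so you cannot reweight an open region of $\mathbb{R}^n$. Nor does positivity of the sets of tuples realizing $\mathcal{A}$ and $\mathcal{B}$ make $m\mapsto m^{\otimes n}(E_{\mathcal{A}})$ non-constant. With $\mathcal{P}=(\mathbb{R},<)$, every labelled order pattern has probability $1/n!$ for every atomless $m$.

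Here is one argument that works. Let $E_{\mathcal{A}}$ be the set of $n$-tuples of distinct reals inducing a copy of $\mathcal{A}$; it is invariant under permuting coordinates.
\begin{itemize}
\item Suppose, for contradiction, that $m^{\otimes n}(E_{\mathcal{A}})=c$ for every nondegenerate (atomless, full-support) $m$.
\item Apply this to the normalization of $m_0+\sum_{i<n}t_i\,\lambda|_{J_i}$. Here $m_0$ is nondegenerate, the $J_i$ are pairwise disjoint bounded intervals, and $t_i\ge 0$.
\item Compare the coefficients of $t_0\cdots t_{n-1}$ in the resulting polynomial identity in $t$, using the symmetry of $E_{\mathcal{A}}$. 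This gives $\lambda^n(E_{\mathcal{A}}\cap J_0\times\cdots\times J_{n-1})=c\prod_i|J_i|$.
\item Hence every small cube off the diagonal meets $E_{\mathcal{A}}$ in relative measure exactly $c$. The Lebesgue density theorem then forces $c\in\{0,1\}$.
\item This contradicts the fact that both $\mathcal{A}$ and $\mathcal{B}$ occur with positive probability.
\end{itemize}
So some nondegenerate $m_0,m_1$ yield distinct invariant measures concentrated on the class of $\mathcal{M}^*$, and hence on the class of $\mathcal{M}$. The open-box witnessing property you propose for $\mathcal{P}$ only secures concentration on the isomorphism class; it does not address this distinctness step.
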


Given a generic filter that is either a Cohen generic filter for a strong \Fraisse\ class that is not highly homogeneous or 
that 
is a $\mu$-random generic filter,
\cref{Borel codes separate random generic from Cohen generic}
will allow us to 
determine which of these is the case. This will be a consequence of the following result.

\begin{theorem}
\label{Limiting densities don't exist for Cohen generics}
Let $\AgeK$ be a strong \Fraisse\ class of $\Lang$-structures
that is not highly homogeneous.
Let $\cM_G$ be the result of forcing with $\Cohen[X](\AgeK)$, i.e., Cohen forcing with $\AgeK$
on $X$. Suppose $i\:\w \to X$ is an injection. Then there is a first-order quantifier-free 
$\Lang$-formula
$\varphi$ such that 
\[
\limsup_{n \to \infty} \, t(\cM_G\rest[{i``[n]}], \varphi)\neq 
\liminf_{n \to \infty} \, t(\cM_G\rest[{i``[n]}], \varphi). 
\]
In particular, we have 
\[
\cM_G \not \in \bigcap \{\zeta(i, \psi, [0, 1])\st \psi\in \Lww(\Lang)\text{ is quantifier-free}\} .
\]
\end{theorem}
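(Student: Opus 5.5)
Because $\AgeK$ is not highly homogeneous, we first fix $k\in\w$ and two non-isomorphic structures $\cA,\cB\in\AgeK$, both of size $k$. We choose an enumeration $\aa$ of $A$ and let $\varphi(\xx)$ be the non-redundant quantifier-free formula of arity $k$ that describes the complete quantifier-free type (diagram) of $\aa$ in $\cA$, restricted to the finitely many relation symbols of $\Lang$ that appear in $\cA$ and $\cB$. If $\Lang$ is infinite, we pick a finite $\Lang_0\subseteq\Lang$ on which the reducts $\cA\rest[\Lang_0]$ and $\cB\rest[\Lang_0]$ are still non-isomorphic. Then $\varphi$ holds of no enumeration of $B$. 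The plan is a density argument showing that $\cM_G$ oscillates: for infinitely many $n$ the density $t(\cM_G\rest[i``[n]],\varphi)$ exceeds $1-\delta$, and for infinitely many $n$ it is below $1-2\delta$, for some fixed $\delta>0$.

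For the high oscillation we use the following set, which we claim is dense: for fixed $N\in\w$,
\[
D^{\mathrm{hi}}_N=\{\cC\in\Cohen[X](\AgeK)\st (\exists n>N)\ i``[n]\subseteq C \text{ and } t(\cC\rest[i``[n]],\varphi)>1-\delta\}.
\]
Given a condition $\cC$, we enlarge it so that $i``[m]\subseteq C$ for some $m>N$, using \SAP\ and \HP\ to add points. Next we choose $n\gg m$ and build a structure on $C\cup i``[n]$ extending $\cC$ in which every $k$-tuple of new points realizes the type of $\aa$. The idea is to take copies of $\cA$ on all $k$-subsets and amalgamate; since a consistent structure of this kind need not exist in general, we instead use the \Fraisse\ limit together with Ramsey-type reasoning. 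The cleanest substitute is to choose an infinite set in the \Fraisse\ limit that is indiscernible for quantifier-free types under some ordering. Here $\varphi$ would be replaced by the type of an increasing tuple in that ordering, so the density of $\varphi$ on the new points is a fixed positive constant $c_A$ (the fraction of orderings of a $k$-set realizing $\varphi$). Because $n\gg m$, the proportion of $k$-tuples from $i``[n]$ that meet $i``[m]$ is negligible, so the density is within $\delta$ of $c_A$. Symmetrically, for the low oscillation we obtain a dense set where the density is near $c_B$, computed from an indiscernible copy whose increasing $k$-tuples realize $\cB$. For this copy the density of $\varphi$ is $0$, since $\varphi$ isolates an isomorphism type different from that of $\cB$. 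Adjoining these points to $\cC$ is possible because $\cC$ together with the finite indiscernible piece embed jointly by \SAP, and the resulting structure can be placed on the specified points of $i``[n]\setminus C$. This requires that those points are not already in $C$, which holds once $n$ exceeds $\max i^{-1}``[C]$.

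By genericity, $G$ meets every $D^{\mathrm{hi}}_N$ and every $D^{\mathrm{lo}}_N$. Hence $\limsup_n t(\cM_G\rest[i``[n]],\varphi)\ge c_A-\delta$ and $\liminf_n t(\cM_G\rest[i``[n]],\varphi)\le\delta$, and these differ once $\delta<c_A/2$. The final clause of the theorem then follows immediately, since failure of convergence means $\cM_G\notin L(i,\varphi,[0,1])=\BCvalue(\zeta(i,\varphi,[0,1]))$.

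The main obstacle is the middle step: producing, inside $\AgeK$, arbitrarily large finite structures in which a prescribed fraction of $k$-tuples realize $\cA$'s type, and others in which none do. I would first try Ramsey's theorem applied to the countable \Fraisse\ limit, which yields infinite sets that are order-indiscernible for quantifier-free types. The difficulty is ensuring that two such sets exist with genuinely different increasing $k$-types, one realizing $\cA$ and one realizing $\cB$. For this I would start Ramsey's argument inside a set containing many disjoint copies of $\cA$ (respectively $\cB$), built by \SAP\ and arranged with a product-style colouring so that the homogeneous set inherits the desired type. If that fails, fall back on choosing $\varphi$ as the $k$-type maximizing density, and argue that since the class is not highly homogeneous some other type can be made asymptotically dominant.
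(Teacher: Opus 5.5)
You correctly reduce to two families of dense sets, you handle placing new points on $i``[n]\setminus C$ with $n \gg |C|$ correctly, and your final deduction $\cM_G \notin L(i,\varphi,[0,1])$ is fine. The gap is the step you yourself flag, and as planned it cannot be closed. You fix $\varphi$ in advance as the diagram of an arbitrary $\cA$. You then need arbitrarily large members of $\AgeK$ all of whose $k$-subsets are isomorphic to $\cA$ (resp.\ $\cB$), obtained from an order-indiscernible set. Such sets usually do not exist. Take the class of all finite graphs with $\cA$ the path on three vertices: since $R(3,3)=6$, every graph on six vertices has a $3$-subset that is a triangle or an independent triple. Ramsey's theorem only guarantees that \emph{some} homogeneous type occurs, and no ``product-style colouring'' lets you dictate which one.

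Nor can the indiscernibility route be rescued by choosing $\varphi$ differently. The finite triangle-free graphs form a strong \Fraisse\ class that is not highly homogeneous. There, every order-indiscernible sequence of length at least $3$ is independent: an edge between $a_i$ and $a_j$ for some $i<j$ would force edges between all pairs, hence a triangle. So all indiscernible sets in that class give identical densities for every $\varphi$. Your fallback (``some other type can be made asymptotically dominant'') just restates what has to be proved.

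The missing ingredient is a genuine source of arbitrarily large members of $\AgeK$ whose $\varphi$-densities are bounded apart. Moreover, $\varphi$ must be chosen so that this holds, not read off from two non-isomorphic $k$-element structures. The paper gets this in its first claim, as follows:
\begin{enumerate}
\item Pass to an extension in which everything is countable, and take the countable \Fraisse\ limit $\cN$.
\item Use \cref{Unique invariant measure} to obtain two distinct ergodic invariant measures concentrated on the isomorphism class of $\cN$.
\item Pick a quantifier-free $\varphi$ on which these measures differ. The sampling concentration of \cref{Sampling finite structures approximates a measure} then yields finite substructures of $\cN$, hence members of $\AgeK$, of every large size with $t(\cdot,\varphi)<q_0$ and with $t(\cdot,\varphi)>q_1$.
\end{enumerate}
In the triangle-free example these are, e.g., independent sets versus complete bipartite graphs, which are not indiscernible sequences. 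Once such structures are available, your disjoint-join-and-density argument goes through, essentially as the paper's second claim.
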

\begin{proof}
First we prove two claims. 

\begin{claim}
\label{Limiting densities don't exist for Cohen generics:Claim 1}
There is
some 
first-order
quantifier-free $\Lang$-formula $\varphi$
of arity $k > 0$ and there are
rationals $q_0$ and $q_1$ 
with
$0 < q_0 < q_1 <  1$
such that for all $n \in \w$, there exist $\cA_0^n, \cA_1^n \in \AgeK$ with
\begin{itemize}
\item $|\cA_0^n| = |\cA_1^n| > n$, and

\item $t(\cA_0^n, \varphi)< q_0 < q_1 < t(\cA_1^n, \varphi)$.
\end{itemize}
\end{claim}
\begin{proof}
First note that whether or not this claim holds is independent of the model of set theory. Therefore, by moving to a forcing extension if needed, we can assume without loss of generality that $|\Lang| = |X| = \w$ and that $\AgeK$ has only countably many isomorphism types. 

Let $\cN$ be a countably infinite \Fraisse\ limit of $\AgeK$. Because $\AgeK$ has two non-isomorphic elements of the same finite size, $\cN$ is not highly homogeneous. Therefore by \cref{Unique invariant measure} there are two distinct ergodic invariant 
probability
measures $\mu_0, \mu_1$ on $\Str<\w>[\Lang](i``(\w))$ concentrated on the isomorphism class of $\cN$.
Hence there must be some first-order quantifier-free 
$\Lang$-formula
$\varphi$ of arity $k \in \w$ and rationals $q_0, q_1$ such that 
\[
\mu_0\bigl(\BCformula(\varphi(i(0), \dots, i(k-1)))\bigr) \ <\  q_0 \ <\  q_1  \ < \ 
\mu_1\bigl(\BCformula(\varphi(i(0), \dots, i(k-1)))\bigr).
\]
Note that because $\mu_0$ and $\mu_1$ are both concentrated on the isomorphism class of $\cN$, the formula $\varphi$ cannot be a sentence.  The claim then follows from \cref{Sampling finite structures approximates a measure}.
\end{proof}

Now fix $q_0, q_1$, $k$, $\varphi$, and $A_0^n$, $A_1^n$ for $n \in \w$, as in \cref{Limiting densities don't exist for Cohen generics:Claim 1}.

\begin{claim}
\label{Limiting densities don't exist for Cohen generics:Claim 2}
For every $\cB \in \AgeK$  there exist $\cC_0, \cC_1 \in \AgeK$ satisfying
\begin{itemize}
\item $\cB \subseteq \cC_0$ and $\cB \subseteq \cC_1$, 

\item $|C_0| = |C_1|$,  and

\item $t(\cC_0, \varphi) \leq q_0 < q_1 \leq t(\cC_1, \varphi)$.
\end{itemize}
\end{claim}
\begin{proof}
Let $\epsilon > 0$.
Let $n_\epsilon$ be such that 
\[
\frac{|\NonRedTuple[k]([n_\epsilon])|}{|\NonRedTuple[k]([n_\epsilon + |B|])|} > 1 - \frac{\epsilon}{2}
\]
and
\[
\frac{|B| \cdot |\NonRedTuple[k-1]([n_\epsilon + |B|])|}{|\NonRedTuple[k]([n_\epsilon + |B|])|} < \frac{\epsilon}{2}.
\]
Recall that $\AgeK$ is a strong \Fraisse\ class, and hence has the joint embedding property. For $j \in \{0, 1\}$ let $\cC_j$ be any structure of size $|B| + |A_j^{n_\epsilon}|$ such that both $\cB$ and $\cA_j^{n_\epsilon}$ have non-overlapping 
embeddings into $\cC_j$.
We then have 
\begin{align*}
\frac{t(\cA_j^{n_\epsilon}, \varphi) \cdot |\NonRedTuple[k]({(A_j^{n_\epsilon})})|}{|\NonRedTuple[k](C_j)|} 
\, &\leq\, t(\cC_j, \varphi) \\
&\leq\,  \frac{t(\cA_j^{n_\epsilon}, \varphi) \cdot |\NonRedTuple[k]({(A_j^{n_\epsilon})})| + |B| \cdot |\NonRedTuple[k-1]([|B| + |A_j^{n_\epsilon}|])|}{|\NonRedTuple[k](C_j)|}
\end{align*}
where the left inequality follows by assuming no tuple in $\cC_j$ with an element from $B$ satisfies $\varphi$ and the right inequality follows by assuming all tuples in $\cC_j$ which have an element from $B$ satisfy $\varphi$. 

We therefore have 
\begin{align*}
q_1  - \frac{\epsilon}{2} 
\,&<\, t(\cA_1^{n_\epsilon}, \varphi) - \tfrac{\epsilon}{2} \\
\,&\leq\, t(\cA_1^{n_\epsilon}, \varphi) \cdot (1 - \tfrac{\epsilon}{2})\\
\,&<\, \frac{t(\cA_1^{n_\epsilon}, \varphi) \cdot |\NonRedTuple[k]({(A_1^{n_\epsilon})})|}{|\NonRedTuple[k](C_1)|} \\
\,&\leq\, t(\cC_1, \varphi).
\end{align*}
Because $\epsilon> 0$ was arbitrary, this implies that $q_1 \leq t(\cC_1, \varphi)$. 
We also have 
\begin{align*}
t(\cC_0, \varphi)
\,&\leq\, \frac{t(\cA_0^{n_\epsilon}, \varphi) \cdot |\NonRedTuple[k]({(A_0^{n_\epsilon})})|}{|\NonRedTuple[k](C_0)|} + \frac{ |\NonRedTuple[k-1]([|B| + |A_0^{n_\epsilon}|])|}{|\NonRedTuple[k](C_0)|} \\
\,&<\,t(\cA_0^{n_\epsilon}, \varphi) + \tfrac{\epsilon}{2} \\
\,&<\, q_0 + \tfrac{\epsilon}{2}.
\end{align*}
Hence, as $\epsilon$ was arbitrary, we have $t(\cC_0, \varphi) \leq q_0$, proving the claim.
\end{proof}

Now let $D_0 = \{\cE \in \AgeK \st t(\cE, \varphi) \leq q_0\}$. By \cref{Limiting densities don't exist for Cohen generics:Claim 2}, $D_0$ is dense, and so we have 
$
\liminf_{n \to \infty}(\cM_G\rest[{i``[n]}], \varphi) \,\leq\, q_0
$.

Let $D_1 = \{\cE \in \AgeK \st t(\cE, \varphi) \geq q_1\}$.  By \cref{Limiting densities don't exist for Cohen generics:Claim 2}, we also have that  $D_1$ is dense. Hence $\limsup_{n \to \infty}(\cM_G\rest[{i``[n]}], \varphi) \geq q_1$. 

Therefore 
\[
\liminf_{n \to \infty}\, (\cM_G\rest[{i``[n]}], \varphi) \,\leq\, q_0 \,<\, q_1 \,\leq\, \limsup_{n \to \infty}\,(\cM_G\rest[{i``[n]}], \varphi),
\]
as desired, establishing the theorem.
\end{proof}

\begin{theorem}     
\label{Borel codes separate random generic from Cohen generic}
There is a $U \in \BorelCodes<\kappa>[\Lang](X)$ with $\rank(U) = 5$ such that if $\cM$ is Cohen generic on $X$ for some strong \Fraisse\ class of $\Lang$-structures
that is not highly homogeneous then $\cM \not \in \BCvalue(U)$, and if $\cM$ is $\nu$-random generic for some invariant measure $\nu$ 
on $\Str<\kappa>[\Lang](X)$
then $\cM \in \BCvalue(U)$. 
\end{theorem}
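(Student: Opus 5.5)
Fix an injection $i\colon \w \to X$ in the ground model. The natural candidate is
\[
U \;=\; \bigwedge_{\psi} \zeta(i, \psi, [0,1]),
\]
where $\psi$ ranges over the first-order non-redundant quantifier-free $\Lang$-formulas. The code $\zeta(i,\psi,[0,1])$ is the one from \cref{Borel codes for limiting probabilities}, with $[0,1]$ treated as open in $[0,1]$. Each conjunct has rank $4$, so $U$ has rank $5$.

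The conjunction has at most $|\Lang|+\w$ conjuncts. If this exceeds $\kappa$, I will regroup it (or assume $|\Lang| \le \kappa$, as is implicit in the definition of codes) so that $U$ remains a legitimate $\kappa$-Borel code. One further point: \cref{Limiting densities don't exist for Cohen generics} is stated for arbitrary quantifier-free $\varphi$. A quantifier-free formula is, on non-redundant tuples, a Boolean combination of non-redundant formulas after splitting according to the equality pattern. So I will either restrict $U$ to non-redundant $\psi$ and check that the witnessing $\varphi$ in that theorem can be taken non-redundant (replace $\varphi$ by $\varphi\And\NonRed$; the density $t$ only counts non-redundant tuples, so nothing changes), or simply note that \cref{Borel codes for limiting probabilities} works for these as well.

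\emph{Cohen side.} Suppose $\cM$ is Cohen generic for a strong \Fraisse\ class $\AgeK$ that is not highly homogeneous. By \cref{Limiting densities don't exist for Cohen generics} there is some $\varphi$ whose densities along $i``[n]$ oscillate, so $\cM \notin \BCvalue(\zeta(i,\varphi,[0,1]))$. Hence $\cM \notin \BCvalue(U)$, where the interpretation is computed in the extension. This direction uses the interpretation clause of \cref{Borel codes for limiting probabilities}, which is valid in every model containing $X$ and $\Lang$. The point of the construction is that $U$ is defined uniformly, with no reference to $\AgeK$.

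\emph{Random side.} Suppose $\nu$ is invariant in $V$ and $\Generic$ is $\nu$-random generic. By \cref{Limiting densities exist for random generics}, $\nu(\zeta(i,\psi,[0,1]))=1$ for each $\psi$. Since $U$ is a conjunction of at most $\kappa$ codes and $\nu$ is a $\kappa$-measure, $\nu(U)=1$. In particular $\neg U \notin \RandPO(\nu)$ (or is $\preceq_\nu$-null), and every condition is compatible with $U$. Therefore $U\in\Generic$, because the set of conditions $\preceq_\nu U$ is dense. By \cref{mu-generic filters are determined by their intersection}(c), $\cM_{\Generic}\in\BCvalue(U)$.

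The main obstacle I expect is the measure-one step, where absoluteness must be kept straight. \cref{Limiting densities exist for random generics} itself passes to an extension where everything is countable and then uses \cref{Relativizations give same values}. So I must check that the value $\nu(U)=1$ computed in $V$ matches what the generic sees; the theorem's clause (c) is stated for codes in $V$, so this is fine. I also need to confirm the measure-one conclusion for the whole conjunction, which follows from $\kappa$-additivity as above.
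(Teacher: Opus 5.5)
Your proposal is correct and is essentially the paper's proof. It uses the same code $U=\bigwedge_{\varphi}\zeta(i,\varphi,[0,1])$, takes the Cohen side from \cref{Limiting densities don't exist for Cohen generics} and the random side from \cref{Limiting densities exist for random generics}, and usefully makes explicit the step the paper leaves tacit: $\nu(U)=1$ puts $U$ in every $\nu$-generic filter, so \cref{mu-generic filters are determined by their intersection}(c) yields $\cM_{\Generic}\in\BCvalue(U)$.

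Drop the ``regrouping'' option: any $\kappa$-Borel code mentions at most $|\kappa|$ relation symbols, so regrouping cannot help when $|\Lang|>|\kappa|$, and the statement can actually fail in that case. The tacit assumption $|\Lang|\le\kappa$, which the paper shares, is the right fix.
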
 
\begin{proof}  
Let 
$
U = \bigwedge \{\zeta(i, \varphi, [0, 1])\st \varphi \in \Lww(\Lang)\text{ is quantifier-free}\}$.
The result then follows from \cref{Limiting densities exist for random generics} and \cref{Limiting densities don't exist for Cohen generics}. 
\end{proof}

The following is an immediate corollary of \cref{Borel codes separate random generic from Cohen generic}.  
\begin{corollary}
Suppose $\cM$ is Cohen generic on $X$ for some strong \Fraisse\ class of $\Lang$-structures
that is not highly homogeneous. Then $\cM$ is not $\nu$-random generic for any invariant 
measure
$\nu$ on $\Str<\kappa>[\Lang](X)$. 

Similarly if $\cM$ is $\nu$-random generic for some invariant 
measure
$\nu$ on $\Str<\kappa>[\Lang](X)$ then $\cM$ is not Cohen generic on $X$ for any strong \Fraisse\ class of $\Lang$-structures that is not highly homogeneous. 
\end{corollary}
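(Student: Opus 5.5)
The corollary follows directly from \cref{Borel codes separate random generic from Cohen generic}. The plan is to fix the single code $U \in \BorelCodes<\kappa>[\Lang](X)$ of rank $5$ provided there. Recall that
\[
U = \bigwedge \{\zeta(i, \varphi, [0, 1]) \st \varphi \in \Lww(\Lang) \text{ quantifier-free}\}
\]
for a fixed injection $i\:\w \to X$. Because $U$ is defined uniformly from $X$ and $\Lang$, it does not depend on the Fra\"iss\'e class or on the measure, and by \cref{Borel codes for limiting probabilities} its interpretation is the same in every model of set theory containing $X$ and $\Lang$. So $U$ can be evaluated at $\cM$ in whatever extension $\cM$ lives in.

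For the first statement, suppose $\cM$ is Cohen generic for a strong \Fraisse\ class $\AgeK$ that is not highly homogeneous. By \cref{Limiting densities don't exist for Cohen generics}, some quantifier-free $\varphi$ has densities along $i``[n]$ that fail to converge, so $\cM \notin \BCvalue(U)$. Now suppose, toward a contradiction, that $\cM = \cM_{\Generic}$ for a $\nu$-random generic filter $\Generic$ over $V$ with $\nu$ invariant.

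By \cref{Limiting densities exist for random generics}, we have $\nu(\zeta(i,\varphi,[0,1])) = 1$ for every quantifier-free $\varphi$. Since $\Lww(\Lang)$ has at most $|\Lang| + \w \le \kappa$ quantifier-free formulas, $U$ is a legitimate $\kappa$-Borel code, and countable additivity gives $\nu(U) = 1$. A code of full measure lies in every generic filter: its complement has measure $0$, so every condition is compatible with $U$, and the set of conditions below $U$ is dense. Hence $U \in \Generic$, and \cref{mu-generic filters are determined by their intersection}(c) gives $\cM_{\Generic} \in \BCvalue(U)$, a contradiction.

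The second statement is the contrapositive of the first, so nothing further is needed there. The only points to check are bookkeeping. First, the Cohen generic and the random generic must be compared as the same structure $\cM$ on $X$, interpreted over a common ground model $V$. Second, $\BCvalue(U)$ must be computed in the same extension. Both hold by the absoluteness already noted for \cref{Borel codes for limiting probabilities}.
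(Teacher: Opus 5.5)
Your proposal is correct and follows the paper's route: the paper derives the corollary immediately from the single separating code $U$ of \cref{Borel codes separate random generic from Cohen generic}. You have unpacked that theorem (full measure puts $U$ in $\Generic$, and then \cref{mu-generic filters are determined by their intersection}(c) gives $\cM_{\Generic} \in \BCvalue(U)$) and added the absoluteness bookkeeping. One small repair: $|\Lang| + \w \le \kappa$ is not a hypothesis of the corollary, and getting $\nu(U) = 1$ from a $\kappa$-sized conjunction needs $\kappa$-additivity rather than countable additivity; both issues disappear if you use only the single rank-$4$ code $\zeta(i, \varphi, [0,1])$ for the $\varphi$ supplied by \cref{Limiting densities don't exist for Cohen generics}, which already has $\nu$-measure $1$ by \cref{Limiting densities exist for random generics}.
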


\section{Conjectures}
\label{Section: Conjectures}

We end with some conjectures about $\mu$-random generics and Cohen generics. 

As we saw in \cref{All Fraisse classes give the same universe}, whenever $\AgeK_0$ and $\AgeK_1$ are non-trivial countable strong \Fraisse\ classes in countable languages, if $\Generic_0$ is generic for $\AgeK_0$ on $X$ over $V$ then $V[\Generic_0]$ contains a filter $\Generic_1$ which is generic for $\AgeK_1$ on $\w$ over $V$. We conjecture that countability is not needed. 
\begin{conjecture}
Suppose 
$\AgeK_0$ and $\AgeK_1$ are non-trivial \Fraisse\ classes each with at most $|X|$-many elements up to isomorphism. Then $\Cohen[X](\AgeK_0)$ and $\Cohen[X](\AgeK_1)$ are forcing equivalent. 
\end{conjecture}

We also saw in \cref{Any two invariant measures on countable sets force to give the same universes} that when the set and language are countably infinite, any two invariant probability measures that assign measure $0$ to every trivial structure give rise to equivalent notions of forcing. This suggests the following conjecture.

\begin{conjecture}
Let $\mu_0, \mu_1 \in V$ be invariant measures on $\Str<\kappa>[\Lang](X)$, and let $\Generic_0$ be a $\mu_0$-random generic filter over $V$. 
Suppose that $\mu_0$ and $\mu_1$ each assign measure $0$ to every trivial structure.
Then in $V[\Generic_0]$ there is a $\mu_1$-random generic filter over $V$. 
\end{conjecture}

We also make a stronger conjecture about the nature of invariant measures, motivated by the proof of \cref{Any two invariant measures on countable sets force to give the same universes}.
\begin{conjecture}
Let $\mu_0$ and $\mu_1$ be invariant measures on $\Str<\kappa>[\Lang](X)$. 
Suppose that $\mu_0$ and $\mu_1$ each assign measure $0$ to every trivial structure.
Then there is a measurable isomorphism $f \:\Str<\kappa>[\Lang](X) \to \Str<\kappa>[\Lang](X)$ such that $\mu_0$ is the pushforward of $\mu_1$ along $f$, i.e., $\mu_0 = \Pushforward[f](\mu_1)$.
\end{conjecture}

An important theorem relating random forcing and Cohen forcing is that adding a Cohen real will not add any random reals, and adding a random real will not add any Cohen reals (see \cite[Lemma~3.3]{Jech_1987}). We conjecture that this extends to arbitrary strong \Fraisse\ classes.

\begin{conjecture}
Let $\AgeK$ be a strong \Fraisse\ class in $\Lang$, and let
$\mu$ be an invariant measure on $\Str<\kappa>[\Lang](X)$  which concentrates on the collection of models potentially isomorphic to a \Fraisse\ limit of $\AgeK$.
\begin{enumerate}
\item 
Suppose $\Generic_0$ is a $\mu$-random generic filter over $V$. Then
$V[\Generic_0]$ contains no Cohen generic structures for $\AgeK$ on $X$ over $V$.

\item 
Suppose $\Generic_1$ is a $\AgeK$-generic filter for $X$ over $V$. Then
$V[\Generic_1]$ contains no $\mu$-random generic structures  over $V$. 
\end{enumerate}
\end{conjecture}

An interesting property of Cohen generic structures is that even when the \Fraisse\ limit of the corresponding age has a large automorphism group, the Cohen generic structures are rigid (see \cite{Golshani} and \cite{Cohen} for examples and \cite{Cohen-Generic-with-Functions_AGM} for a property which ensures rigidity). This suggests that a similar phenomenon might occur for $\mu$-random generic structures.

\begin{definition} Let $n\ge 1$.
A \Fraisse\ class 
$\AgeK$ has the \defn{disjoint $n$-amalgamation property} (abbreviated $n$-DAP) if whenever $\<\cA_i\>_{i \in [n]} \subseteq \AgeK$ are such that 
$\cA_i\rest[A_i \cap A_j] = \cA_j\rest[A_i \cap A_j]$
for all $i < j \in [n]$, 
there is some $\cB \in \AgeK$ such that 
$\cA_i\subseteq \cB$
for all $i \in [n]$.
\end{definition}

Observe that $2$-DAP is the same as the strong amalgamation property.

\begin{conjecture}
Let $\AgeK$ be a strong \Fraisse\ class that satisfies $n$-DAP for every $n$ and has only countably many isomorphism types. Let $\cN$ be a countably infinite \Fraisse\ limit of $\AgeK$. Suppose that $\mu$ is an invariant measure concentrated on the isomorphism class of $\cN$ and that $\Generic$ is a $\mu$-random generic filter. Then $\cM_{\Generic}$ is rigid. 
\end{conjecture}

We showed in 
\cref{Borel codes separate random generic from Cohen generic}
that whenever a strong \Fraisse\ class $\AgeK$ is not highly homogeneous, there is some Borel set that contains all Cohen generic structures but contains no $\mu$-random generic structure for any invariant measure $\mu$. We conjecture that this holds for all non-trivial strong \Fraisse\ classes. 

\begin{conjecture}
For any non-trivial strong \Fraisse\ class $\AgeK$, 
there is a \linebreak $\gamma \in [\BorelCodes<\kappa>[\Lang](X)]^V$ such that 
\begin{itemize}
\item 
$V[\Generic] \models \cM_{\Generic} \in \BCvalue(\gamma)$
whenever $\Generic$ is Cohen generic for $\AgeK$ over $X$, and 

\item 
$V[\Generic] \models \cM_{\Generic} \not \in \BCvalue(\gamma)$ whenever
$\Generic$ is $\mu$-random generic for some invariant measure
$\mu$ that concentrates on the collection of models that are potentially isomorphic to a \Fraisse\ limit of $\AgeK$. 
\end{itemize}
\end{conjecture}

Note that one cannot remove the assumption that $\AgeK$ is non-trivial, as for any trivial strong \Fraisse\ class there is a unique structure with age $\AgeK$ 
for each underlying set.

Recall from  \cref{prop:kal-ergodic} that
when $X$, $\Lang$, and $\kappa$ are countable, every invariant measure on $\Str<\kappa>[\Lang](X)$ is a mixture of ergodic ones. We conjecture that the same is true even when $X$,  $\Lang$, or $\kappa$ is uncountable.

\begin{conjecture}
Suppose $\mu$ is an invariant measure on $\Str<\kappa>[\Lang](X)$. Then $\mu$ is a mixture
of ergodic invariant measures on $\Str<\kappa>[\Lang](X)$. 
\end{conjecture}

\providecommand{\bysame}{\leavevmode\hbox to3em{\hrulefill}\thinspace}
\providecommand{\MR}{\relax\ifhmode\unskip\space\fi MR }
\providecommand{\MRhref}[2]{%
  \href{http://www.ams.org/mathscinet-getitem?mr=#1}{#2}
}
\providecommand{\href}[2]{#2}

\end{document}